\newtheorem{thm}{Theorem}[section]
\newtheorem{lemma}[thm]{Lemma}
\newtheorem{cor}[thm]{Corollary}
\newtheorem{prop}[thm]{Proposition}
\newtheorem{conj}[thm]{Conjecture}
\newtheorem{Definition}[thm]{Definition}
\newenvironment{definition}
  {\begin{Definition}\rm}{\end{Definition}}
\newtheorem{Example}[thm]{Example}
\newenvironment{example}
  {\begin{Example}\rm}{\end{Example}}
\newtheorem{Remark}[thm]{Remark}
\newenvironment{remark}
  {\begin{Remark}\rm}{\end{Remark}}
 \newtheorem{subclaim}{Subclaim}
 \numberwithin{equation}{section}
\apptocmd{\sloppy}{\hbadness 10000\relax}{}{}
\def\SetBasicGraph { 	
	\SetVertexMath
	\GraphInit[vstyle=Classic]
	\SetUpVertex[MinSize=0pt]
	\SetVertexLabel
	\tikzset{VertexStyle/.style = {shape = circle,fill = black,minimum size = 0pt,inner sep=0.75pt}}
	\SetUpEdge[color=black]
	\tikzset{->-/.style={decoration={ markings, mark=at position 0.8 with {\arrow{>}}},postaction={decorate}}}
}
\def\SetFancyGraph {
	\SetVertexMath
	\GraphInit[vstyle=Art]
	\SetUpVertex[MinSize=2pt]
	\SetVertexLabel
	\tikzset{VertexStyle/.style = {shape = circle,shading = ball,ball color = black,inner sep = 1.5pt}}
	\SetUpEdge[color=black]
	\tikzset{->-/.style={decoration={ markings, mark=at position 0.8 with {\arrow{>}}},postaction={decorate}}}
	\tikzset{->--/.style={decoration={ markings, mark=at position 0.55 with {\arrow{>}}},postaction={decorate}}}
}
\def\Triangle[#1,#2,#3,#4,#5] {
	\SetBasicGraph
	\Vertex[NoLabel,x=#1+-0.2,y=#2+-0.25]{v_1}
	\Vertex[NoLabel,x=#1+0.15,y=#2+0.25]{v_2}
	\Vertex[NoLabel,x=#1+0.5,y=#2+-0.25]{v_3}
	\ifthenelse{#3=0}{\Edges[style={thick}](v_1,v_2)}{}
	\ifthenelse{#3=1}{\Edges[style={->-,>=mytip2,thick}](v_1,v_2)}{}
	\ifthenelse{#3=-1}{\Edges[style={->-,>=mytip2,thick}](v_2,v_1)}{}
	\ifthenelse{#3=2}{\Edges[style={->,>=mytip2,thick}](v_1,v_2) \Edges[style={->,>=mytip2,thick}](v_2,v_1)}{}
	\ifthenelse{#4=0}{\Edges[style={thick}](v_1,v_3)}{}
	\ifthenelse{#4=1}{\Edges[style={->-,>=mytip2,thick}](v_1,v_3)}{}
	\ifthenelse{#4=-1}{\Edges[style={->-,>=mytip2,thick}](v_3,v_1)}{}
	\ifthenelse{#4=2}{\Edges[style={->,>=mytip2,thick}](v_1,v_3) \Edges[style={->,>=mytip2,thick}](v_3,v_1)}{}
	\ifthenelse{#5=0}{\Edges[style={thick}](v_2,v_3)}{}
	\ifthenelse{#5=1}{\Edges[style={->-,>=mytip2,thick}](v_2,v_3)}{}
	\ifthenelse{#5=-1}{\Edges[style={->-,>=mytip2,thick}](v_3,v_2)}{}
	\ifthenelse{#5=2}{\Edges[style={->,>=mytip2,thick}](v_2,v_3) \Edges[style={->,>=mytip2,thick}](v_3,v_2)}{}
}
\newcommand*{\bigchi}{\mbox{\Large$\chi$}} %big chi
\def\multiset#1#2{\ensuremath{\left(\kern-.3em\left(\genfrac{}{}{0pt}{}{#1}{#2}\right)\kern-.3em\right)}} %multiset notation
\newcommand{\thickhline}{%
    \noalign {\ifnum 0=`}\fi \hrule height 1pt
    \futurelet \reserved@a \@xhline
}
\newcolumntype{"}{@{\hskip\tabcolsep\vrule width 2pt\hskip\tabcolsep}}
\def\O{\mathcal{O}}
\def\Oref{\mathcal{O}_{\mathrm{ref}}}
\def\Cu{{Cu}}
\def\Cy{{Cy}}
\def\C{{C}}
\def\OC{\overrightarrow{\C}}
\def\OCu{\overrightarrow{\Cu}}
\def\OCy{\overrightarrow{\Cy}}
\def\G{\mathbb{G}}
\definecolor{mygray}{gray}{0.85}
\title{Fourientations and the Tutte Polynomial}
\author{Spencer Backman} 
\email{backman@di.uniroma1.it}
\address{University of Rome ``La Sapienza'', Rome, Italy}
\author{Sam Hopkins}
\email{shopkins@mit.edu}
\address{Massachusetts Institute of Technology, Cambridge, MA, 02139, United States}
\keywords{Partial graph orientations, Tutte polynomial, deletion-contraction, hyperplane arrangements, cycle-cocycle reversal system, chip-firing, $G$-parking functions, abelian sandpile model, Riemann-Roch theory for graphs, Lawrence ideals, zonotopal algebras, reliability polynomial}
\begin{document}

\begin{abstract}
A fourientation of a graph is a choice for each edge of the graph whether to orient that edge in either direction, leave it unoriented, or biorient it.  Fixing a total order on the edges and a reference orientation of the graph, we investigate properties of cuts and cycles in fourientations which give trivariate generating functions that are generalized Tutte polynomial evaluations of the form 
\[(k+m)^{n-1}(k+l)^gT\left(\frac{\alpha k + \beta l + m}{k+m},\frac{\gamma k + l + \delta m}{k+l}\right)\]
for $\alpha,\gamma \in \{0,1,2\}$ and $\beta, \delta \in \{0,1\}$.  We introduce an intersection lattice of~$64$ cut-cycle fourientation classes enumerated by generalized Tutte polynomial evaluations of this form.  We prove these enumerations using a single deletion-contraction argument and classify axiomatically the set of fourientation classes to which our deletion-contraction argument applies.  This work unifies and extends earlier results for fourientations due to Gessel and Sagan~\cite{gessel1996tutte}, and results for partial orientations due to the first author~\cite{backman2014partial}, and the second author and David Perkinson~\cite{hopkins2012bigraphical}, as well as results for total orientations due to Stanley ~\cite{stanley1973acyclic}~\cite{stanley1980decompositions}, Las Vergnas ~\cite{las1980convexity}, Greene and Zaslavsky~\cite{greene1983interpretation}, and Gioan~\cite{gioan2007enumerating}, which were previously unified by Gioan~\cite{gioan2007enumerating}, Bernardi~\cite{bernardi2008tutte}, and Las Vergnas~\cite{las2012tutte}. We conclude by describing how these classes of fourientations relate to geometric, combinatorial, and algebraic objects including bigraphical arrangements, cycle-cocycle reversal systems, graphic Lawrence ideals, Riemann-Roch theory for graphs, zonotopal algebras, and the reliability polynomial.
\end{abstract}

\maketitle

\tableofcontents

\vspace{-1cm}

\section{Introduction}

Throughout we use \emph{graph} to mean finite, undirected graph (although we allow loops and multiple edges). The Tutte polynomial is the most general Tutte-Grothendieck invariant one can associate to a graph; that is, any graph invariant that satisfies a deletion-contraction recurrence is a specialization of the Tutte polynomial. In fact, any graph invariant that satisfies a weighted deletion-contraction recurrence is essentially an evaluation of the Tutte polynomial, as the following theorem, which is sometimes called the \emph{recipe theorem}, makes precise.

\begin{thm}[see~{\cite[Theorem 1]{welsh2000potts}} and~{\cite[Theorem 2.16]{welsh1999tutte}}] \label{thm:gentutte}
Let $\mathcal{G}$ be some set of graphs closed under deletion and contraction, let $\mathbf{k}$ be a field, and let $f\colon \mathcal{G} \to \mathbf{k}$ be some function that is invariant under graph isomorphism. Suppose that $f$ is normalized so that $f(G)=1$ if $G$ has no edges. Suppose further that for every graph $G \in \mathcal{G}$ with at least one edge, there is some edge $e \in E(G)$ such that
\[ f(G) = \begin{cases} af(G / e) + bf(G\setminus e) &\textrm{if $e$ is neither an isthmus nor a loop} \\
 x_0f(G \setminus e)  &\textrm{if $e$ is an isthmus} \\
y_0f(G / e)  &\textrm{if $e$ is a loop},\end{cases}\]
where $G \setminus e$ is graph obtained from $G$ by deleting $e$ and $G/e$ is the graph obtained by contracting $e$. Then for all $G \in \mathcal{G}$ we have 
\[f(G) = a^{n-\kappa}b^{g}T_G\left(\frac{x_0}{a},\frac{y_0}{b}\right),\]
where~$n := |V(G)|$ is the number of vertices of $G$, $\kappa$~is its number of connected components, $g := |E(G)| - |V(G)| + \kappa$~is its cyclomatic number, and $T_G(x,y)$ is its Tutte polynomial.
\end{thm}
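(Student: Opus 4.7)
My plan is to proceed by induction on $|E(G)|$, exploiting the fact that the Tutte polynomial itself satisfies a deletion-contraction recurrence of precisely the form stipulated in the hypothesis: $T_G = T_{G/e} + T_{G\setminus e}$ if $e$ is neither an isthmus nor a loop, $T_G = x\, T_{G\setminus e}$ if $e$ is an isthmus, and $T_G = y\, T_{G/e}$ if $e$ is a loop. The base case $|E(G)| = 0$ is immediate: then $n = \kappa$, $g = 0$, and $T_G(x_0/a, y_0/b) = 1$, so the right-hand side of the claim reduces to $1 = f(G)$.

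For the inductive step I would fix the edge $e$ furnished by the hypothesis and split into three cases. The key bookkeeping is how the invariants $n-\kappa$ and $g$ transform under the two operations: contraction of an ordinary edge decreases $n-\kappa$ by one and leaves $g$ fixed, deletion of an ordinary edge decreases $g$ by one and leaves $n-\kappa$ fixed, deletion of an isthmus decreases $n-\kappa$ by one and leaves $g$ fixed (since $\kappa$ gains exactly what $|E|$ loses), and contraction of a loop decreases $g$ by one and leaves $n-\kappa$ fixed. Substituting the inductive hypothesis for $f(G/e)$ and $f(G\setminus e)$ into the given recurrence, the common prefactor $a^{n-\kappa}b^g$ pulls out cleanly in each case: in the ordinary case the extra factors of $a$ and $b$ in front of $f(G/e)$ and $f(G\setminus e)$ exactly compensate for the drops in the exponents, leaving $T_{G/e}(x_0/a, y_0/b) + T_{G\setminus e}(x_0/a, y_0/b)$ on the right; in the isthmus case the factor $x_0$ becomes $x_0/a \cdot a$, yielding $(x_0/a) T_{G\setminus e}(x_0/a, y_0/b)$; and in the loop case $y_0$ similarly becomes $(y_0/b) T_{G/e}(x_0/a, y_0/b)$. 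In each case the resulting expression is exactly the Tutte recurrence evaluated at $(x_0/a, y_0/b)$, producing $a^{n-\kappa}b^g T_G(x_0/a, y_0/b)$ as required.

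I do not anticipate any serious obstacle; the argument is essentially bookkeeping once the invariants are tracked correctly. The points meriting care are that the hypothesis asserts only the existence of \emph{some} edge $e$ for each $G$ (so the induction uses that one edge and does not rely on the recurrence holding for every edge), that closure of $\mathcal{G}$ under deletion and contraction is essential in order to apply the inductive hypothesis to $G\setminus e$ and $G/e$, and that the isomorphism invariance of $f$ ensures the recursive definition descends unambiguously to isomorphism classes. It is also worth observing that for an isthmus or a loop one has $T_{G/e} = T_{G\setminus e}$, so the slight asymmetry between which of deletion and contraction appears in the hypothesis for each degenerate edge type is immaterial at the level of the Tutte polynomial.
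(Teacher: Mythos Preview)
Your proof is correct and is precisely the standard induction on $|E(G)|$ that one finds in the references cited. Note, however, that the paper does not supply its own proof of this theorem: it is stated as the ``recipe theorem'' with attribution to Welsh and is used as a black box throughout (in particular in the proof of Theorem~\ref{thm:main}). There is therefore nothing to compare your argument against beyond observing that your approach is the expected one.
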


In light of Theorem~\ref{thm:gentutte}, we call an expression of the form $a^{n-\kappa}b^{g}T_G(x,y)$ a \emph{generalized Tutte polynomial evaluation}. Note that $n-\kappa$ is the rank of the graphic matroid associated to $G$ and $g$ is its corank. In what follows we assume for simplicity that all graphs are connected. We also write $T(x,y) := T_G(x,y)$ when the graph is implicit.

Our aim in this paper is to systematically exploit Theorem~\ref{thm:gentutte} in order to enumerate various classes of generalized graph orientations via the Tutte polynomial. A fourientation of a graph is a choice for each edge whether to orient that edge in either direction, leave it unoriented, or biorient it. (There are $4^{|E(G)|}$ fourientations of a graph $G$ and thus the name.) A~$(k,l,m)$-fourientation is obtained from a fourientation by assigning each oriented edge one of $k$ colors, each unoriented edge one of $l$ colors, and each bioriented edge one of $m$ colors.  A potential cut (cycle) in a fourientation is the same as a directed cut (cycle) in an ordinary total orientation except that some of the edges may be unoriented (bioriented). In~\S\ref{sec:four} we generate a list of potential cut and cycle properties which mix with one another to give an intersection lattice of $64$ cut-cycle properties of $(k,l,m)$-fourientations such that each associated class is enumerated by a generalized Tutte polynomial evaluation. Moreover, we show that our list of properties is exhaustive: we derive the axioms required for our deletion-contraction proof to apply and show that the set of properties satisfying these axioms consists of precisely the potential cut and cycle properties on our list together with two exceptional cases. In~\S\ref{sec:special} we consider specializations of~$(k,l,m)$ that recover enumerative results about classes of partial orientations and total orientations obtained by many authors, as detailed in~\S\ref{subsec:history} below.

Our axiomatic approach to orientation properties recovers classes of partial orientations which arose in seemingly unrelated contexts and also suggests interesting new avenues of research. In~\S\ref{sec:connections} we outline how several of our cut and cycle properties relate to geometric, combinatorial, and algebraic objects including bigraphical hyperplane arrangements, cycle-cocycle reversal systems, graphic Lawrence ideals, divisors on graphs, zonotopal algebras, and the reliability polynomial. Recent developments in the study of divisors on graphs, including the commutative algebra of the abelian sandpile model~\cite{perkinson2013primer}~\cite{manjunath2014minimal}~\cite{dochtermann2012laplacian}~\cite{mohammadi2013divisors}~\cite{mohammadi2013divisors2}, Riemann-Roch theory for graphs~\cite{baker2007riemann}~\cite{backman2014riemann}, and geometrizations of the Matrix-Tree theorem~\cite{an2014canonical}, highlight the algebraic significance of the relationship between graph orientations and their indegree sequences. The partial orientation classes we define, which we term \emph{min-edge classes}, appear to arise in many situations where one is interested in indegree sequences. A striking example of this phenomenon, described in detail in~\S\ref{subsec:cutin}, is that the acyclic-cut internal partial orientations point the way towards a monomization of the internal power ideal associated to a graph~$G$. While there exist constructions of monomizations of the external~\cite{postnikov1998chern}~\cite{desjardins2010monomization} and central~\cite{postnikov2004trees} power ideal associated to~$G$, there is no such construction for the internal power ideal that works for all~$G$. We arrived at the definition of the min-edge class cut internal only as a result of our abstract machinery, but it conjecturally helps resolve this outstanding problem that we became aware of after we began our research.
 
\subsection{History} \label{subsec:history}
Since at least the seminal work of Stanley~\cite{stanley1973acyclic}, it has been known that the Tutte polynomial counts classes of graph orientations defined in terms of cuts and cycles. Stanley~\cite{stanley1973acyclic} proved that the number of acyclic orientations of a graph is~$T(2,0)$, which is also equal to the chromatic polynomial evaluated at $-1$.  Las Vergnas~\cite{las1980convexity} proved that the number of strongly connected orientations, those with no  directed cut, is~$T(0,2)$.  Greene and Zaslavsky~\cite{greene1983interpretation} showed that the number of acyclic orientations of a graph with a unique source $q$ is~$T(1,0)$.  By fixing a total order on the edges and a reference orientation of the graph, the previous result can be generalized in the following way: the number of acyclic orientations such that the minimum edge in each directed cut is oriented as in the reference orientation is~$T(1,0)$.  These orientations give distinguished representatives for the set of acyclic orientations modulo cut reversals, which can be obtained greedily.  Gioan~\cite{gioan2007enumerating} observed that $T(0,1)$ counts the number of equivalence classes of strongly connected orientations modulo directed cycle reversals, or equivalently indegree sequences of strongly connected orientations (because any two orientations with the same indegree sequence differ by cycle reversals). This theorem is equivalent to the result of Greene and Zaslavsky~\cite{greene1983interpretation} that the number of strongly connected orientations for which the minimum edge in any directed cycle is oriented as in the reference orientation is~$T(0,1)$ since these orientations give distinguished representatives for the set of strongly connected orientations modulo cycle reversals.  Greene and Zaslavaky's result and its equivalence to Gioan's result was rediscovered by Chen, Yang, and Zhang~\cite{chen2008bijection} who investigated it from a bijective perspective.  Stanley~\cite{stanley1980decompositions} observed that the total number of indegree sequences among orientations is counted by~$T(2,1)$, which may be interpreted as a version of the previous result for orientations that are not necessarily strongly connected.  Similarly, Gioan~\cite{gioan2007enumerating} proved that the number of (not necessarily acyclic) $q$-connected orientations is~$T(1,2)$, and the number of indegree sequences of these orientations is~$T(1,1)$.  Trivially, $T(0,0)=0$, the number of strongly connected-acyclic orientations, and $T(2,2)= 2^{|E|}$, the total number of orientations.  Putting all of these enumerations together, Gioan~\cite{gioan2007enumerating} offered a unified framework for interpretations of $T(x,y)$ for all integer values~$0 \leq x,y \leq 2$ in terms of of equivalence classes of orientations.  He presented separate proofs for the evaluations where either $x$ or $y$ is zero, and then obtained the remaining nonzero evaluations via a convolution formula for the Tutte polynomial due to Kook, Reiner, and Stanton~\cite{kook1999convolution}.  Gioan later provided a more unified proof~\cite{gioan2008circuit} making use of matroid duality, while still requiring application of the convolution formula and separate proofs for the evaluations~$T(2,0)$ and~$T(1,0)$.

Gessel and Sagan~\cite{gessel1996tutte} used depth-first search to investigate relationships between the Tutte polynomial and partial orientations (which they call ``suborientations'') and fourientations (which they call ``subdigraphs'') of a graph.  They proved that two generating functions for generalized orientations give generalized Tutte polynomial evaluations with the following specializations:  the number of acyclic partial orientations of a graph is~$2^gT(3,1/2)$, the number of $q$-connected fourientations is~$2^{|E|}T(1,2)$, and the number of acyclic,~$q$-connected partial orientations of a graph is~$2^gT(1,1/2)$. The first result was rediscovered and proven via deletion-contraction by the first author~\cite{backman2014partial}, who also showed that the number of strongly connected partial orientations is~$2^{n-1}T(1/2,3)$.  It was also shown in~\cite{backman2014partial} that the number of partial orientations modulo cut reversals and modulo cycle reversals are~$2^{n-1}T(1,3)$ and~$2^{g}T(3,1)$ respectively.  As in the case of total orientations, the partial orientations for which the minimum edge in every directed cut or cycle is oriented in the same direction as the reference orientation give distinguished representatives for these equivalence classes.  The second author and Perkinson~\cite{hopkins2012bigraphical} observed that the number of regions of a generic bigraphical arrangement is~$2^{n-1}T(3/2,1)$ and the number of bounded regions is~$2^{n-1}T(1/2,1)$. They demonstrated that the regions of a bigraphical arrangement are labeled by a certain class of ``admissible'' acyclic partial orientations.  Using exponential parameters to define a generic arrangement, we give an alternate description of these admissible partial orientations as those for which the minimum edge in any potential cycle is neutral. The partial orientations that label bounded region become those which are in addition strongly connected.

By specializing $(k,l,m)$ in our main Theorem \ref{thm:main} to $(1,1,1)$ (fourientations), $(1,1,0)$ (\emph{type A classes of partial orientations}), $(1,0,1)$ (\emph{type B classes of partial orientations}), and $(1,0,0)$ (total orientations), we obtain tables (see Figure~\ref{fig:fourtables}) in which all of the aforementioned results appear as entries.  Moreover, for $(k,l,m) = (1,0,0)$, our proof of Theorem \ref{thm:main} specializes to a uniform proof of the Tutte polynomial evaluations in Gioan's~$3 \times 3$ square of total orientation classes. Another uniform proof of the Tutte polynomial evaluations in this~$3 \times 3$ square can be obtained from a certain orientation activity expansion of the Tutte polynomial due to Las Vergnas~\cite{las2012tutte} which is closely related to the ``active bijection'' of Gioan-Las Vergnas~\cite{gioan2009active}. We will explore this orientation activity approach in a sequel paper; see~\S\ref{subsec:future} for more details.

\subsection{Acknowledgements} We thank Olivier Bernardi for some enlightening discussions about the Tutte polynomial and for his encouragement with our investigation of fourientations. We thank Criel Merino for explaining the precise statement of the recipe theorem to us.  The second author thanks Farbod Shokrieh for suggesting the use of exponential parameters during the 2013 AIM workshop on generalizations of chip-firing and the critical group. The first author thanks Dave Perkinson for early discussions about partial orientations. We thank Lorenzo Traldi for helpful discussions about (closed forms of) the Tutte polynomial. The first author was partially supported by the European Research Council under the European Unions Seventh Framework Programme (FP7/2007-2013)/ERC Grant Agreement no.~279558, and the Center for Application of Mathematical Principles at the National Institute of Mathematical Sciences in South Korea during the Summer 2014 Program on {\it Applied Algebraic Geometry}.  The second author was partially supported by NSF grant~\#1122374. Finally, the first author would like to thank the MIT Mathematics Department for their hospitality during visits to the second author.

\section{Fourientations and min-edge classes} \label{sec:four}

\subsection{Notation and terminology}
In this section we introduce, enumerate, and classify the main objects of study in this article: {\it min-edge classes of fourientations}.  In order to do so, we begin by first developing some useful notation and terminology, which will be employed throughout the paper.  

Let $G$ be a graph. We use $V(G)$ to denote the vertex set of $G$ and $E(G)$ to denote its edge set. Recall that throughout we will assume that all graphs are connected. (This assumption is justified by the fact that if~$G = G' \sqcup G''$ is the disjoint union of two other graphs then we have $T_G(x,y) = T_{G'}(x,y) \cdot T_{G''}(x,y)$.) We take a moment to describe our notation for the edges of $G$ and for orientations of these edges. Formally,~$V(G)$ is some finite set and~$E(G)$ is a finite set together with a map~$\varphi(G) \colon E(G) \to \multiset{V(G)}{2}$, i.e., a map~$\varphi(G)$ from~$E(G)$ to the set of all multisets of $V(G)$ of size two. By abuse of notation, for an edge $e \in E(G)$ we write~$e = \{u,v\}$ to mean $\varphi(G)(e) = \{u,v\}$; however, note that we may have $e,f \in E$ with~$e = \{u,v\}$ and~$f=\{u,v\}$ but~$e \neq f$ meaning that~$e$ and $f$ are multiple edges between the same vertices; and it is also possible that~$e = \{u,u\}$ is a loop. In order to talk about orientations of a graph it is helpful to have a reference orientation. A \emph{reference orientation} $\Oref$ of~$G$ is a map~$\Oref\colon E(G) \to V(G)\times V(G)$ with~$F \circ \Oref = \varphi(G)$ where~$F\colon V(G) \times V(G) \to \multiset{V}{2}$ is the forgetful map. An \emph{orientation} of~$e \in E(G)$ is a formal symbol $e^{+}$ or~$e^{-}$, where we think of $e^{+}$ as the orientation of $e$ that agrees with $\Oref$ and $e^{-}$ as the orientation that disagrees with~$\Oref$. For $\delta, \varepsilon \in \{+,-\}$ we define~$-\delta$ and~$\delta \cdot \varepsilon$ in the obvious way. When discussing orientations we use the symbols~$\pm$ and~$\mp$ for compactness of notation: any mathematical sentence involving~$\pm$ should be interpreted by replacing all occurrences of $\pm$ with $\delta$, all occurrences of $\mp$ with~$-\delta$, and adding ``for~$\delta \in \{-,+\}$'' at the end of the sentence. Let $\mathbb{E}(G) := \{e^{\pm}\colon e \in E(G)\}$ be the set of orientations of edges of~$G$. We extend~$\Oref$ to a map~$\mathbb{E}(G) \to V(G) \times V(G)$ by setting~\mbox{$\Oref(e^{+}) := \Oref(e)$} and~\mbox{$\Oref(e^{-}) := \Oref(e)^{\mathrm{op}}$}, where~$(u,v)^{\mathrm{op}} := (v,u)$. Again abusing notation, we write~$e^{\pm} = (u,v)$ to mean~$\Oref(e^{\pm}) = (u,v)$; however, note that if~$e = \{u,u\}$ is a loop then $e^{+} = (u,u)$ and~$e^{-} = (u,u)$ but we still treat $e^{+}$ and~$e^{-}$ as different orientations of $e$. We call the pair $(G,\Oref)$ an \emph{oriented graph}.

We also need to review cuts and cycles of graphs as these are fundamental in defining properties of orientations. A \emph{cut} of $G$ is a partition $\Cu = \{U,U^c\}$ for some $U \subseteq V(G)$, where $U^{c} := V(G) \setminus U$ denotes the complement of $U$, such that both~$U$ and $U^{c}$ are nonempty. We define $E(\Cu) := \{e = \{u,v\}\colon u \in U, v\in U^{c}, e \in E(G)\}$. The cut~$\Cu$ is \emph{simple} if the restriction of $G$ to $U$ and the restriction of $G$ to $U^{c}$ both remain connected. (What we call simple cuts are often called ``bonds.'')  An edge~$e \in E(G)$ is an \emph{isthmus} if $E(\Cu) = \{e\}$ for some (necessarily simple) cut~$\Cu$. A \emph{cycle} of $G$ is a list~$\Cy = v_1, e_1, v_2, e_2, \ldots, v_k, e_k$  with~$k \geq 1$, $v_i \in V(G)$, $e_i \in E(G)$, up to rotation and reflection of the indices, such that all edges~$e_i$ are distinct and $e_i = \{v_i, v_{i+1 \mod k}\}$. We define $E(\Cy) := \{e_i\colon 1 \leq i \leq k\}$. The cycle $\Cy$ is \emph{simple} if all the $v_i$ are distinct. An edge~$e \in E(G)$ is a \emph{loop} if~$E(\Cy) = \{e\}$ for some (necessarily simple) cycle~$\Cy$. 

A \emph{directed cut} of~$(G,\Oref)$ is an ordered pair~$\OCu = (U,U^c)$ for some cut~\mbox{$\Cu = \{U,U^c\}$} of $G$; let~$E(\OCu) := E(\Cu)$ and~$\mathbb{E}(\OCu) := \{e^{\pm} = (u,v)\colon u \in U, v\in U^{c}, e \in E(G)\}$; i.e.,~$\mathbb{E}(\OCu)$ is the set of edge orientations from $U$ to $U^c$.  A \emph{directed cycle} of~$(G,\Oref)$ is a list $\OCy = v_1,e^{\delta_1}_1,\ldots,v_k,e^{\delta_k}_k$, with $\delta_i \in \{+,-\}$, up to rotation but not reflection of indices, for some cycle $\Cy = v_1,e_1,\ldots,v_k,e_k$ of $G$ such that~\mbox{$e^{\delta_i}_i = (v_i,v_{i+1 \mod k})$}; let~$E(\OCy) := E(\Cy)$ and~$\mathbb{E}(\OCy) := \{e^{\delta_i}_i\colon 1 \leq i \leq k\}$. Note that each cut (cycle) $\C$ has two associated directed cuts (cycles) $\OC$ and $-\OC$.

\begin{definition}
A \emph{fourientation} $\O$ of an oriented graph $(G,\Oref)$ is a subset of $\mathbb{E}(G)$.
\end{definition}

In other words, a fourientation is a choice for each edge $e$ of a subset of $\{e^{+},e^{-}\}$. If~$e^{+},e^{-} \in \O$ then we say $e$ is \emph{bioriented} in $\O$, and if $e^{+},e^{-} \notin \O$ then we say $e$ is \emph{unoriented} in $\O$.  An \emph{oriented edge} $e$ of $\O$ is one for which $e^{\pm} \in \O$ but $e^{\mp} \notin \O$.  We emphasize that $\Oref$ is not essential in the definition of a fourientation but $\Oref$ has allowed us to introduce the very useful notation $e^+$ and $e^-$ which we employ throughout this paper.  Moreover, when we discuss the properties of cuts and cycles in fourientations which define classes enumerated by generalized Tutte polynomial evaluations the reference orientation will play an indispensable role. Fourientations were introduced and studied in an enumerative context by Gessel and Sagan~\cite{gessel1996tutte} under the name of ``subdigraphs.'' They are also superficially similar to the orientations of signed graphs investgated by Zaslavsky~\cite{zaslavsky1991orientation}; but note that Zaslavsky's notion of a cycle in a signed graph orientation is quite different from the kinds of cycles of fourientations we investigate. It seems plausible that there is some deeper connection between orientations of signed graphs and fourientations and it would be very interesting to find such a relationship.

\begin{definition}
A \emph{potential cut (cycle)} in a fourientation is the same as a directed cut (cycle) in an ordinary total orientation except that some of the edges may be unoriented (bioriented). More formally, a \emph{potential cut (cycle)} of a fourientation is a directed cut (cycle) of the oriented graph such that each edge in that cut (cycle) is either oriented in agreement with the cut (cycle) or is unoriented (bioriented). In symbols,~$\OCu$ is a potential cut belonging to some larger fourientation~$\O$ if $e^{\pm} \in \mathbb{E}(\OCu) \Rightarrow e^{\mp} \notin \O$ for all~$e \in E$, and~$\OCy$ is a potential cycle of~$\O$ if $e^{\pm} \in \mathbb{E}(\OCy) \Rightarrow e^{\pm} \in \O$ for all~$e \in E$.
\end{definition}

\begin{example}
Let $(G,\Oref)$ be an oriented graph and $\O$ be a fourientation of $(G,\Oref)$ as below:
\begin{center}
\begin{tikzpicture}[scale=1]
	\SetFancyGraph
	\Vertex[LabelOut,Lpos=0, Ldist=.1cm,x=0.75,y=0]{v_1}
	\Vertex[LabelOut,Lpos=90, Ldist=.1cm,x=0,y=1]{v_2}
	\Vertex[LabelOut,Lpos=180, Ldist=.1cm,x=-0.75,y=0]{v_3}
	\Vertex[LabelOut,Lpos=180, Ldist=.1cm,x=-0.75,y=-1]{v_4}
	\Vertex[LabelOut,Lpos=0, Ldist=.1cm,x=0.75,y=-1]{v_5}
	\Edges[style={thick}](v_1,v_2)
	\Edges[style={thick}](v_1,v_3)
	\Edges[style={thick}](v_2,v_3)
	\Edges[style={thick}](v_3,v_4)
	\Edges[style={thick}](v_4,v_5)
	\Edges[style={thick}](v_5,v_1)
	\node at (0.6,0.7) {$e_1$};
	\node at (0,-0.25) {$e_2$};
	\node at (-0.6,0.7) {$e_3$};
	\node at (-1,-0.5) {$e_4$};
	\node at (0,-1.25) {$e_5$};
	\node at (1,-0.5) {$e_6$};
	\node at (0,-1.7){$G$};
\end{tikzpicture} \begin{tikzpicture}[scale=1]
	\SetFancyGraph
	\Vertex[LabelOut,Lpos=0, Ldist=.1cm,x=0.75,y=0]{v_1}
	\Vertex[LabelOut,Lpos=90, Ldist=.1cm,x=0,y=1]{v_2}
	\Vertex[LabelOut,Lpos=180, Ldist=.1cm,x=-0.75,y=0]{v_3}
	\Vertex[LabelOut,Lpos=180, Ldist=.1cm,x=-0.75,y=-1]{v_4}
	\Vertex[LabelOut,Lpos=0, Ldist=.1cm,x=0.75,y=-1]{v_5}
	\Edges[style={->,thick,>=mytip}](v_1,v_2)
	\Edges[style={->,thick,>=mytip}](v_1,v_3)
	\Edges[style={->,thick,>=mytip}](v_2,v_3)
	\Edges[style={->,thick,>=mytip}](v_3,v_4)
	\Edges[style={->,thick,>=mytip}](v_4,v_5)
	\Edges[style={->,thick,>=mytip}](v_1,v_5)
	\node at (0,-1.7) {$\Oref$};
\end{tikzpicture} \begin{tikzpicture}[scale=1]
	\SetFancyGraph
	\Vertex[LabelOut,Lpos=0, Ldist=.1cm,x=0.75,y=0]{v_1}
	\Vertex[LabelOut,Lpos=90, Ldist=.1cm,x=0,y=1]{v_2}
	\Vertex[LabelOut,Lpos=180, Ldist=.1cm,x=-0.75,y=0]{v_3}
	\Vertex[LabelOut,Lpos=180, Ldist=.1cm,x=-0.75,y=-1]{v_4}
	\Vertex[LabelOut,Lpos=0, Ldist=.1cm,x=0.75,y=-1]{v_5}
	\Edges[style={thick}](v_1,v_2)
	\Edges[style={->,thick,>=mytip}](v_2,v_3)
	\Edges[style={->,thick,>=mytip}](v_1,v_3)
	\Edges[style={->,thick,>=mytip,dash pattern=on 0pt off 100pt}](v_3,v_1)
	\Edges[style={->,thick,>=mytip}](v_3,v_4)
	\Edges[style={->,thick,>=mytip,dash pattern=on 0pt off 100pt}](v_4,v_3)
	\Edges[style={->,thick,>=mytip}](v_4,v_5)
	\Edges[style={->,thick,>=mytip}](v_5,v_1)
	\node at (0,-1.7) {$\O$};
\end{tikzpicture}
\end{center}
Here $\O = \{e_2^{+},e_2^{-},e_3^{+},e_4^{+},e_4^{-},e_5^{+},e_6^{-}\}$. Observe that $\OCu = (\{v_2\},\{v_1,v_3,v_4,v_5\})$ is a potential cut of $\O$ and $\OCy = v_1,e_2^{+},v_3,e_4^{+},v_4,e_5^{+},v_5,e_6^{-}$ is a potential cycle of $\O$.
\end{example}

In this section we define various classes of fourientations of~$(G,\Oref)$ in terms of potential cuts and cycles. We will need more input data to define these classes. Specifically, we will also need $<$, a total order on the edges of $G$.  Such an edge order often appears in investigations of the Tutte polynomial because it allows one to define the (internal and external) activity of spanning trees of a graph. It may be possible to extend our work to allow for other notions of activity; for instance, see the recent paper~\cite{courtiel2014general} which develops a unified perspective for various kinds of activity. However, we will stick to the most classical case of activity defined in terms of a total edge order here. We call the triple $\G = (G,\Oref,<)$ an \emph{ordered, oriented graph}. A \emph{fourientation} of $\G$ is of course a fourientation of the underlying oriented graph~$(G,\Oref)$.

The classes of fourientations we will define are enumerated by the Tutte polynomial, so we now review deletion and contraction. For $e = \{u,v\} \in E(G)$, the graph obtained by \emph{deleting} $e$ is denoted $G \setminus e$; this graph has~$V(G \setminus e) := V(G)$ and~$E(G\setminus e):= E \setminus \{e\}$. The graph obtained by \emph{contracting} $e$ is denoted $G / e$; now we set~$V(G/ e) := V / \sim$ where~$\sim$ is the identification $u \sim v$, and again $E(G / e) := E \setminus \{e\}$. Of course, we can also form the deletion~$\G \setminus e$ and contraction~$\G / e$ of an ordered, oriented graph~$\G$ by keeping track of the extra data in the obvious way. We similarly define the deletion~$\O \setminus e$ and contraction $\O / e$ of a fourientation~$\O$ (which in fact are both just equal to~$\O \setminus \{e^{+},e^{-}\}$). In particular note that if $e^{+},e^{-} \notin \O$ then we will often treat $\O$ as a fourientation of~$\G \setminus e$ and of~$\G / e$. For a subset of edges~$H \subseteq E(G)$ we also define~$G \setminus H$~($G / H$) to be the graph obtained by deleting (contracting) all the edges in~$H$ in any order. Of course we similarly define $\G \setminus H$, $\G / H$, $\O \setminus H$, and~$\O / H$. For a simple cut~$\Cu$ of $G$, we define the \emph{contraction to $\Cu$}, denoted~$G_{\Cu}$, to be $G / (E(G) \setminus E(\Cu))$. The  contraction to a simple cut always yields a \emph{banana graph}~$B_n$ for some $n \geq 1$, where the family of banana graphs is
\begin{center}
 \begin{tikzpicture}[scale=1.2]
	\SetFancyGraph
	\Vertex[NoLabel,x=0,y=0.7]{v_2}
	\Vertex[NoLabel,x=0,y=0]{v_1}
	\Edges[style={thick}](v_1,v_2)
	\node at (0,-0.5){$B_1$};
\end{tikzpicture} \qquad \begin{tikzpicture}[scale=1.2]
	\SetFancyGraph
	\Vertex[NoLabel,x=0,y=0.7]{v_2}
	\Vertex[NoLabel,x=0,y=0]{v_1}
	\Edges[style={thick, bend left}](v_1,v_2)
	\Edges[style={thick, bend right}](v_1,v_2)
	\node at (0,-0.5){$B_2$};
\end{tikzpicture} \qquad \begin{tikzpicture}[scale=1.2]
	\SetFancyGraph
	\Vertex[NoLabel,x=0,y=0.7]{v_2}
	\Vertex[NoLabel,x=0,y=0]{v_1}
	\Edges[style={thick}](v_1,v_2)
	\Edges[style={thick, bend left}](v_1,v_2)
	\Edges[style={thick, bend right}](v_1,v_2)
	\node at (0,-0.5){$B_3$};
	\node at (1,0.5){$\cdots$};
\end{tikzpicture}
\end{center}
Similarly, for a simple cycle $\Cy$ of $G$, we define the \emph{restriction to~$\Cy$}, which we denote~$G_{\Cy}$, to be the graph obtained from $G \setminus (E(G) \setminus E(\Cy))$ by removing all isolated vertices. The restriction to a simple cycle always yields a \emph{cycle graph}~$C_n$ for some~$n \geq 1$, where the family of cycle graphs is
\begin{center}
\begin{tikzpicture}[scale=1.2]
	\SetFancyGraph
	\Vertex[NoLabel,x=0,y=0]{v_1}
	\Edges[style={thick, loop, looseness=20}](v_1,v_1)
	\node at (0,-0.5){$C_1$};
\end{tikzpicture} \qquad \begin{tikzpicture}[scale=1.2]
	\SetFancyGraph
	\Vertex[NoLabel,x=0,y=0.7]{v_2}
	\Vertex[NoLabel,x=0,y=0]{v_1}
	\Edges[style={thick, bend left}](v_1,v_2)
	\Edges[style={thick, bend right}](v_1,v_2)
	\node at (0,-0.5){$C_2$};
\end{tikzpicture} \qquad \begin{tikzpicture}[scale=1.2]
	\SetFancyGraph
	\Vertex[NoLabel,x=0,y=0.7]{v_2}
	\Vertex[NoLabel,x=-0.5,y=0]{v_3}
	\Vertex[NoLabel,x=0.5,y=0]{v_1}
	\Edges[style={thick}](v_1,v_2)
	\Edges[style={thick}](v_1,v_3)
	\Edges[style={thick}](v_2,v_3)
	\node at (0,-0.5){$C_3$};
	\node at (2,0.5){$\cdots$};
\end{tikzpicture}
\end{center}
We define the restriction to a simple cycle $\G_{\Cy}$ and the contraction to a simple cut~$\G_{\Cu}$ of an ordered, oriented graph $\G$ in the obvious way by keeping track of the extra data. We also similarly define the restriction to a simple cycle $\O_{\Cy}$ and contraction to a simple cut $\O_{\Cu}$ for fourientations $\O$.

As an aside, we note that by contracting all the bioriented edges and deleting all the unoriented edges in a fourientation we obtain a total orientation of a graph minor. This procedure seems quite natural as potential cuts and cycles are mapped to directed cuts and cycles, respectively.  Unfortunately, in order to reverse this procedure we must remember how the oriented graph minor was obtained (i.e.,~which edges were contracted and which were deleted) because various fourientations may be mapped to the same oriented graph minor.

A fundamental notion for graph orientations is that of reachability by directed paths. In an ordinary total orientation $\O$ we say that the vertex $v$ is reachable from the vertex $u$ if we can walk from $u$ to $v$ along a series of edges that are oriented in $\O$ consistently with our walk. Because we are viewing a bioriented edge as the union of both orientations of an edge we will allow a bioriented edge to be traversed in either direction. On the other hand, unoriented edges cannot be traversed in either direction (because they are not present). Thus we define a \emph{potential path} $P$ of a fourientation~$\O$ of $(G,\Oref)$ to be a list~$v_1,e^{\delta_i}_1,\ldots,e^{\delta_{k-1}}_{k-1},v_k$ for some $k \geq 1$ such that the~$e_i$ are distinct,~$e^{\delta_i}_i = (v_i,v_{i+1})$, and~$e^{\delta_i} \in \O$ for all~$1 \leq i \leq k$. We say that~$P$ is a potential path \emph{from $v_1$ to $v_k$} and we set $E(P) := \{e_i\colon 1 \leq i \leq k-1\}$. If there is a potential path $P$ of $\O$ from~$u$ to~$v$ then we say~$v$ is \emph{reachable} from~$u$ in $\O$. It is a classical fact, which can be seen by considering reachability classes, that every edge in a total orientation belongs to a directed cycle or a directed cut but not both.  It remains the case in fourientations that every oriented edge belongs to either a potential cut or potential cycle but not both, as we show in Proposition~\ref{prop:ordecomp} below. First we present a more basic lemma that says that potential cuts and cycles of a fourientation are disjoint.

\begin{lemma} \label{lem:disjoint}
Let $\O$ be a fourientation of $(G,\Oref)$. Let~$\OCu$ be a potential cut of~$\O$ and~$\OCy$ a potential cycle of~$\O$. Then $E(\OCu) \cap E(\OCy) = \emptyset$.
\end{lemma}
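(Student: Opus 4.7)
The plan is to argue by contradiction: assume there is some edge $e$ that lies in $E(\OCu) \cap E(\OCy)$ and derive a contradiction by combining the potential cut condition on~$\OCu$, the potential cycle condition on~$\OCy$, and the topological fact that a closed walk crosses a vertex cut an equal number of times in each direction.

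Write $\OCu = (U,U^c)$ and $\OCy = v_1, e_1^{\delta_1}, v_2, \ldots, v_k, e_k^{\delta_k}$. First I would record the implications of the two hypotheses in a usable form. The potential cycle condition gives $e_i^{\delta_i} \in \O$ for every $1 \leq i \leq k$. The potential cut condition, applied to any edge $f \in E(\OCu)$ written with $f^{\pm} = (u,w)$ for $u \in U$ and $w \in U^c$, says that the reversed orientation $f^{\mp} = (w,u)$ does \emph{not} lie in~$\O$; equivalently, the only orientation of a cut edge that can belong to~$\O$ is the one directed from $U$ to~$U^c$.

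Next I would look at how the cycle interacts with the cut. Call an index $i$ a \emph{crossing} if the endpoints $v_i$ and $v_{i+1 \bmod k}$ lie on opposite sides of the partition $\{U,U^c\}$; these are exactly the indices for which $e_i \in E(\Cu) = E(\OCu)$. Since $\OCy$ is a closed walk, starting and ending on the same side of the partition, the total number of crossings is even, and moreover the number of crossings where the cycle's forward direction goes from $U$ to $U^c$ equals the number where it goes from $U^c$ to $U$. Under our assumption that $e \in E(\OCu) \cap E(\OCy)$, there is at least one crossing, hence at least one index $j$ with $e_j^{\delta_j} = (v_j, v_{j+1 \bmod k})$ where $v_j \in U^c$ and $v_{j+1 \bmod k} \in U$. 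For this edge, $e_j^{\delta_j}$ is the ``reversed'' orientation $e_j^{\mp}$ in the notation of the potential cut condition, so $e_j^{\delta_j} \notin \O$; but the potential cycle condition forces $e_j^{\delta_j} \in \O$, a contradiction.

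The only real subtlety is the parity/balance observation about crossings, and this is immediate from the fact that walking around the closed cycle returns us to the side of the partition we started on. Once that is in place the two orientation conditions directly collide, so I expect no serious obstacle; the main care needed is to keep the reference orientation~$\Oref$ and the cycle's chosen traversal direction bookkept so that the sign $\delta_j$ in the cycle matches the sign $\mp$ in the cut condition.
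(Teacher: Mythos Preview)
Your proof is correct and rests on the same underlying observation as the paper's: a potential cycle meeting the cut would have to traverse some cut edge in the direction $U^c \to U$, but the potential cut condition forces every cut-edge orientation pointing into $U$ to be absent from~$\O$, contradicting the potential cycle condition. The paper packages this slightly differently---it first disposes of bioriented and unoriented edges by inspection, then for an oriented cut edge $e^{\pm}=(u,v)$ argues via reachability that $u$ cannot be reached from $v$ (since any path back would cross $E(U,U^c)$ against the allowed direction). Your parity-of-crossings formulation is a clean alternative that handles all fourientation states of the shared edge uniformly, without the preliminary case split; conversely, the paper's reachability phrasing feeds directly into the proof of the next proposition on decomposing oriented edges into potential cuts and cycles.
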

\begin{proof}
Certainly if $e$ is bioriented in $\O$ then it does not belong to a potential cut and if $e$ is unoriented in $\O$ then it does not belong to a potential cycle. So suppose that~$e^{\pm} = (u,v) \in \O$ but $e^{\mp} \notin \O$ and let~$\OCu = (U,U^c)$ be a potential cut of~$\O$ with~$e \in E(\OCu)$. Note that~$u$ is not reachable from~$v$ because any potential path from~$v$ to~$u$ would have to go through an edge in~$E(U,U^c)$ and these are all either unoriented or oriented from~$U$ into~$U^c$. Thus there is no potential cycle~$\OCy$ of~$\O$ with~$e \in E(\OCy)$.
\end{proof}

\begin{prop} \label{prop:ordecomp}
Let~$\O$ be a fourientation of $(G,\Oref)$ and $e$ an oriented edge of $\O$. Then $e \in E(\OCu)$ for some potential cut $\OCu$ of $\O$ or $e \in E(\OCy)$ for some potential cycle~$\OCy$ of~$\O$ but not both.
\end{prop}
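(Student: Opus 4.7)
The ``not both'' assertion is immediate from Lemma~\ref{lem:disjoint}. For the existence statement, let $e^{\pm} = (u,v) \in \O$, where $e^{\mp} \notin \O$ since $e$ is oriented. Define
\[ U := \{w \in V(G) : w \text{ is reachable from } v \text{ in } \O\}, \]
so $v \in U$ via the trivial potential path. My plan is to case-split on whether $u \in U$: the first case will yield a potential cycle through $e$, the second a potential cut.

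The key auxiliary fact I would establish first is a \emph{forward closure} property: if $w \in U$ and $f^{\sigma} = (w,w') \in \O$, then $w' \in U$. One might try to prove this by simply concatenating a potential path from $v$ to $w$ with the edge $f^{\sigma}$, but this fails in general because $f$ may already appear in that path. The standard workaround is to consider \emph{walks} from $v$ to $w'$ along edge orientations in $\O$ (possibly repeating edges/vertices). A shortest such walk cannot visit any vertex twice: if vertices at positions $i<j$ coincide, deleting positions $i+1,\ldots,j$ yields a strictly shorter walk. Hence the shortest walk is a simple potential path, proving $w' \in U$.

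With closure in hand, Case~1 ($u \in U$): let $P$ be a shortest potential path from $v$ to $u$. I claim $e \notin E(P)$. Indeed, any appearance of $e$ in $P$ must use the orientation $e^{\pm} = (u,v)$ (since $e^{\mp} \notin \O$), so $P$ would visit $u$ as an intermediate vertex before its stated endpoint $u$, contradicting minimality by the same shortcut argument used above. Hence concatenating $P$ with $e^{\pm}$ yields a potential cycle of $\O$ containing~$e$. The degenerate loop case ($u = v$) is handled by taking $P$ to be the trivial one-vertex path, producing the length-one potential cycle $\OCy = u,\, e^{\pm}$.

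Case~2 ($u \notin U$): since $v \in U$ and $u \in U^c$ are both witnesses of non-emptiness, $\OCu := (U^c, U)$ is a well-defined directed cut of $(G,\Oref)$ with $e \in E(\OCu)$. To verify it is a potential cut, suppose some $f^{\sigma} \in \O$ lay in $\mathbb{E}((U, U^c))$, i.e.\ were oriented from $U$ to $U^c$. Forward closure would then place its $U^c$-endpoint in $U$, a contradiction. So no orientation of any edge across the cut points from $U$ to $U^c$, which is precisely the condition $f^{\sigma} \in \mathbb{E}(\OCu) \Rightarrow f^{-\sigma} \notin \O$. Thus $\OCu$ is a potential cut of $\O$ containing $e$, completing the proof.

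The main technical obstacle is the forward closure property: naively one wants to append $f^{\sigma}$ to a potential path, but the distinct-edge requirement in the definition of potential path forces one to pass through shortest walks and invoke the vertex-shortcut argument. Once this lemma is isolated, the rest of the proof runs in parallel with the classical directed-graph fact.
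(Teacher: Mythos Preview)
Your proof is correct and follows exactly the paper's approach: define $U$ as the reachability set from $v$, split on whether $u \in U$, and invoke Lemma~\ref{lem:disjoint} for the ``not both'' clause. You are simply more careful than the paper's three-line sketch about forward closure and the cycle construction; in particular you orient the potential cut as $(U^c,U)$, which is the correct direction for $e^{\pm}=(u,v)$ to lie in $\mathbb{E}(\OCu)$ (the paper writes $(U,U^c)$, a minor slip).
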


\begin{proof}
Let $e^{\pm} = (u,v) \in \O$ with $e^{\mp} \notin \O$. Let $U$ be set of vertices reachable from $v$ in~$\O$. If $u \in U$ then $e$ belongs to a potential cycle. Otherwise $(U,U^c)$ is a potential cut containing $e$. By  Lemma~\ref{lem:disjoint} we know that $e^{\pm}$ cannot belong to both a potential cut and a potential cycle.
\end{proof}

In general we cannot partition all of the edges of a fourientation into potential cuts and cycles. However, the following two propositions offer two dual ways to extend the partition of the oriented edges in Proposition~\ref{prop:ordecomp} to a decomposition of the entire edge set of our graph.

\begin{prop} \label{prop:cutdecomp}
Let $\O$ be a fourientation of $(G,\Oref)$. Then there is a unique decomposition $E(G) = E_{\mathrm{cu}}(\O) \sqcup E^c_{\mathrm{cu}}(\O)$ such that
\begin{enumerate}[label=(\alph*),leftmargin=1cm]
\item for any $e \in E_{\mathrm{cu}}(\O)$ we have $e \in E(\OCu)$ for some potential cut $\OCu$ of $\O / E^c_{\mathrm{cu}}(\O)$; \label{cond:cutdecompdel}
\item there is no $e \in E^c_{\mathrm{cu}}(\O)$ with $e \in E(\OCu)$ for any potential cut $\OCu$ of $\O \setminus E_{\mathrm{cu}}(\O)$. \label{cond:cutdecompcon}
\end{enumerate}
\end{prop}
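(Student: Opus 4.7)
The plan is to construct the decomposition explicitly via the strongly connected components of an auxiliary digraph, verify (a) and (b) on this construction, and then establish uniqueness by contrasting any two candidate decompositions.

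First I would build the auxiliary digraph $D(\O)$ on vertex set $V(G)$ that contains a directed arc $u \to v$ for every $e^\pm = (u,v) \in \O$; thus bioriented edges contribute two arcs, oriented edges one, and unoriented edges none. Let $V_1, \ldots, V_k$ be the strongly connected components of $D(\O)$. I would set $E^c_{\mathrm{cu}}(\O)$ to consist of all edges of $G$ whose endpoints lie in a common SCC, and $E_{\mathrm{cu}}(\O) := E(G) \setminus E^c_{\mathrm{cu}}(\O)$. Every bioriented edge automatically lies in $E^c_{\mathrm{cu}}(\O)$, and by Proposition~\ref{prop:ordecomp} the oriented edges of $\O$ split correctly: those lying in a potential cycle go into $E^c_{\mathrm{cu}}(\O)$ and those lying in a potential cut go into $E_{\mathrm{cu}}(\O)$.

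To verify (b), I would argue by contradiction: if $e \in E^c_{\mathrm{cu}}(\O)$ with endpoints in a common SCC $V_i$ lay in a potential cut $\OCu = (U, U^c)$ of $\O \setminus E_{\mathrm{cu}}(\O)$, then both $V_i \cap U$ and $V_i \cap U^c$ would be nonempty, so strong connectivity in $D(\O)$ would yield a directed path from $V_i \cap U^c$ into $V_i \cap U$. Every arc on this path comes from an oriented or bioriented edge with both endpoints in $V_i$, so these edges are all in $E^c_{\mathrm{cu}}(\O)$ and survive in $\O \setminus E_{\mathrm{cu}}(\O)$; some arc must cross from $U^c$ into $U$, yielding an orientation $f^\pm = (x, y) \in \O$ with $x \in U^c$, $y \in U$, so that $f^\mp \in \mathbb{E}(\OCu)$ while $f^\pm \in \O$, a direct violation of the potential cut condition. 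For (a), I would observe that $\O / E^c_{\mathrm{cu}}(\O)$ has vertex set $\{V_1, \ldots, V_k\}$, edge set $E_{\mathrm{cu}}(\O)$, no bioriented edges, and oriented edges forming a DAG on the SCCs (the condensation of $D(\O)$). For $e \in E_{\mathrm{cu}}(\O)$ with endpoints in distinct SCCs $V_i \neq V_j$, I would choose a topological ordering with $V_i$ preceding $V_j$ (always possible, since if the DAG forces $V_j$ before $V_i$ we simply swap the roles of $i$ and $j$) and take $U$ to be the set of SCCs up to and including $V_i$; the topological property ensures no oriented edge runs from $U^c$ into $U$, so $(U, U^c)$ is a potential cut of $\O / E^c_{\mathrm{cu}}(\O)$ containing $e$.

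For uniqueness, suppose $(X_1, Y_1)$ and $(X_2, Y_2)$ both satisfy (a) and (b), and choose $e \in Y_1 \setminus Y_2$, so $e \in X_2$. Condition (a) for $(X_1, Y_1)$ provides a potential cut $(U, U^c)$ of $\O / X_1$ containing $e$; its vertex partition lifts to a partition of $V(G)$ across which no $X_1$-edge crosses (those edges were contracted), so the lifted partition is also a potential cut of $\O$ itself. Deleting $Y_2$ only removes some crossing edges and preserves the potential cut condition (which is a local condition on each surviving crossing edge), giving a potential cut of $\O \setminus Y_2$ that still contains $e \in X_2$, contradicting (b) for $(X_2, Y_2)$. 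The main obstacle is ensuring in (b) that the strong-connectivity path remains entirely within $E^c_{\mathrm{cu}}(\O)$ so that it survives the deletion, and in the uniqueness argument verifying that the potential cut condition transfers cleanly under both the lift from $\O / X_1$ to $\O$ and the subsequent restriction from $\O$ to $\O \setminus Y_2$; handling unoriented edges of $E_{\mathrm{cu}}(\O)$ in (a) turns out to be easier than one might expect, since such edges impose no constraint on the topological order of the SCCs.
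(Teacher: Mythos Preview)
Your argument is correct, and it takes a somewhat different route from the paper's. The paper defines $E_{\mathrm{cu}}(\O)$ directly as the set of edges lying in some potential cut of $\O$, verifies~(a) immediately (a potential cut of $\O$ survives contraction of $E^c_{\mathrm{cu}}(\O)$), and handles~(b) by a case analysis on whether $e\in E^c_{\mathrm{cu}}(\O)$ is bioriented, oriented, or unoriented, invoking Proposition~\ref{prop:ordecomp} and Lemma~\ref{lem:disjoint}. You instead package the same information structurally via the strongly connected components of the auxiliary digraph $D(\O)$ and the topological order on its condensation; this yields the same set $E_{\mathrm{cu}}(\O)$ (an edge lies in a potential cut of $\O$ iff its endpoints are in distinct SCCs) but makes~(a) and~(b) fall out of standard DAG facts rather than case analysis. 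For uniqueness, the paper compares an arbitrary decomposition $(A,B)$ against the explicit $E_{\mathrm{cu}}(\O)$ using two monotonicity observations (cuts of $\O/B$ lift to cuts of $\O$; cuts of $\O$ restrict to cuts of $\O\setminus A$), whereas you compare two arbitrary decompositions directly. Your lift-then-delete argument is really the composition of the paper's two monotonicity steps, just organized differently. One small point: in your uniqueness paragraph you are implicitly using the convention $X_i = E^c_{\mathrm{cu}}$-part and $Y_i = E_{\mathrm{cu}}$-part (opposite to the paper's alphabetical suggestion), and you should mention that the case $e\in Y_2\setminus Y_1$ is handled by symmetry; otherwise the argument is complete.
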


\begin{prop} \label{prop:cycdecomp}
Let $\O$ be a fourientation of $(G,\Oref)$. Then there is a unique decomposition $E(G) = E_{\mathrm{cy}}(\O) \sqcup E^c_{\mathrm{cy}}(\O)$ such that
\begin{enumerate}[label=(\alph*),leftmargin=1cm]
\item for any $e \in E_{\mathrm{cy}}(\O)$ we have $e \in E(\OCy)$ for some potential cycle $\OCy$ of $\O \setminus E^c_{\mathrm{cy}}(\O)$;
\item there is no $e \in E^c_{\mathrm{cy}}(\O)$ with $e \in E(\OCy)$ for any potential cycle $\OCy$ of $\O / E_{\mathrm{cy}}(\O)$.
\end{enumerate}
\end{prop}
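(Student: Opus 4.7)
The plan is to define $E_{\mathrm{cy}}(\O)$ to be the (unique) maximal subset $F \subseteq E(G)$ with the property that every edge of $F$ lies in some potential cycle of $\O|_F$, where I write $\O|_F := \O \setminus (E(G) \setminus F)$. To see this maximum exists, I would first observe that if $F_1, F_2 \subseteq E(G)$ each satisfy the property, then so does $F_1 \cup F_2$: a potential cycle of $\O|_{F_i}$ uses only edges of $F_i$, and whether a given $\OCy$ is a potential cycle depends only on the orientation status of the edges $e \in E(\OCy)$, which is unchanged when we enlarge from $F_i$ to $F_1 \cup F_2$. So the union of all subsets with the property is itself the maximum, and I take this to be $E_{\mathrm{cy}}(\O)$. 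Condition~(a) then holds tautologically.

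The heart of the argument, and the main obstacle, is condition~(b); it rests on the following lemma, which I would prove separately. Call $H \subseteq E(G)$ \emph{cycle-saturated} if every edge of $H$ lies in a potential cycle of $\O|_H$. I claim that in any cycle-saturated $H$, (i)~no edge is unoriented in $\O$, and (ii)~for any two vertices $u,v$ in the same connected component of $H$, there is a potential path from $u$ to $v$ in $\O|_H$. Part (i) is immediate since unoriented edges cannot appear in potential cycles. For (ii), view $\O|_H$ as a digraph $D$ (each bioriented edge contributing both directed versions, each oriented edge contributing one); every directed edge of $D$ lies in a directed cycle of $D$ (one such cycle comes from the potential cycle through the underlying undirected edge, possibly traversed in the other direction when the edge is bioriented), and by the standard fact that a connected digraph in which every edge lies on a directed cycle is strongly connected, the claim follows on each component.

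To establish condition~(b), I would suppose for contradiction that $e \in E^c_{\mathrm{cy}}(\O)$ lies in some potential cycle $\OCy$ of $\O / E_{\mathrm{cy}}(\O)$. The edge set $E(\OCy) \subseteq E^c_{\mathrm{cy}}(\O)$, and $\OCy$ can be read as alternately traversing edges of $E(\OCy)$ and passing through ``super-vertices'' that are connected components of $E_{\mathrm{cy}}(\O)$ in $G$. Using strong-connectivity (applied component by component to $E_{\mathrm{cy}}(\O)$, which is itself cycle-saturated by definition), I can splice a potential path across each super-vertex between the head of one edge of $\OCy$ and the tail of the next, producing for each $f \in E(\OCy)$ a potential walk in $\O|_F \setminus \{f\}$ from the head of $f$ to its tail, where $F := E_{\mathrm{cy}}(\O) \cup E(\OCy)$. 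Shortcutting to a simple potential path and closing up with $f$ yields a potential cycle of $\O|_F$ through $f$. Combined with the pre-existing potential cycles for edges of $E_{\mathrm{cy}}(\O)$ (which remain potential cycles in the larger $\O|_F$), this shows that $F$ is cycle-saturated; but $F \supsetneq E_{\mathrm{cy}}(\O)$, contradicting maximality.

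For uniqueness, suppose $E(G) = A \sqcup A^c$ is any decomposition satisfying (a) and (b). Condition~(a) says $A$ is cycle-saturated, so $A \subseteq E_{\mathrm{cy}}(\O)$ by maximality. If the inclusion were strict, pick $e \in E_{\mathrm{cy}}(\O) \setminus A \subseteq A^c$ and a potential cycle $\OCy$ of $\O|_{E_{\mathrm{cy}}(\O)}$ through $e$. Since the edges of $\OCy$ in $A \subseteq E_{\mathrm{cy}}(\O)$ get contracted while $e$ and the other edges in $A^c$ remain, contracting $A$ collapses $\OCy$ to a shorter (possibly loop) cycle in $G/A$ with the same edge orientations, which is still a potential cycle of $\O/A$ containing $e$. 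This violates condition~(b) for $A$, so $A = E_{\mathrm{cy}}(\O)$, completing the proof.
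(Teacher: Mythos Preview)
Your proof is correct, but takes a genuinely different route from the paper's. The paper simply defines $E_{\mathrm{cy}}(\O)$ to be the set of edges lying in some potential cycle of $\O$ (which coincides with your maximal cycle-saturated set, since any potential cycle of a restriction is a potential cycle of $\O$, and conversely the edge set of any potential cycle is itself cycle-saturated). For condition~(b) the paper argues by cut--cycle duality: an edge $e \in E^c_{\mathrm{cy}}(\O)$ is unoriented (so lies in no potential cycle at all), or is oriented and hence by Proposition~\ref{prop:ordecomp} lies in a potential cut $\OCu$; Lemma~\ref{lem:disjoint} forces $E(\OCu) \cap E_{\mathrm{cy}}(\O) = \emptyset$, so $\OCu$ survives contraction and blocks any potential cycle through $e$; the bioriented case is handled by a dual path argument. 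Your approach instead bypasses the cut side entirely, establishing strong connectivity of $\O$ restricted to $E_{\mathrm{cy}}(\O)$ and then lifting a hypothetical potential cycle of the contraction back to $G$. This is more self-contained --- it does not invoke Lemma~\ref{lem:disjoint} or Proposition~\ref{prop:ordecomp} --- at the cost of the extra strong-connectivity lemma.

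One small imprecision: your parenthetical justification that every arc of the digraph $D$ lies on a directed cycle (``possibly traversed in the other direction when the edge is bioriented'') does not quite work as stated, since reversing a potential cycle need not yield a potential cycle unless every edge in it is bioriented. The claim is still true, though: for a bioriented edge $e$ the two arcs $e^{+}$ and $e^{-}$ already form a directed $2$-cycle in $D$, and for an oriented edge the potential cycle through it supplies the directed cycle. With that fix your strong-connectivity argument goes through.
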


\begin{proof}[Proof of Proposition~\ref{prop:cutdecomp}]
We first show existence.  Let $E_{\mathrm{cu}}(\O)$ be the set of edges which belong to a potential cut of $\O$ and $E^c_{\mathrm{cu}}(\O)$ the complement of this set.  Clearly condition~\ref{cond:cutdecompdel} is satisfied. To show~\ref{cond:cutdecompcon}, let $e \in E^c_{\mathrm{cu}}(\O)$. If $e$ is bioriented in $\O$ then certainly it does not belong to a potential cut of $\O / E_{\mathrm{cy}}(\O)$. Next suppose $e$ is oriented in $\O$. Then since $e$ does not belong to a potential cut of $\O$, by Proposition~\ref{prop:cutdecomp} it belongs to a potential cycle $\OCy$ of $\O$. Note that $E(\OCy) \cap E_{\mathrm{cu}}(\O) = \emptyset$ by Lemma~\ref{lem:disjoint}. Thus~$\OCy$ persists as a potential cycle in $\O / E_{\mathrm{cy}}(\O)$, so again by Lemma~\ref{lem:disjoint} we have that $e$ belongs to no potential cycle. Finally, suppose that $e = \{u,v\}$ is unoriented in $\O$. Note that because $e$ does not belong to a potential cut, there is a potential path $P$ from~$u$ to~$v$ and another potential path $P'$ from~$v$ to~$u$. All of the edges in $E(P) \cup E(P')$ either belong to potential cycles or are bioriented; at any rate, none of them belong to potential cuts. Thus $P$ and $P'$ persist as potential paths in $\O / E_{\mathrm{cy}}(\O)$. Finally, note that the paths $P$ and $P'$ prevent $e$ from belonging to any potential cut of $\O / E_{\mathrm{cy}}(\O)$. So indeed regardless of how $e$ is fouriented it does not belong to any potential cut of~$\O / E_{\mathrm{cy}}(\O)$.

For proving uniqueness of this decomposition, suppose $E(G) = A\sqcup B$ and every edge of $G/B$ belongs to a potential cut of $\O / B$ while no edges of $G \setminus A$ belong to a potential cut of $\O \setminus A$. First suppose that there exits some $e \in A \setminus E_{\mathrm{cu}}(\O)$.  We know that $e$ does not belong to a potential cut in $\O$, hence it certainly does not belong to a potential cut in $\O /B$, which is a contradiction.  Therefore we may assume that $A\subseteq E_{\mathrm{cu}}(\O)$ and~$E^c_{\mathrm{cu}}(\O) \subseteq B$.  Next suppose there is some edge in $e \in B \setminus E^c_{\mathrm{cu}}(\O)$.  We know that~$e$ belongs to a potential cut in $\O$, and therefore $e$ belongs to a potential cut in $\O \setminus A$, which is a contradiction.  Thus $A  = E_{\mathrm{cu}}(\O)$ and $B = E^c_{\mathrm{cu}}(\O)$.
\end{proof}

\begin{proof}[Proof of Proposition~\ref{prop:cycdecomp}]
The proof is analogous to the proof of Proposition~\ref{prop:cutdecomp}. We define~$E_{\mathrm{cy}}(\O)$ to be the set of edges which belong to a potential cycle.
\end{proof}

\subsection{Tutte fourientation properties and the main theorem}
We now define \emph{Tutte fourientation properties}. These properties are defined axiomatically so as to satisfy exactly those conditions necessary for us to carry out a deletion-contraction argument that invokes Theorem~\ref{thm:gentutte} and proves that they are enumerated by generalized Tutte polynomial evaluations. The key observation that motivates this definition is that if some objects associated to our graph $G$ are enumerated by a generalized Tutte polynomial evaluation, then there is some way of recursively deleting and contracting edges of~$G$ so that at each step our enumeration respects the relevant weighted deletion-contraction recurrence. We may as well assume the order that we delete and contract is dictated by~$<$. Thus we force the weighted deletion-contraction recurrence to hold with respect to the maximum edge of our graph. As we will see later, we can also give explicit descriptions of these properties that focuses instead on the statuses of minimum edges in cuts and cycles.

\begin{definition}
A \emph{fourientation property} is a map
\[\{ (\G, \O) \colon \O \textrm{ is a fourientation of the ordered, oriented graph } \G \} \to \{\mathrm{good},\mathrm{bad}\}\]
that is invariant under isomorphism of the input.\footnote{Formally, we say that $((G^1,\Oref^1,<^1),\O^1)$ is isomorphic to $((G^2,\Oref^2,<^2),\O^2)$ if there exist bijections $\nu\colon V(G^1) \to V(G^2)$ and $\eta\colon E(G^1) \to E(G^2)$ such that for all $e, e' \in E(G^1)$:
\begin{itemize}
\item $\varphi(G^2)(\eta(e)) = \nu(\varphi(G^1)(e))$;
\item $\Oref^2(\eta(e)) = \nu(\Oref^1(e))$;
\item $e <^1 e'$ if and only if $\eta(e) <^2 \eta(e')$;
\item $e^{\pm} \in \O^1$ if and only if $\eta(e)^{\pm} \in \O^2$.
\end{itemize}} When $(\G, \O)$ is good we say that $\O$ is a good fourientation of $\G$ with respect to the property, and similarly when $( \G, \O)$ is bad. In general we identify a fourientation property with its set of good fourientations. We call a fourientation property a \emph{cut (cycle) property} if $\O$ is a good fourientation of~$\G$ if and only if $\O_{\C}$ is a good fourientation of $\G_{\C}$ for each simple cut (cycle) $\C$ of $\G$. We call a cut (cycle) property a \emph{Tutte cut (cycle) property} if for all ordered, oriented graphs $\G$ we have
\begin{enumerate}[label=\textbf{T\arabic*},ref=\textbf{T\arabic*}]
\item $\O$ is a bad fourientation of $\G$ only if $\O$ has a potential cut (cycle); \label{cond:tuttepot}
\item if the maximum edge $e$ of $\G$ is neither an isthmus nor a loop, then \label{cond:tuttemax}
\begin{enumerate}[label=\textbf{(\alph*)}, ref=\textbf{\theenumi(\alph*)}]
\item if $\O$ is a good fourientation of $\G \setminus e$ ($\G / e$) then $\O \cup \{e^{+}\}$ and $\O \cup \{e^{-}\}$ are both good fourientations of $\G$; \label{cond:tuttegoodor}
\item if $\O$ is a bad fourientation of $\G \setminus e$ ($\G / e$) but $\O \cup \{e^{+},e^{-}\}$ ($\O$) is a good fourientation of $\G$, then exactly one of $\O \cup \{e^{+}\}$ or $\O \cup \{e^{-}\}$ is a good fourientation of $\G$; \label{cond:tuttebador}
\item $\O$ is a good fourientation of $\G \setminus e$ ($\G / e$) if and only if~$\O$ ($\O \cup \{e^{+},e^{-}\}$) is a good fourientation of $\G$. \label{cond:tutteun}
\end{enumerate}
\end{enumerate}
A \emph{Tutte cut-cycle property} is the intersection of a Tutte cut property with a Tutte cycle property. 
\end{definition}

The following lemma says that condition~\ref{cond:tutteun} applies to Tutte cut-cycle properties as well as individual Tutte cut or cycle properties.

\begin{lemma}\label{lem:goodfourup}
Let $e$ be the maximum edge of $\G$, and assume $e$ is not an isthmus or loop. Then $\O$ is a good fourientation of $\G\setminus e$ with respect to some Tutte cut-cycle property if and only if $\O$ is good for $\G$. Similarly, $\O / e$ is good for~$\G/e$ if and only if~$\O \cup \{e^{+},e^{-}\}$ is good for $\G$.
\end{lemma}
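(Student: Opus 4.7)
Fix a Tutte cut-cycle property $P = P_{\mathrm{cu}} \cap P_{\mathrm{cy}}$. The plan is to decompose each of the two equivalences asserted by the lemma into a ``cut piece'' and a ``cycle piece.'' Axiom \ref{cond:tutteun} applied to $P_{\mathrm{cu}}$ states directly that $\O$ is good for $P_{\mathrm{cu}}$ in $\G\setminus e$ if and only if $\O$ is good for $P_{\mathrm{cu}}$ in $\G$, providing the cut piece of the first claim. Dually, \ref{cond:tutteun} applied to $P_{\mathrm{cy}}$ states that $\O/e$ is good for $P_{\mathrm{cy}}$ in $\G/e$ if and only if $\O\cup\{e^{+},e^{-}\}$ is good for $P_{\mathrm{cy}}$ in $\G$, providing the cycle piece of the second claim. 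What remains are the two ``crossed'' assertions: the cycle piece of the first claim, and the cut piece of the second. I would prove both by a local argument on simple cycles or cuts combined with axiom \ref{cond:tuttepot}.

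For the cycle piece of the first claim, assume $e^{\pm}\notin\O$. By the definition of cycle property, $\O$ is good for $P_{\mathrm{cy}}$ in $\G$ iff $\O_{\C}$ is good for $\G_{\C}$ for every simple cycle $\C$ of $\G$. The simple cycles of $\G$ split into those not containing $e$, which are precisely the simple cycles of $\G\setminus e$ with matching restrictions $\O_{\C}$ and $\G_{\C}$, and those containing $e$. For a simple cycle $\C$ of the latter type, $\G_{\C}$ is a cycle graph and $e$ is unoriented in $\O_{\C}$; any potential cycle of $\O_{\C}$ would require every edge to be oriented consistently with the cycle direction or bioriented, so no such potential cycle exists, and \ref{cond:tuttepot} forces $\O_{\C}$ to be good. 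Hence the goodness of $\O$ in $\G$ reduces to its goodness in $\G\setminus e$.

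The cut piece of the second claim is perfectly dual. Assuming $\{e^{+},e^{-}\}\subseteq\O$, the simple cuts of $\G/e$ correspond bijectively to the simple cuts of $\G$ not containing $e$, with matching contractions $\G_{\Cu}=(\G/e)_{\Cu}$; this correspondence is obtained by identifying the two endpoints of $e$, which does not affect connectedness of either side. For a simple cut $\Cu$ of $\G$ containing $e$, the graph $\G_{\Cu}$ is a banana graph in which $e$ is bioriented in $\O_{\Cu}$, so no potential cut exists, and \ref{cond:tuttepot} forces $\O_{\Cu}$ to be good. Combining the cut and cycle pieces for each of the two equivalences then yields the lemma.

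I do not anticipate a genuine obstacle. The only real checkpoint is the standard correspondence between simple cuts of $\G/e$ and simple cuts of $\G$ not containing $e$, and the analogous observation that simple cycles of $\G\setminus e$ are exactly the simple cycles of $\G$ that avoid $e$. The substantive mechanism in both crossed arguments is the same: \ref{cond:tuttepot} guarantees that any local failure of goodness on a simple cut or cycle must be witnessed by an actual potential cut or cycle, but a cut containing a bioriented edge and a cycle containing an unoriented edge are both impossible, so the extra simple cuts/cycles running through $e$ contribute nothing.
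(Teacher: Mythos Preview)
Your proof is correct and follows essentially the same approach as the paper: both use condition~\ref{cond:tutteun} directly for the ``matching'' pieces (cut property with deletion, cycle property with contraction) and handle the ``crossed'' pieces by observing that any bad simple cycle (resp.\ cut) through $e$ would need a potential cycle (resp.\ cut), which is impossible when $e$ is unoriented (resp.\ bioriented), so \ref{cond:tuttepot} makes those restrictions automatically good. The paper's write-up is just slightly more terse, treating the contraction case by saying ``analogous'' where you spell out the simple-cut correspondence explicitly.
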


\begin{proof}
Suppose $\O$ is bad for $\G \setminus e$ with respect to the cycle property: then there is a simple cycle $\Cy$ of $\G \setminus e$ so that $\O_{\Cy}$ is bad for the cycle property; this cycle $\Cy$ persists in $\G$ and in fact $\G_{\Cy}$ is isomorphic to $(\G\setminus e)_{\Cy}$; therefore $\O$ is also bad for~$\G$. Similarly, if $\O$ is bad for $\G$ with respect to the cycle property and $e^{+},e^{-} \notin \O$, then there is a simple cycle $\Cy$ of $\G$ so that $\O_{\Cy}$ is bad for the cycle property. If we had~$e \in E(\Cy)$, then $\O_{\Cy}$ could not have a potential cycle and so could not be bad by condition~\ref{cond:tuttepot}. So indeed $e \notin E(\Cy)$, and thus~$\Cy$ remains a simple cycle of~$\G \setminus e$, and thus $\O$ remains bad for $\G \setminus e$. On the other hand, that $\O$ is bad for~$\G \setminus e$ with respect to the cut property if and only if $\O$ is bad for~$\G$ is exactly condition~\ref{cond:tutteun}. The proof for $\G/e$ is analogous.
\end{proof}

\begin{lemma}\label{lem:goodfourdown}
Let $e$ be the maximum edge of $\G$ and let $\O$ be a good fourientation of~$\G$ with respect to some Tutte cut-cycle property. Then either $\O \setminus e$ is good for~$\G \setminus e$ or~$\O / e$ is good for~$\G / e$.
\end{lemma}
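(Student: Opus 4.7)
The plan is to case-split on the status of $e$ in $\O$ (unoriented, bioriented, or oriented) and to reduce everything to showing that, in the remaining hard case, $\O_{00} := \O \setminus \{e^{+}, e^{-}\}$ or $\O_{+-} := \O \cup \{e^{+}, e^{-}\}$ is good for the cut-cycle property on $\G$. Axiom \ref{cond:tutteun} tells us that $\O \setminus e$ is good for $\G \setminus e$ iff $\O_{00}$ is good for $\G$, and $\O / e$ is good for $\G / e$ iff $\O_{+-}$ is good for $\G$. If $e$ is unoriented in $\O$ then $\O = \O_{00}$ and we are done with the first disjunct; if $e$ is bioriented in $\O$ then $\O = \O_{+-}$ and we are done with the second. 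So assume $e$ is oriented; without loss of generality $e^{+}\in\O$ and $e^{-}\notin\O$, and the plan is to show $\O_{+-}$ is good for $\G$.

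The cut half is direct. For simple cuts $\Cu$ of $G$ not containing $e$, $(\O_{+-})_{\Cu}=\O_{\Cu}$ is good by hypothesis. For $\Cu$ containing $e$ we have $|E(\Cu)|\geq 2$ because $e$ is not an isthmus, so $\G_{\Cu}$ is a banana graph $B_{m}$ whose unique simple cut uses all $m$ edges and therefore contains $e$. Since $e$ is bioriented in $(\O_{+-})_{\Cu}$, $e$ cannot appear in any potential cut, so $(\O_{+-})_{\Cu}$ has no potential cut at all and axiom \ref{cond:tuttepot} makes it good on $B_{m}$.

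The cycle half is subtler. Cycles $\Cy$ not containing $e$ are immediate, but for $\Cy$ containing $e$ (so $\G_{\Cy}=C_{n}$ with $n\geq 2$, since $e$ is not a loop) $(\O_{+-})_{\Cy}$ has $e$ bioriented, and new potential cycles using the reversed orientation of $e$ may appear. The trick is to establish a biorientation/un-orientation equivalence at the maximum edge: for any fourientation $\O'$ of $\G$ with $e^{\pm}\notin\O'$, $\O'$ is good for $\G$ iff $\O'\cup\{e^{+},e^{-}\}$ is good for $\G$. This follows by combining \ref{cond:tuttegoodor} (whose deletion form says $\O'$ good forces both single-oriented extensions to be good) with \ref{cond:tuttebador} (whose contraction form says $\O'$ good together with $\O'\cup\{e^{+},e^{-}\}$ bad forces exactly one single-oriented extension to be good): those two are incompatible, so the mixed case is impossible, and the symmetric mixed case is ruled out dually. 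Applying this equivalence to the cycle property, $\O_{+-}$ good for cycle is equivalent to $\O_{00}$ good for cycle; but $\O_{00}$ is manifestly good for cycle because every $\Cy$ containing $e$ has $e$ unoriented in $(\O_{00})_{\Cy}$, which blocks both cyclic orientations of $C_{n}$ so that no potential cycle exists, and axiom \ref{cond:tuttepot} finishes the case.

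I expect the main obstacle to be the careful derivation of the biorientation/un-orientation equivalence from the axioms at the maximum edge. Once in hand, the rest is the two ``no potential cut/cycle'' verifications on $B_{m}$ and $C_{n}$ together with the case split and Lemma~\ref{lem:goodfourup}. As a by-product this approach yields the stronger statement that both $\O\setminus e$ is good for $\G\setminus e$ and $\O/e$ is good for $\G/e$, not merely one of them.
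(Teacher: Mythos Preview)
Your argument contains a genuine error: the ``biorientation/un-orientation equivalence at the maximum edge'' that you claim to derive from the axioms is simply false for cycle properties, and consequently your stronger conclusion that \emph{both} $\O\setminus e$ and $\O/e$ are good is false as well.

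Here is a concrete counterexample. Let $G$ be a triangle with edges $e_1<e_2<e_3=e$, reference orientation $e_1^{+}=(u,v)$, $e_2^{+}=(v,w)$, $e_3^{+}=(u,w)$. Take the cut property to be ``cut general'' and the cycle property to be ``acyclic'' (i.e.\ $Y=\{\{-\},\{+\},\{-,+\}\}$). Set $\O=\{e_1^{+},e_2^{+},e_3^{+}\}$. Then $\O$ is good: there is no potential cycle since the triangle cannot be traversed consistently. Now $\O_{00}=\{e_1^{+},e_2^{+}\}$ is good for $\G$ (the unoriented $e$ blocks both cyclic directions), but $\O_{+-}=\{e_1^{+},e_2^{+},e_3^{+},e_3^{-}\}$ is \emph{bad} for $\G$: the cycle $u,e_1^{+},v,e_2^{+},w,e_3^{-}$ is a potential cycle, hence bad for the acyclic property. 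Equivalently, $\O/e$ has a directed $2$-cycle in $\G/e$ and is bad, while $\O\setminus e$ is good. So only one disjunct holds.

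The flaw in your derivation is that axiom~\ref{cond:tuttegoodor} for a \emph{cycle} property has as hypothesis ``$\O'$ good for $\G/e$,'' not ``$\O'$ good for $\G$.'' You invoke the ``deletion form'' of~\ref{cond:tuttegoodor}, but that is the cut version; it says nothing about cycle-goodness of the single-oriented extensions. Likewise~\ref{cond:tuttebador} for cycle has hypotheses phrased in terms of $\G/e$. There is no axiom that lets you pass from ``$\O_{00}$ good for $\G$ (cycle)'' to ``both single-oriented extensions good for $\G$ (cycle),'' and indeed this implication fails in the example above (exactly one of $\O_{00}\cup\{e_3^{+}\}$ and $\O_{00}\cup\{e_3^{-}\}$ is acyclic).

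The paper's proof proceeds quite differently: it argues by contradiction. Assuming both $\O\setminus e$ and $\O/e$ are bad, one isolates that the badness must come from the cut property on the deletion side and the cycle property on the contraction side, and then uses~\ref{cond:tuttebador} on each side to see that the \emph{same} single-oriented extension $\O':=(\O\setminus e)\cup\{e^{-\delta}\}$ is bad for both cut and cycle on $\G$. By~\ref{cond:tuttepot} this forces $\O'$ to have a potential cut and a potential cycle both containing $e$, contradicting Lemma~\ref{lem:disjoint}. You should rework your argument along these lines; the key missing ingredient is the disjointness of potential cuts and potential cycles.
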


\begin{proof} If $e$ is an isthmus or loop then $\O \setminus e$ and $\O/e$ are both clearly good. So assume that~$e$ is neither an isthmus nor a loop. By Lemma~\ref{lem:goodfourup}, if $e$ is unoriented in $\O$ then $\O \setminus e$ is good and if~$e$ is bioriented in $\O$ then~$\O/e$ is good. So assume further that~$e^{\delta} \in \O$ and~$e^{-\delta} \notin \O$ for some $\delta \in \{+,-\}$. Now suppose to the contrary that both~$\O \setminus e$ and~$\O/e$ are bad. Note as in the proof of Lemma~\ref{lem:goodfourup} that~$\O \setminus e$ is certainly good for~$\G \setminus e$ with respect to the cycle property; and~$\O/e$ is certainly good for $\G/e$ with respect to the cut property. So~$\O\setminus e$ is bad with respect to the cut property and~$\O/e$ is bad with respect to the cycle property. Then by condition~\ref{cond:tuttebador} we can extend~$\O \setminus e$ to a bad fourientation of $\G$ with respect to the cut property by orienting $e$ in a certain direction, and it must be that $\O' := (\O \setminus e) \cup \{e^{-\delta}\}$ is this extension as we know $\O$ is good. Analogously we know that $\O' = (\O / e) \cup \{e^{-\delta}\}$ is bad for $\G$ with respect to the cycle property. But that means that there must be some cut $\Cu$ of $\G$ so that~$\O'_{\Cu}$ is bad with respect to the cut property. It must be that~$e \in E(\Cu)$ or else $\O_{\Cu}$ would also be bad. And it must be that there is a way of directing $\Cu$ to become a potential cut $\OCu$ of~$\O'$ or else the contraction $\O'_{\Cu}$ could not be bad by condition~\ref{cond:tuttepot}. Analogously we can find a potential cycle $\OCy$ of $\O'$ with $e \in E(\OCy)$. However, this contradicts Lemma~\ref{lem:disjoint} which says the edge sets of potential cuts and potential cycles are disjoint.
\end{proof}

The deletion-contraction argument we use to count the good fourientations with respect to some Tutte cut-cycle property will actually work for fourientations that are weighted by the number of oriented, bioriented, and unoriented edges they contain. Thus we define the following chromatic extension of fourientations.

\begin{definition}
For $k, l, m \in \mathbb{Z}_{\geq 0}$, a \emph{(k,l,m)-fourientation} is obtained from a fourientation by assigning each of the oriented edges one of $k$ colors, each of the unoriented edges one of $l$ colors, and each of the bioriented edges one of $m$ colors.  If a variable is set equal to zero, we naturally require that the associated fourientations have no edges with the corresponding fourientation type. We say a~$(k,l,m)$-fourientation is good with respect to a fourientation property if the underlying fourientation is good.
\end{definition}

\begin{definition}
We define the \emph{bad isthmus set} $X \subseteq \{ \emptyset, \{-\}, \{+\}\}$ of a Tutte cut property in the following way: let $(B_1,\Oref,<)$ be the unique (up to isomorphism) ordered, oriented graph whose underlying graph is the banana graph~$B_1$ and let~$e$ denote the unique edge of $B_1$; define $X$ to be the set of all~$S$ such that $\{e^{\varepsilon}\colon \varepsilon \in S\}$ is a bad fourientation for $(B_1,\Oref,<)$ with respect to the cut property. (Observe that by~\ref{cond:tuttepot} it is impossible for $\{e^{+},e^{-}\}$ to be bad with respect to a Tutte cut property.) For a Tutte cycle property we define its \emph{bad loop set} $Y  \subseteq \{ \{-\}, \{+\},\{-,+\}\}$ analogously in terms of the fourientations that are bad for~$C_1$. The bad isthmus (loop) set of a Tutte cut-cycle property is the bad isthmus (loop) set of its underlying cut (cycle) property.
\end{definition}

\begin{thm}\label{thm:main}
For a fourientation $\O$, let $\O^{o}$ denote the set of edges that are oriented in $\O$, $\O^{u}$ the set of unoriented edges, and $\O^{b}$ the set of bioriented edges. Fix a Tutte cut-cycle property with bad isthmus set $X$ and bad loop set~$Y$. Then for $\G$, an ordered, oriented graph whose underlying graph $G$ has $n$ vertices and cyclomatic number~$g$, we have
\begin{equation} \label{eqn:tutteeval}
\sum_{\O} k^{|\O^{o}|} l^{|\O^{u}|} m^{|\O^{b}|} = (k+m)^{n-1}(k+l)^{g} T_G\left(\frac{x_0}{k+m},\frac{y_0}{k+l}\right)
\end{equation}
where the sum is over all good fourientations $\O$ of $\G$, and where
\begin{align*}
x_0 &:= \delta(\{+\} \notin X)k + \delta(\{-\} \notin X)k + \delta(\emptyset \notin X)l + m  \\
y_0 &:= \delta(\{+\} \notin Y)k + \delta(\{-\} \notin Y)k  + l + \delta(\{+,-\} \notin Y)m
\end{align*}
and $\delta(P)$ is $1$ if $P$ is true and $0$ if $P$ is false. In other words, the number of good $(k,l,m)$-fourientations of $\G$ is given by~(\ref{eqn:tutteeval}).
\end{thm}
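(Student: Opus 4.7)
The plan is to apply the recipe theorem (Theorem~\ref{thm:gentutte}) to $f(\G) := \sum_{\O \text{ good}} k^{|\O^{o}|} l^{|\O^{u}|} m^{|\O^{b}|}$ with parameters $a = k+m$ and $b = k+l$, and with $x_0, y_0$ as in the statement. The base case $f(\G) = 1$ for edgeless $\G$ is immediate, since the unique empty fourientation is trivially good. The work is in verifying the weighted deletion-contraction on the maximum edge $e$.

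When $e$ is neither isthmus nor loop, I would partition the good fourientations of $\G$ by $\O' := \O \setminus \{e^{+}, e^{-}\}$. Lemma~\ref{lem:goodfourup} identifies the unoriented and bioriented extensions: they are good for $\G$ iff $\O'$ is good for $\G\setminus e$ (indicator $g_1$) and $\G/e$ (indicator $g_2$), contributing $l \cdot g_1 + m \cdot g_2$ per $\O'$. Matching against the desired $(k+l) f(\G\setminus e) + (k+m) f(\G/e) = l\cdot g_1 + m\cdot g_2 + k(g_1 + g_2)$ (per $\O'$), the problem reduces to proving that the number $N(\O')$ of $\delta \in \{+,-\}$ with $\O' \cup \{e^{\delta}\}$ good for $\G$ equals $g_1 + g_2$ for every $\O'$.

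To prove this, I would separate the cut and cycle indicators into $\gamma_i, \eta_i$ (so that $g_i = \gamma_i \eta_i$) and establish two key monotonicity relations: $\gamma_1 \le \gamma_2$ and $\eta_2 \le \eta_1$. For $\gamma_1 \le \gamma_2$, axiom \ref{cond:tuttegoodor} for cut makes $\O' \cup \{e^{+}\}$ and $\O' \cup \{e^{-}\}$ cut-good in $\G$; then a direct check on each simple cut $\Cu$ of $\G$ shows that $\O' \cup \{e^{+}, e^{-}\}$ is also cut-good in $\G$, since when $e \in E(\Cu)$ the bioriented $e$ blocks any potential cut in the banana $\G_{\Cu}$ (via \ref{cond:tuttepot}) and when $e \notin E(\Cu)$ the restriction coincides with that of $\O' \cup \{e^{+}\}$, already good. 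The bijection between simple cuts of $\G$ with $e \notin E(\Cu)$ and simple cuts of $\G/e$ then converts this into $\gamma_2 = 1$. The relation $\eta_2 \le \eta_1$ is obtained symmetrically, using \ref{cond:tuttegoodor} for cycle, \ref{cond:tuttepot} for cycle, and Lemma~\ref{lem:goodfourup}.

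With the monotonicity in hand, a finite case check on $(\gamma_1, \gamma_2, \eta_1, \eta_2)$ using \ref{cond:tuttegoodor} and \ref{cond:tuttebador} for cut and for cycle separately verifies $N = g_1 + g_2$ in every case. The one subtle sub-case is $(\gamma_1, \gamma_2, \eta_1, \eta_2) = (0, 1, 1, 0)$, where \ref{cond:tuttebador} for cut and for cycle each single out one good orientation; the hard part is to show these orientations differ so that $N = 0$. Lemma~\ref{lem:goodfourdown} is precisely what does this: if they coincided, the common $\O' \cup \{e^{\delta}\}$ would be cut-cycle good, forcing $g_1 \vee g_2 = 1$ and contradicting $g_1 = g_2 = 0$. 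For the isthmus case, one observes that $\{e\}$ is the only simple cut of $\G$ containing $e$ and that $e$ lies in no simple cycle, so cut-cycle goodness of $\O$ factorizes as $(s_e \notin X)$ and ``$\O'$ good for $\G/e$''; multiplicativity of $f$ over one-vertex joins then identifies $f(\G/e)$ with $f(\G\setminus e)$ to give $f(\G) = x_0 \cdot f(\G\setminus e)$. The loop case is dual, giving $f(\G) = y_0 \cdot f(\G/e)$, and the recipe theorem converts these recurrences into the claimed Tutte polynomial evaluation.
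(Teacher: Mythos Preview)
Your proposal is correct and follows essentially the same strategy as the paper: verify the weighted deletion--contraction recurrence on the maximum edge using the axioms \ref{cond:tuttepot}--\ref{cond:tutteun} together with Lemmas~\ref{lem:goodfourup} and~\ref{lem:goodfourdown}, and then invoke Theorem~\ref{thm:gentutte}. Your organization differs only cosmetically---you parameterize by all restrictions $\O'$ and prove $N(\O') = g_1 + g_2$ via a case analysis on $(\gamma_1,\gamma_2,\eta_1,\eta_2)$, whereas the paper parameterizes by $\O \in T(\G\setminus e)$ (resp.\ $T(\G/e)$) and shows each fiber $De(\O)$ has either $2$ or $4$ elements---but the same monotonicity $\gamma_1 \le \gamma_2$, $\eta_2 \le \eta_1$ and the same appeal to Lemma~\ref{lem:goodfourdown} in the delicate $(0,1,1,0)$ case drive both arguments.
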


\begin{proof}
Fix a Tutte cut-cycle property. For an ordered, oriented graph $\G$, define 
\[ T(\G) := \{\O\colon \textrm{$\O$ is a good fourientation of $\G$ with respect to our property} \}.\]
For a set $O$ of fourientations, define $\widehat{O} := \sum_{\O \in O} k^{|\O^{o}|} l^{|\O^{u}|} m^{|\O^{b}|}$. Let $f(G) := \widehat{T(\G)}$ where $\G$ has $G$ as its underlying graph. The proof will also establish inductively that~$f(G)$ is well-defined, that is, that this generating function does not depend on what reference orientation and edge order we give $G$.

Let $G$ be a graph and set $\G := (G,\Oref,<)$ for arbitrary $\Oref$ and $<$. If $G$ has no edges then certainly $f(G) = 1$. So assume $G$ has an edge and let $e$ be the maximum edge of $\G$. If $e$ is an isthmus, then the simple cycles of $G$ are the same as those of $G\setminus e$ and there is one additional simple cut: namely, the cut that has $e$ as its only edge. So any good fourientation $\O$ of $\G\setminus e$ extends to a good fourientation $\O \cup \{e^{\varepsilon}\colon \epsilon \in S\}$ of~$\G$ as long as $S \notin X$, and we get all good fourientations of $\G$ this way. Therefore if $e$ is an isthmus then $f(G) = x_0 f(G\setminus e)$. Similarly if $e$ is a loop then $f(G) = y_0f(G/e)$. So from now on assume that $e$ is neither an isthmus nor loop. 

We want to show $f(G) = (k+l)f(G \setminus e) +(k+m)f(G/e)$ from which the result follows via Theorem~\ref{thm:gentutte}. For $\O \in T(\G\setminus e)$, set $De(\O) := \{\O' \in T(\G)\colon \O' \setminus e = \O\} $. Similarly, for~$\O \in T(\G/e)$, set $C\O^{o} := \{\O' \in T(\G)\colon \O'/e = \O\}$. Set~$De := \bigcup_{\O \in T(\G\setminus e)} De(\O)$ and~$Co := \bigcup_{\O \in T(\G/e)}C\O^{o}$. We claim that for $\O \in T(\G\setminus e)$,
\begin{enumerate}[label=(\roman*)]
\item either $De(\O) \subseteq De \setminus Co$ and $De(\O) = \{\O, \O \cup \{e^{\varepsilon}\}\}$ for some $\varepsilon \in \{-,+\}$;
\item or $De(\O) \subseteq De \cap Co$ and $De(\O) = \{\O, \O \cup \{e^\mathrm{+}\}, \O \cup \{e^\mathrm{-}\}, \O \cup \{e^{+},e^{-}\}\}$.
\end{enumerate}
From this claim it follows that $f(G\setminus e) = \frac{\widehat{De \setminus Co}}{k+l}+ \frac{\widehat{De \cap Co}}{2k+l+m}$. So let us prove the claim. First of all, by Lemma~\ref{lem:goodfourup} we know that $\O \in T(\G)$ which agrees with our claim. Assume first~$ \O \cup \{e^{+},e^{-}\} \in De(\O)$; we need to show~$\O \in T(\G/e)$ and~$\O \cup \{e^{\varepsilon}\} \in T(\G)$ for any~$\varepsilon \in \{\mathrm{-},\mathrm{+}\}$. For any simple cut $\Cu$ of~$\G/e$ we have a corresponding simple cut~$\Cu'$ of $\G\setminus e$ so that $(\G/e)_{\Cu}$ and $(\G\setminus e)_{\Cu'}$ are isomorphic. Thus $\O$ is good for~$\G/e$ with respect just to the cut property. Because~$ \O \cup \{e^{+},e^{-}\}$ is good for $\G$ with respect to the cycle property, by condition~\ref{cond:tutteun} we get that~$(\O \cup \{e^{+},e^{-}\}) / e=\O$ is also good for $\G/e$ with respect to the cycle property and therefore~$\O \in T(\G/e)$. Using condition~\ref{cond:tuttegoodor} with respect to the cut and cycle properties gives~$\O \cup \{e^{+}\} \in T(\G)$ and $\O \cup \{e^{-}\} \in T(\G)$ so we are done. Next assume~$\O \cup \{e^{+},e^{-}\} \notin De(\O)$; we need to show~$\O \notin T(\G/e)$ and~$\O \cup \{e^{\varepsilon}\} \in T(\G)$ for a unique $\varepsilon \in \{\mathrm{-},\mathrm{+}\}$. Lemma~\ref{lem:goodfourup} gives~$\O \notin T(\G/e)$. We know that~$\O \cup \{e^{+}\}$ and $\O \cup \{e^{-}\}$ are good for $\G$ with respect to just the cut property by condition~\ref{cond:tuttegoodor}. As before, we know~$\O$ is good for $\G/e$ with respect to the cut property. So it must be that~$\O$ is bad for $\G/e$ with respect to the cycle property. Condition~\ref{cond:tutteun} says that since~$\O$ is not good for $\G/e$ with respect to the cycle property, exactly one of $\O \cup \{e^{+}\}$ or~$\O \cup \{e^{-}\}$ is good for $\G$ with respect to the cycle property. So the claim is proved.

We can similarly show that $f(G / e) = \frac{\widehat{Co \setminus De}}{k+m}+ \frac{\widehat{De\cap Co}}{2k+l+m}$. Then by Lemma~\ref{lem:goodfourdown},
\begin{align*}
f(G) &= \widehat{De \setminus Co}+ \widehat{Co \setminus De}+ \widehat{De \cap Co} \\
&= (k+l)f(G\setminus e) + (k+m)f(G/e),
\end{align*}
thus completing the proof.
\end{proof}

\begin{remark}
Theorem~\ref{thm:main} says that there are the same number of good fourientations of $(G,\Oref^1,<^1)$ and $(G,\Oref^2,<^2)$ with respect to some fixed Tutte cut-cycle property. It would be interesting to find a simple bijection between these sets of fourientations; that is, it would be interesting to understand how the set of good fourientations changes as we modify the reference orientation and total order.
\end{remark}

\subsection{Min-edge classes}
A priori it is not clear that there are any nontrivial Tutte cut properties. We will now show that there exist Tutte cut properties for all bad isthmus sets $X \subseteq \{ \emptyset, \{+\}, \{-\}\}$. Moreover, we will show that the Tutte cut properties are almost classified by $X$ (specifically, for each choice of $X$ there are one, two, or three Tutte cut properties with bad isthmus set~$X$). Of course the situation is analogous for Tutte cycle properties.

\begin{definition}
A \emph{min-edge cut (cycle) property} is defined by~$X \subseteq \{\emptyset,\{-\},\{+\}\}$  ($\{ \{-\},\{+\}, \{-,+\}\}$) and~$\delta \in \{+,-\}$.  A potential cut $\OC$ (cycle $\OC$) of a fourientation~$\O$ of an ordered, oriented graph $\G$ is \emph{bad} with respect to the min-edge cut (cycle) property defined by~$(X,\delta)$ if it satisfies both of the following conditions:
\begin{enumerate}[label=(\roman*),ref=(\roman*)]
\item $\{\varepsilon\colon e_{\mathrm{min}}^{\varepsilon} \in \O\} = S$ for some $S \in X$, where $e_{\mathrm{min}}$ is the minimum edge in $E(\OC)$ ;
\item if $e_{\mathrm{min}}$ is unoriented (bioriented) in $\O$ then $e_{\mathrm{min}}^{\delta} \in \mathbb{E}(\OC)$. \label{cond:minhalf}
\end{enumerate}
If the potential cut (cycle) is not bad, then it is \emph{good}. A fourientation $\O$  of $\G$ is good with respect to the min-edge cut (cycle) property defined by~$(X,\delta)$ if and only if all of its potential cuts (cycles) are good.
\end{definition}

A heuristic explanation for the emphasis on the statuses of minimum edges in potential cuts is that in checking whether a cut is good or bad with respect to a Tutte cut property we repeatedly peel off maximal edges until we reduce to the base case where the cut's minimum edge becomes an isthmus. The point of $\delta$ is that in order to satisfy~\ref{cond:tuttebador} we want one of the ways of extending a bad cut by an oriented edge to be bad and the other way to be good: if the bad cut consists only of unoriented edges then both ways of extending it by an oriented edge still yield potential cuts and so could potentially be bad by~\ref{cond:tuttepot}; in this case $\delta$ tells us which of these in fact is bad.

There are $12$ essentially different min-edge cut properties because the choice of $\delta$ is relevant only if~$\emptyset \in X$. Let $-X$ denote the set we get by swapping $+ \leftrightarrow -$ and define~$-\O$ similarly. Clearly $\O$ is a good fourientation of $\G$ with respect to the min-edge cut property defined by $(X,\delta)$ if and only if $-\O$ is good with respect to~$(-X,-\delta)$. So we may as well fix $\delta = -$ and thus reduce the list to the following eight properties, which we call the \emph{min-edge cut classes} of fourientations:
\begin{enumerate}
\setlength{\itemindent}{-1em}
\item {\bf Cut general} ($X = \emptyset$):  There are no restrictions on potential cuts.
\item {\bf Cut directed} ($X = \{\emptyset\}$):   For each potential cut, if the minimum edge of the cut is unoriented then cut contains some oriented edge directed in agreement with the reference orientation of this minimum edge.
\item {\bf Cut negative} ($X = \{ \{-\} \} $):  The minimum edge in each potential cut is unoriented or is oriented in agreement with its reference orientation.
\item {\bf Cut positive} ($X = \{\{+\}\} $):  The minimum edge in each potential cut is unoriented or is oriented in disagreement with its reference orientation.
\item {\bf Cut connected} ($X = \{\emptyset,\{-\}\}$):  Each potential cut contains an oriented edge directed in agreement with the reference orientation of the minimum edge in the cut.
\item {\bf Cut co-connected} ($X = \{\emptyset,\{+\}\}$):  For each potential cut, either the minimum edge of the cut is unoriented and the cut contains an oriented edge directed in agreement with the reference orientation of this minimum edge, or the minimum edge is oriented in disagreement its the reference orientation.
\item {\bf Cut neutral} ($X = \{\{-\},\{+\}\}$):  The minimum edge in each potential cut is unoriented.
\item {\bf Cut internal} ($X = \{\emptyset,\{-\},\{+\}\}$):  The minimum edge in each potential cut is unoriented and the cut contains an oriented edge directed in agreement with the reference orientation of this minimum edge.
\end{enumerate}
We omit the description of the \emph{min-edge cycle classes} which are exactly analogous (with ``cycle external'' being dual to cut internal). Observe that the poset of the above eight classes ordered by containment is of course isomorphic to the Boolean lattice on three elements. A \emph{min-edge class} of fourientations is the intersection of a min-edge cut class and min-edge cycle class. Note that an arbitrary intersection of min-edge classes remains a min-edge class.

\begin{thm} \label{thm:minaretutte}
Any  min-edge cut (cycle) property is a Tutte cut (cycle) property.
\end{thm}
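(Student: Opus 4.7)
I treat the cut case; the cycle case follows by an analogous argument, with potential cuts, unoriented edges, deletion, and isthmuses exchanged for potential cycles, bioriented edges, contraction, and loops. Fix a min-edge cut property with parameters $(X, \delta)$, and throughout write $e$ for the maximum edge of $\G$.

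First I verify that this is a cut property in the sense of the definition: $\O$ is good for $\G$ if and only if $\O_{\C}$ is good for $\G_{\C}$ for every simple cut $\C$. The $(\Rightarrow)$ direction is immediate, since a bad potential cut of $\O_{\C}$ in the banana $\G_{\C}$ lifts to a bad potential cut of $\O$ in $\G$ with identical min edge, status, and direction-in-cut. For $(\Leftarrow)$, given a bad potential cut $\OCu$ of $\O$ with min edge $e_{\min}$, I use the standard matroid fact that there is a bond $\C'$ with $e_{\min} \in E(\C') \subseteq E(\Cu)$; the directed bond inheriting its edge orientations from $\OCu$ is a potential cut of $\O$ whose min edge is still $e_{\min}$ with the same bad conditions, so $\O_{\C'}$ is bad. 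Axiom~\ref{cond:tuttepot} is tautological.

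For \ref{cond:tutteun} and \ref{cond:tuttegoodor}, the crucial observation is that since $e$ is not an isthmus, $e$ cannot be the min edge of any potential cut containing it (otherwise $E(\OCu) = \{e\}$, forcing $e$ to be an isthmus). Consequently, bad potential cuts transfer cleanly between $\G$ and $\G \setminus e$: removing $e$ from a cut's edge set preserves the min edge, its status, and its direction-in-cut, and the resulting partition remains a cut of $\G \setminus e$ since $e$ is not an isthmus. This gives~\ref{cond:tutteun}. For~\ref{cond:tuttegoodor}, a putative bad cut of $\O \cup \{e^{\varepsilon}\}$ either avoids $e$ (and is visibly a bad cut of $\O$ in $\G \setminus e$) or contains $e$ with $e^{\varepsilon}$ matching the cut's direction of $e$ (and restricting to $\G \setminus e$ yields a bad cut of $\O$ with the same min-edge data); either outcome contradicts $\O$ being good for $\G \setminus e$.

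The main obstacle is~\ref{cond:tuttebador}. I isolate the subclaim: under the hypotheses of~\ref{cond:tuttebador}, every bad potential cut of $\O$ in $\G$ contains $e$, and all such cuts orient $e$ in the same direction $\varepsilon_0$. Granting this, $\O \cup \{e^{\varepsilon_0}\}$ inherits every bad cut of $\O$ so it is bad; whereas in $\O \cup \{e^{-\varepsilon_0}\}$ every bad cut of $\O$ ceases to be potential (since $e^{-\varepsilon_0}$ conflicts with the cut's direction of $e$) and no new bad cuts can arise (any bad cut of $\O \cup \{e^{-\varepsilon_0}\}$ is also a bad cut of $\O$), so $\O \cup \{e^{-\varepsilon_0}\}$ is good. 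The first half of the subclaim is immediate: a bad cut of $\O$ avoiding $e$ would persist as a bad cut of $\O \cup \{e^+, e^-\}$, contradicting the premise. For the second half, suppose for contradiction that $\OCu_1 = (U_1, U_1^c)$ and $\OCu_2 = (U_2, U_2^c)$ are bad potential cuts of $\O$ with $e^+ \in \mathbb{E}(\OCu_1)$ and $e^- \in \mathbb{E}(\OCu_2)$, where $e = \{u,v\}$ has $u \in U_1 \cap U_2^c$ and $v \in U_1^c \cap U_2$. Form the four-partition with blocks $A := U_1 \cap U_2$, $B := U_1 \cap U_2^c$, $C := U_1^c \cap U_2$, $D := U_1^c \cap U_2^c$, so $u \in B$ and $v \in C$. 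The combined potentiality of $\OCu_1$ and $\OCu_2$ forces every non-$e$ edge between $B$ and $C$ to be unoriented in $\O$. Let $e^i_{\min}$ denote the min edge of $\OCu_i$ and assume without loss of generality $e^1_{\min} \leq e^2_{\min}$. If $e^1_{\min}$ is a non-$e$ $B$-$C$ edge, then $e^2_{\min} \leq e^1_{\min}$ as well, so both cuts share this min edge; but the edge has opposite directions in $\OCu_1$ and $\OCu_2$, making condition~\ref{cond:minhalf} inconsistent with any single $\delta$, a contradiction. Otherwise $e^1_{\min}$ lies in one of the edge classes $A$-$C$, $A$-$D$, or $B$-$D$, and I construct an auxiliary cut --- namely $(A, B \cup C \cup D)$ if $e^1_{\min} \in A$-$C$, and $(A \cup B \cup C, D)$ if $e^1_{\min} \in A$-$D$ or $B$-$D$ --- which is a potential cut of $\O$ avoiding $e$; since its edge set lies in $(E(\Cu_1) \cup E(\Cu_2)) \setminus \{e\}$ and contains $e^1_{\min}$, its min edge is exactly $e^1_{\min}$ with the same direction-in-cut, so it too is bad, contradicting that $\O \cup \{e^+, e^-\}$ is good.
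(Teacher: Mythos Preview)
Your proof is correct and follows essentially the same approach as the paper: the cut-property check and conditions~\ref{cond:tuttepot}, \ref{cond:tuttegoodor}, \ref{cond:tutteun} are handled identically, and for~\ref{cond:tuttebador} you carry out the same gluing construction, forming the cuts $(U_1\cap U_2,\,(U_1\cap U_2)^c)$ and $((U_1^c\cap U_2^c)^c,\,U_1^c\cap U_2^c)$ to produce a bad potential cut avoiding~$e$. Your packaging via the subclaim that all bad cuts of $\O$ orient $e$ in a single direction $\varepsilon_0$, together with the explicit four-block case analysis on the location of $e^1_{\min}$, is a slightly cleaner rendering of the same argument.
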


\begin{proof} Let us work with cut properties; the cycle properties are exactly analogous. First let us prove that a min-edge cut property is a Tutte cut property. Fix some min-edge cut property. Clearly the property is defined in an isomorphism invariant way and so it is indeed a fourientation property. A potential simple cut being good with respect to our min-edge cut property is the same as the corresponding contraction to a banana graph being good (and in light of \ref{cond:tuttepot} we only care about potential cuts). So certainly if a fourientation is good, its contractions to its simple cuts are good. Conversely, assume the fourientation $\O$ is bad. Then there is a bad potential cut~$\OCu$ for~$\O$. In fact we have~$\mathbb{E}(\OCu) = \mathbb{E}(\OCu_1) \sqcup \cdots \sqcup \mathbb{E}(\OCu_k)$ for potential cuts~$\OCu_i$ whose underlying undirected cuts~$\Cu_i$ are simple. So if~$e_{\mathrm{min}}^{\delta} \in \mathbb{E}(\OCu)$ with~$e_{\mathrm{min}}$ being the minimum edge of~$E(\OCu)$ then $e_{\mathrm{min}}^{\delta} \in \mathbb{E}(\OCu_i)$ for some $i$, which means $\O_{\Cu_i}$ is bad. Thus our min-edge cut property is indeed a cut property. What remains to check are the conditions~\ref{cond:tuttepot} and~\ref{cond:tuttemax}. Condition~\ref{cond:tuttepot} holds trivially. Now let us deal with condition~\ref{cond:tuttemax}: so let $e$ be the maximal edge of $\G$ with $e$ neither an isthmus nor a loop, and let $\O$ be a fourientation of $\G\setminus e$. Condition~\ref{cond:tuttegoodor} holds because in this case $e$ cannot be the minimum edge in any potential cut, so any good potential cuts of $\O$ which it becomes a part of remain good in $\O \cup \{e^{\pm}\}$. Condition~\ref{cond:tutteun} holds for much the same reason: since $e$ is not the minimum edge in any potential cut, any good potential cuts for~$\O$ which it becomes a part of remain good potential cuts in~$\O$ when considered as a fourientation of $\G$ and any bad potential cuts remain bad potential cuts. 

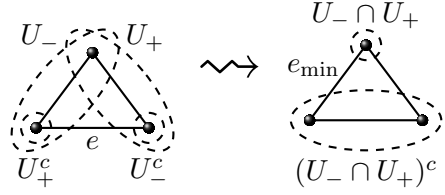
\begin{figure}
\begin{tikzpicture}
	\SetFancyGraph
	\Vertex[NoLabel,x=0,y=1]{v_2}
	\Vertex[NoLabel,x=-0.75,y=0]{v_1}
	\Vertex[NoLabel,x=0.75,y=0]{v_3}
	\Edges[style={thick}](v_1,v_2)
	\Edges[style={thick}](v_2,v_3)
	\Edges[style={thick}](v_1,v_3)
	\draw[rotate=-40,dashed,thick] (-0.6,0.15) ellipse (.4 and 1);
	\draw[dashed,thick] (-0.75,0) ellipse (.2 and .2);
	\draw[rotate=40,dashed,thick] (0.6,0.15) ellipse (.4 and 1);
	\draw[dashed,thick] (0.75,0) ellipse (.2 and .2);
	\node at (0,-0.2) {$e$};
	\node at (-0.75,-0.6) {$U^c_{+}$};
	\node at (-0.7,1.2) {$U_{-}$};
	\node at (0.75,-0.6) {$U^c_{-}$};
	\node at (0.7,1.2) {$U_{+}$};
	\node at (0,-0.7){};
\end{tikzpicture} \parbox{0.4in}{\begin{center} \vspace{-1.25in} \Huge $\leadsto$\end{center}} \begin{tikzpicture}
	\SetFancyGraph
	\Vertex[NoLabel,x=0,y=1]{v_2}
	\Vertex[NoLabel,x=-0.75,y=0]{v_1}
	\Vertex[NoLabel,x=0.75,y=0]{v_3}
	\Edges[style={thick}](v_1,v_2)
	\Edges[style={thick}](v_2,v_3)
	\Edges[style={thick}](v_1,v_3)
	\draw[dashed,thick] (0,1) ellipse (.2 and .2);
	\draw[dashed,thick] (0,0) ellipse (1 and .4);
	\node at (-0.7,0.7) {$e_{\mathrm{min}}$};
	\node at (0,1.4) {$U_{-} \cap U_{+}$};
	\node at (0,-0.7) {$(U_{-} \cap U_{+})^c$};
\end{tikzpicture}
\caption{A visual aid for the proof of Theorem~\ref{thm:minaretutte}.} \label{fig:badglue}
\end{figure}

The least obvious condition is~\ref{cond:tuttebador}. First of all, if $\O$ is bad for $\G \setminus e$ then one of~$\O \cup \{e^{+}\}$ or $\O \cup \{e^{-}\}$ is bad because $\G \setminus e$ has at least one bad potential cut and so if we orient $e$ as $e^{\pm}$ in a way consistent with this cut it will remain a bad potential cut in~$\O \cup \{e^{\pm}\}$. In order to complete the proof that \ref{cond:tuttebador} holds, we claim that if~$\O \cup \{e^{+},e^{-}\}$ is good for $\G$ then at least one of~$\O \cup\{e^{+}\}$ or~$\O \cup\{e^{-}\}$ is good. Suppose that to the contrary $\O \cup \{e^{+},e^{-}\}$ is good but $\O \cup\{e^{+}\}$ and $\O \cup\{e^{-}\}$ are both bad. Since~$\O \cup \{e^{+},e^{-}\}$ is good, it cannot be that there is a bad potential cut~$\OCu$ of $\O \cup \{e^{\pm}\}$ with $e \notin E(\OCu)$. So it must be that there is a bad potential cut~$\OCu_{+} = (U_{+},U_{+}^c)$ of $\O \cup \{e^{+}\}$ and a bad potential cut~$\OCu_{-} = (U_{-},U_{-}^c)$ of $\O \cup \{e^{-}\}$ with $e \in E(\OCu_{+}) \cap E(\OCu_{-})$. The idea, as depicted in Figure~\ref{fig:badglue}, is to glue the cuts~$\OCu_{+}$ and $\OCu_{-}$ together to find a bad potential cut which does not involve $e$. Let $e_{\mathrm{min}}$ be the minimum edge of $E(\OCu_{+}) \cup E(\OCu_{-})$. By supposition that $e$ is not an isthmus, we have $e \neq e_{\mathrm{min}}$. Suppose by symmetry that~$e_{\mathrm{min}}^{\delta} \in \mathbb{E}(\OCu_{+})$ for some $\delta \in \{+,-\}$. We claim that $e_{\mathrm{min}}^{\delta} \neq (u,v)$ with $u \in U_{+} \cap U_{-}^{c}$ and $v \in U_{+}^{c} \cap U_{-}$. Suppose to the contrary. Then $e_{\mathrm{min}} \in E(\OCu_{+}) \cap E(\OCu_{-})$, which means that in order for $\OCu_{+}$ to be a potential cut of $\O \cup \{e^{+}\}$ and $\OCu_{-}$ a potential cut of $\O \cup \{e^{-}\}$ we need $e_{\mathrm{min}}$ to be unoriented in~$\O$. But then we have $e_{\mathrm{min}}^{\delta} \in \mathbb{E}(\OCu_{+})$ and~$e_{\mathrm{min}}^{-\delta} \in \mathbb{E}(\OCu_{-})$, so by part~\ref{cond:minhalf} of the min-edge cut definition we cannot have that~$\OCu_{+}$ and $\OCu_{-}$ are both bad potential cuts, a contradiction. So indeed $e_{\mathrm{min}}^{\delta} \neq (u,v)$ for any $u \in U_{+} \cap U_{-}^{c}$ and $v \in U_{+}^{c} \cap U_{-}$. One consequence of this is that~$U_{+} \neq U_{-}^{c}$. So at least one of $U_{+} \cap U_{-}$ or $U_{+}^{c} \cap U_{-}^{c}$ is nonempty. And on the other hand if $e = \{w,x\}$ then~$\{w,x\} \subseteq (U_{+} \cap U_{-})^c \cap (U_{+}^{c} \cap U_{-}^{c})^c$, so $(U_{+} \cap U_{-})^c$ and $(U_{+}^{c} \cap U_{-}^{c})^c$ are both nonempty. Thus if we define~$\OCu' := (U_{+} \cap U_{-},(U_{+} \cap U_{-})^c)$ and~$\OCu'' := (U_{+}^{c} \cap U_{-}^{c},(U_{+}^{c} \cap U_{-}^{c})^c)$ at least one of these must genuinely be a directed cut. Our discussion of $e_{\mathrm{min}}$ also establishes that~$e_{\mathrm{min}}^{\delta} \in \mathbb{E}(\OCu')$ or~$e_{\mathrm{min}}^{\delta} \in \mathbb{E}(\OCu'')$. Whichever of~$\OCu'$ or~$\OCu''$ it belongs to is a potential cut of $\O \cup \{e^{+},e^{-}\}$ because we have~$e \notin E(\OCu') \cap E(\OCu'')$. But then one of~$\OCu'$ or $\OCu''$ is a bad potential cut of~$\O \cup \{e^{+},e^{-}\}$, contradicting our assumption that~$\O \cup \{e^{+},e^{-}\}$ is good. \end{proof}

In order to give a near converse to Theorem~\ref{thm:minaretutte} and to completely classify Tutte cut properties we need to introduce two anomalous properties: {\bf cut weird} and {\bf cut co-weird}. The cut weird fourientations of an ordered, oriented graph are those such that each potential cut contains at least one oriented edge and the minimum oriented edge in the cut is oriented in agreement with its reference orientation. The cut co-weird fourientations are those such that each potential cut contains at least one oriented edge and the minimum oriented edge in the cut is oriented in disagreement with its reference orientation.

\begin{thm} \label{thm:tuttearemin}
Any Tutte cut property is either a min-edge cut property or is cut weird or cut co-weird. There is a completely analogous classification for Tutte cycle properties.
\end{thm}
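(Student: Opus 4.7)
A Tutte cut property $P$ is, by definition, a cut property, so it is fully determined by its values on banana graphs (the contractions to simple cuts). Fixing an ordered oriented banana $(B_n, \Oref, <)$ and applying T2 to the maximum edge $e_n$ inductively, we encode $P$ by the bad isthmus set $X$ (its behavior on $B_1$) together with, for each $n \geq 2$ and each bad fourientation $\O'$ of $B_{n-1}$, a sign $\epsilon_n(\O') \in \{+,-\}$ specifying which of $\O' \cup \{e_n^{+}\}$ or $\O' \cup \{e_n^{-}\}$ is the bad extension guaranteed by T2(b). This encoding is complete: fourientations containing any bioriented edge are automatically good by T1 (since bioriented edges block every potential cut of a banana graph), fourientations with $e_n$ unoriented are good iff their restrictions to $B_{n-1}$ are good by T2(c), and good fourientations extend to good fourientations by T2(a).

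Next we constrain $\epsilon_n(\O')$ using the structure of $\O'$. If $\O'$ contains any oriented edge agreeing with a bad potential cut $\OCu$ of $\O'$, then $\epsilon_n(\O')$ is forced to orient $e_n$ in agreement with $\OCu$: orienting $e_n$ oppositely destroys every potential cut of the extension, because the cut $\OCu$ is blocked by the new $e_n$-orientation while the opposite directed cut is blocked by the pre-existing oriented edge in $\O'$ (mirroring the gluing argument in the proof of Theorem~\ref{thm:minaretutte}), so by T1 the extension is good. The only genuinely free case is therefore when $\O'$ is the all-unoriented fourientation of $B_{n-1}$, which is bad precisely when $\emptyset \in X$.

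Writing $\sigma_i \in \{+,-\}$ for the reference sign of $e_i$ relative to a chosen direction of the simple cut, isomorphism invariance under the global flip $\Oref \to -\Oref$ together with T2 consistency propagated across banana graphs of different sizes and orderings forces $\epsilon_n$ on the all-unoriented input to take one of exactly two forms: either $\delta \cdot \sigma_1 \sigma_n$ for some global constant $\delta \in \{+,-\}$, which yields the min-edge cut property with isthmus set $X$ and parameter $\delta$; or a constant $\delta' \in \{+,-\}$ independent of the $\sigma_i$, which yields cut weird when $X = \{\emptyset, \{-\}\}$ or cut co-weird when $X = \{\emptyset, \{+\}\}$ (these being the only isthmus sets for which the constant choice is consistent with $P$'s behavior on $B_1$). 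Combined with the eight possibilities for $X$, this accounts for all $12$ min-edge and $2$ weird/co-weird Tutte cut properties.

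The main obstacle is ruling out hybrid $\sigma$-dependencies of $\epsilon_n$ on the all-unoriented input beyond these two forms --- e.g., a dependence on some $\sigma_j$ for $1 < j < n$, or a dependence that toggles with $n$. Such hybrids are eliminated by propagating T2 constraints through banana graphs of successively larger size and through composite graphs built by identifying vertices of multiple bananas, so as to show no intermediate $\sigma$-dependence is globally self-consistent. The analogous classification of Tutte cycle properties then follows by the cut/cycle duality implicit in the parenthetical formulation of T2.
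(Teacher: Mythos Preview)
Your outline follows the same overall approach as the paper: reduce to banana graphs, observe that bioriented edges force goodness by \textbf{T1}, that the behavior when $\O \setminus e_n$ contains an oriented edge is forced, and that the only genuinely undetermined case is when $\O \setminus e_n$ is the all-unoriented fourientation (requiring $\emptyset \in X$). This setup is correct and matches the paper's reduction.

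However, there is a genuine gap at precisely the point you flag as ``the main obstacle.'' You assert that isomorphism invariance under the global flip together with ``T2 consistency propagated across banana graphs of different sizes and orderings'' forces $\epsilon_n$ on the all-unoriented input to be either $\delta \cdot \sigma_1 \sigma_n$ or constant, and that hybrids are ``eliminated by propagating T2 constraints through \ldots\ composite graphs built by identifying vertices of multiple bananas.'' But this is a description of what needs to be done, not a proof. The global flip symmetry alone only tells you that $\epsilon_n$ is an odd function of $(\sigma_1,\ldots,\sigma_n)$; it does not single out the two forms you name. The actual work of eliminating all other dependencies is the substance of the theorem, and the paper carries it out via five explicit subclaims, each involving a carefully constructed auxiliary graph on three vertices (not merely bananas) with specified edge sets, orders, and reference orientations, from which contradictions are derived by contracting to particular simple cuts. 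For instance, establishing that in the ``weird'' case one must have $\{-\} \in X$ and $\{+\} \notin X$ (so that $X = \{\emptyset,\{-\}\}$ is forced) requires two separate such constructions. None of these constructions are suggested in your sketch, and without them the classification does not go through.
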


\begin{proof} 
Again we work with the cut case. Fix some Tutte cut property. Because it is a cut property, this property is determined by the values it takes on ordered, oriented banana graphs. It is not hard to see that if our Tutte property agrees with some min-edge cut property~$(X,\delta)$ on all ordered, oriented banana graphs then it is agrees with~$(X,\delta)$ on all graphs (here we again use the fact that a cut decomposes into simple cuts). Our goal is to find~$(X,\delta)$. Clearly we should define $X$ to be bad isthmus set of our Tutte cut property. In order to define $\delta$ we need to consider some small banana graphs. Let us view the banana graph $B_n$ as having vertex set~$V(B_n) := \{u,v\}$ and edge set~$E(B_n) := \{e_1,\ldots,e_n\}$ where $e_1 := \cdots := e_n := \{u,v\}$. Define the edge order~$<$ by~$e_1 <\cdots < e_n$. If $\emptyset \notin X$, then we define~$\delta$ arbitrarily. If $\emptyset \in X$, then we define~$\delta$ as follows: define a reference orientation~$\Oref^2$ by~$\Oref^2(e^{+}_1) := \Oref^2(e^{+}_2) := (u,v)$; then let~$\delta \in \{+,-\}$ be so that~$\O^2 := \{e_2^{\delta}\}$ is a bad fourientation of~$(B_2,\Oref^2,<)$. We need to check that our property agrees with the min-edge cut property $(X,\delta)$ on all banana graphs. So let $(B_n,\Oref,<)$ be an ordered, oriented banana graph and assume~$n > 1$ since we know our Tutte property agrees with $(X,\delta)$ for $n=1$.  Let $\O$ be a fourientation of $(B_n,\Oref,<)$. If~$\O$ has any bioriented edges, we know it is good by condition~\ref{cond:tuttepot} because it has no potential cuts and this agrees with $(X,\delta)$. So now assume $\O$ has no bioriented edges. If~$\O \setminus e_n$ is good for~$(B_n,\Oref,<) \setminus e_n$ then we know by conditions~\ref{cond:tuttegoodor} and~\ref{cond:tutteun} that~$\O$ is good no matter how $e_n$ is fouriented, which again agrees with~$(X,\delta)$. If $\O \setminus e_n$ is bad but contains at least one oriented edge, then we know by conditions~\ref{cond:tuttepot},~\ref{cond:tuttebador}, and~\ref{cond:tutteun} that~$\O$ is good if and only if $e$ is oriented to disagree with that oriented edge and make it so that~$\O$ has no potential cuts. This agrees with $(X,\delta)$. So finally assume that~$\O \setminus e_n$ is bad and contains no oriented edges. Note by repeated application of~\ref{cond:tutteun} that this is possible only if $\emptyset \in X$. Certainly by~\ref{cond:tutteun} if $e_n^{+},e_n^{-} \notin \O$ then $\O$ is bad; so the status of $\O$ is only at issue if $e_n^{\varepsilon} \in \O$ for some~$\varepsilon \in \{-,+\}$. We claim that in this case the status of $\O$ is still consistent with~$(X,\delta)$ unless we are in an exceptional case that we will address at the end. 

\begin{figure}
\def\arraystretch{1.5}
\begin{tabular}{c | c " c | c | c}
\multicolumn{5}{c}{ \parbox{4in} {\centering Assume throughout \\ by symmetry $\delta = -$: \\ \vspace{.1cm} 
\begin{tikzpicture}
	\SetFancyGraph
	\Vertex[NoLabel,x=0,y=1]{v_2}
	\Vertex[NoLabel,x=0,y=0]{v_1}
	\Edges[style={thick,bend left,->--}](v_1,v_2)
	\Edges[style={thick,bend left,->,>=mytip}](v_2,v_1)
	\Edges[style={dash pattern=on 0pt off 100pt,thick,bend right,->--}](v_1,v_2);
\end{tikzpicture} bad \vspace{.2cm}} }  \\ \hline
%%%%%%%%%%%%%%%%%
\multicolumn{2}{c"}{ \parbox{2in} { \vspace{.1cm} \centering Case I: \\ \vspace{.1cm} 
\begin{tikzpicture}
	\SetFancyGraph
	\Vertex[NoLabel,x=0,y=1]{v_2}
	\Vertex[NoLabel,x=0,y=0]{v_1}
	\Edges[style={thick,bend left,->--}](v_1,v_2)
	\Edges[style={thick,bend left,->,>=mytip}](v_2,v_1)
	\Edges[style={dash pattern=on 0pt off 100pt,thick,bend left,->--}](v_2,v_1);
\end{tikzpicture} bad \vspace{.2cm} }} &
%%%%%%%%%%%%%%%%%
 \multicolumn{3}{c}{ \parbox{2in} { \vspace{.1cm} \centering Case II: \\ \vspace{.1cm} 
\begin{tikzpicture}
	\SetFancyGraph
	\Vertex[NoLabel,x=0,y=1]{v_2}
	\Vertex[NoLabel,x=0,y=0]{v_1}
	\Edges[style={thick,bend left,->--}](v_1,v_2)
	\Edges[style={thick,bend left,->,>=mytip}](v_2,v_1)
	\Edges[style={dash pattern=on 0pt off 100pt,thick,bend left,->--}](v_2,v_1);
\end{tikzpicture} good \vspace{.2cm} }} \\ \hline
%%%%%%%%%%%%%%%%%
Subclaim~\ref{sub:b3}: &  Subclaim~\ref{sub:caseimain}: &  Subclaim~\ref{sub:minusinx}: & Subclaim~\ref{sub:plusnotinx}: & Subclaim~\ref{sub:caseiimain}: \\
%%%%%%%%%%%%%%%%%
\parbox{0.75in} { \centering 
\begin{tikzpicture}
	\SetFancyGraph
	\Vertex[NoLabel,x=0,y=1]{v_2}
	\Vertex[NoLabel,x=0,y=0]{v_1}
	\Edges[style={thick,bend left=40,->--}](v_1,v_2)
	\Edges[style={thick,->--}](v_2,v_1)
	\Edges[style={dash pattern=on 0pt off 100pt,thick,bend left=40,->--}](v_2,v_1);
	\Edges[style={thick,bend left=40,->,>=mytip}](v_2,v_1)
\end{tikzpicture} good  \\ $\Downarrow$ \\  \begin{tikzpicture}
	\SetFancyGraph
	\Vertex[NoLabel,x=0,y=1]{v_2}
	\Vertex[NoLabel,x=-0.5,y=0]{v_1}
	\Vertex[NoLabel,x=0.5,y=0]{v_3}
	\Edges[style={thick,bend left,->--}](v_1,v_2)
	\Edges[style={thick,->,>=mytip}](v_1,v_2)
	\Edges[style={dash pattern=on 0pt off 100pt,thick,->--}](v_2,v_1);
	\Edges[style={thick,bend right,->--}](v_2,v_3)
	\Edges[style={thick,->--}](v_3,v_2)
	\Edges[style={thick,->,>=mytip, bend right=60}](v_3,v_2)
	\Edges[style={dash pattern=on 0pt off 100pt,thick,bend right=60,->--}](v_3,v_2)
	\node at (-0.6,0.6) {\tiny $e_1$};
	\node at (-0.2,0.25) {\tiny $e_3$};
	\node at (0.15,0.1) {\tiny $e_2$};
	\node at (0.47,0.4) {\tiny $e_4$};
	\node at (0.8,0.7) {\tiny $e_5$};	
\end{tikzpicture} good \\ $\Downarrow$ \\  \begin{tikzpicture}
	\SetFancyGraph
	\Vertex[NoLabel,x=0,y=1]{v_2}
	\Vertex[NoLabel,x=0,y=0]{v_1}
	\Edges[style={thick,bend left=80,->--}](v_1,v_2)
	\Edges[style={thick,bend right=35,->--}](v_2,v_1)
	\Edges[style={thick,->,>=mytip}](v_1,v_2)
	\Edges[style={dash pattern=on 0pt off 100pt,thick,->--}](v_2,v_1);
	\Edges[style={thick,->--,bend right=35}](v_1,v_2)
	\Edges[style={thick,->,>=mytip, bend right=80}](v_1,v_2)
	\Edges[style={dash pattern=on 0pt off 100pt,thick,bend right=80,->--}](v_1,v_2)
\end{tikzpicture} good \\ $\Downarrow$ \\  \begin{tikzpicture}
	\SetFancyGraph
	\Vertex[NoLabel,x=0,y=1]{v_2}
	\Vertex[NoLabel,x=0,y=0]{v_1}
	\Edges[style={thick,bend left=40,->--}](v_1,v_2)
	\Edges[style={thick,->--}](v_2,v_1)
	\Edges[style={thick,->,>=mytip,bend right=40}](v_1,v_2)
	\Edges[style={dash pattern=on 0pt off 100pt,thick,->--, bend left=40}](v_2,v_1);
\end{tikzpicture} good  \\ $\Rightarrow \Leftarrow$} & 
%%%%%%%%%%%%%%%%%
\parbox{0.75in} { \centering 
\begin{tikzpicture}
	\SetFancyGraph
	\Vertex[NoLabel,x=0,y=1]{v_2}
	\Vertex[NoLabel,x=0,y=0]{v_1}
	\Edges[style={thick,bend left=40,->--}](v_1,v_2)
	\Edges[style={thick,->--}](v_2,v_1)
	\Edges[style={dash pattern=on 0pt off 100pt,thick,bend left=40,->--}](v_2,v_1);
	\Edges[style={thick,bend left=40,->,>=mytip}](v_2,v_1)
\end{tikzpicture} bad  \\ $\Downarrow$ \\  
\begin{tikzpicture}
	\SetFancyGraph
	\Vertex[NoLabel,x=0,y=1]{v_2}
	\Vertex[NoLabel,x=0,y=0]{v_1}
	\tikzset{VertexStyle/.style = {shape = circle,fill = black,minimum size = 0pt,inner sep=0pt}}
	\Vertex[NoLabel,x=0.75,y=0.5]{v_3}
	\Edges[style={thick,bend left=30,->--}](v_1,v_2)
	\Edges[style={thick,->--}](v_2,v_1)
	\Edges[style={dash pattern=on 0pt off 100pt,thick,bend left=30,->--}](v_2,v_1);
	\Edges[style={thick,bend left=30,->,>=mytip}](v_2,v_1)
	\Edges[style={thick,bend left=50}](v_2,v_3)
	\Edges[style={thick,bend left=50,->,>=mytip}](v_3,v_1)
	\node at (0.55,0.5) {\footnotesize $\cdots$};
\end{tikzpicture} bad \\ $\Downarrow$ \\  
\begin{tikzpicture}
	\SetFancyGraph
	\Vertex[NoLabel,x=0,y=1]{v_2}
	\Vertex[NoLabel,x=0,y=0]{v_1}
	\Vertex[NoLabel,x=1,y=1]{v_3}
	\Edges[style={thick,bend left=50,->--}](v_1,v_2)
	\node at (0.05,0.5) {\footnotesize $\cdots$};
	\Edges[style={thick,bend left=50,->,>=mytip}](v_2,v_1)
	\Edges[style={thick,->--,bend right=30}](v_2,v_3)
	\Edges[style={dash pattern=on 0pt off 100pt,thick,bend left=30,->--}](v_2,v_3);
	\Edges[style={thick,bend left=30,->,>=mytip}](v_2,v_3)
	\draw[dashed,thick] (0.5,1) ellipse (.7 and .5);
	\node[color=green] at (0.7,1.5) {\LARGE $\checkmark$};
\end{tikzpicture} bad \\ $\Downarrow$ \\  
\begin{tikzpicture}
	\SetFancyGraph
	\Vertex[NoLabel,x=0,y=1]{v_2}
	\Vertex[NoLabel,x=0,y=0]{v_1}
	\Edges[style={thick,bend left=50,->--}](v_1,v_2)
	\node at (0.05,0.5) {\footnotesize $\cdots$};
	\Edges[style={thick,bend left=50,->,>=mytip}](v_2,v_1)
\end{tikzpicture} bad } & 
%%%%%%%%%%%%%%%%%
\parbox{0.75in} { \centering 
\begin{tikzpicture}
	\SetFancyGraph
	\Vertex[NoLabel,x=0,y=1]{v_2}
	\Vertex[NoLabel,x=0,y=0]{v_1}
	\Edges[style={dash pattern=on 0pt off 100pt,thick,->--}](v_1,v_2);
	\Edges[style={thick,->,>=mytip}](v_2,v_1)
\end{tikzpicture} good  \\ $\Downarrow$ \\  \begin{tikzpicture}
	\SetFancyGraph
	\Vertex[NoLabel,x=0,y=1]{v_2}
	\Vertex[NoLabel,x=0.5,y=0]{v_1}
	\Vertex[NoLabel,x=-0.5,y=0]{v_3}
	\Edges[style={dash pattern=on 0pt off 100pt,thick,->--}](v_1,v_2);
	\Edges[style={thick,->,>=mytip}](v_2,v_1)
	\Edges[style={bend right,thick,->--}](v_2,v_3);
	\Edges[style={thick,->,>=mytip,bend left}](v_2,v_3)
	\Edges[style={dash pattern=on 0pt off 100pt,bend left,thick,->--}](v_2,v_3);
	\node at (-0.5,0.8) {\tiny $e_1$};
	\node at (0.05,0.25) {\tiny $e_3$};
	\node at (0.35,0.8) {\tiny $e_2$};
\end{tikzpicture} good \\ $\Downarrow$ \\  \begin{tikzpicture}
	\SetFancyGraph
	\Vertex[NoLabel,x=0,y=1]{v_2}
	\Vertex[NoLabel,x=0,y=0]{v_1}
	\Edges[style={dash pattern=on 0pt off 100pt,thick,->--}](v_1,v_2);
	\Edges[style={thick,->,>=mytip}](v_2,v_1)
	\Edges[style={bend right=40,thick,->--}](v_2,v_1);
	\Edges[style={thick,->,>=mytip,bend left=40}](v_2,v_1)
	\Edges[style={dash pattern=on 0pt off 100pt,bend left=40,thick,->--}](v_2,v_1);
\end{tikzpicture} good \\ $\Downarrow$ \\  \begin{tikzpicture}
	\SetFancyGraph
	\Vertex[NoLabel,x=0,y=1]{v_2}
	\Vertex[NoLabel,x=0,y=0]{v_1}
	\Edges[style={dash pattern=on 0pt off 100pt,thick,->--,bend right = 40}](v_1,v_2);
	\Edges[style={thick,->,>=mytip, bend left = 40}](v_2,v_1)
	\Edges[style={bend right=40,thick,->--}](v_2,v_1);
\end{tikzpicture} good  \\ $\Rightarrow \Leftarrow$ } &
%%%%%%%%%%%%%%%%%
\parbox{0.75in} { \centering 
\begin{tikzpicture}
	\SetFancyGraph
	\Vertex[NoLabel,x=0,y=0.8]{v_2}
	\Vertex[NoLabel,x=0.5,y=0]{v_1}
	\Vertex[NoLabel,x=-0.5,y=0]{v_3}
	\Edges[style={dash pattern=on 0pt off 100pt,thick,->--}](v_2,v_1)
	\Edges[style={thick,->,>=mytip}](v_2,v_1)
	\Edges[style={thick,->--}](v_2,v_3);
	\node at (-0.35,0.6) {\tiny $e_1$};
	\node at (0.35,0.6) {\tiny $e_2$};
\end{tikzpicture} bad \\ and \\ \vspace{.1cm}  \begin{tikzpicture}
	\SetFancyGraph
	\Vertex[NoLabel,x=0,y=0.8]{v_2}
	\Vertex[NoLabel,x=0.5,y=0]{v_1}
	\Vertex[NoLabel,x=-0.5,y=0]{v_3}
	\Edges[style={dash pattern=on 0pt off 100pt,thick,->--}](v_2,v_1)
	\Edges[style={thick,->,>=mytip}](v_2,v_1)
	\Edges[style={thick,->--}](v_2,v_3);
	\Edges[style={thick,->,>=mytip}](v_1,v_3)
	\Edges[style={dash pattern=on 0pt off 100pt,thick,->,>=mytip}](v_3,v_1)
	\Edges[style={dash pattern=on 0pt off 100pt,thick,->--}](v_3,v_1)
	\node at (-0.35,0.6) {\tiny $e_1$};
	\node at (0.35,0.6) {\tiny $e_2$};
	\node at (0,-0.2) {\tiny $e_3$};
\end{tikzpicture} good \\ $\Downarrow$ \\ \vspace{.1cm} \begin{tikzpicture}
	\SetFancyGraph
	\Vertex[NoLabel,x=0,y=0.8]{v_2}
	\Vertex[NoLabel,x=0.5,y=0]{v_1}
	\Vertex[NoLabel,x=-0.5,y=0]{v_3}
	\Edges[style={dash pattern=on 0pt off 100pt,thick,->--}](v_2,v_1)
	\Edges[style={thick,->,>=mytip}](v_2,v_1)
	\Edges[style={thick,->--}](v_2,v_3);
	\Edges[style={thick,->,>=mytip}](v_1,v_3)
	\Edges[style={dash pattern=on 0pt off 100pt,thick,->--}](v_3,v_1)
	\draw[rotate=45,dashed,thick] (-0.1,0.3) ellipse (.4 and .5);
	\node[color=red] at (-0.45,0.4) {\Huge $\times$};
\end{tikzpicture} good \\ or \\ \vspace{.1cm} \begin{tikzpicture}
	\SetFancyGraph
	\Vertex[NoLabel,x=0,y=0.8]{v_2}
	\Vertex[NoLabel,x=0.5,y=0]{v_1}
	\Vertex[NoLabel,x=-0.5,y=0]{v_3}
	\Edges[style={dash pattern=on 0pt off 100pt,thick,->--}](v_2,v_1)
	\Edges[style={thick,->,>=mytip}](v_2,v_1)
	\Edges[style={thick,->--}](v_2,v_3);
	\Edges[style={thick,->,>=mytip}](v_3,v_1)
	\Edges[style={dash pattern=on 0pt off 100pt,thick,->--}](v_3,v_1)
	\draw[rotate=-45,dashed,thick] (0.1,0.3) ellipse (.4 and .5);
	\node[color=green] at (0.7,0.5) {\LARGE $\checkmark$};
\end{tikzpicture} good} &
%%%%%%%%%%%%%%%%%
 \parbox{1in} { \centering Induct on $n$: \\
 \begin{tikzpicture}
	\SetFancyGraph
	\Vertex[NoLabel,x=0,y=1]{v_2}
	\Vertex[NoLabel,x=0,y=0]{v_1}
	\Vertex[NoLabel,x=1,y=1]{v_3}
	\Edges[style={thick,bend left=90}](v_1,v_2)
	\node at (-0.125,0.5) {\footnotesize $\cdots$};
	\Edges[style={thick,bend right=10,dashed}](v_1,v_2)
	\Edges[style={thick,bend left=40,->,>=mytip}](v_2,v_1)
	\Edges[style={dash pattern=on 0pt off 100pt,thick,bend left=40,->--}](v_2,v_1);
	\Edges[style={thick}](v_2,v_3)
	\Edges[style={dash pattern=on 0pt off 100pt,thick,bend left=60,->--}](v_2,v_3);
	\Edges[style={thick,bend left=60,->,>=mytip}](v_2,v_3)
	\node at (-0.6,0.6) {\tiny $e_1$};
	\node at (0.6,0.45) {\footnotesize $e_{n}$};
	\node at (0.6,0.85) {\footnotesize $e_{n-1}$};
	\node at (0.55,1.5) {\footnotesize $e_{n+1}$};
\end{tikzpicture} good \\ $\Downarrow$ \\ \begin{tikzpicture}
	\SetFancyGraph
	\Vertex[NoLabel,x=0,y=1]{v_2}
	\Vertex[NoLabel,x=0,y=0]{v_1}
	\Edges[style={thick,bend left=90}](v_1,v_2)
	\node at (-0.125,0.5) {\footnotesize $\cdots$};
	\Edges[style={thick,bend right=10}](v_1,v_2)
	\Edges[style={thick,bend left=40,->,>=mytip}](v_2,v_1)
	\Edges[style={dash pattern=on 0pt off 100pt,thick,bend left=40,->--}](v_2,v_1);
	\Edges[style={dash pattern=on 0pt off 100pt,thick,bend left=90,->--}](v_2,v_1);
	\Edges[style={thick,bend left=90,->,>=mytip}](v_2,v_1)
\end{tikzpicture} good \\ $\Downarrow$ \\ \begin{tikzpicture}
	\SetFancyGraph
	\Vertex[NoLabel,x=0,y=1]{v_2}
	\Vertex[NoLabel,x=0,y=0]{v_1}
	\Edges[style={thick,bend left=90}](v_1,v_2)
	\node at (-0.125,0.5) {\footnotesize $\cdots$};
	\Edges[style={thick,bend right=10}](v_1,v_2)
	\Edges[style={thick,bend left=40,->,>=mytip}](v_2,v_1)
	\Edges[style={dash pattern=on 0pt off 100pt,thick,bend left=40,->--}](v_2,v_1);
\end{tikzpicture} good }
%%%%%%%%%%%%%%%%%
\end{tabular}
\caption{A visual aid for the proof of Theorem~\ref{thm:tuttearemin}. The smaller arrows in the middle of an edge indicate the reference orientation (if there are no arrows in the middle of an edge then the reference orientation is arbitrary). The larger arrows at the end of an edge are edge orientations that belong to the fourientation. In general edges are ordered from left-to-right (with leftmost being minimal) but edge labels are included when this is not the case and the order is important.} \label{fig:tutteareminaid}
\end{figure}
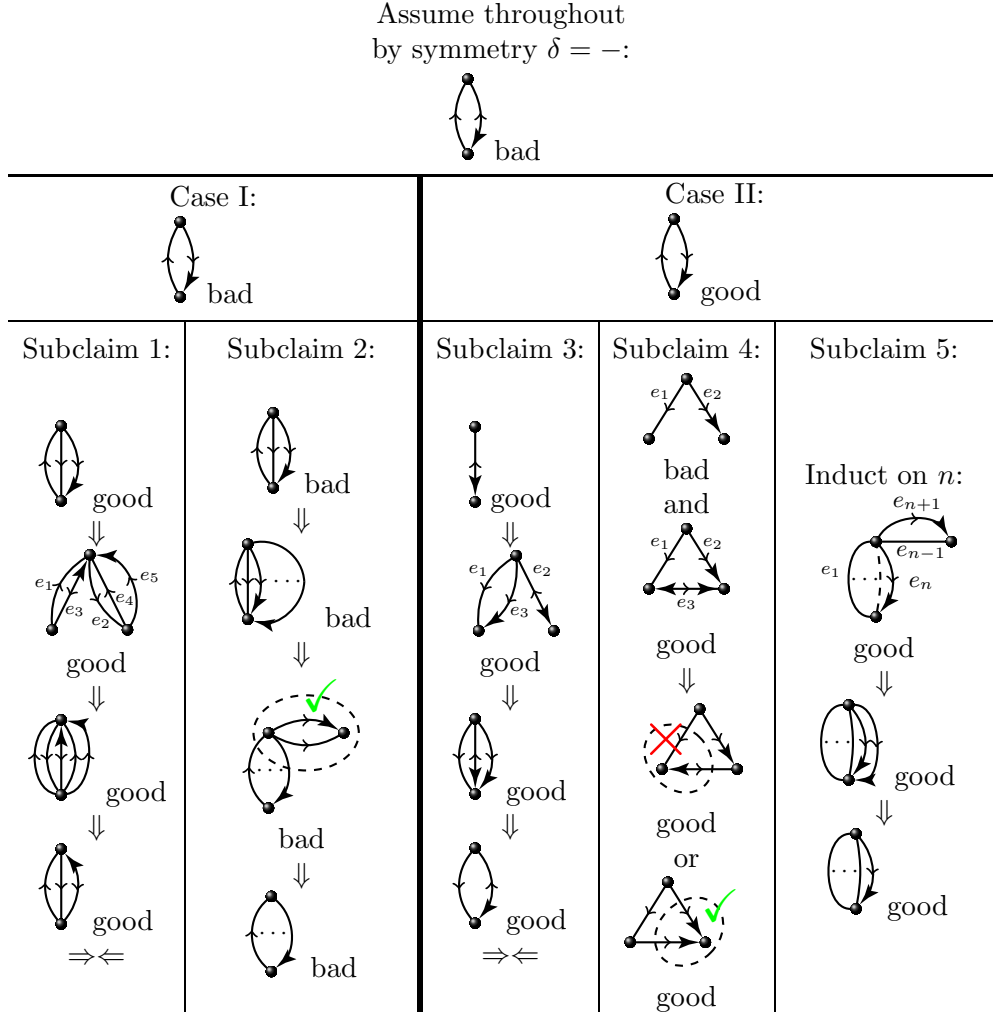

From now on assume $\emptyset \in X$ (or else we cannot have that $\O \setminus e_n = \emptyset$ is bad). Using the $+ \leftrightarrow -$ symmetry assume additionally from now on that $\delta = -$. The proof that follows is technical and requires constructing several auxillary graphs; Figure~\ref{fig:tutteareminaid} offers a pictorial aid for the various subclaims made below. We must now consider how our Tutte property behaves with respect to the other reference orientation for $B_2$. Define~$\Oref^{2'}$ by $\Oref^{2'}(e^{+}_1) := \Oref^{2'}(e^{-}_2) := (u,v)$ and define $\O^{2'} := \{e_2^{+}\}$. There are two cases to address: either $\O^{2'}$ is bad for $(B_2,\Oref^{2'},<)$ or it is good.

\medskip
\noindent {\bf Case I: $\O^{2'}$ is a bad fourientation of $(B_2,\Oref^{2'},<)$.}
\medskip

Note that this case is consistent with the min-edge cut property defined by $(X,\delta)$. We will show that indeed our Tutte property is this min-edge cut property.

\begin{subclaim} \label{sub:b3}
Set~$\Oref^3(e^+_1) := \Oref^3(e^-_2) := \Oref^3(e^-_3) := (u,v)$. Then~$\O^3 := \{e_3^{+}\}$ is a bad fourientation of~$(B_3,\Oref^3,<)$.
\end{subclaim}

\begin{proof} Suppose to the contrary. Define the auxiliary graph~$G^{*}$ by $V(G^{*}) := \{u,v,w\}$ and~$E(G^{*}) := \{e_1,e_2,e_3,e_4,e_5\}$ where $e_1 := e_3 := \{u,w\}$ and $e_2 := e_4 := e_5 := \{u,v\}$. Set $\Oref^{*}(e_1^{-}) := \Oref^{*}(e_3^{+}) := (u,w)$ and $\Oref^{*}(e_2^{+}) := \Oref^{*}(e_4^{-}) := \Oref^{*}(e_5^{-}) := (u,v)$. Then $\O^{*} := \{e_3^{-},e_5^{+}\}$ must be good for $\G^{*} := (G^{*},\Oref^{*},<)$: the graph $G^{*}$ has two simple cuts $\Cu_1 := \{ \{u,v\}, \{w\}\}$ and $\Cu_2 := \{ \{u,w\}, \{v\}\}$; and the contraction to these cuts are~$(\G^{*}_{\Cu_1},\O^{*}_{\Cu_1}) \simeq ((B_2,\Oref^{2'},<),\O^{2'})$ and~$(\G^{*}_{\Cu_2},\O^{*}_{\Cu_2}) \simeq ((B_3,\Oref^{3},<),\O^3)$, both of which are good by supposition. Let $G^{*'}$ be the graph obtained from $G^{*}$ by adding an edge $e_6 := \{v,w\}$ and let~$\Oref^{*'}$ be any extension of $\Oref^{*}$. By the Tutte condition~(2c), we have that $\O^{*}$ remains good for~$\G^{*'} := (G^{*'},\Oref^{*'},<)$. Set $\Cu_3 := \{ \{u\}, \{v,w\}\}$, a cut of~$G^{*'}$. Since we are working with a cut property, we know the contraction $(\G^{*'}_{\Cu_3},\O^{*}_{\Cu_3})$ is good; by removing~$e_5$ and $e_4$ from this contraction using conditions~\ref{cond:tuttegoodor} and~\ref{cond:tutteun} we get that something isomorphic to $((B_3,\Oref^{3},<),-\O^3)$ is good. But $\O^3$ and $-\O^3$ both being good for~$(B_3,\Oref^{3},<)$ contradicts the Tutte condition~\ref{cond:tuttebador}. So indeed it must have been that~$\O^3$ was bad. \end{proof}

\begin{subclaim} \label{sub:caseimain}
Let $n > 1$. Fix some~$(B_n,\Oref,<)$. Suppose $\O = \{e_n^{\varepsilon}\}$ for $\varepsilon \in \{-,+\}$ with~$\Oref(e^{+}_1) = \Oref(e_n^{-\varepsilon})$. Then $\O$ is bad for $(B_n,\Oref,<)$.
\end{subclaim}

\begin{proof} Assume without loss of generality that $\Oref(e^{+}_1) = (u,v)$. Define a reference orientation $\Oref^{n+3}$ of $B_{n+3}$  by~$\Oref^{n+3}(e_1^{+}) := \Oref^{n+3}(e_2^{-}) := \Oref^{n+3}(e_3^{-}) := (u,v)$ and~$\Oref^{n+3}(e^{+}_i) := \Oref(e^{+}_{i-3})$ for all $4 \leq i \leq n+3$. Since $\O^3$ is bad for $(B_3,\Oref^3,<)$, repeated use of condition~\ref{cond:tutteun} and one application of~\ref{cond:tuttebador} says that~$\O^{*} := \{e_3^{+},e_{n+3}^{\varepsilon}\}$ is bad for~$(B_{n+3},\Oref^{n+3},<)$. Define the auxiliary graph~$G^{*}$ by $V(G^{*}) := \{u,v,w\}$ and~$E(G^{*}) := \{e_1,\ldots,e_{n+4}\}$, where $e_1 := e_4 := \ldots := e_{n+3} := \{u,v\}$,~$e_{n+4} := \{v,w\}$ and~$e_2 := e_3 := \{u,w\}$. Let $\Oref^{*}$ be any extension of $\Oref^{n+3}$. Because the contraction of~$( (G^{*}, \Oref^{*},<),\O^{*})$ to the cut $\{\{u\},\{v,w\}\}$ is isomorphic to~$((B_{n+3},\Oref^{n+3},<),\O^{*})$, we get that $\O^{*}$ is bad for $\G^{*} := (G^{*}, \Oref^{*},<)$. So by condition~\ref{cond:tutteun}, $\O^{*}\setminus e_{n+4}$ is bad for $\G^{*} \setminus e_{n+4}$. Note that $G^{*} \setminus e_{n+4}$ has two simple cuts:~$\Cu_1 := \{\{u,v\},\{w\}\}$ and~$\Cu_2 := \{\{u,w\},\{v\}\}$. The contraction of~$( \G^{*} \setminus e_{n+4},\O^{*} \setminus e_{n+4})$ to~$\Cu_1$ is isomorphic to $((B_2,\Oref^2,<),\O^2)$, which is good. So it must be that the contraction of~$(\G^{*}  \setminus e_{n+4},\O^{*} \setminus e_{n+4})$ to~$\Cu_2$, which is isomorphic to $((B_n,\Oref,<),\O)$, is bad. \end{proof}

\noindent Under the assumptions of the previous subclaim we have by condition~\ref{cond:tuttebador} that~$-\O$ is good. Recall by considerations at the beginning of the proof that the status of any fourientation $\O$ was only at issue if $\O = \{e^{\varepsilon}\}$ for some $\varepsilon \in \{+,-\}$ and $\O \setminus e_n$ was bad. But we just showed that in this case the status of $\O$ still agrees with the min-edge cut property defined by $(X,\delta)$. So we are done with Case I.

\medskip
\noindent {\bf Case II: $\O^{2'}$ is a good fourientation of $(B_2,\Oref^{2'},<)$.}
\medskip

Note that this case is in contradiction with the min-edge cut property defined by~$(X,\delta)$. We claim that our Tutte property must be cut weird. 

\begin{subclaim} \label{sub:minusinx}
 We have $\{-\} \in X$.
\end{subclaim}
\begin{proof} Suppose to the contrary. Define the auxiliary graph $G^{*}$ by~$V(G^{*}) := \{u,v,w\}$ and $E(G^{*}) := \{e_1,e_2,e_3\}$ where $e_1 := e_3 := \{u,v\}$ and $e_2 := \{u,w\}$. Define $\Oref^{*}$ by~$\Oref^{*}(e_1^{+}) := \Oref^{*}(e_3^{+}) := (u,v)$ and $\Oref^{*}(e_2^{+}) := (w,u)$. Then~$\O^{*} := \{e_2^{-},e_3^{+}\}$ is good for $\G^{*} := (G^{*},\Oref^{*},<)$: the graph $G^{*}$ has two simple cuts $\Cu_1 := \{ \{u,v\}, \{w\}\}$ and~$\Cu_2 := \{ \{u,w\}, \{v\}\}$ and we have that~$(\G^{*}_{\Cu_1},\O^{*}_{\Cu_1}) \simeq ((B_1,\Oref^{1},<),\{e_1^{-}\})$ and~$(\G^{*}_{\Cu_2},\O^{*}_{\Cu_2}) \simeq ((B_2,\Oref^{2},<),\O^2)$, both of which are good by supposition. Let~$G^{*'}$ be the graph obtained from $G^{*}$ by adding an edge $e_4 := \{v,w\}$ and let~$\Oref^{*'}$ be any extension of $\Oref^{*}$. Then~$\O^{*}$ is good for $\G^{*'} := (G^{*'},\Oref^{*'},<)$ by condition~\ref{cond:tuttegoodor}.  Set~$\Cu_3 := \{ \{u\}, \{v,w\}\}$, a cut of $G^{*'}$. The contraction~$(\G^{*'}_{\Cu_3},\O^{*}_{\Cu_3})$ is good; by removing $e_3$ from this contraction using~\ref{cond:tuttegoodor} we get that something isomorphic to~$((B_2,\Oref^{2'},<),-\O^{2'})$ is good. But $\O^{2'}$ and $-\O^{2'}$ both being good for~$(B_2,\Oref^{2'},<)$ contradicts~\ref{cond:tuttebador}. So $\{-\} \in X$.
\end{proof}

\begin{subclaim} \label{sub:plusnotinx}
 We have $\{+\} \notin X$.
\end{subclaim}
\begin{proof} Define the auxiliary graph $G^{*}$ by~$V(G^{*}) := \{u,v,w\}$ and $E(G^{*}) := \{e_1,e_2\}$ where $e_1 := \{u,v\}$ and $e_2 := \{u,w\}$.  Define~$\Oref^{*}(e_1^{+}) := (u,v)$ and $\Oref^{*}(e_2^{+}) := (u,w)$. Set $\O^{*} := \{e_2^{+}\}$. Then~$\O^{*}$ is bad for $\G^{*} := (G^{*},\Oref^{*},<)$ because its contraction to~$\Cu_1:=\{\{u,w\},\{v\}\}$ is bad.  Let $G^{*'}$ be the graph obtained from $G^{*}$ by adding an edge $e_3 := \{v,w\}$, let~$\Oref^{*'}$ be the extension of $\Oref^{*}$ with $\Oref^{*'}(e_3^{+}) := (v,w)$, and let~$\O^{*'} := \O^{*} \cup \{e_3^{+},e_3^{-}\}$. Note that $\O^{*'}$ is good for~$\G^{*'} := (G^{*'},\Oref^{*'},<)$: the contractions to $\Cu_1$ and $\Cu_2 := \{\{u,v\},\{w\}\}$ no longer give potential cuts, and the contraction to~$\Cu_3 := \{\{u\},\{v,w\}\}$ is isomorphic to~$((B_2,\Oref^{2},<),\O^2)$.  So by condition~(2b), one of~$\O^{*}\cup \{e_3^{+}\}$ or~$\O^{*}\cup \{e_3^{-}\}$ must be good for $\G^{*'}$. Note that $\O^{*}\cup \{e_3^{-}\}$ is not good because the contraction of~$(\G^{*'},\O^{*}\cup \{e_3^{-}\})$ to $\Cu_1$ is isomorphic to~$((B_2,\Oref^{2'},<),-\O^{2'})$, which is bad by condition \ref{cond:tuttebador} since~$((B_2,\Oref^{2'},<),\O^{2'})$ is good. So $\O^{*}\cup \{e_3^{+}\}$ must be good; but then $((B_2,\Oref^2,<),\{e_1^{+},e_2^{+}\})$, which is isomorphic to the contraction of~$(\G^{*'},\O^{*}\cup \{e_3^{+}\})$ to $\Cu_2$, is good. Then by~\ref{cond:tuttebador} and~\ref{cond:tuttepot} we get~$\{+\} \notin X$. \end{proof}

Therefore we must have~$X = \{\emptyset,\{-\}\}$. This indeed is possible. In this case, the good fourientations are the cut weird fourientations. To see that these are exactly the good fourientations, again we can just check agreement on banana graphs. The only case not addressed by above considerations is when $\O$ is a fourientation of $(B_n,\Oref,<)$ for some~$n > 1$ where~$e_n^{\varepsilon} \in \O$ for~$\varepsilon \in \{-,+\}$ and $\O \setminus e_n = \emptyset$ is bad. 

\begin{subclaim} \label{sub:caseiimain}
Let $n > 1$. Set $\O := \{e_n^{+}\}$. Then $\O$ is good for any $(B_n,\Oref,<)$.
\end{subclaim}

\begin{proof} We prove this by induction on $n$. The case~\mbox{$n=2$} is true by our suppositions. So assume $n > 2$ and the result holds for smaller~$n$. Assume without loss of generality that~$\Oref(e^{+}_1) = (u,v)$. Suppose $\Oref(e_{n-1}^{\gamma}) = \Oref(e_{n}^{\gamma'}) = (u,v)$ for~$\gamma, \gamma' \in \{+,-\}$. Define the auxiliary graph $G^{*}$ by~$V(G^{*}) := \{u,v,w\}$ and~$E(G^{*}) := \{e_1,\ldots,e_{n+1}\}$ where~$e_1 := \ldots := e_{n-2} := e_{n} := \{u,v\}$ and~\mbox{$e_{n-1} := e_{n+1} := \{u,w\}$}. Define~$\Oref^{*}$ by~$\Oref^{*}(e_{i}) := \Oref(e_i)$ for all $1 \leq i \leq n-2$ and~\mbox{$\Oref^{*}(e_{n-1}^{\gamma}) := \Oref^{*}(e_{n+1}^{\gamma'}) := (u,w)$} and~$\Oref^{*}(e_n^{\gamma'}) := (u,v)$. Let $\O^{*} := \{e_{n}^{+},e_{n+1}^{+}\}$. Then~$\O^{*}$ is good for~$\G^{*} := (G^{*},\Oref^{*},<)$: the graph~$G^{*}$ has two simple cuts~$\Cu_1 := \{ \{u,v\}, \{w\}\}$ and~$\Cu_2 := \{ \{u,w\}, \{v\}\}$; the contraction to~$\Cu_1$ is isomorphic to~$((B_2,\Oref^2,<),\O^2)$ or to~$((B_2,\Oref^2,<),\O^{2'})$, which are good, and the contraction to~$\Cu_2$ is isomorphic to~$((B_{n-1},\Oref^{'},<),\{e_{n-1}^{+}\})$, which is good by our inductive hyptothesis. Let $G^{*'}$ be the graph obtained from $G^{*}$ by adding an edge~$e_{n+2} := \{v,w\}$ and let $\Oref^{*'}$ be any extension of~$\Oref^{*}$. By condition~\ref{cond:tutteun}, we have that~$\O^{*}$ remains good for~$\G^{*'} := (G^{*'},\Oref^{*'},<)$. Set~$\Cu_3 := \{ \{u\}, \{v,w\}\}$, a cut of~$G^{*'}$. The contraction~$(\G^{*'}_{\Cu_3}, \O^{*}_{\Cu_3})$ is good; by removing~$e_{n+1}$ from this contraction using condition~\ref{cond:tuttegoodor} we get that something isomorphic to~$((B_n,\Oref^{n},<),\O)$ is good. \end{proof}

\noindent Under the assumptions of the previous subclaim we have by condition~\ref{cond:tuttebador} that~$-\O$ is bad. So indeed any property that lands in Case II would have to be cut weird. By mimicking the proof of Theorem~\ref{thm:minaretutte} one can show that cut weird actually defines a consistent Tutte cut property. Finally note that if $\delta = +$ then by a completely symmetric argument either our Tutte property is still a min-edge cut property or we arrive at the other exceptional case where our property is cut co-weird.
\end{proof}

\begin{remark} 
Define a \emph{signed, ordered, oriented graph} to be~$(G,\Oref,<,\sigma)$, where the triple~$(G,\Oref,<)$ is an ordered, oriented graph, and $\sigma\colon E(G) \to \{+,-\}$ is any map from the edges of $G$ to $\{+,-\}$. We could extend our notion of fourientation property to take as input fourientations of signed, ordered, oriented graphs and only require invariance under isomorphism of these more decorated structures. Then we could extend the min-edge cut (cycle) property defined by $(X,\delta)$ to signed, ordered, oriented graphs by saying that a potential cut~$\OC$ (cycle $\OC$) of a fourientation~$\O$ of~$(G,\Oref,<,\sigma)$ is bad if it satisfies both of the following conditions:
\begin{enumerate}[label=(\roman*$'$)]
\item $\{\varepsilon\colon e_{\mathrm{min}}^{\varepsilon} \in \O\} = S$ for some $S \in X$, where $e_{\mathrm{min}}$ is the minimum edge in $E(\OC)$ ;
\item if $e_{\mathrm{min}}$ is unoriented (bioriented) in $\O$ then $e_{\mathrm{min}}^{\delta\cdot\sigma(e_{\mathrm{min}})} \in \mathbb{E}(\OC)$.
\end{enumerate}
The arguments already given in this section establish that the number of good $(k,l,m)$-fourientations of $(G,\Oref,<,\sigma)$ with respect to the intersection of the min-edge cut property defined by $(X,\delta_1)$ and the min-edge cycle property defined by $(Y,\delta_2)$ is still given by formula~(\ref{eqn:tutteeval}) in the statement of Theorem~\ref{thm:main}. However, a classification of Tutte properties where we allow the extra decoration $\sigma$ appears to be significantly more involved than Theorem~\ref{thm:tuttearemin}, and it is unclear what is gained by this extra level of generality. It would certainly be interesting to find a simple bijection from the good fourientations of~$(G,\Oref,<,\sigma_1)$ to the good fourientations of~$(G,\Oref,<,\sigma_2)$ with respect to some fixed min-edge cut property $(X,\delta)$.
\end{remark}

\section{Specializations} \label{sec:special}

In this section we consider $(k,l,m)$-fourientations for special values of~$(k,l,m)$. Let us call a fourientation with no bioriented edges a \emph{Type A fourientation}, and a fourientation with no unoriented edges a \emph{Type B fourientation}. In other words, a Type~A foruientation is a $(1,1,0)$-fourientation and a Type B fourientation is a $(1,0,1)$-fourientation. The fourientations that are both Type A and Type B, the $(1,0,0)$-fourientations, are precisely the total orientations. The impetus for this research was actually to unify the study of various classes of partial orientations. We now explain how Tutte fourientation properties give rise to many interesting classes of partial orientations.

\subsection{Partial orientations} \label{subsec:partial}

\begin{definition}
A \emph{partial orientation} of $(G,\Oref)$ is a subset $\O$ of $\mathbb{E}(G)$ such that for each $e \in E(G)$ at least one of $e^{+}$ or $e^{-}$ is not in $\O$. If $e^{+} \notin \O$ and $e^{-} \notin \O$ then we say~$e$ is \emph{neutral} in $\O$ and we write $e \notin \O$. If $e^{\pm} \in \O$ then we say $e$ is \emph{oriented} in $\O$.
\end{definition}

So a partial orientation is just a Type~A fourientation where we call the unoriented edges neutral. However, when studying partial orientations we actually want to consider Type A and Type~B fourientations ``simultaneously.'' Let us call the images of the min-edge classes of fourientations under the identity map from Type~A fourientations to partial orientations the \emph{Type~A classes} of partial orientations. There is also an obvious bijection from the set of Type~B fourientations of~$\G$ to the set of partial orientations of~$\G$ where we treat bioriented edges as neutral. Let us call the images of the min-edge classes of fourientations under this second bijection the \emph{Type~B classes} of partial orientations. Many (but not all) of the min-edge classes of partial orientations have been studied before, as we detail in~\S\ref{sec:connections}. In order to explicitly describe the Type~A and~B classes of partial orientations, let us give some preliminary definitions.

\begin{definition}
By abuse of language, a \emph{directed cut} (\emph{cycle}) of a partial orientation is a directed cut (cycle) of the underlying oriented graph for which all edges are oriented in agreement with the cut (cycle). A \emph{potential cut} (\emph{cycle}) of a partial orientation is a directed cut (cycle) of the underlying oriented graph for which all oriented edges are oriented consistently with the cut (cycle), but neutral edges are allowed. In symbols,~$\OC$ is a directed cut (cycle) of $\O$ if $e^{\pm} \in \mathbb{E}(\OC) \Rightarrow e^{\pm} \in \O$ for all~$e \in E$, whereas $\OC$ is a potential cut (cycle) of $\O$ if $e^{\pm} \in \mathbb{E}(\OC) \Rightarrow e^{\mp} \notin \O$ for all~$e \in E$.
\end{definition}

\noindent Using these notions of potential and directed cuts and cycles, we give the following names to the Type~A classes of partial orientations of an ordered, oriented graph:
\begin{enumerate}
\item {\bf Cut/cycle general}: There are no restrictions on cuts/cycles.
\item {\bf Cycle minimal}: The minimum edge in each directed cycle is oriented in agreement with its reference orientation.
\item {\bf Cycle maximal}:  The minimum edge in each directed cycle is oriented in disagreement its the reference orientation.
\item {\bf Acyclic}: There are no directed cycles.
\item {\bf Cut directed}: For each potential cut, if the minimum edge of the cut is neutral then the cut contains an oriented edge directed in agreement with the reference orientation of this minimum edge.
\item {\bf Cut negative}: The minimum edge in each potential cut is neutral or is oriented in agreement with its reference orientation.
\item {\bf Cut positive}: The minimum edge in each potential cut is neutral or is oriented in disagreement with its reference orientation.
\item {\bf Cut connected}: Each potential cut contains an oriented edge directed in agreement with the reference orientation of the minimum edge in the cut.
\item {\bf Cut co-connected:} For each potential cut, either the minimum edge of the cut is neutral and the cut contains an oriented edge directed in agreement with the reference orientation of this minimum edge, or the minimum edge is oriented in disagreement with its reference orientation.
\item {\bf Cut neutral}: The minimum edge in each potential cut is~neutral.
\item {\bf Cut internal}: The minimum edge in each potential cut is neutral and the cut contains an oriented edge directed in agreement with the reference orientation of this minimum edge.
\end{enumerate}
The names of the Type~B classes of partial orientations are similar (with ``strongly connected'' being dual to acyclic). The point of considering Type~A and Type~B classes simultaneously is that there are interesting containment relations between classes across types: Figure~\ref{fig:poclasses} depicts these relations. Theorem~\ref{thm:main} tells us that all Type~A and Type~B classes of partial orientations are enumerated by generalized Tutte polynomial evaluations (but note that it is not true in general that an intersection of a Type~A and a Type~B class is enumerated by a generalized Tutte polynomial evaluation). Figure~\ref{fig:fourtables} below displays these specific evaluations. 

\begin{remark}
The containment relations depicted in Figure~\ref{fig:poclasses} imply inequalities among the generalized Tutte polynomial evaluations in Figure~\ref{fig:fourtables}: for instance, for any graph~$G$ on~$n$ vertices with cyclomatic number~$g$ we have~$2^{g}\cdot T_G(2,\frac{3}{2}) \leq 2^{n-1}\cdot T_G(1,3)$.
\end{remark}

\begin{figure}
\setlength{\tabcolsep}{0pt}
\begin{tabular}{cc}
	\begin{tikzpicture}[scale=0.65]
		\node at (0,-0.72) {\parbox{0.75in}{\begin{center} Cut internal \end{center}}};
		\node at (0,-1.7) {\textrm{(A)}};
		\node at (-3.5,2) {\parbox{0.75in}{\begin{center} Cut \\ co-con. \end{center}}};
		\node at (-3.5,1) {\textrm{(A)}};
		\node at (3.5,2) {\parbox{0.75in}{\begin{center} Cut \\ connected \end{center}}};
		\node at (3.5,1) {\textrm{(A)}};
		\node at (0,2.2) {\parbox{0.8in}{\begin{center} Cut neutral \end{center}}};
		\node at (0,1.5) {\textrm{(A)}};
		\node at (-3.5,5) {\parbox{0.75in}{\begin{center} Cut \\ positive \end{center}}};
		\node at (-3.5,4) {\textrm{(A)}};
		\node at (3.5,5) {\parbox{0.75in}{\begin{center} Cut \\ negative \end{center}}};
		\node at (3.5,4) {\textrm{(A)}};
		\node at (0,5) {\parbox{0.75in}{\begin{center} Strongly \\ connected \end{center}}};
		\node at (0,4) {\textrm{(B)}};
		\node at (0,7.5) {\parbox{0.75in}{\begin{center} Cut \\ directed \end{center}}};
		\node at (0,6.5) {\textrm{(A)}};
		\node at (-2.5,8) {\parbox{0.75in}{\begin{center} Cut \\ maximal \end{center}}};
		\node at (-2.5,7) {\textrm{(B)}};
		\node at (2.5,8) {\parbox{0.75in}{\begin{center} Cut \\ minimal \end{center}}};
		\node at (2.5,7) {\textrm{(B)}};
		\node at (0,9.5) {\parbox{0.75in}{\begin{center} Cut \\ General \end{center}}};	
		\draw (0,1) -- (0,0);
		\draw (-3,1) -- (-0.5,0);
		\draw (3,1) -- (0.5,0);
		\draw (0,3.5) -- (0,2.5);
		\draw (-3,3.8) -- (-0.5,2.5);
		\draw (3,3.8) -- (0.5,2.5);
		\draw (-3.5,3.5) -- (-3.5,2.5);
		\draw (3.5,3.5) -- (3.5,2.5);
		\draw (-2.25,6.5) -- (-0.5,5.6);
		\draw (2.25,6.5) -- (0.5,5.6);
		\draw (-2.75,6.5) -- (-3.5,5.7);
		\draw (2.75,6.5) -- (3.5,5.7);
		\draw (-0.5,6.5) -- (-3.25,2.7);
		\draw (0.5,6.5) -- (3.25,2.7);
		\draw (0,8.8) -- (0,8);
		\draw (-0.5,8.8) -- (-1.5,8.2);
		\draw (0.5,8.8) -- (1.5,8.2);
	\end{tikzpicture} & 
	\begin{tikzpicture}[scale=0.65]
		\node at (0,-0.72) {\parbox{0.75in}{\begin{center} Cycle external \end{center}}};
		\node at (0,-1.7) {\textrm{(B)}};
		\node at (-3.5,2) {\parbox{0.75in}{\begin{center} Cycle \\ co-con. \end{center}}};
		\node at (-3.5,1) {\textrm{(B)}};
		\node at (3.5,2) {\parbox{0.75in}{\begin{center} Cycle \\ connected \end{center}}};
		\node at (3.5,1) {\textrm{(B)}};
		\node at (0,2.2) {\parbox{1in}{\begin{center} Cycle neutral \end{center}}};
		\node at (0,1.5) {\textrm{(B)}};
		\node at (-3.5,5) {\parbox{0.75in}{\begin{center} Cycle \\ positive \end{center}}};
		\node at (-3.5,4) {\textrm{(B)}};
		\node at (3.5,5) {\parbox{0.75in}{\begin{center} Cycle \\ negative \end{center}}};
		\node at (3.5,4) {\textrm{(B)}};
		\node at (0,5.2) {\parbox{0.75in}{\begin{center} Acyclic \end{center}}};
		\node at (0,4.5) {\textrm{(A)}};
		\node at (0,7.5) {\parbox{0.75in}{\begin{center} Cycle \\ directed \end{center}}};
		\node at (0,6.5) {\textrm{(B)}};
		\node at (-2.5,8) {\parbox{0.75in}{\begin{center} Cycle \\ maximal \end{center}}};
		\node at (-2.5,7) {\textrm{(A)}};
		\node at (2.5,8) {\parbox{0.75in}{\begin{center} Cycle \\ minimal \end{center}}};
		\node at (2.5,7) {\textrm{(A)}};
		\node at (0,9.5) {\parbox{0.75in}{\begin{center} Cycle \\ General \end{center}}};	
		\draw (0,1) -- (0,0);
		\draw (-3,1) -- (-0.5,0);
		\draw (3,1) -- (0.5,0);
		\draw (0,4) -- (0,2.5);
		\draw (-3,3.8) -- (-0.5,2.5);
		\draw (3,3.8) -- (0.5,2.5);
		\draw (-3.5,3.5) -- (-3.5,2.5);
		\draw (3.5,3.5) -- (3.5,2.5);
		\draw (-2.25,6.5) -- (-0.5,5.6);
		\draw (2.25,6.5) -- (0.5,5.6);
		\draw (-2.75,6.5) -- (-3.5,5.7);
		\draw (2.75,6.5) -- (3.5,5.7);
		\draw (-0.5,6.5) -- (-3.25,2.7);
		\draw (0.5,6.5) -- (3.25,2.7);
		\draw (0,8.8) -- (0,8);
		\draw (-0.5,8.8) -- (-1.5,8.2);
		\draw (0.5,8.8) -- (1.5,8.2);
	\end{tikzpicture}
\end{tabular}
\caption{The min-edge cut and cycle classes of partial orientations ordered by containment.} \label{fig:poclasses}
\end{figure}
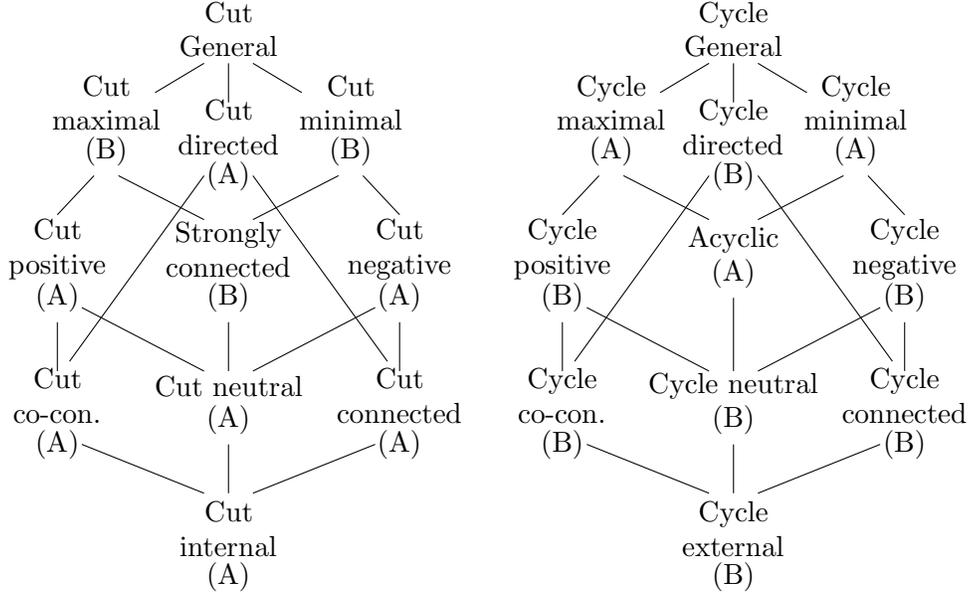

\subsection{Total orientations}

Of course, we can also set $(k,l,m) := (1,0,0)$. The $(1,0,0)$-fourientations of $\G$ are precisely the total orientations. The min-edge cut classes for total orientations are:
\begin{enumerate}
\item {\bf Cut general}: There are no restrictions on cuts.
\item {\bf Cut minimal}: The minimum edge in each directed cut is oriented in agreement with its reference orientation.
\item {\bf Cut maximal}:  The minimum edge in each directed cut is oriented in disagreement with its reference orientation.
\item {\bf Strongly connected}: There are no directed cuts.
\end{enumerate}
The poset of these four classes ordered by containment is isomorphic to the Boolean lattice on two elements. By intersecting min-edge cut and cycle classes of total orientations we realize all values of $T(x,y)$ for integral $0 \leq x,y \leq 2$ as explained in the unifying work of Gioan~\cite{gioan2007enumerating} and Bernardi~\cite{bernardi2008tutte}. Bernardi connects this $3 \times 3$ table with a corresponding table of classical subgraph enumerations. Note, however, that the input data of~\cite{bernardi2008tutte} is different than what we are working with here: Bernardi uses an embedding of the graph into a surface rather than~$\Oref$ and~$<$ to define his classes and in particular to define a notion of internal and external activity.  The middle row and column of the $3 \times 3$ table have various equivalent descriptions (see~\S\ref{subsec:history} above):
\pagebreak
\begin{itemize}
\item Middle row:
	\begin{itemize}
	\item cycle minimal total orientations;
	\item cycle maximal total orientations;
	\item equivalence classes of total orientations modulo cycle reversals;
	\item indegree sequences of total orientations.
	\end{itemize}
\item Middle column:
	\begin{itemize}
	\item cut minimal total orientations;
	\item cut maximal total orientations;
	\item equivalence classes of total orientations modulo cocycle reversals;
	\item $q$-connected total orientations.
	\end{itemize}
\end{itemize}
\noindent Informally, the indegree sequence of an orientation is the list of the numbers of incoming edges at each vertex, and a $q$-connected orientation is one with a directed path from~$q$ to every other vertex. We now recall the cycle/cocycle reversal systems of Gioan~\cite{gioan2007enumerating}. Given a total orientation~$\O$ of~$\G$, a \emph{(co)cycle reversal} is the operation of replacing~$\O$ by~$(\O \setminus \mathbb{E}(\OC)) \cup \mathbb{E}(-\OC)$ for some directed cycle (cut) $\OC$ of $\O$. We write~$\O \overset{\Cy} \sim \O'$ ($\O \overset{\Cu} \sim \O'$) if $\O$ is related to~$\O'$ by a series of (co)cycle reversals, and write $\O \sim \O'$ if~$\O$ is related to $\O'$ by a series of cycle and cocycle reversals. The three equivalence relations~$ \overset{\Cy} \sim$, $\overset{\Cu} \sim$, and $\sim$ define the \emph{cycle}, \emph{cocycle}, and \emph{cycle-cocycle} reversal systems respectively. Each equivalence class in the (co)cycle reversal system contains a unique cycle (cut) minimal orientation, and each equivalence class in the cycle-cocycle reversal system contains a unique cut minimal-cycle minimal orientation.

\subsection{Tables of enumerations}
Figure~\ref{fig:fourtables} displays four tables recording the generalized Tutte polynomial evaluations that, as a consequence of Theorem~\ref{thm:main}, enumerate the various min-edge classes of generalized orientations.

\subsection{Subgraphs} \label{subsec:subgraphs}

If we set $(k,l,m) := (0,1,1)$ we get fourientations with no oriented edges. We may identify such fourientations with subgraphs by thinking of the bioriented edges as belonging to our subgraph and the unoriented edges as being absent. Here a subgraph just means a subset $H\subseteq E(G)$ of the edges of $G$. In other words, all subgraphs $H$ of $G$ under consideration have $V(H) = V(G)$ and $E(H)\subseteq E(G)$, so we can identify the subgraph $H$ with its set of edges $E(H)$. Let $H \subseteq E(G)$ be a subgraph of $G$. Then a \emph{cut} $\Cu$ of $H$ is a cut $\Cu$ of $G$ with $E(\Cu) \cap H = \emptyset$. And a \emph{cycle} $\Cy$ of $H$ is a cycle $\Cy$ of $G$ with $E(\Cy) \subseteq H$. The min-edge cut classes for subgraphs become:
\begin{enumerate}
\item {\bf Cut general}: There are no restrictions on cuts.
\item {\bf Spanning}: The subgraph has no cuts.
\end{enumerate}
Of course the poset of these two classes ordered by containment is isomorphic to the Boolean lattice on one element. Dually, the min-edge cycle classes become:
\begin{enumerate}
\item {\bf Cycle general}: There are no restrictions on cuts.
\item {\bf Forest}: The subgraph has no cycles.
\end{enumerate}
These classes make up two rows and two columns of the~$3 \times 3$ table of classical subgraph Tutte polynomial enumerations mentioned above. In order to recover the other row and column of this table we need to consider internal and external activities: this will be the main project of a sequel paper. We now briefly sketch this approach.

\subsection{Future work: activities} \label{subsec:future}
For a fourientation $\O$ of $\G$ let $\O^{o}$ denote the set of oriented edges of $\O$, $\O^{u}$ the set of unoriented edges, and $\O^{b}$ the set of bioriented edges as in the statement of Theorem~\ref{thm:main}. Also, let $\O^{+}$ denote the set of edges oriented in agreement with the reference orientation and $\O^{-}$ the set of edges oriented in disagreement with the reference orientation so that $\O^{o} = \O^{+} \cup \O^{-}$.

Las Vergnas developed a notion of orientation activity that allows one to recapture the enumerations in the bottom $3 \times 3$ table in Figure~\ref{fig:fourtables}. Specifically, let us say~$e \in E(G)$ is \emph{internally active} in the total orientation $\O$ if it is the minimum edge in some directed cut of $\O$. Dually, we say $e \in E(G)$ is \emph{externally active} in $\O$ if it is the minimum edge in some directed cycle of $\O$. Let $I(\O)$ denote the set of internally active edges in~$\O$ and~$L(\O)$ the set of externally active edges. For ease of notation set~$I(\O^{+}) := I(\O) \cap \O^{+}$ and so on. Las Vergnas~\cite[Theorem 3.1]{las1984tutte}~\cite[Theorem~3.1]{las2012tutte} proved
\begin{align} \label{eqn:orientationactivity}
T_G(x+w,y+z) = \sum_{\O} x^{| I(\O^{+})|} w^{|I(\O^{-})|} y^{|L(\O^{+})|} z^{|L(\O^{-})|}
\end{align}
where the sum is over all total orientation $\O$ of $\G$.

There is a very analogous story for subgraphs. We say an edge $e \in E(G)$ is \emph{internally active} in the subgraph $H \subseteq E(G)$ if $e$ is the minimum edge in a cut of $H\setminus \{e\}$. Dually, we say~$e\in E(G)$ is \emph{externally active} in $H$ if $e$ is the minimum edge in a cycle of $H \cup \{e\}$. Let $I(H)$ denote the set of internally active edges of $H$ and $L(H)$ the set of externally active edges. In the case where $H$ is a spanning tree of $G$, these notions of activity go back to the original work of Tutte~\cite{tutte1954contribution}; but in the case of arbitrary subgraphs $H$ the earliest reference for these notions of activity we are aware of is Gordon-Traldi~\cite{gordon1990generalized} (but see also~\cite{bari1979chromatic}). Gordon-Traldi~\cite[Theorem 3]{gordon1990generalized} (see also~\cite[Theorem~3.5]{las2013tutte}) proved
\begin{align} \label{eqn:subgraphactivity}
T_G(x_*+w_*,y_*+z_*) = \sum_{H \subseteq E(G)} {x_*}^{|I(H) \cap H|} {w_*}^{|I(H)\setminus H|} {y_*}^{|L(H) \setminus H|} {z_*}^{|L(H) \cap H|}.
\end{align}

\medskip

Of course the equations~~(\ref{eqn:orientationactivity}) and~(\ref{eqn:subgraphactivity}) give different expressions for the same Tutte polynomial evaluation when the variables with stars equal those without. Giving a bijective proof of this fact that matches terms in the two sums is one aim of the so-called ``active bijection'' of Gioan-Las Vergnas~\cite{gioan2009active}. We now discuss a different approach to understanding the relationship between~(\ref{eqn:orientationactivity}) and~(\ref{eqn:subgraphactivity}) via fourientations. The ultimate goal is to derive a fourientation activity expression for the Tutte polynomial that specializes to both~(\ref{eqn:orientationactivity}) and~(\ref{eqn:subgraphactivity}). So let us say an edge $e \in E(G)$ is \emph{internally active} in a fourientation $\O$ of $\G$ if
\begin{itemize}
\item $e$ is oriented in $\O$ and is the minimum edge of some potential cut of $\O$;
\item $e$ is unoriented in $\O$ and is the minimum edge of some potential cut of $\O \cup \{e^{-}\}$.
\end{itemize}
Dually, we say~$e \in E(G)$ is \emph{externally active} in~$\O$ if
\begin{itemize}
\item $e$ is oriented in $\O$ and is the minimum edge of some potential cycle of $\O$;
\item $e$ is bioriented in $\O$ and is the minimum edge of some potential cycle of $\O \setminus \{e^{+}\}$.
\end{itemize}
As before, let $I(\O)$ denote the set of internally active edges in $\O$ and $L(\O)$ the set of externally active edges. Set~$I(\O^{u}) := I(\O) \cap \O^{u}$ and so on. Then the techniques developed in this paper allow one to prove that
\begin{gather} \label{eqn:fouractivity}
(k+m)^{n-1}(k+l)^{g}T_G\left(\frac{kx+kw+lw_*+m}{k+m},\frac{ky+kz+l+mz_*}{k+l}\right) = \\
\sum_{\O} k^{|\O^{o}|}l^{|\O^{u}|}m^{|\O^{b}|}x^{| I(\O^{+}) |} w^{| I(\O^{-}) |} {w_*}^{| I(\O^{u}) |} y^{| L( \O^{+}) |} z^{| L(\O^{-}) |}{z_*}^{| L(\O^{b}) |} \nonumber
\end{gather}
where the sum is over all fourientations $\O$ of $\G$. Taking~$x,w,w_*,y,v,v_* \in \{0,1\}$ in~(\ref{eqn:fouractivity}) recovers the enumeration of min-edge classes of fourientations from Theorem~\ref{thm:main}. Also, specializing~$(k,l,m) := (1,0,0)$ in~(\ref{eqn:fouractivity}) recovers Las Vergnas's formula~(\ref{eqn:orientationactivity}). Moreover, specializing~$(k,l,m) := (0,1,1)$ in~(\ref{eqn:fouractivity}) yields
\[T_G(1+w_*,1+z_*) = \sum_{H \subseteq E(G)} {w_*}^{|I(H)\setminus H|} {z_*}^{|L(H) \cap H|},\]
which is~(\ref{eqn:subgraphactivity}) with $x_* := 1$ and $y_* := 1$. It would be extremely interesting to introduce two extra variables, $x_*$ and $y_*$, into~(\ref{eqn:fouractivity}) so that we can fully recover the Gordon-Traldi formula~(\ref{eqn:subgraphactivity}) when we set~$(k,l,m) := (0,1,1)$. This amounts to extending the definition of fourientation activity so that bioriented edges can be internally active and unoriented edges can be externally active. This extension is the project of our future work. Ultimately we should also be able to realize the active bijection as a map from the set of fourientations to itself that explains the symmetric role played by the variables in~(\ref{eqn:fouractivity}) when $(k,l,m) := (1,1,1)$.

\afterpage{
\newgeometry{top=1in, bottom=1in, left=0.75in, right=0.75in}

\begin{figure}

\setlength{\tabcolsep}{0.3em}

\def\arraystretch{2}
\begin{tabular}{c | c | c | c | c | }
	\multicolumn{5}{c}{{\bf Fourientations}} \\
	\multicolumn{1}{r}{} & \multicolumn{1}{c}{General} & \multicolumn{1}{c}{\parbox{1in}{\begin{center} {\color{blue} Cut pos./neg.} \\  {\color{red} Cut directed}\end{center}}} & \multicolumn{1}{c}{\parbox{1in}{\begin{center} {\color{blue} Cut neutral} \\ {\color{red} Cut (co)-con. }\end{center}}}  & \multicolumn{1}{c}{Cut internal}   \\ \hhline{|~|-|-|-|-|}
	General & \cellcolor{mygray}  $2^{|E|} T(2,2)$ & $2^{|E|} T(\frac{3}{2},2)$ &  \cellcolor{mygray}  $2^{|E|} T(1,2)$ &   $2^{|E|} T(\frac{1}{2}, 2)$   \\ \hhline{|~|-|-|-|-|}
	\parbox{0.75in}{\begin{center}{\color{blue} Cycle pos./neg.} \\ {\color{red} Cycle directed}\end{center}} & $2^{|E|} T(2,\frac{3}{2})$ & $2^{|E|} T(\frac{3}{2},\frac{3}{2})$ & $2^{|E|} T(1,\frac{3}{2})$ & $2^{|E|} T(\frac{1}{2},\frac{3}{2})$ \\ \hhline{|~|-|-|-|-|}
	\parbox{0.75in}{\begin{center} {\color{blue} Cycle neutral} \\ {\color{red} Cycle (co)-con.} \end{center}} & $2^{|E|} T(2,1)$ & $2^{|E|} T(\frac{3}{2},1)$ & $2^{|E|} T(1,1)$ & $2^{|E|} T(\frac{1}{2},1)$ \\ \hhline{|~|-|-|-|-|}
	\parbox{0.75in}{\begin{center}Cycle external\end{center}} & $2^{|E|} T(2,\frac{1}{2})$ & $2^{|E|} T(\frac{3}{2},\frac{1}{2})$ & $2^{|E|} T(1,\frac{1}{2})$ & $2^{|E|} T(\frac{1}{2},\frac{1}{2})$ \\ \hhline{|~|-|-|-|-|}
\end{tabular}

\vspace{0.3in}

\setlength{\tabcolsep}{3pt}
\def\arraystretch{1}
\begin{tabular}{c|c} 
	\begin{minipage}{.49\linewidth}
		\begin{tabular}{c | c | c | c | c |}
			\multicolumn{5}{c}{{\bf Type A classes of partial orientations}}\\
			\multicolumn{1}{r}{} & \multicolumn{1}{c}{General} & \multicolumn{1}{c}{\parbox{0.65in}{\begin{center} {\color{blue} Cut pos./neg.} \\ {\color{red} Cut dir.}\end{center}} } & \multicolumn{1}{c}{\parbox{0.65in}{\begin{center} {\color{blue} Cut neutral} \\ {\color{red} Cut (co)-con.}\end{center}} } &\multicolumn{1}{c}{\parbox{0.5in}{\begin{center}Cut int. \end{center}}} \\ \hhline{|~|-|-|-|-|}
			Gen. &  \cellcolor{mygray} \parbox{0.5in}{\begin{center} $2^{g}$ \\ $T(3,\frac{3}{2})$ \end{center}} & \parbox{0.5in}{\begin{center} $2^{g}$ \\ $T(2,\frac{3}{2})$ \end{center}} & \parbox{0.5in}{\begin{center} $2^{g}$ \\ $T(1,\frac{3}{2})$ \end{center}} & \parbox{0.5in}{\begin{center} $2^{g}$ \\ $T(0,\frac{3}{2})$ \end{center}} \\ \hhline{|~|-|-|-|-|}
			\parbox{0.5in}{\begin{center}{\color{blue} Cycle \\ min.} \\ {\color{red}Cycle  max.}\end{center}} & \cellcolor{mygray}  \parbox{0.5in}{\begin{center} $2^{g}$ \\ $T(3,1)$ \end{center}} & \parbox{0.5in}{\begin{center} $2^{g}$ \\ $T(2,1)$ \end{center}} & \parbox{0.5in}{\begin{center} $2^{g}$ \\ $T(1,1)$ \end{center}} & \parbox{0.5in}{\begin{center} $2^{g}$ \\ $T(0,1)$ \end{center}} \\ \hhline{|~|-|-|-|-|}
			Acyc. & \cellcolor{mygray} \parbox{0.5in}{\begin{center} $2^{g}$ \\ $T(3,\frac{1}{2})$ \end{center}} & \parbox{0.5in}{\begin{center} $2^{g}$ \\ $T(2,\frac{1}{2})$ \end{center}} & \cellcolor{mygray} \parbox{0.5in}{\begin{center} $2^{g}$ \\ $T(1,\frac{1}{2})$ \end{center}} & \parbox{0.5in}{\begin{center} $2^{g}$ \\ $T(0,\frac{1}{2})$ \end{center}} \\ \hhline{|~|-|-|-|-|}
		\end{tabular}  
	\end{minipage} & \begin{minipage}{.42\linewidth}
		\begin{tabular}{c | c | c | c |}
			\multicolumn{4}{c}{{\bf Type B classes of partial orientations}}\\
			\multicolumn{1}{r}{} & \multicolumn{1}{c}{General} & \multicolumn{1}{c}{\parbox{0.7in}{\begin{center}Cut \\ {\color{blue} min.}/{\color{red} max.} \end{center}}} & \multicolumn{1}{c}{\parbox{0.5in}{\begin{center}Strong.\\ con.\end{center}}} \\ \hhline{|~|-|-|-|}
			Gen. & \cellcolor{mygray} \parbox{0.5in}{\begin{center} $2^{n-1}$ \\ $T(\frac{3}{2},3)$ \end{center}} & \cellcolor{mygray} \parbox{0.5in}{\begin{center} $2^{n-1}$ \\ $T(1,3)$ \end{center}} & \cellcolor{mygray} \parbox{0.5in}{\begin{center} $2^{n-1}$ \\ $T(\frac{1}{2},3)$ \end{center}}  \\ \hhline{|~|-|-|-|}
			\parbox{0.6in}{\begin{center}{\color{blue} Cycle pos./neg.} \\ {\color{red} Cycle dir.}\end{center}} & \parbox{0.5in}{\begin{center} $2^{n-1}$ \\ $T(\frac{3}{2},2)$ \end{center}} & \parbox{0.5in}{\begin{center} $2^{n-1}$ \\ $T(1,2)$ \end{center}} & \parbox{0.5in}{\begin{center} $2^{n-1}$ \\ $T(\frac{1}{2},2)$ \end{center}}  \\ \hhline{|~|-|-|-|}
			\parbox{0.6in}{\begin{center} {\color{blue} Cycle neutral} \\ {\color{red} Cycle (co)-con.} \end{center}}  &  \cellcolor{mygray} \parbox{0.5in}{\begin{center} $2^{n-1}$ \\ $T(\frac{3}{2},1)$ \end{center}} & \parbox{0.5in}{\begin{center} $2^{n-1}$ \\ $T(1,1)$ \end{center}} &  \cellcolor{mygray} \parbox{0.5in}{\begin{center} $2^{n-1}$ \\ $T(\frac{1}{2},1)$ \end{center}}    \\ \hhline{|~|-|-|-|}
			\parbox{0.5in}{\begin{center}Cycle ext.\end{center}} & \parbox{0.5in}{\begin{center} $2^{n-1}$ \\ $T(\frac{3}{2},0)$ \end{center}} & \parbox{0.5in}{\begin{center} $2^{n-1}$ \\ $T(1,0)$ \end{center}} & \parbox{0.5in}{\begin{center} $2^{n-1}$ \\ $T(\frac{1}{2},0)$ \end{center}}  \\ \hhline{|~|-|-|-|}
		\end{tabular}  
	\end{minipage}
\end{tabular}

\vspace{0.3in}

\def\arraystretch{1.8}
\begin{tabular}{c |c| c| c |}
	\multicolumn{4}{c}{{\bf Total orientations }} \\
	\multicolumn{1}{r}{} & \multicolumn{1}{c}{General} & \multicolumn{1}{c}{\parbox{0.75in}{\begin{center}Cut \\ {\color{blue} min.}/{\color{red} max.} \end{center}}} & \multicolumn{1}{c}{\parbox{0.75in}{\begin{center}Strongly \\connected\end{center}}} \\ \hhline{|~|-|-|-|}
	General & \cellcolor{mygray} $T(2,2)$ & \cellcolor{mygray} $T(1,2)$ & \cellcolor{mygray} $T(0,2)$  \\ \hhline{|~|-|-|-|}
	\parbox[c]{0.75in}{\begin{center}Cycle \\ {\color{blue} min.}/{\color{red} max.} \end{center}} & \cellcolor{mygray} $T(2,1)$ & \cellcolor{mygray} $T(1,1)$ & \cellcolor{mygray} $T(0,1)$  \\ \hhline{|~|-|-|-|}
	Acyclic & \cellcolor{mygray} $T(2,0)$ & \cellcolor{mygray} $T(1,0)$ & \cellcolor{mygray} $T(0,0)$ \\ \hhline{|~|-|-|-|}
\end{tabular}
\caption{Four tables showing how the min-edge classes of generalized orientations are enumerated by generalized Tutte polynomial evaluations. Cells shaded in gray are enumerations that had been obtained in some form prior to this work or are trivial (see~\S\ref{subsec:history}).} \label{fig:fourtables}

\end{figure}
\restoregeometry
}

\restoregeometry

\section{Connections between min-edge classes and geometric, combinatorial, and algebraic objects} \label{sec:connections}

The mid-edge classes of fourientations enumerated in our main Theorem~\ref{thm:main} are not simply formal combinatorial objects.  In this section we illustrate their broader significance by highlighting connections to several different mathematical topics such as bigraphical arrangements, cycle-cocycle reversal systems, Riemann-Roch theory for graphs, graphic Lawrence ideals, zonotopal algebras, and the reliability polynomial.  In the future, we hope that a more unified understanding of these various relationships will arise which incorporates additional min-edge classes.

Throughout this section we fix an ordered, oriented graph~$\G = (G,\Oref,<)$ which has~$n := |V(G)|$ vertices and cyclomatic number~$g := |E(G)| - |V(G)| + 1$.  We will suppress mention of the reference orientation and edge order where it is not necessary (and thus for example speak of fourientations or partial orientations of $G$).

\subsection{Bi(co)graphical arrangements and cycle (cut) neutral partial orientations} \label{subsec:bigraph}

Cycle neutral partial orientations are related to the bigraphical arrangements originally defined by the second author and Perkinson~\cite{hopkins2012bigraphical}. We explain this relationship precisely here, and also define for the first time the object dual to the bigraphical arrangement, namely the bicographical arrangement. The bi(co)graphical arrangement depends on~$G$ as well as a \emph{parameter list} $A = (a_{e^{\pm}}) \in \mathbb{R}_{>0}^{\mathbb{E}(G)}$, which is a list of positive real parameters~$a_{e^{+}}, a_{e^{-}} \in \mathbb{R}_{>0}$ for each $e \in E(G)$ subject to the technical restriction that $a_{e_1^{\delta_1}} \neq a_{e_2^{\delta_2}}$ for $e_1^{\delta_1} \neq e_2^{\delta_2} \in \mathbb{E}(G)$ with $\Oref(e_1^{\delta_1}) = \Oref(e_2^{\delta_2})$. For an appropriate choice of parameters, the regions of the bi(co)graphical arrangement are in bijection with cycle (cut) neutral partial orientations; moreover the regions that avoid a certain generic hyperplane are in bijection with cut minimal-cycle neutral (cycle minimal-cut neutral) partial orientations, and the bounded regions are in bijection with strongly connected-cycle neutral (acyclic-cut neutral) partial orientations. The result concerning bounded regions of the bigraphical arrangement was essentially already proved in~\cite{hopkins2012bigraphical}, albeit in slightly different language. In general, for any hyperplane arrangement $\mathcal{A}$ these three region counts (total number of regions, number of regions avoiding a generic hyperplane, number of bounded regions) are given (up to sign) by evaluating the characteristic polynomial $\bigchi_{\mathcal{A}}(t)$ at~$t = -1,0,1$. These three characteristic polynoial evaluations allow us to explain an entire row (resp., column) in one of the tables in Figure~\ref{fig:fourtables} in terms of the bi(co)graphical arrangement.

The degenerate case of the bi(co)graphical arrangement where we set all the parameters $a_{e^{\pm}}$ to~$0$ recovers the (co)graphical arrangement. Many of the results here are extensions from total orientations to partial orientations of results obtained by Greene and Zaslavsky in~\cite{greene1983interpretation}, especially~\S8 of that paper which explores the cographical arrangement. In the proofs in this subsection we assume some familiarity with the theory of hyperplane arrangements, especially the notions of the intersection poset and characteristic polynomial of a hyperplane arrangement; see~\cite{stanley2007hyperplane} for all the relevant definitions and background information. 

\begin{definition}
Let $A = (a_{e^{\pm}}) \in \mathbb{R}_{>0}^{\mathbb{E}(G)}$ be a parameter list. Let~$W \simeq \mathbb{R}^{V(G)}$ be a real vector space with basis~$x_v$ for $v \in V$. Let $U \subseteq W$ be the subspace of $W$ where~$\sum_{v \in V(G)} x_{v} = 0$. The \emph{bigraphical arrangement} $\Sigma_{(G,\Oref)}(A) \subseteq U$ is
\[ \Sigma_{(G,\Oref)}(A) := \{H_{e^{+}} \cap U, H_{e^{-}} \cap U\colon e \in E(G) \textrm{ with $e$ not a loop}\}\]
where for a non-loop $e \in E(G)$ with $e^{\pm} = (u,v)$ we define \mbox{$H_{e^{\pm}} := x_v - x_u = a_{e^{\pm}}$}. Note that the bigraphical arrangement is an essential arrangement of $2|E(G)|$ hyperplanes in~$(n-1)$-dimensional space.
\end{definition}

\begin{definition}
Let $A \in \mathbb{R}_{>0}^{\mathbb{E}(G)}$ be a parameter list. Let $W \simeq \mathbb{R}^{E(G)}$ be a real vector space with basis $x_{e^{+}}$ for $e \in E(G)$, with the convention that $x_{e^{-}} = -x_{e^{+}}$. Let $U \subseteq W$ be the subspace of $W$ where for every $v \in V(G)$ we have $\sum_{e^{\pm} \in \mathbb{E}(\{v\},V(G)\setminus\{v\})} x_{e^{\pm}} = 0$. The \emph{bicographical arrangement} $\Sigma^{*}_{(G,\Oref)}(A) \subseteq U$ is
\[ \Sigma^{*}_{(G,\Oref)}(A) := \{H_{e^{+}} \cap U, H_{e^{-}} \cap U\colon e \in E(G)\}\]
where for $e \in E(G)$ we define $H_{e^{\pm}} := x_{e^{\pm}} = a_{e^{\pm}}$. Note that the bicographical arrangement is an essential arrangement of $2|E(G)|$ hyperplanes in $g$-dimensional space. (This is because $U$ is determined by $n$ linear equations, any $n-1$ of which are linearly independent, so its dimension is $|E(G)| - (n-1) = g$.)
\end{definition}

A region of a hyperplane a hyperplane arrangement $\mathcal{A}$ in $\mathbb{R}^{k}$ is a connected component of $\mathbb{R}^{k} \setminus \mathcal{A}$. In both the bigraphical and bicographical arrangements, the hyperplanes~$H_{e^{+}}$ and $H_{e^{-}}$ cut out a ``sandwich'' in space for each~$e \in E(G)$, so that for any region~$R$ of the arrangement exactly one of the following holds:
\begin{enumerate}[label=(\alph*),ref=(\alph*)]
\item $R$ is in the half-space of $U \setminus H_{e^{+}}$ opposite from $H_{e^{-}}$; \label{cond:hypplus}
\item $R$ is in the half-space of $U \setminus H_{e^{-}}$ opposite from $H_{e^{+}}$; \label{cond:hypminus}
\item $R$ is between $H_{e^{+}}$ and $H_{e^{-}}$. \label{cond:hypneut}
\end{enumerate}
Thus there is a natural map $R \mapsto \O_R$ that associates to any region $R$ of either the bigraphical or bicographical arrangement a partial orientation $\O_R$ of $(G,\Oref)$ whereby~$e \in E(G)$ is oriented as $e^{+}$ in case~\ref{cond:hypplus}, it is oriented as $e^{-}$ in case~\ref{cond:hypminus}, and it is left neutral in case~\ref{cond:hypneut}.\footnote{In the case of the bigraphical arrangement, if $e$ is a loop we did not include hyperplanes $H_{e^{\pm}}$ for~$e$ because they would lead to contradictory equations, but we can in fact consider these as hyperplanes ``at infinity'' and thus treat any region as ``between'' $H_{e^{+}}$ and $H_{e^{-}}$. Thus a loop will always be neutral in $\O_R$ for $R$ a region of $\Sigma_{(G,\Oref)}(A)$. It can similarly be seen that  an isthmus will always be neutral in~$\O_R$ for~$R$ a region of~$\Sigma^{*}_{(G,\Oref)}(A)$.} The second author and Perkinson~\cite{hopkins2012bigraphical} show that for a generic parameter list $A$ the number of regions of $\Sigma_{(G,\Oref)}(A)$ is given by a generalized Tutte polynomial evaluation. In order to make their input data compatible with the edge order $<$ used to define classes of partial orientation above we will fix a particular choice of generic parameters, namely, exponential parameters. We define the \emph{exponential parameter list} associated to~$<$ to be $A^< := (a^<_{e^{\pm}})$ where for each~$e \in E(G)$ we set $a^<_{e^{+}} := a^<_{e^{-}} := (1/2)^{i}$ if $e$ is the $i$th smallest edge according to $<$. That is, if~$e_1 < e_{2} < \cdots < e_m$ are the elements of $E(G)$, then $a^<_{e_i^{+}} = a^<_{e_i^{-}} = (1/2)^{i}$.

\begin{prop} \label{prop:bigbij}
The map $R \mapsto \O_R$ is a bijection between the regions of $\Sigma_{(G,\Oref)}(A^<)$ and the cycle neutral partial orientations of $\G$.
\end{prop}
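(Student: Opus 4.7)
The plan is to translate the region structure into a system of linear inequalities on $(x_v)_{v\in V(G)}$, verify the forward direction (every $\O_R$ is cycle neutral) by a sum-around-a-cycle argument, and verify the reverse direction (surjectivity onto cycle neutral) via a linear programming feasibility argument; injectivity of $R\mapsto \O_R$ is immediate since $\O_R$ records on which side of each hyperplane $R$ lies. First I would unpack the hyperplane description: for any point $(x_v)\in U$ and any non-loop edge $e$ with $e^+=(u,v)$, being on the $H_{e^+}$-side away from $H_{e^-}$ gives $x_v-x_u>a^<_{e^+}$ (so $e$ is oriented as $e^+$ in $\O_R$), being on the $H_{e^-}$-side away from $H_{e^+}$ gives $x_v-x_u<-a^<_{e^-}$ (so $e$ is oriented as $e^-$), and being between gives $-a^<_{e^-}<x_v-x_u<a^<_{e^+}$ (so $e$ is neutral); loops are always neutral. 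Uniformly, if $\O_R$ orients $e$ as $e^\delta=(u',v')$ then $x_{v'}-x_{u'}>a^<_{e^\delta}$ on $R$.

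For the forward direction, assume for contradiction that $\O_R$ has a potential cycle $\OCy=v_1,e_{i_1}^{\delta_1},\ldots,v_k,e_{i_k}^{\delta_k}$ whose minimum edge $e_{i_j}$ is oriented (consistently with $\OCy$, by the definition of potential cycle for partial orientations). Picking any $(x_v)\in R$, for each oriented edge $e_{i_l}$ of $\OCy$ we have $x_{v_{l+1}}-x_{v_l}>a^<_{e_{i_l}^{\delta_l}}$, and for each neutral edge the weaker bound $x_{v_{l+1}}-x_{v_l}>-a^<_{e_{i_l}^{-\delta_l}}$. Summing over $l$ telescopes to $0$ on the left, while on the right the contribution $a^<_{e_{i_j}}=(1/2)^{i_j}$ of the minimum edge strictly dominates the sum $\sum_{i>i_j}(1/2)^{i}$ of all other parameters by the geometric-series property of $A^<$. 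This yields $0>0$, a contradiction.

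For the reverse direction, fix a cycle neutral partial orientation $\O$ and let $P_\O\subseteq U$ be the polyhedron defined by the inequalities $\O$ dictates (as unpacked above). By standard Bellman-Ford/Farkas feasibility for strict difference constraints, $P_\O\neq\emptyset$ iff every directed cycle in the associated constraint graph has strictly positive total weight; here the constraint graph has vertex set $V(G)$ and a directed edge $v\to u$ of weight $-a^<_{e^\delta}$ for each edge $e$ oriented as $e^\delta=(u,v)$ in $\O$, together with two directed edges $u\to v$ and $v\to u$ of weights $a^<_{e^+}$ and $a^<_{e^-}$ for each neutral edge $e$ with $e^+=(u,v)$. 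A closed walk decomposes into an edge-disjoint union of simple cycles, so it suffices to check simple directed cycles $C^*$; any such $C^*$ is vertex-disjoint and hence projects to a simple directed cycle $\OCy$ in $G$ in which oriented edges of $\O$ appear traversed \emph{opposite} to their $\O$-orientation. The reverse cycle $-\OCy$ is therefore a potential cycle of $\O$, and cycle-neutrality forces its minimum edge $e_{\min}$ to be neutral. Hence $e_{\min}$ contributes $+a^<_{e_{\min}}$ to the weight of $C^*$, while every other edge contributes with absolute value strictly smaller than $a^<_{e_{\min}}$; the geometric dominance property again makes the total weight strictly positive, so $P_\O$ is nonempty and coincides with the unique open region $R$ satisfying $\O_R=\O$.

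The hard step is the reverse direction: the delicate bookkeeping is to recognize that an LP infeasibility certificate (a non-positive weight cycle in the constraint graph) automatically corresponds, after direction reversal, to a potential cycle of $\O$, so that the cycle-neutrality hypothesis applies there and the exponential-parameter dominance $a^<_{e_{\min}}>\sum_{e>e_{\min}} a^<_e$ rules out the certificate.
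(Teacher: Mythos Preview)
Your proof is correct and more self-contained than the paper's. The paper's argument simply cites \cite[Theorem~1.6]{hopkins2012bigraphical} to identify the image of $R\mapsto\O_R$ with the set of \emph{$A$-admissible} partial orientations (those for which every potential cycle has positive score $\nu_A(\OCy,\O)$), and then observes that for the exponential parameters $A^<$ the score of a potential cycle is dominated by its minimum edge, so $A^<$-admissibility collapses to cycle neutrality. Your forward direction is essentially this same dominance argument written out as a telescoping sum, but your reverse direction replaces the black-box citation with a direct Farkas/Bellman--Ford feasibility argument: you build the constraint digraph, reduce infeasibility to the existence of a simple nonpositive-weight cycle, reverse that cycle to obtain a potential cycle of $\O$, and kill it with cycle neutrality plus the geometric dominance $a^<_{e_{\min}}>\sum_{e>e_{\min}}a^<_e$. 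What you gain is a proof that does not depend on the external reference; what the paper gains is brevity.

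One small wrinkle worth patching: a simple $2$-cycle in your constraint digraph that uses both directions of a \emph{single} neutral edge $e$ does not project to a cycle of $G$ (it would repeat the edge $e$), so your sentence ``projects to a simple directed cycle $\OCy$ in $G$'' needs a side remark. This case is harmless---its weight is $a^<_{e^+}+a^<_{e^-}=2a^<_e>0$---so just dispose of it separately before invoking the projection argument for the remaining simple cycles.
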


\begin{proof}
Cleary $R \mapsto \O_R$ is injective as a map to partial orientations. It is shown in~\cite[Theorem 1.6]{hopkins2012bigraphical} that for a bigraphical arrangement $\Sigma_{(G,\Oref)}(A)$ with arbitrary parameter list $A$, the image of this map $R \mapsto \O_R$ is the set of so-called \emph{$A$-admissible partial orientations}. A partial orientation $\O$ is $A$-admissible if every potential cycle of $\O$ has a positive score with respect to $A$, where the score $\nu_A(C,\O)$ of a potential cycle $\OCy$ is given by
\[ \nu_A(\OCy,\O) := \sum_{\substack{e^{\pm} \in \mathbb{E}(\OCy), \\ e \notin \O}} a_{e^{\pm}} - \sum_{\substack{e^{\pm} \in \mathbb{E}(\OCy), \\ e^{\pm} \in \O}} a_{e^{\mp}}. \]
We are interested in the case of the exponential parameter list $A^<$. There is a simpler description of admissibility in this special case: a partial orientation is $A^<$-admissible precisely when the minimum edge in every potential cut is neutral because the contribution of this minimum edge in a potential cycle dominates the score of that cycle. In other words, a partial orientation is $A^<$-admissible precisely when it is cycle neutral. So indeed the image of $R \mapsto \O_R$ is the set of cycle neutral orientations.
\end{proof}

\begin{prop}
The map $R \mapsto \O_R$ is a bijection between the regions of $\Sigma^{*}_{(G,\Oref)}(A^<)$ and the cut neutral partial orientations of $\G$.
\end{prop}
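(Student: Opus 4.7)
The plan is to adapt the proof of Proposition~\ref{prop:bigbij} to its dual form, exploiting the duality between the cycle space and cut space of $G$. The map $R \mapsto \O_R$ is visibly injective as a map to partial orientations, so all of the work lies in identifying its image.

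The key observation is that, under the convention $x_{e^{-}} = -x_{e^{+}}$, the ambient subspace $U$ of the bicographical arrangement is precisely the cycle space of $G$ inside $\mathbb{R}^{E(G)}$, and its orthogonal complement is the cut space, which is spanned by the signed indicator vectors $\chi_{\OCu}$ of the directed cuts $\OCu$ of $G$. Consequently, for any $x \in U$ and any directed cut $\OCu$, one has the identity $\sum_{e^{\pm} \in \mathbb{E}(\OCu)} x_{e^{\pm}} = 0$. Motivated by this, I would introduce, in analogy with the cycle score used in the proof of Proposition~\ref{prop:bigbij}, a cut score
\[ \nu^{*}_{A}(\OCu, \O) := \sum_{\substack{e^{\pm} \in \mathbb{E}(\OCu), \\ e \notin \O}} a_{e^{\pm}} - \sum_{\substack{e^{\pm} \in \mathbb{E}(\OCu), \\ e^{\pm} \in \O}} a_{e^{\mp}}, \]
and call a partial orientation $\O$ \emph{$A$-coadmissible} if every potential cut of $\O$ has positive cut score. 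Substituting the inequalities defining the region $R_{\O}$ into the cut-space identity above would show that $\O_R$ is always $A$-coadmissible. For the converse, I would apply Farkas' lemma to the linear system cutting out $R_{\O}$: since $U^{\perp}$ is the cut space, any infeasibility certificate corresponds to a nonnegative linear combination of the constraints supported on a potential cut of $\O$ with nonpositive cut score. This step is the precise dual of~\cite[Theorem~1.6]{hopkins2012bigraphical}.

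Finally, I would specialize to the exponential parameter list $A^{<}$. For this list the contribution of the minimum edge $e_{\mathrm{min}}$ of $E(\OCu)$ to $\nu^{*}_{A^{<}}(\OCu, \O)$ dominates all other contributions, and is positive if $e_{\mathrm{min}}$ is neutral in $\O$ but negative if $e_{\mathrm{min}}$ is oriented in the cut direction. Hence $A^{<}$-coadmissibility coincides with the cut neutral condition, identifying the image of $R \mapsto \O_R$ as the set of cut neutral partial orientations. The main obstacle I expect is the Farkas' lemma step: structurally the argument is the same as in the bigraphical case, but I will need to be careful about signs when interchanging $a_{e^{+}}$ with $a_{e^{-}}$ and cycles with cuts, so that the cut score is defined consistently with the orientation conventions of the bicographical arrangement.
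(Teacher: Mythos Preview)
Your proposal is correct and follows essentially the same approach as the paper's primary proof: the paper explicitly states that one dualizes the bigraphical argument by defining a cut score, introducing $A$-coadmissibility, and then checking that for the exponential parameter list $A^{<}$ coadmissibility coincides with the cut neutral condition. The paper also mentions an alternative route via the characteristic polynomial and Zaslavsky's theorem, but your Farkas-lemma dualization of~\cite[Theorem~1.6]{hopkins2012bigraphical} is exactly what the paper has in mind for the direct argument.
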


\begin{proof}
This proposition is formally dual to the previous one. Using the same techniques as in~\cite{hopkins2012bigraphical} we can describe when a partial orientation is in the image of  $R \mapsto \O_R$ in terms of scores associated to potential cuts, and we will see that with $A^<$ the $A$-coadmissible partial orientations will be precisely the cut neutral ones. Alternatively, one could also prove, as in~\cite[Theorem 3.2]{hopkins2012bigraphical}, that because $A^<$ is generic the characteristic polynomial of~$\Sigma^{*}_{(G,\Oref)}(A^<)$ is $\bigchi_{\Sigma^{*}_{(G,\Oref)}(A^<)}(t) = (-2)^{g} T_G(1,1-t/2)$, which would show via Zaslavsky's theorem~\cite{zaslavsky1975facing}~\cite[Theorem 2.5]{stanley2007hyperplane} that there are at least the same number of cut neutral partial orientations as regions of $\Sigma^{*}_{(G,\Oref)}(A^<)$. Then it is easy to see that~$\O_R$ for~$R$ a region of~$\Sigma^{*}_{(G,\Oref)}(A^<)$ cannot have a potential cut whose minimum edge is directed, proving that the map is indeed a bijection.
\end{proof}

Compare the following propositions to~\cite[Corollary 8.2]{greene1983interpretation}.

\begin{prop} \label{prop:bigmin}
Let $M \gg 0$ be some large positive constant and define the hyperplane~$H_0 \subseteq \mathbb{R}^{n-1}$ by $H_0 := \sum_{e \in E(G), e^{+}=(u,v)} a^{<}_{e^{+}} (x_v - x_u) =-M$. Then the map~$R \mapsto \O_R$ is a bijection between the regions $R$ of $\Sigma_{(G,\Oref)}(A^<)$ with $R \cap H_0 = \emptyset$ and the cut minimal-cycle neutral partial orientations of $\G$.
\end{prop}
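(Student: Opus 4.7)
The plan is to leverage Proposition~\ref{prop:bigbij}, which gives a bijection $R \mapsto \O_R$ between the regions of $\Sigma_{(G,\Oref)}(A^<)$ and the cycle neutral partial orientations of $\G$; it thus suffices to show that, for $M \gg 0$, a region $R$ avoids $H_0$ if and only if $\O_R$ is additionally cut minimal. Writing $L(x) := \sum_{e \in E(G), \, e^{+} = (u,v)} a^<_{e^{+}}(x_v - x_u)$, so that $H_0 = \{L = -M\}$, the first step is to observe that for $M$ sufficiently large $R$ avoids $H_0$ if and only if $L$ is bounded below on $R$, which by standard convex analysis is equivalent to requiring $L(d) \geq 0$ for every $d$ in the recession cone of $R$. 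A direct calculation identifies this recession cone as the set of $d$ satisfying $d_a \leq d_b$ for each edge of $\O_R$ oriented from $a$ to $b$, and $d_u = d_v$ for each neutral edge $\{u,v\}$.

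The combinatorial heart of the argument exploits the dominance property of the exponential parameters: since $a^<_e = (1/2)^i$ with $i$ the rank of $e$ in $<$, one has $a^<_{e_{\mathrm{min}}} > \sum_{e \in T \setminus \{e_{\mathrm{min}}\}} a^<_e$ for any finite set $T$ of edges with minimum element $e_{\mathrm{min}}$. If $\O_R$ has a directed cut $\OCu = (S, S^c)$ whose minimum edge $e_{\mathrm{min}}$ disagrees with its reference orientation, I set $d := \mathbf{1}_{S^c}$ (translation by a constant affects neither $L$ nor the recession cone constraints). Since all edges of $E(\OCu)$ are oriented from $S$ to $S^c$, this $d$ lies in the recession cone, and $L(d) = \sum_{e \in E(\OCu)} \epsilon_e \, a^<_e$, where $\epsilon_e = +1$ when the reference orientation of $e$ points from $S$ to $S^c$ and $\epsilon_e = -1$ otherwise. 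By hypothesis $\epsilon_{e_{\mathrm{min}}} = -1$, so the dominance property forces $L(d) < 0$ and hence $R$ crosses $H_0$.

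Conversely, suppose $\O_R$ is cut minimal and let $d$ lie in its recession cone. Order the vertices so that $d_{v_1} \leq \cdots \leq d_{v_n}$ and set $S_i := \{v_{i+1}, \ldots, v_n\}$. Since $L$ vanishes on constants, the layer-cake identity gives $L(d) = \sum_{i=1}^{n-1}(d_{v_{i+1}} - d_{v_i}) L(\mathbf{1}_{S_i})$, a nonnegative combination. Whenever $d_{v_{i+1}} - d_{v_i} > 0$, the recession cone constraints force $(S_i^c, S_i)$ to be a directed cut of $\O_R$: a crossing neutral edge would require $d_u = d_v$ across the strict jump, and every crossing oriented edge must be directed from $S_i^c$ into $S_i$. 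Cut minimality of $\O_R$ then ensures the minimum edge of this cut agrees with its reference, so the dominance argument again gives $L(\mathbf{1}_{S_i}) > 0$, whence $L(d) \geq 0$. The main obstacle I anticipate is executing these two combinatorial steps without sign errors, carefully distinguishing the partial orientation's direction from the reference orientation's direction across each cut; once this bookkeeping is in place, the exponential dominance property does the essential work.
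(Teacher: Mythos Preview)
Your proof is correct, but it takes a genuinely different route from the paper's.

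The paper's argument is a \emph{counting} proof. It invokes a result of Greene--Zaslavsky that the number of regions of an essential arrangement avoiding a generic hyperplane equals $(-1)^k\bigchi_{\mathcal{A}}(0)$; combined with the computation $\bigchi_{\Sigma_{(G,\Oref)}(A^<)}(t) = (-2)^{n-1}T_G(1-t/2,1)$, this shows there are exactly $2^{n-1}T_G(1,1)$ regions avoiding $H_0$. By the main enumeration Theorem~\ref{thm:main}, this is also the number of cut minimal--cycle neutral partial orientations. The paper then proves only the direction ``not cut minimal $\Rightarrow$ intersects $H_0$'' (essentially your first direction, via the ray $p + t\mathbf{1}_{W^c}$), and concludes by cardinality.

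Your argument is instead a \emph{direct bijective} proof: you characterize the recession cone of $R$ combinatorially and show both implications via the exponential dominance property, the converse direction using a layer-cake decomposition of an arbitrary recession direction into indicator functions of level sets. This is more elementary and self-contained: it avoids the Greene--Zaslavsky genericity result, the characteristic polynomial computation, and any appeal to Theorem~\ref{thm:main}. The paper's approach, by contrast, is shorter because it outsources the hard direction to enumeration, at the cost of invoking heavier machinery. Your converse argument (the layer-cake step) is the genuinely new ingredient, and it works cleanly; the only small point worth making explicit is that since there are finitely many regions, a single sufficiently large $M$ can be chosen uniformly so that ``$L$ bounded below on $R$'' is equivalent to ``$R \cap H_0 = \emptyset$'' for all regions simultaneously.
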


\begin{proof}
Let $\mathcal{A}$ be an essential arrangement of hyperplanes in $\mathbb{R}^k$. Let us say the hyperplane $H$ is \emph{generic with respect to $\mathcal{A}$} if for any $H_1,\ldots,H_m \in \mathcal{A}$, we have that $H$ has nonempty intersection with $H_1 \cap \ldots \cap H_m$ if and only if $\mathrm{dim}(H_1 \cap \ldots \cap H_m) \geq 1$. Suppose $H$ is generic with respect to $\mathcal{A}$. Then the number of regions $R$ of $\mathcal{A}$ such that~$H \cap R = \emptyset$ is given by $(-1)^{k}\bigchi_{\mathcal{A}}(0)$, where $\bigchi_{\mathcal{A}}$ is the characteristic polynomial of~$\mathcal{A}$. This assertion is~\cite[Theorem 3.1]{greene1983interpretation}.

We say that a parameter list $A$ is \emph{generic} if the arrangement $\Sigma_{(G,\Oref)}(A)$ is generic in the sense of~\cite[\S2]{stanley2007hyperplane}. It is shown in~\cite[Theorem 3.2]{hopkins2012bigraphical} that for generic $A$, the characteristic polynomial of~$\Sigma_{(G,\Oref)}(A)$ is $\bigchi_{\Sigma_{(G,\Oref)}(A)}(t) = (-2)^{n-1} T_G(1-t/2,1)$. Thus the previous claim tells us that the number of regions of $\Sigma_{(G,\Oref)}(A)$ for generic~$A$ that avoid a generic hyperplane is $2^{n-1}T_G(1,1)$, which is precisely the number of cut minimal-cycle positive partial orientations of $G$ by Theorem~\ref{thm:main}. It is easy to see that~$A^<$ is a generic parameter list and~$H_0$ is generic with respect to  $\Sigma_{(G,\Oref)}(A^<)$.

To finish the proof, we show that if $\O_R$ is not cut minimal, then $R$ must have nonempty intersection with $H_0$. Suppose $\O_R$ is not cut minimal. Then there is a directed cut $\OCu = (W,W^c)$ of $\O_R$ such that the orientation of the minimum edge in~$E(\OCu)$ disagrees with $\Oref$. Let $\mathbf{1}_{W^c} := \sum_{v \in W^c} x_v$ and let~$p$ be a point in $R$. Let~$L_0$ be the linear form $ \sum_{e \in E, e^{+}=(u,v)} a^{<}_{e^{+}} (x_v - x_u)$. Then~$L_0(p + t\mathbf{1}_{W^c}) = L_0(p) + Nt$ where 
\[N :=  \sum_{e^{+} \in \mathbb{E}(\OCu)} a^{<}_{e^{+}} \; -  \sum_{e^{-} \in \mathbb{E}(\OCu)}a^{<}_{e^{+}}.\] 
Note that $N$ is negative because the orientation of the minimum edge in $E(\OCu)$ disagrees with~$\Oref$. And note also that $p + t\mathbf{1}_{W^c} \in R$ for all $t \in [0,\infty)$. Thus we can find a point~$q \in R$ with~$L_0(q)$ arbitrarily small. This means $R$ intersects $H_0$ nontrivially as long as $M$ is taken to be sufficiently large.
\end{proof}

\begin{prop} \label{prop:cobigmin}
Let $M \gg 0$ be some large positive constant and  define the hyperplane~$H_0 \subseteq \mathbb{R}^{g}$ by $H_0 := \sum_{e \in E(G)} a^{<}_{e^{+}} x_{e^{+}} =-M$. Then the map $R \mapsto \O_R$ is a bijection between the regions $R$ of $\Sigma^{*}_{(G,\Oref)}(A^<)$ with $R \cap H_0 = \emptyset$ and the cycle minimal-cut neutral partial orientations of $\G$.
\end{prop}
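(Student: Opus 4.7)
The plan is to mirror the proof of Proposition~\ref{prop:bigmin} verbatim, exchanging the roles of cuts and cycles. First I would invoke the general lemma (due to Greene--Zaslavsky and used in the proof of Proposition~\ref{prop:bigmin}) that if $\mathcal{A}$ is an essential arrangement in $\mathbb{R}^k$ and $H$ is generic with respect to $\mathcal{A}$, then the number of regions of $\mathcal{A}$ avoiding $H$ is $(-1)^k \bigchi_{\mathcal{A}}(0)$. The dual calculation to the one cited in Proposition~\ref{prop:bigmin} gives the characteristic polynomial of a generic bicographical arrangement as $\bigchi_{\Sigma^*_{(G,\Oref)}(A)}(t) = (-2)^{g} T_G(1,1-t/2)$ (this was already used implicitly in the alternative proof sketched for Proposition~\ref{prop:cobigbij}), so evaluating at $t=0$ yields that a generic bicographical arrangement has $2^{g}\,T_G(1,1)$ regions avoiding a generic hyperplane. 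By Theorem~\ref{thm:main} (see the Type~A table in Figure~\ref{fig:fourtables}), this is exactly the number of cycle minimal--cut neutral partial orientations of~$\G$, so matching cardinalities will reduce the problem to showing that the map $R \mapsto \O_R$ sends regions avoiding $H_0$ into cycle minimal-cut neutral partial orientations.

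Next I would check two genericity conditions: that the exponential parameter list $A^<$ makes $\Sigma^*_{(G,\Oref)}(A^<)$ a generic arrangement (the argument is the same as for the bigraphical case in~\cite{hopkins2012bigraphical}, since exponential parameters make all relevant linear dependencies have nonzero sums), and that for $M \gg 0$ the hyperplane $H_0$ is generic with respect to $\Sigma^*_{(G,\Oref)}(A^<)$. Since we already know from Proposition~\ref{prop:cobigbij} that $\O_R$ is cut neutral for every region $R$, it then suffices to prove the following claim: if $\O_R$ is not cycle minimal, then $R \cap H_0 \neq \emptyset$.

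This final claim is the main content of the proof, and is the dual of the corresponding claim in Proposition~\ref{prop:bigmin}. Suppose $\O_R$ is not cycle minimal, so there is a directed cycle $\OCy$ of $\O_R$ whose minimum edge $e_{\mathrm{min}}$ disagrees with $\Oref$. Define the ``indicator vector'' $\mathbf{1}_{\OCy} := \sum_{e^{+} \in \mathbb{E}(\OCy)} x_{e^{+}} - \sum_{e^{-} \in \mathbb{E}(\OCy)} x_{e^{+}}$. Because directed cycles have matching in- and out-degrees at every vertex, $\mathbf{1}_{\OCy}$ lies in the subspace $U$ where $\Sigma^*_{(G,\Oref)}(A^<)$ lives. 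Let $p \in R$ and let $L_0 := \sum_{e \in E(G)} a^{<}_{e^{+}} x_{e^{+}}$. Then $L_0(p + t\,\mathbf{1}_{\OCy}) = L_0(p) + Nt$ where
\[
N = \sum_{e^{+} \in \mathbb{E}(\OCy)} a^{<}_{e^{+}} \;-\; \sum_{e^{-} \in \mathbb{E}(\OCy)} a^{<}_{e^{+}},
\]
which is negative because the exponential weight of $e_{\mathrm{min}}$ dominates and it contributes with a negative sign. Furthermore, for each $e^{\delta} \in \mathbb{E}(\OCy)$ the corresponding coordinate of $p$ already satisfies the appropriate strict inequality relative to $H_{e^{+}}$ and $H_{e^{-}}$, and adding a positive multiple of $\mathbf{1}_{\OCy}$ only pushes these coordinates further in the same direction; for edges outside $E(\OCy)$ the coordinates are unchanged. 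Hence $p + t\,\mathbf{1}_{\OCy} \in R$ for all $t \geq 0$, while $L_0$ tends to $-\infty$ along this ray, so for $M$ sufficiently large $R$ meets $H_0$.

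The main obstacle, as in Proposition~\ref{prop:bigmin}, is being careful about the translation-invariance argument in the last paragraph: one must verify that $\mathbf{1}_{\OCy}$ genuinely lies in the cycle-space subspace $U$ (so that the computation takes place in the ambient arrangement and not outside it) and that every hyperplane of $\Sigma^*_{(G,\Oref)}(A^<)$ is respected by the ray $p + t\,\mathbf{1}_{\OCy}$. Once these two points are settled the proof goes through, and the cardinality match from the characteristic polynomial calculation upgrades ``every non-cycle-minimal region meets $H_0$'' to the desired bijection.
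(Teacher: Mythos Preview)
Your proposal is correct and is exactly what the paper has in mind: its own proof consists of the single sentence ``this proposition is formally dual to the previous one and the key is to compute the characteristic polynomial of~$\Sigma^{*}_{(G,\Oref)}(A^<)$,'' and you have spelled out precisely that dualization. The only cosmetic issue is that you cite a label \texttt{prop:cobigbij} that the paper does not actually define (the relevant proposition is unlabeled there), but the referent is unambiguous.
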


\begin{proof}
Again, this proposition is formally dual to the previous one and the key is to compute the characteristic polynomial of~$\Sigma^{*}_{(G,\Oref)}(A^<)$.
\end{proof}

The following propositions should be seen as analogous to the fact that there are no strongly connected-acyclic total orientations, which agrees with there being no bounded regions of the (co)graphical arrangement.

\begin{prop}
The map $R \mapsto \O_R$ is a bijection between the regions of~$\Sigma_{(G,\Oref)}(A^<)$ that are bounded and the strongly connected-cycle neutral partial orientations of $\G$.
\end{prop}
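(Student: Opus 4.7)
The plan is to extend Proposition~\ref{prop:bigbij} by identifying which regions are bounded in terms of the combinatorics of $\O_R$. Since Proposition~\ref{prop:bigbij} already provides an injection from regions of $\Sigma_{(G,\Oref)}(A^<)$ into the cycle neutral partial orientations of $\G$, the task reduces to showing that a region $R$ is bounded if and only if $\O_R$ is strongly connected.

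The first step is to compute the recession cone of $R$. The region $R \subseteq U$ is cut out by strict inequalities of the form $x_v - x_u > a^<_{e^+}$, $x_u - x_v > a^<_{e^-}$, or $-a^<_{e^-} < x_v - x_u < a^<_{e^+}$, one for each non-loop $e \in E(G)$ with $e^+ = (u,v)$, according to whether $e$ is oriented as $e^+$, oriented as $e^-$, or left neutral in $\O_R$. Homogenizing these inequalities shows that the recession cone is
\[\mathrm{rec}(R) = \{w \in U : w_v \geq w_u\text{ for every oriented edge }(u,v)\text{ of }\O_R,\ w_u = w_v\text{ for every neutral edge }\{u,v\}\text{ of }\O_R\},\]
and $R$ is bounded if and only if $\mathrm{rec}(R) = \{0\}$.

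Next I would prove that $\mathrm{rec}(R)$ is nontrivial if and only if $\O_R$ admits a directed cut, meaning a partition $(W,W^c)$ of $V(G)$ across which no neutral edge of $\O_R$ runs and every oriented edge of $\O_R$ between $W$ and $W^c$ points from $W$ to $W^c$. Given such a cut, the vector defined by $w_v := -|W^c|$ for $v \in W$ and $w_v := |W|$ for $v \in W^c$ lies in $U$ and is a nonzero element of $\mathrm{rec}(R)$. Conversely, given a nonzero $w \in \mathrm{rec}(R)$, pick $c$ strictly between $\min_v w_v$ and $\max_v w_v$ and set $W := \{v : w_v \leq c\}$ (both $W$ and $W^c$ are nonempty because $w \in U$ forces sign changes); the equality constraints forbid neutral edges from crossing $(W,W^c)$, while the inequality constraints force every oriented edge in the cut to point from $W$ to $W^c$. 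Hence $R$ is unbounded exactly when $\O_R$ fails to be strongly connected.

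The main technical subtlety is pinning down which flavor of strong connectivity is detected by the arrangement, namely the Type~B notion employed in Figure~\ref{fig:fourtables}: every cut of $G$ must contain either a neutral edge of $\O_R$ or an oriented edge of $\O_R$ directed against the cut. As a numerical sanity check, Zaslavsky's theorem applied to the characteristic polynomial $\bigchi_{\Sigma_{(G,\Oref)}(A^<)}(t) = (-2)^{n-1} T_G(1 - t/2, 1)$ from~\cite{hopkins2012bigraphical} gives $(-1)^{n-1}\bigchi_{\Sigma_{(G,\Oref)}(A^<)}(1) = 2^{n-1} T_G(1/2, 1)$ for the number of bounded regions, agreeing with the enumeration of strongly connected-cycle neutral partial orientations in Figure~\ref{fig:fourtables}.
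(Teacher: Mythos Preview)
Your argument is correct and gives a self-contained geometric proof: computing the recession cone of $R$ directly and showing it is trivial precisely when $\O_R$ has no directed cut is exactly the right characterization of boundedness, and your two directions (constructing the step vector $\mathbf{1}_{W^c}$-type witness from a directed cut, and extracting a directed cut from a nonzero recession vector via a level set) are both sound. The identification of ``strongly connected'' with the Type~B condition ``no directed cuts of the partial orientation'' is also correct.

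The paper proceeds differently: it simply invokes~\cite[Theorem~1.8]{hopkins2012bigraphical}, which already characterizes the bounded regions of any bigraphical arrangement $\Sigma_{(G,\Oref)}(A)$ as those $R$ for which $\O_R$ is $A$-admissible and every oriented edge of $\O_R$ lies in a potential cycle, and then observes via Proposition~\ref{prop:ordecomp} that ``every oriented edge lies in a potential cycle'' is equivalent to ``no directed cut,'' i.e.\ strongly connected. So the paper's proof is a one-line citation plus a translation, whereas yours reproves the relevant special case of that cited theorem from scratch via the recession cone. Your route is more elementary and does not depend on the general $A$-admissibility machinery of~\cite{hopkins2012bigraphical}; the paper's route is shorter given that the cited result is already available and works uniformly for all parameter lists $A$, not just $A^<$.
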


\begin{proof}
It is shown in~\cite[Theorem 1.8]{hopkins2012bigraphical} that the bounded regions $R$ of $\Sigma_{(G,\Oref)}(A)$ for any parameter list $A$ are those for which $\O_R$  is $A$-admissible and such that every oriented edge in~$\O_R$ belongs to a potential cycle. Although they did not describe it in these terms, that is equivalent to saying that the bounded regions $R$ are those for which $\O_R$ is $A$-admissible and strongly connected because, in light of Proposition~\ref{prop:ordecomp}, each oriented edge in a partial orientation either belongs to a potential cycle or to a directed cut, but not both. Thus indeed the bounded regions $R$ of $\Sigma_{(G,\Oref)}(A^<)$ are those for which $\O_R$ is strongly connected and cycle neutral.
\end{proof}

\begin{prop}
The map $R \mapsto \O_R$ is a bijection between the regions of~$\Sigma^{*}_{(G,\Oref)}(A^<)$ that are bounded and the acyclic-cut neutral partial orientations of $\G$.
\end{prop}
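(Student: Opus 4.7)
The plan is to mirror the proof of the preceding proposition, exploiting the formal duality between the bigraphical and bicographical arrangements. By the previous result of this subsection, the map $R \mapsto \O_R$ is already known to be a bijection between all regions of $\Sigma^{*}_{(G,\Oref)}(A^<)$ and the cut neutral partial orientations of $\G$. So the remaining task is to characterize which regions are bounded and match them with the acyclic partial orientations among the cut neutral ones.

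First I would establish a dual version of \cite[Theorem 1.8]{hopkins2012bigraphical} for the bicographical arrangement: namely, for any parameter list $A$, a region $R$ of $\Sigma^{*}_{(G,\Oref)}(A)$ is bounded if and only if $\O_R$ is $A$-coadmissible and every oriented edge of $\O_R$ belongs to a potential cut of $\O_R$. The argument parallels the bigraphical case: unboundedness of $R$ means there is some ray $p + t\mathbf{1}$, $t \in [0,\infty)$, that lies entirely in $R$, where $\mathbf{1}$ is a suitable direction in the ambient subspace $U$ (now the cut space rather than the vertex space). Using the concrete description of $U$ for $\Sigma^*_{(G,\Oref)}(A)$, such recession directions are precisely nonnegative integer combinations of indicator vectors of directed cycles of $\O_R$. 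Conversely, given an oriented edge $e$ of $\O_R$ not contained in any potential cut, by Proposition~\ref{prop:ordecomp} the edge $e$ lies in some potential cycle $\OCy$ of $\O_R$, and translating by the indicator of $\OCy$ gives a recession direction. So $R$ is bounded precisely when no oriented edge of $\O_R$ belongs to a potential cycle.

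Next I would translate this condition into the language of the statement. By Proposition~\ref{prop:ordecomp} (which says every oriented edge lies in either a potential cut or a potential cycle, but not both), the condition ``every oriented edge of $\O_R$ belongs to a potential cut'' is equivalent to ``$\O_R$ has no potential cycle containing an oriented edge'', which is equivalent to ``$\O_R$ has no directed cycle'', i.e., $\O_R$ is acyclic. Combined with the fact that $A^<$-coadmissibility amounts to cut neutrality (established in the proof of the preceding proposition), this yields the desired characterization: the bounded regions are in bijection under $R \mapsto \O_R$ with the acyclic-cut neutral partial orientations.

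The main obstacle is verifying the dual of \cite[Theorem 1.8]{hopkins2012bigraphical} carefully, since \cite{hopkins2012bigraphical} never treats the bicographical arrangement explicitly. The proof there rests on a scoring function attached to potential cycles; here one needs the analogous scoring function attached to potential cuts, namely $\nu^*_A(\OCu,\O) := \sum_{e \notin \O,\, e^{\pm}\in\mathbb{E}(\OCu)} a_{e^{\pm}} - \sum_{e^{\pm}\in\O,\, e^{\mp}\in\mathbb{E}(\OCu)} a_{e^{\pm}}$, and one must check that coadmissibility is characterized by positivity of all such scores and that the recession cone analysis goes through. Once the recession cone is identified with the cone generated by directed cycles of $\O_R$, the remainder of the argument is formal and follows from Proposition~\ref{prop:ordecomp}.
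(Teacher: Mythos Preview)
Your proposal is correct and follows essentially the same approach as the paper, which proves the proposition in one line by declaring it dual to the preceding bigraphical result. You spell out considerably more detail than the paper does---in particular the recession-cone analysis identifying unbounded directions with directed cycles of $\O_R$ and the appeal to Proposition~\ref{prop:ordecomp}---but the underlying idea is the same duality argument. One terminological caution: when you invoke Proposition~\ref{prop:ordecomp}, the ``potential cycle'' there is in the fourientation sense, which for a Type~A fourientation (partial orientation) is precisely a directed cycle; this is what makes your step ``$\O_R$ has no potential cycle containing an oriented edge $\Leftrightarrow$ $\O_R$ has no directed cycle'' valid, and it is exactly the observation the paper makes in the proof of the preceding proposition.
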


\begin{proof}
This proposition is again dual to the previous one.
\end{proof}

\begin{example} \label{ex:hyper}
 Let $G$ be the triangle graph as below:
\begin{center}
\begin{tikzpicture}[scale=0.7]
	\SetFancyGraph
	\Vertex[LabelOut,Lpos=90, Ldist=.1cm,x=0,y=1]{v_2}
	\Vertex[LabelOut,Lpos=180, Ldist=.1cm,x=-0.75,y=0]{v_3}
	\Vertex[LabelOut,Lpos=0, Ldist=.1cm,x=0.75,y=0]{v_1}
	\Edges[style={thick}](v_1,v_2)
	\Edges[style={thick}](v_1,v_3)
	\Edges[style={thick}](v_2,v_3)
	\node at (-0.6,0.7) {$e_3$};
	\node at (0,-0.25) {$e_2$};
	\node at (0.6,0.7) {$e_1$};
	\node at (0,-0.7){$G$};
\end{tikzpicture} \begin{tikzpicture}[scale=0.7]
	\SetFancyGraph
	\Vertex[LabelOut,Lpos=90, Ldist=.1cm,x=0,y=1]{v_2}
	\Vertex[LabelOut,Lpos=180, Ldist=.1cm,x=-0.75,y=0]{v_3}
	\Vertex[LabelOut,Lpos=0, Ldist=.1cm,x=0.75,y=0]{v_1}
	\Edges[style={->,thick,>=mytip}](v_2,v_3)
	\Edges[style={->,thick,>=mytip}](v_1,v_3)
	\Edges[style={->,thick,>=mytip}](v_1,v_2)
	\node at (0,-0.7) {$\Oref$};
\end{tikzpicture}
\end{center}
Take $\Oref$ as above and let $<$ be given by $e_1 < e_2 < e_3$. The Tutte polynomial of~$G$ is~$T_G(x,y) = x^2 + x + y$. Figure~\ref{fig:bigraph} shows the bigraphical arrangement of~$\G$ together with a labeling of its regions by partial orientations. Note that there are~$2^{n-1}T_G(\frac{3}{2},1) = 19$ regions of $\Sigma_{(G,\Oref)}(A^<)$ and their labels are the cycle neutral partial orientations. There are~$2^{n-1}T_G(1,1) = 12$ regions of $\Sigma_{(G,\Oref)}(A^<)$ that avoid $H_0$, shaded in light and dark gray, and their labels are the cut minimal-cycle neutral partial orientations. There are~$2^{n-1}T_G(\frac{1}{2},1) = 7$ bounded regions of $\Sigma_{(G,\Oref)}(A^<)$, shaded in dark gray, and their labels are the strongly connected-cycle neutral partial orientations. Figure~\ref{fig:bicograph} depicts the bicographical arrangement of $G$ similarly labeled.
\end{example}

\begin{figure}
\begin{tikzpicture}
	
	\draw [fill = gray,opacity=0.5] (4,3) -- (-3.5,3) -- (-1.5,-1) -- (-3.75,-3.25) -- (3.625,-3.25) -- (1.675,0.675) -- cycle;

	\draw [fill = gray,opacity=0.25] (-5.75,3) -- (-3.5,3) -- (-1.5,-1) -- (-3.75,-3.25) -- (3.625,-3.25) -- (4.125,-4.25) -- (-5.75,-4.25) -- cycle;

	\draw [style=thick,color=red] (-0.5,4.1) -- (7,3.5);
	\node [color=red] at (0.9,4.2) {$H_0$};
	
	\draw [style=thick] (-4,4) -- (0.125,-4.25);
	\node at (-3.75,4.45) {$x_{v_3}-x_{v_1}=\frac{1}{4}$};
	\draw [style=thick] (0,4) -- (4.125,-4.25);
	\node at (-0.5,4.45) {$x_{v_1}-x_{v_3}=\frac{1}{4}$};
	
	\draw [style=thick] (3.5,4) -- (-4.75,-4.25);
	\node at (5.75,4.45) {$x_{v_2}-x_{v_3}=\frac{1}{8}$};
	\draw [style=thick] (5,4) -- (-3.25,-4.25);
	\node at (3.25,4.45) {$x_{v_3}-x_{v_2}=\frac{1}{8}$};
	
	\draw [style=thick] (-5.75,3) -- (5.5,3);
	\node at (5,2.75) {$x_{v_1}-x_{v_2}=\frac{1}{2}$};
	\draw [style=thick] (-5.75,-3.25) -- (5.5,-3.25);
	\node at (4.75,-3) {$x_{v_2}-x_{v_1}=\frac{1}{2}$};
	
	\Triangle[0,0,0,0,0]
	\Triangle[-1,1.5,-1,0,0]
	\Triangle[1,-1.5,1,0,0]
	\Triangle[3,0,1,1,0]
	\Triangle[-3,0,-1,-1,0]
	\Triangle[1.7,1.75,0,1,0]
	\Triangle[-1.8,-1.8,0,-1,0]
	\Triangle[1.23,2.6,-1,1,0]
	\Triangle[-1.25,-2.75,1,-1,0]
	\Triangle[-1.5,-3.75,1,-1,-1]
	\Triangle[-3.75,-3.75,0,-1,-1]
	\Triangle[-5.25,-3.75,-1,-1,-1]
	\Triangle[2.25,-3.75,1,0,-1]
	\Triangle[4.5,-3.75,1,1,-1]
	\Triangle[1.25,3.75,-1,1,1]
	\Triangle[-2.25,3.75,-1,0,1]
	\Triangle[-5,3.75,-1,-1,1]
	\Triangle[3.6,3.75,0,1,1]
	\Triangle[5.25,3.75,1,1,1]
	
\end{tikzpicture}
\caption{The bigraphical arrangement $\Sigma_{(G,\Oref)}(A^<)$ in Example~\ref{ex:hyper}. The hyperplane $H_0 = \frac{1}{2}(x_{v_2}-x_{v_1}) + \frac{1}{4}(x_{v_3}-x_{v_1}) + \frac{1}{8}(x_{v_3}-x_{v_2}) = -M$ from Proposition~\ref{prop:bigmin} is depicted in red.} \label{fig:bigraph}
\end{figure}
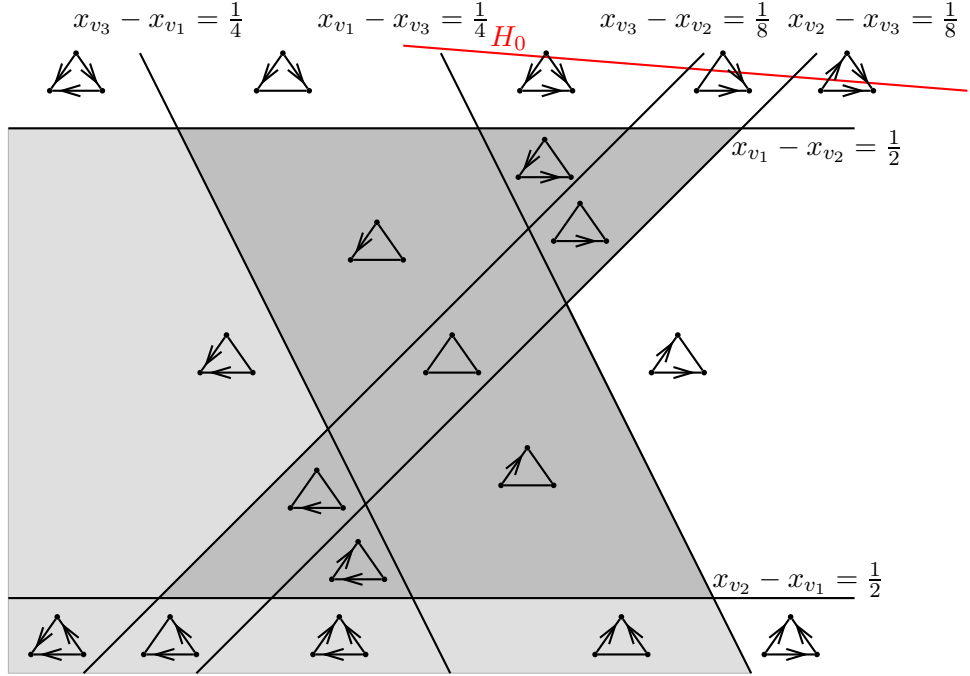

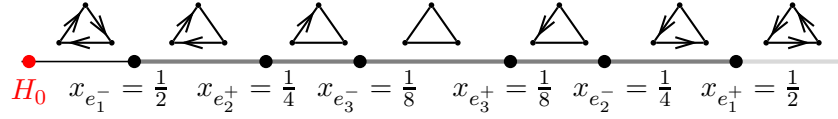
\begin{figure}
\begin{tikzpicture}
	\draw [style=semithick] (-5.5,0) -- (5.5,0);
	\draw [color = gray,style = ultra thick] (-4,0) -- (4,0);
	\draw [color = mygray, style = ultra thick] (4,0) -- (5.5,0);

	\node[circle,fill=black,minimum size=5pt,inner sep=0pt] at (1,0){};
	\node at (0.9,-0.4) {$x_{e^+_3} = \frac{1}{8}$};
	\node[circle,fill=black,minimum size=5pt,inner sep=0pt] at (-1,0){};
	\node at (-0.9,-0.4) {$x_{e^-_3} = \frac{1}{8}$};
	\node[circle,fill=black,minimum size=5pt,inner sep=0pt] at (2.25,0){};
	\node at (2.5,-0.4) {$x_{e^-_2} = \frac{1}{4}$};
	\node[circle,fill=black,minimum size=5pt,inner sep=0pt] at (-2.25,0){};
	\node at (-2.5,-0.4) {$x_{e^+_2} = \frac{1}{4}$};
	\node[circle,fill=black,minimum size=5pt,inner sep=0pt] at (4,0){};
	\node at (4.2,-0.4) {$x_{e^+_1} = \frac{1}{2}$};
	\node[circle,fill=black,minimum size=5pt,inner sep=0pt] at (-4,0){};
	\node at (-4.2,-0.4) {$x_{e^-_1} = \frac{1}{2}$};
	\Triangle[-0.2,0.5,0,0,0]
	\Triangle[1.5,0.5,-1,0,0]
	\Triangle[-1.7,0.5,1,0,0]
	\Triangle[3.1,0.5,-1,1,0]
	\Triangle[-3.3,0.5,1,-1,0]
	\Triangle[4.6,0.5,-1,1,-1]
	\Triangle[-4.8,0.5,1,-1,1]
	
	\node[circle,fill=red,minimum size=5pt,inner sep=0pt] at (-5.4,0){};
	\node[color=red] at (-5.4,-0.4) {$H_0$};
	
\end{tikzpicture}
\caption{The bicographical arrangement $\Sigma^{*}_{(G,\Oref)}(A^<)$ in Example~\ref{ex:hyper}. The hyperplane $H_0 = \frac{1}{2}(x_{e^+_1}) + \frac{1}{4}(x_{e^{+}_2}) + \frac{1}{8}(x_{e^{+}_3}) = -M$ from Proposition~\ref{prop:cobigmin} is depicted in red. } \label{fig:bicograph}
\end{figure}

\begin{remark} \label{rem:param}
Let $W \simeq \mathbb{R}^{E(G)}$ be a real vector space with basis $x_{e^{+}}$ for $e \in E(G)$ with the convention that $x_{e^{-}} = -x_{e^{+}}$. Let $\mathcal{E}_{(G,\Oref)}(A) := \{H_{e^{\pm}}\}$ be the ``perturbed coordinate hyperplane arrangement'' in $W$ with $H_{e^{\pm}} := x_{e^{\pm}} = a_{e^{\pm}}$. For a directed cut or cycle $\OC$ of $G$ define the vector $x_{\OC} := \sum_{e^{\pm} \in \mathbb{E}(\OC)}x_{e^{\pm}} \in W$. The bicographical arrangement is the projection of $\mathcal{E}_{(G,\Oref)}(A)$ to the subspace $U\subseteq W$ where $x_{\OCu} = 0$ for all directed cuts $\OCu$ of $G$. In order to make the bigraphical arrangement look more like the bicographical arrangement, one can also view it as the projection of~$\mathcal{E}_{(G,\Oref)}(A)$ to the subspace $U'\subseteq W$ where $x_{\OCy} = 0$ for all directed cycles $\OCy$ of $G$. Consequently one might wonder which other min-edge classes of partial orientations can be described by projecting~$\mathcal{E}_{(G,\Oref)}(A^<)$ to various subspaces: for instance, the set of all partial orientations is naturally in bijection with the regions of~$\mathcal{E}_{(G,\Oref)}(A^<)$. 
\end{remark}

\begin{remark}
There is another notion of acyclicity for partial orientations which is not to be confused with our acyclic partial orientations. In a recent paper of Iriarte~\cite{iriarte2014acyclic} this other kind of acyclic partial orientation is called a  ``partial acyclic orientation.'' A partial acyclic orientation is one for which the contraction of all neutral edges yields an acyclic total orientation. By contrast, the acyclic partial orientations studied in this paper are those such that the \emph{deletion} of all neutral edges yields an acyclic total orientation. There is a bijection between the partial acyclic orientations of a graph and the faces (i.e., the regions and the faces of lower dimension) of its ordinary graphical arrangement (see Greene-Zaslavksy~\cite[Lemma 7.2]{greene1983interpretation} or Zaslavsky~\cite[Corollary 4.6]{zaslavsky1991orientation}, who proves a stronger version of this result that holds at the level of signed graphs). Recast in our terminology, these partial acyclic orientations are the partial orientations whose only potential cycles consist of all neutral edges. Apparently the partial acyclic orientations are not enumerated in a simple way by the Tutte polynomial. However, we remark that these partial acyclic orientations are precisely the partial orientations that are cycle neutral for all choices of edge order~$<$. They are also the partial orientations that are $A$-admissible for all choices of parameter list~$A\in\mathbb{R}_{>0}^{\mathbb{E}(G)}$.

\end{remark}

\subsection{The cycle/cocycle reversal systems}

Gioan \cite{gioan2007enumerating} investigated the set of total orientations modulo directed cycle and/or directed cut (cocycle) reversals and he used this setup to give a unified framework for understanding the evaluations~$T(x,y)$ for~$0 \leq x,y \leq 2$ integral.  Each equivalence class in the cycle/cocycle reversal systems contains a unique cycle/cut minimal orientation and so these objects give distinguished representatives.  Thus the~$3 \times 3$ table at the bottom of Figure \ref{fig:fourtables} is equivalent to Gioan's~$3 \times 3$ square.  Furthermore, Gioan also showed that the two orientations are in the same equivalence class of the cycle-cocycle reversal system if and only if their associated indegree sequences are equivalent by \emph{chip-firing moves}, which we now describe: given a chip configuration, which is simply a function from the vertices to the integers, a vertex \emph{fires} by sending a chip to each of its neighbors and losing its degree number of chips in the process; we say that two chip configurations~$D$ and~$D'$ are \emph{chip-firing equivalent} if we can get from one to the other by a sequence of chip-firings moves.  Equivalently, if we view~$D$ and~$D'$ as vectors, then they are chip-firing equivalent when their difference is in the integer span of the columns of the Laplacian matrix of~$G$.

In \cite{backman2014partial} and ~\cite{backman2014riemann} the first author investigated two different extensions of Gioan's cycle-cocycle reversal systems for partial orientations.  One extension, which we call the \emph{cycle/cocycle reversal systems for partial orientations} describes the set of partial orientations modulo cycle and/or cocycle reversals. The definition of (co)cycle reversals for partial orientations are exactly the same as for total orientations: given a partial orientation~$\O$ of $\G$, a \emph{(co)cycle reversal} is the operation of replacing~$\O$ by~$(\O \setminus \mathbb{E}(\OC)) \cup \mathbb{E}(-\OC)$ for some directed cycle (cut) $\OC$ of $\O$. These cycle/cocycle reversal systems are related to the graphic and cographic Lawrence ideals from combinatorial commutative algebra and in~\cite{backman2014partial} it was demonstrated that they define equivalence classes of partial orientations counted by generalized Tutte polynomial evaluations. Each equivalence class in the (co)cycle reversal system contains a unique cycle (cut) minimal partial orientation. The (co)graphic Lawrence ideals and their connection to fourientations are discussed in~\S\ref{sec:lawrence}. The other extension, which we call the \emph{generalized cycle/cocycle reversal systems for partial orientations} was introduced in~\cite{backman2014riemann} for the study of chip-firing in the context of Baker and Norine's combinatorial Riemann-Roch theorem~\cite{baker2007riemann}.  In the next section we explain how this extension allows for a direct and aesthetically pleasing interpretation of the graphical Riemann-Roch duality in terms of fourientations.  At the time of writing, the precise connection between the Tutte polynomial and the generalized cycle/cocycle reversal systems remains a mystery.

\subsection{The generalized cycle/cocycle reversal systems and Riemann-Roch theory for fourientations}

In~\cite{backman2014riemann}, an \emph{edge pivot} for partial orientations was defined as follows: given an edge~$e$ oriented towards a vertex~$v$ and~$e'$ a neutral edge incident to~$v$, we may unorient~$e$ and orient~$e'$ towards~$v$.  This name is motivated by the image of an oriented edge nailed down at its head which can pivot to other unoriented edges.  The generalized cycle, cocycle and cycle-cocycle reversal systems for partial orientations are defined to be these systems extended to partial orientations by the inclusion of edge pivots.  

We now introduce generalized edge pivots for fourientations, which we will refer to as simply edge pivots. Let~$e$ and~$e'$ be a pair of edges incident to~$v$.  Suppose that~$e$ is bioriented or is oriented towards~$v$ and~$e'$ is either unoriented or oriented away from~$v$.  Then we can remove the orientation of~$e$ towards~$v$ and add an orientation of~$e'$ towards~$v$.  That is, if $\O$ is a fourientation with~$e_1^{\delta_1} = (v,u) \in \O$ but~$e_2^{\delta_2} = (w,u) \notin \O$, then an \emph{edge pivot} is the operation of replacing $\O$ by~$\O' = (\O \setminus  \{e_1^{\delta_1}\}) \cup \{e_2^{\delta_2}\}$. See Figure~\ref{fig:edgepivots} for the different combinatorial types of edge pivots. The generalized cycle, cocycle and cycle-cocycle reversal systems for fourientations are defined to be these systems extended to fourientations by the inclusion of generalized edge pivots.  To clarify, we can only reverse a directed cut or cycle in a fourientation if none of the edges are unoriented or bioriented.  We write~$\O \sim \O'$ if the fourientations~$\O$ and~$\O'$ are equivalent in the generalized cocycle reversal system.

\begin{figure}
\begin{tikzpicture}[scale=0.8]
	\SetFancyGraph
	\Vertex[NoLabel,x=-0.75,y=0]{v_1}
	\Vertex[NoLabel,x=0,y=1]{v_2}
	\Vertex[NoLabel,x=0.75,y=0]{v_3}
	\Edges[style={thick,->,>=mytip,dash pattern=on 0pt off 100pt}](v_1,v_2)
	\Edges[style={thick,->,>=mytip}](v_2,v_1)
	\Edges[style={thick,->,>=mytip}](v_2,v_3)
\end{tikzpicture} \parbox[b][0.3in][c]{0.4in}{\begin{center} \huge $\Leftrightarrow$ \end{center} \vfill} \begin{tikzpicture}[scale=0.8]
	\SetFancyGraph
	\Vertex[NoLabel,x=-0.75,y=0]{v_1}
	\Vertex[NoLabel,x=0,y=1]{v_2}
	\Vertex[NoLabel,x=0.75,y=0]{v_3}
	\Edges[style={thick,->,>=mytip,dash pattern=on 0pt off 100pt}](v_3,v_2)
	\Edges[style={thick,->,>=mytip}](v_2,v_1)
	\Edges[style={thick,->,>=mytip}](v_2,v_3)
\end{tikzpicture} \qquad \begin{tikzpicture}[scale=0.8]
	\SetFancyGraph
	\Vertex[NoLabel,x=-0.75,y=0]{v_1}
	\Vertex[NoLabel,x=0,y=1]{v_2}
	\Vertex[NoLabel,x=0.75,y=0]{v_3}
	\Edges[style={thick,->,>=mytip}](v_1,v_2)
	\Edges[style={thick}](v_2,v_3)
\end{tikzpicture} \parbox[b][0.3in][c]{0.4in}{\begin{center} \huge $\Leftrightarrow$ \end{center} \vfill} \begin{tikzpicture}[scale=0.8]
	\SetFancyGraph
	\Vertex[NoLabel,x=-0.75,y=0]{v_1}
	\Vertex[NoLabel,x=0,y=1]{v_2}
	\Vertex[NoLabel,x=0.75,y=0]{v_3}
	\Edges[style={thick,->,>=mytip}](v_3,v_2)
	\Edges[style={thick}](v_2,v_1)
\end{tikzpicture} \qquad \begin{tikzpicture}[scale=0.8]
	\SetFancyGraph
	\Vertex[NoLabel,x=-0.75,y=0]{v_1}
	\Vertex[NoLabel,x=0,y=1]{v_2}
	\Vertex[NoLabel,x=0.75,y=0]{v_3}
	\Edges[style={thick,->,>=mytip,dash pattern=on 0pt off 100pt}](v_1,v_2)
	\Edges[style={thick,->,>=mytip}](v_2,v_1)
	\Edges[style={thick}](v_2,v_3)
\end{tikzpicture} \parbox[b][0.3in][c]{0.4in}{\begin{center} \huge $\Leftrightarrow$ \end{center} \vfill} \begin{tikzpicture}[scale=0.8]
	\SetFancyGraph
	\Vertex[NoLabel,x=-0.75,y=0]{v_1}
	\Vertex[NoLabel,x=0,y=1]{v_2}
	\Vertex[NoLabel,x=0.75,y=0]{v_3}
	\Edges[style={thick,->,>=mytip}](v_2,v_1)
	\Edges[style={thick,->,>=mytip}](v_3,v_2)
\end{tikzpicture}
\caption{The various types of edge pivots.} \label{fig:edgepivots}
\end{figure}
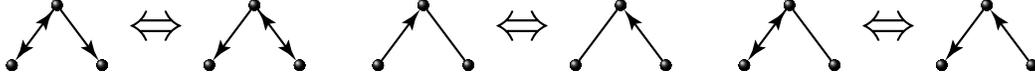

\begin{remark} A cycle reversal in a fourientation can be performed by a sequence of generalized edge pivots as depicted in Figure~\ref{fig:cyclerev}.  Thus the generalized cycle-cocycle reversal system and the generalized cocycle reversal system for fourientations agree.  
\end{remark}

\begin{figure}
\begin{tikzpicture}[scale=0.8]
	\SetFancyGraph
	\Vertex[NoLabel,x=-0.75,y=0]{v_1}
	\Vertex[NoLabel,x=0,y=1]{v_2}
	\Vertex[NoLabel,x=0.75,y=0]{v_3}
	\Edges[style={thick,->,>=mytip}](v_3,v_1)
	\Edges[style={thick,->,>=mytip}](v_2,v_3)
	\Edges[style={thick,->,>=mytip}](v_1,v_2)
	\tikzset{VertexStyle/.style = {shape = circle,fill = black,minimum size = 0pt,inner sep=0pt}}
	\Vertex[NoLabel,x=-0.25,y=0.9]{v_7}
	\Vertex[NoLabel,x=0.25,y=0.9]{v_8}
	\Edges[style={thick,->--,color=blue,out=90,in=90,looseness=2}](v_7,v_8)
	\Vertex[NoLabel,x=0.8,y=0.2]{v_9}
	\Vertex[NoLabel,x=0.55,y=-0.1]{v_a}
	\Edges[style={thick,->--,color=blue,out=-20,in=-70,looseness=3.5}](v_9,v_a)
	\Vertex[NoLabel,x=-0.55,y=-0.1]{v_b}
	\Vertex[NoLabel,x=-0.8,y=0.2]{v_c}
	\Edges[style={thick,->--,color=blue,out=-110,in=200,looseness=3.5}](v_b,v_c)
	\node at (-0.4,-0.4) {};
\end{tikzpicture} \parbox[b][0.4in][c]{0.4in}{\begin{center} \color{blue} \huge $\Rightarrow$ \end{center} \vfill} \begin{tikzpicture}[scale=0.8]
	\SetFancyGraph
	\Vertex[NoLabel,x=-0.75,y=0]{v_1}
	\Vertex[NoLabel,x=0,y=1]{v_2}
	\Vertex[NoLabel,x=0.75,y=0]{v_3}
	\Edges[style={thick,->,>=mytip}](v_1,v_3)
	\Edges[style={thick,->,>=mytip}](v_3,v_2)
	\Edges[style={thick,->,>=mytip}](v_2,v_1)
	\node at (-0.4,-0.4) {};
\end{tikzpicture}
\caption{A cycle reversal performed by edge pivots.} \label{fig:cyclerev}
\end{figure}
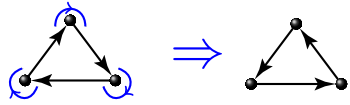

For a fourientation~$\O$ of~$G$ and a vertex~$v \in V(G)$ we define the \emph{indegree of~$\O$ at~$v$} to be~$\mathrm{indeg}_{\O}(v) := |\{e^{\pm} = (u,v) \in \O\}|$. In keeping with the terminology of algebraic geometry, we define the \emph{divisor} associated to the fourientation~$\O$ to be~$D_{\O} := \sum_{v \in V(G)} (\mathrm{indeg}_{\O}(v)-1)(v)$ viewed as a formal sum of the vertices with integer coefficients.  Similarly, given two divisors~$D$ and~$D'$ we write~$D \sim D'$ if they are equivalent by chip-firing moves and say they are \emph{linearly equivalent}. See~\cite{perkinson2013primer} for background on linear equivalence of divisors.  We note that our terminology is justified by the rich connection between combinatorial divisor theory for graphs and chip-firing~\cite{baker2007riemann}~\cite{cools2012tropical}~\cite{mikhalkin2006tropical}~\cite{bacher1997lattice}.  Lemma 3.1 of~\cite{backman2014riemann} says that two partial orientations are equivalent in the generalized cycle reversal system if and only if they have the same associated divisors, which extends Gioan's~\cite[Proposition 4.10]{gioan2007enumerating} from total to partial orientations.  We now further extend this result to the setting of fourientations.  

\begin{lemma} \label{lem:edgepivot}
Two fourientations~$\O$ and~$\O'$ are equivalent by edge pivots if and only if~$D_{\O} = D_{\O'}$.
\end{lemma}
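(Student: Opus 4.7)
The plan is to prove both directions via a ``head decomposition'' of fourientations. For each vertex $u \in V(G)$, set $H_u := \{f \in \mathbb{E}(G) : f = (x,u) \text{ for some } x \in V(G)\}$, the set of possible incoming edge-orientations at $u$. Every element of $\mathbb{E}(G)$ has a unique head, so $\mathbb{E}(G) = \bigsqcup_{u \in V(G)} H_u$, and for any fourientation $\O$ the sets $I_u(\O) := \O \cap H_u$ are pairwise disjoint with $\O = \bigsqcup_u I_u(\O)$ and $|I_u(\O)| = \mathrm{indeg}_\O(u)$. In particular, $D_\O = D_{\O'}$ if and only if $|I_u(\O)| = |I_u(\O')|$ for every $u \in V(G)$.

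For the forward direction, I would observe that an edge pivot $\O \mapsto (\O \setminus \{e_1^{\delta_1}\}) \cup \{e_2^{\delta_2}\}$ with $e_1^{\delta_1}, e_2^{\delta_2}$ both having head $u$ simply replaces one element of $I_u(\O)$ by another and leaves $I_{u'}(\O)$ unchanged for every $u' \neq u$. Thus $|I_{u'}|$ is preserved at every vertex, and by induction on the length of a pivot sequence the divisor is preserved under any sequence of edge pivots.

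For the reverse direction, assume $D_\O = D_{\O'}$, so $|I_u(\O)| = |I_u(\O')|$ for every $u$. The plan is to transform $\O$ into $\O'$ one vertex at a time. Fixing $u$, since $I_u(\O)$ and $I_u(\O')$ are equinumerous subsets of the finite set $H_u$, I pair up the elements of $I_u(\O) \setminus I_u(\O')$ bijectively with those of $I_u(\O') \setminus I_u(\O)$. For each such pair $(a, b)$, both $a$ and $b$ have head $u$, while $a \in \O$ and $b \notin \O$, so the swap $\O \mapsto (\O \setminus \{a\}) \cup \{b\}$ is exactly an edge pivot by the formal definition. Performing these swaps in sequence transforms $I_u(\O)$ into $I_u(\O')$; and since each swap modifies only edge-orientations with head $u$, no $I_{u'}(\O)$ for $u' \neq u$ is disturbed. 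Iterating over all vertices $u \in V(G)$ yields a sequence of edge pivots taking $\O$ to $\O'$.

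The only delicate point I anticipate is the case where $a$ and $b$ are the two orientations $e^{+}, e^{-}$ of a single loop $e$ at $u$: these are distinct elements of $\mathbb{E}(G)$ both having head $u$, so a swap between them is a valid edge pivot (taking $e_1 = e_2 = e$ with $\delta_1 \neq \delta_2$) by the formal definition, even though the informal English phrasing ``$e'$ is unoriented or oriented away from $v$'' might at first glance appear to exclude it. Once this loop case is allowed, the head decomposition argument goes through in complete generality and no further obstacles arise.
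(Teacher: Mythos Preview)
Your proof is correct and takes a genuinely different, more elementary route than the paper's. The paper argues the converse by first pivoting away ``biorientation mismatches'' between $\O$ and $\O'$ (whenever some edge is oriented in $\O$ but bioriented in $\O'$, it finds a pivot reducing the symmetric difference) and then appeals to the analogous result for partial orientations, Lemma~3.1 of~\cite{backman2014riemann}. Your argument instead works directly with the head decomposition $\mathbb{E}(G) = \bigsqcup_u H_u$: since an edge pivot is precisely a swap within a single block $H_u$, and since $D_\O = D_{\O'}$ says exactly that $|\O \cap H_u| = |\O' \cap H_u|$ for each $u$, the converse reduces to a trivial block-by-block matching. Your approach is self-contained and makes transparent why the formal definition of edge pivot is exactly the right one; the paper's approach has the virtue of tying the fourientation result explicitly back to the earlier partial-orientation theory. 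Your remark about loops is on point: the formal definition does not forbid $e_1 = e_2$, so the loop swap is a legitimate edge pivot and the argument goes through uniformly.
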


\begin{proof}
It is clear that if~$\O$ and~$\O'$ are equivalent in the generalized cycle reversal system then~$D_{\O} = D_{\O'}$.  We now demonstrate the converse.  First suppose that there exists some edge~$e = (u,v)$ such that~$e$ is oriented towards~$v$ in~$\O$, but~$e$ is bioriented in~$\O'$.  Because~$D_{\O} = D_{\O'}$ we know that there exists some~$e'$ incident to~$u$ such that~$e'$ is not oriented towards~$u$ in~$\O$.  We can perform a pivot from~$e$ to~$e'$ in~$\O$.  By induction on the symmetric difference of~$\O$ and~$\O'$ we may assume that no such edge exists.  Therefore their symmetric difference is a Type~A fourientation and we reduce to Lemma 3.1 of~\cite{backman2014riemann}.
\end{proof}

In~\cite{backman2014riemann} the first author introduced a ``nonlocal" extension of an edge pivot called a \emph{Jacob's ladder cascade} and employed this operation repeatedly.  We now extend this operation to fourientations.  Let~$P$ be a directed path from~$u$ to~$v$ in the fourientation~$\O$~(i.e.,~$P$ is a path from~$u$ to~$v$ that walks along oriented edges). Let~$e_1$ and~$e_2$ be edges not in~$P$ such that~$e_1^{\delta_1} = (x,u)$,~$e_2^{\delta_2} = (y,v)$ with~$e_1^{\delta_1} \in \O$ and~$e_2^{\delta_2} \notin \O$. Then we can perform successive edge pivots along~$P$ to so that~$e_1^{\delta_1} \notin \O$ and~$e_2^{\delta_2} \in \O$ and we call this operation a \emph{Jacob's ladder cascade}; see Figure~\ref{fig:jacob}.  We note that our definition allows for~$e_1 = e_2 = \{u,v\}$ and hence a cycle reversal may be viewed as a special case of a Jacob's ladder cascade.

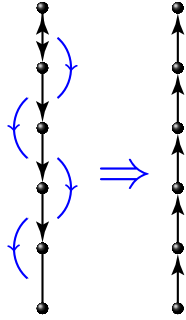
\begin{figure}
\begin{tikzpicture}[scale=0.8]
	\SetFancyGraph
	\Vertex[NoLabel,x=0,y=0]{v_1}
	\Vertex[NoLabel,x=0,y=1]{v_2}
	\Vertex[NoLabel,x=0,y=2]{v_3}
	\Vertex[NoLabel,x=0,y=3]{v_4}
	\Vertex[NoLabel,x=0,y=4]{v_5}
	\Vertex[NoLabel,x=0,y=5]{v_6}
	\Edges[style={thick,->,>=mytip,dash pattern=on 0pt off 100pt}](v_5,v_6)
	\Edges[style={thick,->,>=mytip}](v_6,v_5)
	\Edges[style={thick,->,>=mytip}](v_5,v_4)
	\Edges[style={thick,->,>=mytip}](v_4,v_3)
	\Edges[style={thick,->,>=mytip}](v_3,v_2)
	\Edges[style={thick}](v_2,v_1)
	\tikzset{VertexStyle/.style = {shape = circle,fill = black,minimum size = 0pt,inner sep=0pt}}
	\Vertex[NoLabel,x=0.25,y=4.5]{v_7}
	\Vertex[NoLabel,x=0.25,y=3.5]{v_8}
	\Edges[style={thick,->--,color=blue,bend left=50}](v_7,v_8)
	\Vertex[NoLabel,x=-0.25,y=3.5]{v_9}
	\Vertex[NoLabel,x=-0.25,y=2.5]{v_a}
	\Edges[style={thick,->--,color=blue,bend right=50}](v_9,v_a)
	\Vertex[NoLabel,x=0.25,y=2.5]{v_b}
	\Vertex[NoLabel,x=0.25,y=1.5]{v_c}
	\Edges[style={thick,->--,color=blue,bend left=50}](v_b,v_c)
	\Vertex[NoLabel,x=-0.25,y=1.5]{v_d}
	\Vertex[NoLabel,x=-0.25,y=0.5]{v_e}
	\Edges[style={thick,->--,color=blue,bend right=50}](v_d,v_e)
\end{tikzpicture} \parbox[b][0.8in][c]{0.4in}{\begin{center} \color{blue} \huge $\Rightarrow$ \end{center} \vfill} \begin{tikzpicture}[scale=0.8]
	\SetFancyGraph
	\Vertex[NoLabel,x=0,y=0]{v_1}
	\Vertex[NoLabel,x=0,y=1]{v_2}
	\Vertex[NoLabel,x=0,y=2]{v_3}
	\Vertex[NoLabel,x=0,y=3]{v_4}
	\Vertex[NoLabel,x=0,y=4]{v_5}
	\Vertex[NoLabel,x=0,y=5]{v_6}
	\Edges[style={thick,->,>=mytip}](v_5,v_6)
	\Edges[style={thick,->,>=mytip}](v_4,v_5)
	\Edges[style={thick,->,>=mytip}](v_3,v_4)
	\Edges[style={thick,->,>=mytip}](v_2,v_3)
	\Edges[style={thick,->,>=mytip}](v_1,v_2)
\end{tikzpicture}
\caption{A Jacob's ladder cascade.} \label{fig:jacob}
\end{figure}

Given a fourientation~$\O$, we define~$\O^c$ to be the fourientation obtained by reversing the orientation of each directed edge, replacing each unoriented edge with a bioriented edge, and replacing each bioriented edge with an unoriented edge.  In other words, we simply set~$\O^c := \mathbb{E}(G) \setminus \O$. Recall that the canonical divisor of~$G$ is~$K = \sum_{v \in V(G)} (\mathrm{deg}(v)-2)(v)$ where the \emph{degree} of~$v \in V(G)$ is~$\mathrm{deg}(v) := |\{e = \{u,v\}\colon e \in E(G), u \in V(G)\}|$.  Baker and Norine's Riemann-Roch formula for graphs~\cite{baker2007riemann} investigates the \emph{rank} of a divisor~$D$, written~$r(D)$, in comparison to~$r(K-D)$.  We do not review the definition of rank here, nor the Riemann-Roch formula, but we note the following important observation.

\begin{remark}\label{RRrmk}
 If~$\O$ is a fourientation, then~$K-D_{\O}= D_{\O^c}$. Thus the divisors associated to complementary fourientations are Riemann-Roch dual.
 \end{remark}

\begin{lemma}\label{lem:dual}
If~$\O$ and~$\O'$ are fourientations, then~$\O \sim \O'$ if and only if~$\O^c \sim \O^{c'}$.
\end{lemma}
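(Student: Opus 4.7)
The plan is to check that the complementation map $\O \mapsto \O^c$ carries each elementary generator of the generalized cocycle reversal equivalence relation $\sim$ to another such generator, so that it descends to a bijection on equivalence classes. Since $\O \mapsto \O^c$ is an involution (we have $(\O^c)^c = \O$), it suffices to show one direction: if a single edge pivot or a single cocycle reversal transforms $\O$ into $\O'$, then $\O^c$ is obtained from $\O'^c$ by the same type of move.

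First I would handle edge pivots. Suppose $e_1^{\delta_1} = (v,u) \in \O$ and $e_2^{\delta_2} = (w,u) \notin \O$, and set $\O' := (\O \setminus \{e_1^{\delta_1}\}) \cup \{e_2^{\delta_2}\}$. A direct set-theoretic computation using $\O'^c = \mathbb{E}(G) \setminus \O'$ gives
\[ \O'^c = (\O^c \setminus \{e_2^{\delta_2}\}) \cup \{e_1^{\delta_1}\}. \]
Since $e_2^{\delta_2} = (w,u) \in \O^c$ and $e_1^{\delta_1} = (v,u) \notin \O^c$, this transformation is precisely the edge pivot in $\O^c$ that moves the head at $u$ from $e_2$ back to $e_1$.

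Next I would handle cocycle reversals. Suppose $\OCu$ is a directed cut of $\O$ all of whose edges are oriented in agreement with $\OCu$ (with no unoriented or bioriented edges), and $\O' := (\O \setminus \mathbb{E}(\OCu)) \cup \mathbb{E}(-\OCu)$. For each $e \in E(\OCu)$, the condition $e^{\pm} \in \mathbb{E}(\OCu) \Rightarrow e^{\pm} \in \O$ and $e^{\mp} \notin \O$ translates under complementation to $e^{\pm} \notin \O^c$ and $e^{\mp} \in \O^c$; hence every edge of the cut is oriented with $-\OCu$ in $\O^c$, so $-\OCu$ is available for cocycle reversal in $\O^c$. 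The same bookkeeping used in the pivot case yields $\O'^c = (\O^c \setminus \mathbb{E}(-\OCu)) \cup \mathbb{E}(\OCu)$, which is exactly that cocycle reversal.

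Iterating these two observations over a sequence of moves establishes that $\O \sim \O'$ implies $\O^c \sim \O'^c$; applying the same statement to $\O^c, \O'^c$ and using involutivity gives the converse. The potential obstacle to watch for is whether the ``reversed'' generator in $\O^c$ is itself an admissible move in the generalized cocycle reversal system---but in each of the two cases above the admissibility conditions (having the appropriate edges in or out of the fourientation, and having no neutral or bioriented edges in the cut) transfer automatically under complementation, so no difficulty arises.
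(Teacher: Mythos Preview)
Your proposal is correct and is precisely the straightforward verification that the paper has in mind: the paper's own proof is simply ``This is trivial.'' You have spelled out the obvious check that complementation $\O \mapsto \O^c$ sends each generating move of $\sim$ (edge pivot or cocycle reversal) to another such move, which is exactly what makes the lemma trivial.
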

\begin{proof}
This is trivial.
\end{proof}

\begin{lemma}\label{lem:typered}
Let~$\O$ be a fourientation, then 
\begin{enumerate}[label=(\roman*)]
\item~$\O \sim \O'$ with~$\O'$ a Type A fourientation if and only if~${\rm deg}(D_{\O}) \leq g-1$; \label{cond:rrtypea}
\item~$\O \sim \O'$ with~$\O'$ a Type B fourientation if and only if~${\rm deg}( D_{\O}) \geq g-1$. \label{cond:rrtypeb}
\end{enumerate}
\end{lemma}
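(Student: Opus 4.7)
The plan is to handle both forward implications via a single invariance observation, then prove the converse of (i) by an induction combining cocycle reversals (to expand a reachability set) with a Jacob's ladder cascade (to ``cancel'' one biorientation against one unorientation), and finally deduce the converse of (ii) by complementation.

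Both cocycle reversals and edge pivots preserve $|\O|$---a cocycle reversal flips oriented edges in a directed cut, while an edge pivot swaps one element of $\mathbb{E}(G)$ for another---so $\deg(D_\O)=|\O|-n$ is a $\sim$-invariant. A Type A fourientation satisfies $|\O|\le|E(G)|=n+g-1$, giving $\deg(D_\O)\le g-1$; the Type B analog gives the reverse inequality. This proves ($\Rightarrow$) of both (i) and (ii).

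For ($\Leftarrow$) of (i), assume $\deg(D_\O)\le g-1$, so the number $b(\O)$ of bioriented edges of $\O$ is at most the number $u(\O)$ of unoriented ones. I induct on $b(\O)$; the case $b(\O)=0$ is trivial, so assume $b(\O)\ge 1$ (and hence $u(\O)\ge 1$) and fix a bioriented edge $b=\{x,y\}$. Let $R\subseteq V(G)$ be the set of vertices reachable from $y$ along potential paths of $\O$; since $b^\pm\in\O$ we have $x,y\in R$, so $b$ is contained inside $R$ rather than in the boundary cut $E(R,R^c)$. If no unoriented edge is incident to $R$, then every edge in $E(R,R^c)$ must be oriented from $R^c$ into $R$ (edges oriented the other way, bioriented edges, or unoriented edges in the cut would contradict either maximality of $R$ or our hypothesis). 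Hence $(R^c,R)$ is a directed cut of $\O$, which may be reversed as a cocycle move; this enlarges $R$ strictly while preserving $b(\O)$, $u(\O)$, and the biorientation of $b$. Iterating terminates (either some unoriented edge meets $R$, or $R=V(G)$, in which case all unoriented edges do).

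Assuming an unoriented edge $e$ with endpoint $z\in R$, write $R=R'_y\cup R'_x$ where $R'_w$ is the reachability set from $w$ in $\O\setminus\{b^+,b^-\}$; by symmetry assume $z\in R'_y$ (if $z\in R'_x$, push $b^-$ instead of $b^+$). Choose a directed path $P$ from $y$ to $z$ in $\O\setminus b$ and let $e^\beta=(\cdot,z)$ be the orientation of $e$ into $z$. The Jacob's ladder cascade with incoming edge $e_1=b$, $\delta_1=+$ and outgoing edge $e_2=e$, $\delta_2=\beta$ along $P$ removes $b^+$ and adds $e^\beta$ to the fourientation (reversing the oriented edges of $P$ in passing), so the resulting $\O''\sim\O$ has $b$ and $e$ both singly oriented, whence $b(\O'')=b(\O)-1$, completing the induction. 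The main technical obstacle above is ensuring that $P$ can be chosen disjoint from $b$; this is precisely why one must split $R$ into $R'_y\cup R'_x$ and push the appropriate side of $b$. Finally, ($\Leftarrow$) of (ii) follows by complementation: Remark~\ref{RRrmk} gives $\deg(D_{\O^c})=\deg(K)-\deg(D_\O)=2(g-1)-\deg(D_\O)$, so $\deg(D_\O)\ge g-1$ implies $\deg(D_{\O^c})\le g-1$; part (i) then produces a Type A $\tilde\O$ with $\O^c\sim\tilde\O$, and Lemma~\ref{lem:dual} yields $\O\sim\tilde\O^c$, which is Type B.
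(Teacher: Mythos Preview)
Your overall strategy matches the paper's: handle both forward implications by the $\sim$-invariance of $|\O|$, derive (ii) from (i) via complementation and Lemma~\ref{lem:dual}, and for ($\Leftarrow$) of (i) induct on $b(\O)$, alternating cocycle reversals (to enlarge a reachability set) with a Jacob's ladder cascade (to cancel one biorientation against one unorientation). There is, however, a gap at the cascade step. You build $R$ and $R'_y$ using \emph{potential} reachability, which permits traversing bioriented edges, and then assert the existence of a \emph{directed} path $P$ (oriented edges only) from $y$ to $z$ in $\O\setminus b$; but $z\in R'_y$ does not guarantee such a $P$. If every potential path from $y$ to $z$ avoiding $b$ passes through another bioriented edge $f$, the cascade with incoming edge $b$ cannot be carried to completion: working backwards from $z$, the pivot sequence stalls upon reaching $f$, since the reverse orientation $f^{-\gamma}$ is already in $\O$. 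Concretely, take $G$ to be a path on vertices $x',x,y,w,z$ (in that order) with $\{x',x\}$ and $\{w,z\}$ unoriented and $b=\{x,y\}$, $f=\{y,w\}$ both bioriented; here $g=0$ and $\deg(D_\O)=g-1$. Choosing the unoriented edge $\{w,z\}$ gives $w\in R'_y$, yet no directed $y$-to-$w$ path avoiding $b$ exists, so your stated conclusion that the cascade leaves ``$b$ and $e$ both singly oriented'' is false.

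The paper avoids this by starting not from a fixed bioriented edge but from the set $S$ of \emph{all} vertices incident to a bioriented edge, and taking $\overline{S}$ to be the \emph{directed}-path reachability set from $S$. Every bioriented edge then lies inside $\overline{S}$ (so the cut-reversal step is unaffected), and whenever $\overline{S}$ meets the set $T$ of vertices incident to an unoriented edge, the witnessing directed path originates at a vertex incident to \emph{some} bioriented edge $b'$, which serves as the cascade's incoming edge. Within your framework the minimal patch is: along the potential path from $y$ to $z$, cascade from the \emph{last} bioriented edge encountered rather than insisting on $b$; the suffix of the path after that edge is genuinely directed, the cascade converts that bioriented edge to singly oriented, and $b(\O)$ still drops by one.
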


\begin{proof}
We have that~${\rm deg}(D_{\O}) \leq g-1$ if and only if~${\rm deg}(D_{\O^c}) \geq g-1$, and~$\O$ is Type~A if and only if~$\O^c$ is Type~B, thus Lemma~\ref{lem:dual} shows that~\ref{cond:rrtypeb} is equivalent to~\ref{cond:rrtypea}.  We now verify~\ref{cond:rrtypea}.  It is clear that if~$\O \sim \O'$ with~$\O'$ a type~$A$ partial orientation, then~${\rm deg}( D_{\O}) \leq g-1$.  Conversely, suppose~${\rm deg}( D_{\O}) \leq g-1$ and~$\O$ is not a Type A fourientation.  Let~$S$ be the set of vertices incident to a bioriented edge and~$T$ be the set of edges incident to an unoriented edge.  By assumption, both~$S$ and~$T$ are non-empty.  Furthermore, we take~$\overline{S}$ to be the set of vertices which are reachable from~$S$ by a (possibly empty) directed path.  If~$\overline{S} \cap T \neq \emptyset$ then we may perform a Jacob's ladder cascade to decrease the number of bioriented edges.  By induction on the number of bioriented edges in~$\O$ we can assume that eventually~$\overline{S} \cap T = \emptyset$.  Therefore~$(\overline{S}^c, \overline{S})$ is fully oriented towards~$\overline{S}$, and we can reverse this directed cut enlarging~$\overline{S}$.  By induction on~$|\overline{S}^c|$ this process must terminate.
\end{proof}

Theorem 3.4 of~\cite{backman2014riemann} states that two partial orientations are equivalent in the generalized cycle-cocycle reversal system if and only if their associated divisors are chip-firing equivalent.  This extends Gioan's~\cite[Proposition 4.13]{gioan2007enumerating} from total orientations to partial orientations.  We now extend this theorem further to the setting of fourientations.  
 
\begin{thm}\label{thm:cocycle}
If~$\O$ and~$\O'$ are fourientations, then~$\O \sim \O'$ if and only if~$D_{\O} \sim D_{\O'}$.
\end{thm}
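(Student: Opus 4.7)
The plan is to prove both directions of the equivalence $\O \sim \O' \Leftrightarrow D_\O \sim D_{\O'}$, with the reverse direction reducing to the partial-orientation case handled by~\cite[Theorem 3.4]{backman2014riemann}.

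For the forward implication, I will verify that each of the two generating operations of the generalized cocycle reversal system changes $D_\O$ by a chip-firing move. Edge pivots preserve $D_\O$ exactly by Lemma~\ref{lem:edgepivot}. A cocycle reversal of a directed cut $\OCu = (U, U^c)$---legal only when every edge of $E(\OCu)$ is oriented from $U$ to $U^c$---decreases $\mathrm{indeg}_{\O}(v)$ by the number of cut edges at $v$ for each $v \in U^c$, and symmetrically increases it at each $u \in U$. This is precisely the divisor change produced by simultaneously firing every vertex in $U^c$; hence $D_\O \sim D_{\O'}$.

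For the reverse implication, I first observe that chip-firing preserves degree, so $\deg(D_\O) = \deg(D_{\O'})$. Assume first that $\deg(D_\O) \le g-1$. Lemma~\ref{lem:typered}\ref{cond:rrtypea} furnishes Type~A fourientations---i.e., partial orientations---$\tilde{\O}$ and $\tilde{\O}'$ with $\tilde{\O} \sim \O$ and $\tilde{\O}' \sim \O'$, and by the forward implication already proved, $D_{\tilde{\O}} \sim D_\O \sim D_{\O'} \sim D_{\tilde{\O}'}$. Now~\cite[Theorem 3.4]{backman2014riemann} provides an equivalence from $\tilde{\O}$ to $\tilde{\O}'$ in the partial-orientation generalized cycle-cocycle reversal system; its moves are edge pivots, cycle reversals, and cocycle reversals for partial orientations, each of which is a move (or short sequence of moves) in our fourientation cocycle reversal system---the first and third literally, and the second via the biorient-then-pivot sequence shown in Figure~\ref{fig:cyclerev}. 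Transitivity then gives $\O \sim \O'$. In the remaining case $\deg(D_\O) \ge g-1$, I apply the preceding argument to the complementary fourientations: by Remark~\ref{RRrmk} we have $D_{\O^c} = K - D_\O$, so $D_{\O^c} \sim D_{\O'^c}$ and both have degree $\le g-1$, yielding $\O^c \sim \O'^c$, whereupon Lemma~\ref{lem:dual} converts this back to $\O \sim \O'$.

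The main obstacle---really a bookkeeping step rather than a genuine obstruction---is the compatibility claim: one must check that every equivalence established in~\cite{backman2014riemann} at the level of partial orientations genuinely lifts to the fourientation system. The only nontrivial point is the realization of a cycle reversal by generalized edge pivots, which is exactly the move depicted in Figure~\ref{fig:cyclerev} (and explicitly allowed by the remark following that figure). With that verification in place, the substantive combinatorial content of the proof has been outsourced to~\cite{backman2014riemann} and to the reductions provided by Lemmas~\ref{lem:edgepivot} and~\ref{lem:typered}.
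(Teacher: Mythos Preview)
Your proposal is correct and follows essentially the same route as the paper: reduce via Lemma~\ref{lem:typered} (using Lemma~\ref{lem:dual} and Remark~\ref{RRrmk} for the complementary case) to Type~A fourientations, then invoke~\cite[Theorem~3.4]{backman2014riemann} and conclude by transitivity. You are more explicit than the paper about the forward direction and about checking that the partial-orientation moves from~\cite{backman2014riemann} lift to the fourientation generalized cocycle system, but this extra care does not constitute a different argument.
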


\begin{proof}
It is clear that if~$\O$ and~$\O'$ are equivalent in the generalized cocycle reversal system, then~$D_{\O} \sim D_{\O'}$.  We now demonstrate the converse.  Lemma~\ref{lem:dual} in conjunction with the fact that~$D_{\O} \sim D_{\O'}$ if and only if~$D_{\O^c} \sim D_{(\O')^c}$ allows us to assume that~${\rm deg}(D_{\O}) \leq g-1$.  By Lemma \ref{lem:typered}~$\O \sim \O_1$ and~$\O' \sim \O_2$ such that both~$\O_1$ and~$\O_2$ are Type~A fourientations.  We know that~$D_{\O} \sim D_{\O_1}$ and~$D_{\O'} \sim D_{\O_2}$, thus by transitivity~$D_{\O_1} \sim D_{\O_2}$.  Now by Theorem 3.4 in \cite{backman2014riemann} we have~$\O_1 \sim \O_2$ and again by transitivity~$\O \sim \O'$.
\end{proof}

\subsection{Indegree sequences of partial orientations} 

For a fourientation~$\O$ of~$G$, define~$\overline{D}_{\O} :=  \sum_{v \in V(G)} \mathrm{indeg}_{\O}(v)(v) \in \mathbb{Z}V(G)$. We call~$\overline{D}_{\O}$ the \emph{indegree sequence} of~$\O$. Recall the divisor associated to~$\O$ is~$D_{\O} := \sum_{v \in V(G)} (\mathrm{indeg}_{\O}(v)-1)(v) \in \mathbb{Z}V(G)$; the distinction between the divisor associated to a fourientation and its indegree sequence is just one of normalization. As mentioned in the last section, results of the first author imply that studying partial orientations up to ``having the same indegree sequence'' is the same as studying equivalence classes of partial orientations in the generalized cycle reversal system (see the discussion above Lemma~\ref{lem:edgepivot}). In this subsection we explore the number of indegree sequences in various classes of partial orientations. Here we will assume all graphs are loopless since loops only affect indegree sequences in a trivial way. With this assumption any acyclic partial orientation of a graph can be extended to an acyclic total orientation. We might hope that the number of indegree sequences among partial orientations in a min-edge class is also given by a generalized Tutte polynomial evaluation. But, as observed in~\cite{backman2014partial}, the number of indegree sequences among all partial orientations of~$G$ cannot be a generalized evaluation of the Tutte polynomial of~$G$ itself: for example, the path on three edges has~$21$ indegree sequences among its partial orientations while the star on three edges has~$20$, but the Tutte polynomials of all trees on~$n$ vertices are the same. This also shows that the number of indegree sequences of acyclic partial orientations of~$G$ is not a generalized Tutte polynomial evaluation since all partial orientations of a tree are of course acyclic. One way to get around this obstruction is by considering the Tutte polynomial of graphs related to~$G$. Let us denote by~$G^{\bullet}$ the \emph{cone over~$G$}, which is the graph obtained from~$G$ by adding an extra vertex~$v_0$ and connecting it by an edge to every other vertex in~$G$. Note that the cone over the path on three edges and the cone over the star on three edges have different Tutte polynomials. It turns out that the set~$\{\overline{D}_{\O} \colon \O \textrm{ an acyclic partial orientation of~$G$} \}$ is the set of~$(G^{\bullet},v_0)$-parking functions and thus the cardinality of this set is~$T_{G^{\bullet}}(1,1)$. We first need some terminology to explain why this is.

\begin{definition}
Let~$G$ be a graph and designate a special \emph{sink} vertex~$q \in V(G)$. Set~$V^{q}(G) := V(G) \setminus \{q\}$. A \emph{$(G,q)$-parking function} is an element~$c = \sum_{v \in V^{q}(G)}c_v (v)$ of~$\mathbb{Z}V^{q}(G)$ so that for every non-empty~$U \subseteq V^{q}(G)$, there is~$u \in U$ with~$0\leq c_u < d^G_U(u)$, where~$d^G_U(u) := |\{ e=\{u,v\} \in E(U, V(G)\setminus U)\}|$. We denote the set of $(G,q)$-parking functions by $\mathrm{PF}(G,q)$. The set of $(G,q)$-parking function inherits a natural partial order from~$\mathbb{Z}V^{q}(G)$. A \emph{maximal} $(G,q)$-parking function is one that is maximal among elements of~$\mathrm{PF}(G,q)$ with respect to this order.
\end{definition}

 A \emph{source} of a total orientation is a vertex with no incoming directed edges. The following lemma, which is the main tool that will allow us to count indegree sequences of partial orientations, is well-known.
 
 \begin{lemma}[{See~\cite[Theorem~3.1]{benson2010g}}]\label{lem:maxpfs}
 There is a bijection between acyclic total orientations of $G$ with unique source $q$ and maximal $(G,q)$-parking functions given by~$\O \mapsto (D_{\O})_{\mathbb{Z}V^{q}(G)}$.
 \end{lemma}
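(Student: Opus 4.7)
My plan proceeds in four verifiable steps: (i) the map lands in $\mathrm{PF}(G,q)$; (ii) its image consists of maximal parking functions; (iii) it is injective; and (iv) it is surjective onto the maximal parking functions. Steps (i)--(iii) are largely formal, while (iv) is where the real work lies.

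For (i), I would fix an acyclic total orientation $\O$ with unique source $q$ and any nonempty $U \subseteq V^q(G)$. Since $\O|_U$ inherits acyclicity it has a source $u \in U$, meaning every edge of $\O|_U$ incident to $u$ points away from $u$; therefore $\mathrm{indeg}_\O(u) \leq d^G_U(u)$ and $c_u := \mathrm{indeg}_\O(u) - 1 < d^G_U(u)$. The unique-source hypothesis forces $\mathrm{indeg}_\O(u) \geq 1$ for $u \neq q$, so $c_u \geq 0$. For (ii), I would take $U_v$ to be the set of vertices reachable from $v$ along directed edges of $\O$; acyclicity together with the unique-source condition ensures $q \notin U_v \subseteq V^q(G)$. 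For any $u \in U_v$, every edge from $u$ to $V(G) \setminus U_v$ must be incoming at $u$ (otherwise $U_v$ would grow), and if $u \neq v$ then a directed $v$-to-$u$ path contributes a further incoming edge from within $U_v$, yielding $(c + (v))_u \geq d^G_{U_v}(u)$ for all $u \in U_v$ and hence $c + (v) \notin \mathrm{PF}(G,q)$. For (iii), Gioan's theorem recalled in~\S\ref{subsec:history} states that two total orientations sharing an indegree sequence differ by directed cycle reversals; acyclic orientations admit no directed cycle to reverse, so the map is injective (the indegree at $q$ is matched automatically since $\sum_v \mathrm{indeg}_\O(v) = |E(G)|$).

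Step (iv) is the main obstacle. Given a maximal $(G,q)$-parking function $c$, I would construct $\O$ by a reverse-burning algorithm: starting with $W_0 := \{q\}$, at step $i$ the parking axiom applied to $U := V(G) \setminus W_{i-1}$ produces a vertex $v_i \in U$ with $c_{v_i} < d^G_U(v_i)$; orient $c_{v_i} + 1$ of the edges between $v_i$ and $W_{i-1}$ as incoming to $v_i$ and the rest as outgoing. This assigns $\mathrm{indeg}_\O(v) = c_v + 1$ to each $v \in V^q(G)$, and provided one can show $\sum_v c_v = g$ for maximal $c$, conservation of total indegree forces $\mathrm{indeg}_\O(q) = |E(G)| - (n-1) - g = 0$, so $q$ is automatically a source. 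Acyclicity of the resulting orientation demands a more delicate choice of which $c_{v_i} + 1$ edges to orient as incoming at each step -- one must essentially enforce compatibility with a topological order refining the peeling order -- and this is precisely where maximality of $c$ (rather than just the parking property) is needed. Proving the identity $\sum_v c_v = g$ for maximal $c$ and refining the algorithm to guarantee acyclicity are the two essential technical points; both can be carried out via standard sandpile-theoretic arguments such as the Dhar burning algorithm, or bypassed altogether by invoking the explicit construction of Benson--Chakrabarty--Tetali cited in the excerpt.
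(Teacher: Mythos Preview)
The paper does not prove this lemma at all; it simply records the statement with a citation to Benson--Chakrabarty--Tetali. Your proposal therefore goes well beyond the paper, and steps (i)--(iii) are clean and correct.

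Step (iv), however, contains a genuine gap that you only partially flag. Your assertion ``This assigns $\mathrm{indeg}_\O(v) = c_v + 1$ to each $v \in V^q(G)$'' does not follow from the construction as written. When you process $v_j$ with $j>i$, the $d^G_{V\setminus W_{j-1}}(v_j) - (c_{v_j}+1)$ edges you orient \emph{out} of $v_j$ land in $W_{j-1}$, and any of them that hit $v_i$ inflate $\mathrm{indeg}_\O(v_i)$ beyond the $c_{v_i}+1$ you already recorded. The issue is not merely the choice of \emph{which} $c_{v_i}+1$ edges to take (as you suggest when discussing acyclicity); the problem is that there may be strictly more than $c_{v_i}+1$ edges from $v_i$ to $W_{i-1}$ in the first place. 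What you actually need is that, for a \emph{maximal} parking function, the burning can always be run ``tightly'': at each step one may choose $v_i$ with exactly $c_{v_i}+1$ edges to $W_{i-1}$, so that \emph{all} of them are oriented into $v_i$ and none go back. This does hold, but establishing it non-circularly (without already having the acyclic orientation whose topological order would furnish such a burning sequence) is precisely the substance of the Dhar/Benson--Chakrabarty--Tetali argument you defer to in your last sentence. So your proposal and the paper ultimately land in the same place: invoke the cited reference.
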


Note the unfortunate, but traditional, conflict between the terms sink and source. Here~$(\cdot)_{ \mathbb{Z}V^{q}(G)}$ means ignore the~$-1$~coefficient of~$q$ and treat the expression as an element of~$\mathbb{Z}V^{q}(G)$. The inverse map of the bijection in Lemma~\ref{lem:maxpfs} is essentially given by Dhar's burning algorithm~\cite{dhar1990self}. By~\cite[Theorem~7.3]{greene1983interpretation}, acyclic total orientations of $G$ with unique source $q$ (and consequently, maximal $(G,q)$-parking functions) are enumerated by~$T_G(1,0)$.

\begin{prop}[{See~\cite[Proposition 2,4]{hopkins2012bigraphical}}] \label{prop:numacyclicpos}
The number of indegree sequences of acyclic partial orientations of $G$ is $T_{G^{\bullet}}(1,1)$.
\end{prop}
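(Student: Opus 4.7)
The plan is to establish a bijection between indegree sequences of acyclic partial orientations of $G$ and $(G^{\bullet},v_0)$-parking functions, then invoke the classical Merino identity $|\mathrm{PF}(G^{\bullet}, v_0)| = T_{G^{\bullet}}(1,1)$ (see e.g.~\cite{perkinson2013primer}). The candidate map sends an acyclic partial orientation $\O$ to $c = (\mathrm{indeg}_{\O}(v))_{v \in V(G)}$; this is well-defined on indegree sequences and trivially injective on them, so the two substantive steps are (a) verifying that $c$ is always a $(G^{\bullet},v_0)$-parking function, and (b) showing every such parking function is achieved.

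For (a), given a nonempty $U \subseteq V(G)$, observe that since $v_0 \notin U$ we have $d^{G^{\bullet}}_U(u) = d^G_U(u) + 1$ (the $+1$ being the cone edge to $v_0$), so I need a $u \in U$ with $\mathrm{indeg}_{\O}(u) \leq d^G_U(u)$. The restriction of $\O$ to the induced subgraph $G[U]$ is still an acyclic partial orientation and so its underlying directed graph has a \emph{source}: a vertex $u \in U$ into which no edge of $G[U]$ is oriented. Then every incoming edge of $\O$ at $u$ comes from $V(G) \setminus U$, which supplies the bound.

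For (b), I would run a variant of Dhar's burning algorithm on $G^{\bullet}$ with $v_0$ as sink. Treat $v_0$ as initially burnt and maintain a burnt set $B \subseteq V(G)$; at each step choose an unburnt $v \in V(G)$ satisfying $c_v \leq |\{u \in B : \{u,v\} \in E(G)\}|$, orient any $c_v$ of those edges between $v$ and $B$ toward $v$ in $\O$ (leaving the remaining such edges neutral), and add $v$ to $B$. Every oriented edge of the resulting $\O$ runs from an earlier-burnt vertex to a later-burnt vertex, so $\O$ is acyclic; and at the moment $v$ is burnt its indegree is locked in at $c_v$, since edges from $v$ to vertices burnt later are never oriented toward $v$.

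The step requiring the most care is showing that a burnable vertex always exists while $B \neq V(G)$. If at some stage none did, then $U := V(G) \setminus B$ would be a nonempty subset of $V(G)$ with $c_u > |\{w \in B : \{w,u\} \in E(G)\}| = d^G_U(u)$, i.e.\ $c_u \geq d^{G^{\bullet}}_U(u)$, for every $u \in U$, contradicting $c \in \mathrm{PF}(G^{\bullet}, v_0)$. This gives surjectivity; combined with (a) and injectivity, the bijection is established, and the proposition follows from Merino's identity.
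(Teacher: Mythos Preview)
Your proof is correct and establishes the same bijection as the paper---indegree sequences of acyclic partial orientations of $G$ correspond exactly to $(G^{\bullet},v_0)$-parking functions---but you reach it by a different route. The paper factors through total orientations: it invokes Lemma~\ref{lem:maxpfs} to identify indegree sequences of acyclic \emph{total} orientations of $G$ with \emph{maximal} $(G^{\bullet},v_0)$-parking functions, then uses that any acyclic partial orientation extends to an acyclic total orientation together with the downward-closure of parking functions to pass to the partial case. Your argument is more direct and self-contained: the source-in-$G[U]$ observation gives (a) without mentioning total orientations, and the Dhar-style burning construction gives (b) explicitly. The paper's version is shorter once Lemma~\ref{lem:maxpfs} is in hand; yours avoids that dependency and produces an explicit acyclic partial orientation realizing each parking function. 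One small notational point: when you write $|\{u \in B : \{u,v\} \in E(G)\}|$ you mean the number of \emph{edges} between $v$ and $B$, not the number of neighbors; since the paper allows multigraphs, it is worth saying so.
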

\begin{proof} Observe that the set~$\{\overline{D}_{\O}\colon \O \textrm{ an acyclic total orientation of $G$} \}$ is also equal to~$\{(D_{\O})_{V(G)}\colon \O  \textrm{ an acyclic total orientation of $G^{\bullet}$ with unique source $v_0$}\}$ and so by Lemma~\ref{lem:maxpfs} the indegree sequences of acyclic total orientations of $G$ are the maximal $(G^{\bullet},v_0)$-parking functions. Then observe~$\{\overline{D}_{\O}\colon \O \textrm{ an acyclic partial orientation of $G$} \}$ is the same as~$\{c \in \mathbb{Z}V\colon 0 \leq c \leq \overline{D}_{\O} \textrm{ for some acyclic total orientation $\O$ of $G$}\}$ because any acyclic partial orientation can be completed to an acyclic total orientation. It is a simple fact that~$c \in \mathbb{Z}V^{q}(G)$ is a~$(G,q)$-parking function if and only if \mbox{$0 \leq c \leq c'$} for some maximal~$(G,q)$-parking function $c'$. Thus indeed the set of~$(G^{\bullet},v_0)$-parking functions is~$\{\overline{D}_{\O}\colon \O \textrm{ an acyclic partial orientation of $G$} \}$. It is a classical fact (again, see~\cite{benson2010g}) that the number of~$(G,q)$-parking functions is~$T_{G}(1,1)$, the number of spanning trees of~$G$. So the number of indegree sequences of acyclic partial orientations of~$G$ is~$T_{G^{\bullet}}(1,1)$.\end{proof}

\begin{prop}[{See~\cite[Corollary 2.10]{hopkins2012bigraphical}}]
The number of indegree sequences of cycle neutral partial orientations of $G$ is $T_{G^{\bullet}}(1,1)$.
\end{prop}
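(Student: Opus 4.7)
The plan is to reduce this proposition to Proposition~\ref{prop:numacyclicpos} by showing that cycle neutral partial orientations and acyclic partial orientations give rise to the very same set of indegree sequences, namely the set of $(G^{\bullet},v_0)$-parking functions.

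First, I would observe that every cycle neutral partial orientation is automatically acyclic: a directed cycle is a potential cycle in which every edge is oriented, and so its minimum edge cannot be neutral. Thus the set of indegree sequences of cycle neutral partial orientations of~$G$ is contained in the set of indegree sequences of acyclic partial orientations of~$G$, which by Proposition~\ref{prop:numacyclicpos} has cardinality $T_{G^{\bullet}}(1,1)$. It remains to prove the reverse inclusion: every indegree sequence of an acyclic partial orientation is realized by some cycle neutral partial orientation.

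For the reverse inclusion I would use Lemma~\ref{lem:edgepivot} restricted to the Type~A setting, namely pivots that never produce bioriented edges: two partial orientations have the same indegree sequence if and only if they can be transformed into one another by such pivots. It therefore suffices to prove that any acyclic partial orientation~$\O$ can be transformed via a sequence of Type~A edge pivots into a cycle neutral partial orientation. The procedure is greedy: while~$\O$ has some bad potential cycle~$\OCy$ (i.e., one whose minimum edge is oriented), pick one minimizing its minimum edge~$e_{\min}$ under~$<$; traverse~$\OCy$ from $e_{\min}$ in the cycle direction, let~$e'$ be the first neutral edge encountered, and let~$e''$ be its predecessor on~$\OCy$; then perform the Type~A edge pivot at the common vertex of~$e''$ and~$e'$ that transfers the orientation of~$e''$ to~$e'$, placing~$e'$ against the direction of~$\OCy$. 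This pivot destroys~$\OCy$ as a potential cycle (since~$e'$ is now oriented opposite to the cycle), preserves the indegree sequence by the definition of an edge pivot, and is legal in Type~A because~$e'$ was neutral.

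The hard part will be a two-pronged verification that the procedure (i)~terminates and (ii)~preserves acyclicity at every step. For termination, the natural monovariant is the smallest element of $\{e_{\min} : \OCy \text{ a bad potential cycle of } \O\}$ under~$<$; one must check that this minimum strictly increases after each pivot, the subtlety being that the pivot can in principle create new potential cycles through the newly oriented edge~$e'$, and one needs to argue that any such new bad cycle has minimum edge strictly exceeding the old minimum. For preservation of acyclicity, the idea is that a newly created directed cycle would be forced to incorporate the reverse-oriented edge~$e'$, and splicing that cycle against the forward portion of~$\OCy$ that was already oriented in~$\O$ would yield a directed cycle present in~$\O$, contradicting the acyclicity hypothesis. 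Once these two checks are carried out, iterating the greedy procedure produces a cycle neutral partial orientation with the same indegree sequence as~$\O$, establishing the reverse inclusion and hence the count $T_{G^{\bullet}}(1,1)$.
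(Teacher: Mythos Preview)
Your overall strategy—showing that cycle neutral and acyclic partial orientations have the same set of indegree sequences—is exactly the right idea, and in fact the paper's proof rests on the same equality. However, the paper does not argue directly via edge pivots; it simply invokes the result of Hopkins--Perkinson that for \emph{any} parameter list $A$ the indegree sequences of $A$-admissible partial orientations coincide with those of acyclic partial orientations (i.e., the $(G^{\bullet},v_0)$-parking functions), and then uses Proposition~\ref{prop:bigbij} to identify $A^{<}$-admissible with cycle neutral.

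Your greedy pivot procedure, as written, does not work: both the acyclicity-preservation claim and the monovariant claim fail. Here is a small counterexample. Take $V=\{1,2,3,4\}$, edges $e_1=\{1,2\}$, $e_2=\{2,3\}$, $e_3=\{3,4\}$, $e_4=\{4,1\}$, and a parallel edge $e_5=\{3,4\}$, ordered $e_1<\cdots<e_5$. Let $\O$ orient $e_1\colon 1\to2$, $e_2\colon 2\to3$, $e_5\colon 3\to4$, with $e_3,e_4$ neutral. Then $\O$ is acyclic, and the $4$-cycle through $e_1,e_2,e_3,e_4$ is a bad potential cycle with $e_{\min}=e_1$. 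Your procedure (choosing this cycle) finds $e'=e_3$, $e''=e_2$, and pivots to obtain $e_2$ neutral and $e_3\colon 4\to 3$. But now $e_3\colon 4\to 3$ and $e_5\colon 3\to 4$ form a directed $2$-cycle, so acyclicity is lost; moreover the $4$-cycle through $e_1,e_2,e_5,e_4$ is still a bad potential cycle with $e_{\min}=e_1$, so your monovariant has not increased. Continuing, one eventually reaches a bad potential cycle with no neutral edge (the directed $2$-cycle on $e_3,e_5$), and the procedure is stuck. Your splicing argument for acyclicity fails because the directed path $P$ from $v$ to $w$ need not close up with the forward portion of $\OCy$ into a cycle in $\O$.

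The approach is salvageable—e.g., with a more careful choice of which bad cycle to attack, or by pivoting differently—but as stated it is not a proof, and the gap is a genuine obstruction rather than missing routine detail.
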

\begin{proof}
The main result of Hopkins and Perkinson~\cite[Corollary 2.10]{hopkins2012bigraphical} is that for any parameter list~$A \in \mathbb{R}_{>0}^{\mathbb{E}(G)}$ (in the sense of~\S\ref{subsec:bigraph}) the set of indegree sequences of acyclic partial orientations of $G$ is also equal to~$\{\overline{D}_{\O_R}\colon R \textrm{ a region of } \Sigma_{(G,\Oref)}(A)\}$.  (See also the work of Mazin~\cite{mazin2015multigraph} extending this result, which was originally proven only for simple graphs, to multigraphs; and for more on the connection between parking functions and partial orientations when $G = K_n$ is the complete graph, see~\cite{beck2014parking}.) Therefore by Propositions~\ref{prop:bigbij} and~\ref{prop:numacyclicpos} the number of indegree sequences of cycle neutral partial orientations of $G$ is also given by~$T_{G^{\bullet}}(1,1)$. 
\end{proof}

It would be interesting to see if we can count indegree sequences for other classes of partial orientations by evaluating the Tutte polynomial of graphs related to $G$, or by using more complicated expressions involving the Tutte polynomial of~$G$ itself. Another way to obtain Tutte polynomial enumerations of indegree sequences for min-edge classes of partial orientations is by restricting to special input data. Recall that a partial orientation~$\O$ is \emph{$q$-connected} if every vertex $v \in V^{q}(G)$ is reachable from $q$, i.e.,~there is a directed path from $q$ to $v$ for each~$v \in V^{q}(G)$. As mentioned in~\S\ref{subsec:history}, $q$-connected (or ``initially connected'') fourientations and partial orientations were previously investigated by Gessel and Sagan~\cite{gessel1996tutte} in the context of depth-first search. First we consider a slight variation of Proposition~\ref{prop:numacyclicpos}  (which appears implicity in ~\cite{mohammadi2013divisors2} and explicitly in ~\cite[Lemma 5.6]{backman2014riemann} and ~\cite[Theorem 3.10]{mohammadi2014divisors}).  The following result in some sense extends Lemma~\ref{lem:maxpfs} to all, not necessarily maximal, $G$-parking functions, and it follows more-or-less immediately from that classical result.  Perhaps the main reason why this result did not appear earlier in the literature is simply that many authors consider total orientations to be more well-behaved or natural than partial orientations.  One of the main goals of this paper is to convince the reader that this instinctual desire to restrict attention to total orientations is not always beneficial.

\begin{prop} \label{prop:numacyclicqpos}
The number of indegree sequences of acyclic, $q$-connected partial orientations of $G$ is $T_G(1,1)$.
\end{prop}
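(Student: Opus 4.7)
My plan is to exhibit a bijection between the set $\{\overline{D}_{\O} : \O \text{ is an acyclic, } q\text{-connected partial orientation of } G\}$ of indegree sequences under consideration and the set $\mathrm{PF}(G,q)$ of $(G,q)$-parking functions, via the map $\overline{D}_{\O} \mapsto \overline{D}_{\O}|_{V^{q}(G)} - \mathbf{1}$. Since $|\mathrm{PF}(G,q)| = T_G(1,1)$, this establishes the claim. First I observe that $q$ is necessarily a source in any such $\O$, for any incoming edge $v \to q$ would combine with the $q$-to-$v$ directed path provided by $q$-connectedness to yield a directed cycle, contradicting acyclicity. Hence $\overline{D}_{\O}(q) = 0$, and for every $v \in V^{q}(G)$ the terminal edge of a $q$-to-$v$ directed path forces $\overline{D}_{\O}(v) \geq 1$, so the shifted sequence is non-negative on $V^{q}(G)$. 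Injectivity on indegree sequences is automatic since $\overline{D}_{\O}$ is determined by its restriction to $V^{q}(G)$ together with $\overline{D}_{\O}(q) = 0$.

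For the forward direction, given an acyclic $q$-connected partial orientation $\O$, I would extend $\O$ to an acyclic $q$-connected total orientation $\O'$ by iteratively orienting each neutral edge in a direction that preserves acyclicity; at least one direction always works, since both orientations being forbidden would complete a pre-existing directed cycle in $\O$. Because orientations are only added, every $v \in V^{q}(G)$ retains positive indegree, so the only possible source of the acyclic total orientation $\O'$ is $q$. By Lemma~\ref{lem:maxpfs}, $\overline{D}_{\O'}|_{V^{q}(G)} - \mathbf{1}$ is a maximal $(G,q)$-parking function. Since $\O \subseteq \O'$ we have $\overline{D}_{\O} \leq \overline{D}_{\O'}$ componentwise, and $\mathrm{PF}(G,q)$ is closed under componentwise decrease subject to non-negativity, so $\overline{D}_{\O}|_{V^{q}(G)} - \mathbf{1} \in \mathrm{PF}(G,q)$.

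The more substantial direction is surjectivity. Given $c \in \mathrm{PF}(G,q)$, I would inductively construct a total ordering $q = v_0, v_1, \ldots, v_{n-1}$ of $V(G)$ such that for each $j \geq 1$ the number $N_j$ of edges from $v_j$ to $\{v_0, \ldots, v_{j-1}\}$ in $G$ strictly exceeds $c(v_j)$: applying the parking function axiom to $U := V(G) \setminus \{v_0, \ldots, v_{j-1}\}$ produces some $u \in U$ with $c(u) < d^G_U(u)$, and we take $v_j := u$. Given this ordering, define $\O$ by selecting, for each $j \geq 1$, any $c(v_j) + 1$ of the $N_j$ available edges at $v_j$ and orienting them toward $v_j$, leaving all remaining edges neutral. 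Then $\overline{D}_{\O}(v_j) = c(v_j) + 1$ and $\overline{D}_{\O}(q) = 0$, so $\overline{D}_{\O}$ maps to $c$. The orientation $\O$ is acyclic since every oriented edge runs from an earlier- to a later-indexed vertex, and $q$-connectedness follows by induction on $j$: each $v_j$ inherits an incoming oriented edge from some $v_i$ with $i < j$, which is already reachable from $q$ by the inductive hypothesis. The main technical obstacle is producing the ordering above, which amounts to a Dhar-burning-style argument based on iterated application of the parking function axiom.
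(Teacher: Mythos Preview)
Your proof is correct and follows essentially the same approach as the paper: both identify the set of indegree sequences (shifted by $-\mathbf{1}$ on $V^q(G)$) with $\mathrm{PF}(G,q)$, using Lemma~\ref{lem:maxpfs} for the forward direction via extension to an acyclic total orientation with unique source $q$, together with downward closure of parking functions. Your surjectivity argument is a more explicit, self-contained variant: rather than starting from a maximal parking function and unorienting edges of the corresponding total orientation (as the paper implicitly does by invoking the proof of Proposition~\ref{prop:numacyclicpos}), you build the partial orientation directly from $c$ via a Dhar-burning-style ordering; this is really the same mechanism viewed constructively. One small wording slip: when you say both orientations of a neutral edge being forbidden ``would complete a pre-existing directed cycle in $\O$,'' the cycle would actually live in the current intermediate orientation, not in the original $\O$---but since you maintain acyclicity at each step, the argument goes through.
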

\begin{proof}
Note that the acyclic,~$q$-connected total orientations of $G$ are the same as the acyclic total orientations of $G$ with unique source~$q$. So thanks to Lemma~\ref{lem:maxpfs}, and arguing as in the proof of Proposition~\ref{prop:numacyclicpos}, the set of $(G,q)$-parking functions is also equal to~$\{(D_{\O})_{\mathbb{Z}V^{q}(G)} \colon \O \textrm{ an acyclic, $q$-connected partial orientation of $\G$}\}$. As before the cardinality of this set is $T_G(1,1)$.
\end{proof}

Recall that we are always assuming our graph $G$ is connected and so in particular $G$ has at least one spanning tree. Choose a sink~$q \in V(G)$ and choose an \emph{ordered, $q$-rooted spanning tree} $T$ of $G$. By this we mean that $T$ is a directed spanning tree of~$G$ rooted at $q$, with edges oriented away from~$q$ and totally ordered in some way consistent with the partial order of ancestry so that edges closer to $q$ in $T$ are less than those further away. Let us say that $\Oref$ and $<$ are \emph{compatible} with the data of $(q,T)$ if reference orientation~$\Oref$ is obtained by extending the orientation of the edges of~$T$ to all the edges in $E(G)$ arbitrarily, and the edge order~$<$ is obtained by extending the order on the edges of $T$ to an order of all the edges in $E(G)$ in some way so that any edge not in~$T$ is greater than all edges in~$T$. If $\Oref$ and $<$ are compatible with~$(q,T)$ then we call the ordered, oriented graph $\G = (G,\Oref,<)$ a \emph{$(q,T)$-connected graph}.  Let us say that $\G$ is \emph{$q$-connected} if the set of cut connected partial orientations of $\G$ is equal to the set of $q$-connected partial orientations of $\G$. The point of studying $(q,T)$-connected graphs $\G$ is explained by the following proposition.

\begin{prop}[{See~\cite[\S3]{backman2014partial}}] \label{prop:qconnected}
If $\G$ is $(q,T)$-connected then $\G$ is $q$-connected.
\end{prop}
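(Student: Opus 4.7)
The plan is to verify both inclusions required for $\G$ to be $q$-connected: every $q$-connected partial orientation of $\G$ is cut connected, and every cut connected partial orientation of $\G$ is $q$-connected. The structural fact I would establish first, and use throughout, is this: for any cut $\OCu = (U,U^c)$ of $G$, the minimum edge $e_{\mathrm{min}}$ of $E(\OCu)$ is a tree edge of $T$, and $\Oref(e_{\mathrm{min}})$ points from the side of the cut containing $q$ into the opposite side. Since tree edges precede non-tree edges in $<$ and $T$ is spanning, some edge of $T$ crosses $\Cu$, forcing $e_{\mathrm{min}} \in T$. For the direction claim, if the parent of $e_{\mathrm{min}}$ in $T$ lay on the side of $\Cu$ opposite $q$, then the tree path from $q$ down to that parent would cross $\Cu$ at some strictly earlier tree edge, which would be smaller than $e_{\mathrm{min}}$ in $<$ by compatibility of the order with ancestry, contradicting minimality. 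Since tree edges are reference-oriented away from $q$, we conclude that $\Oref(e_{\mathrm{min}})$ leaves the $q$-side of the cut.

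For the forward direction, let $\O$ be a $q$-connected partial orientation and let $\OCu = (U,U^c)$ be any potential cut of $\O$. I first observe that $q \in U$: otherwise, for any $v \in U$ a directed path from $q \in U^c$ to $v$ would have to traverse some edge from $U^c$ to $U$, but this orientation lies in $\mathbb{E}(-\OCu)$ and is excluded by the potential cut condition. Granted $q \in U$, the structural observation yields $e_{\mathrm{min}}^{+} \in \mathbb{E}(\OCu)$, and the potential cut condition forces $e_{\mathrm{min}}^{-} \notin \O$. Hence $\{\varepsilon\colon e_{\mathrm{min}}^{\varepsilon} \in \O\} \in \{\emptyset,\{+\}\}$. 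The value $\{+\}$ is not in $X = \{\emptyset,\{-\}\}$, so axiom~(i) of badness fails there; and when the set equals $\emptyset$ (i.e.\ $e_{\mathrm{min}}$ is neutral), axiom~(ii) demands $e_{\mathrm{min}}^{-} \in \mathbb{E}(\OCu)$, which is false because $e_{\mathrm{min}}^{+}$ is the element of $\mathbb{E}(\OCu)$. Thus every potential cut is good, so $\O$ is cut connected.

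For the converse I argue contrapositively. Suppose $\O$ is cut connected but some vertex is unreachable from $q$, and let $U^c := \{v\colon v \text{ is reachable from } q \text{ in } \O\}$ and $U := V(G)\setminus U^c$, so that $q \in U^c$ and $U$ is nonempty. Any oriented edge $e^{\pm} = (a,b) \in \O$ with $a \in U^c$ and $b \in U$ would make $b$ reachable from $q$, contradicting $b \in U$; therefore $\OCu := (U,U^c)$ is a potential cut of $\O$. The structural observation gives that $e_{\mathrm{min}}$ is a tree edge with $\Oref(e_{\mathrm{min}})$ pointing from $U^c$ into $U$, so $e_{\mathrm{min}}^{-} \in \mathbb{E}(\OCu)$ and the potential cut condition forces $e_{\mathrm{min}}^{+} \notin \O$. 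Hence $\{\varepsilon\colon e_{\mathrm{min}}^{\varepsilon} \in \O\} \in \{\emptyset,\{-\}\} = X$, satisfying axiom~(i); when $e_{\mathrm{min}}$ is neutral, axiom~(ii) also holds since $e_{\mathrm{min}}^{-} \in \mathbb{E}(\OCu)$. So $\OCu$ is bad, contradicting cut connectedness, and $\O$ must be $q$-connected after all. The main obstacle in this proof is really only the initial lemma identifying $e_{\mathrm{min}}$ as a tree edge oriented away from the $q$-side; once that is in hand, the axioms for the cut connected class translate directly into the global reachability condition from $q$.
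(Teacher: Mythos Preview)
Your proof is correct and follows essentially the same approach as the paper's: both arguments hinge on the observation that the minimum edge of any cut lies in $T$ and is reference-oriented away from the $q$-side, and both then verify the two inclusions between the cut connected and $q$-connected partial orientations. You isolate this structural fact as a preliminary lemma and prove one direction directly rather than contrapositively, whereas the paper interleaves the tree-edge observation into each direction and argues both contrapositively; these are stylistic differences only.
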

\begin{proof}
Let $\G=(G,\Oref,<)$ be~$(q,T)$-connected and let $\O$ be a partial orientation of~$\G$. Suppose $\O$ is not cut connected and let $\OCu = (U,U^c)$ be a bad potential cut that witnesses this. Let~$e$ be the minimum edge in $E(\OCu)$. Since $\OCu$ is bad we have~$\Oref(e) = (v,u)$ with~$u \in U$ and~$v \in U^c$. Observe that~$e$ belongs to~$T$. Moreover, because $T$ is ordered in a way consistent with ancestry and because its edges are oriented away from $q$, all the edges in $T$ closer to $q$ than~$e$ are between vertices in~$U^c$. Thus it must be that~$q \in U^c$. But this means that~$u \in U$ is not reachable from $q$ and so $\O$ is not $q$-connected. Now suppose $\O$ is not $q$-connected. Let $U$ be set of vertices not reachable from $q$. Then $\OCu = (U,U^c)$ is a bad potential cut with respect to the cut connected property: the minimum edge $e$ in $E(\OCu)$ belongs to~$T$ and since $q \in U^c$ and $e$ is oriented away from $q$ we have~$\Oref(e) = (v,u)$ with~$u \in U$ and~$v \in U^c$.
\end{proof}

\begin{cor}
Suppose $\G$ is $(q,T)$-connected. Then the number of indegree sequences of acyclic-cut connected partial orientations of $\G$ is $T_G(1,1)$.
\end{cor}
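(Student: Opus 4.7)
The plan is to deduce this corollary immediately by chaining together two results that appear just before it in the section. Specifically, I would combine Proposition~\ref{prop:qconnected} (which says that a $(q,T)$-connected ordered, oriented graph $\G$ is $q$-connected, i.e., cut connected partial orientations of $\G$ coincide with $q$-connected partial orientations of $\G$) with Proposition~\ref{prop:numacyclicqpos} (which says that the number of indegree sequences of acyclic, $q$-connected partial orientations of $G$ is $T_G(1,1)$).

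First, I would apply Proposition~\ref{prop:qconnected} to the hypothesis that $\G$ is $(q,T)$-connected to conclude that a partial orientation $\O$ of $\G$ is cut connected if and only if it is $q$-connected. Intersecting both sides of this equivalence with the class of acyclic partial orientations, I obtain that the set of acyclic-cut connected partial orientations of $\G$ is equal, as a set, to the set of acyclic, $q$-connected partial orientations of $\G$. In particular the two sets have the same collection of indegree sequences $\{\overline{D}_{\O}\}$.

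Next, I would invoke Proposition~\ref{prop:numacyclicqpos}, which asserts that the number of indegree sequences of acyclic, $q$-connected partial orientations of $G$ is $T_G(1,1)$. Combining this with the previous step yields that the number of indegree sequences of acyclic-cut connected partial orientations of $\G$ is also $T_G(1,1)$, which is the desired conclusion.

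There is essentially no obstacle here: the work has already been done in Propositions~\ref{prop:qconnected} and~\ref{prop:numacyclicqpos}, and the only thing to note is that the equivalence of the two classes of partial orientations is preserved under taking indegree sequences (which is trivial, since the classes are literally equal as sets of partial orientations). The corollary is really just a translation of Proposition~\ref{prop:numacyclicqpos} into the language of min-edge classes via the compatibility result in Proposition~\ref{prop:qconnected}, justifying the appearance of ``acyclic-cut connected'' in the Type~A table of Figure~\ref{fig:fourtables}.
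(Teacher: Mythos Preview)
Your proposal is correct and takes essentially the same approach as the paper, which simply states that the corollary follows immediately from Propositions~\ref{prop:numacyclicqpos} and~\ref{prop:qconnected}. You have just spelled out the chaining of these two results in slightly more detail than the paper does.
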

\begin{proof}
This follows immediately from~Propoisitions~\ref{prop:numacyclicqpos} and~\ref{prop:qconnected}. 
\end{proof}

The~$(q,T)$-connectedness of $\G$ is really essential here: the number of indegree sequences of acyclic-cut connected partial orientations of an arbitrary ordered, oriented graph $\G = (G,\Oref,<)$ is not necessarily given by $T_G(1,1)$. In fact, we have the following converse to Proposition~\ref{prop:qconnected} which justifies restricting our attention to $(q,T)$-connected graphs.

\begin{prop}
Suppose $\G = (G,\Oref,<)$ is $q$-connected. Then there exists an ordered, $q$-rooted spanning tree $T$ such that for any min-edge cut property $X$ and any $(q,T)$-connected graph $\G' = (G,\Oref',<')$, the set of good fourientations of $\G$ with respect to $X$ is equal to the set of good fourientations of $\G'$ with respect to $X$.\footnote{For $\O$ a fourientation of $\G$ and $\O'$ a fourientation of $\G'$ we write $\O = \O'$ to mean that we have an equality of multisets $\{\Oref(e^{\pm})\colon e^{\pm} \in \O\} = \{\Oref'(e^{\pm})\colon e^{\pm} \in \O'\}$.}
\end{prop}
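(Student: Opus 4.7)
The plan is to take $T$ to be the minimum spanning tree of $G$ with respect to $<$ (the Kruskal tree), rooted at $q$, and to equip $T$ with an arbitrary total order $<_T$ on its edges refining the ancestry partial order. Write $S_e$ for the subtree of $T$ below the edge $e\in T$. Kruskal's classical property gives that the $<$-minimum of every simple cut of $G$ lies in $T$, and, equivalently, each $e\in T$ is the $<$-minimum of its fundamental simple cut $(V\setminus S_e,S_e)$.

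The first step is to verify that $\Oref$ orients every $T$-edge away from $q$. Fix $e\in T$; its fundamental cut $(V\setminus S_e,S_e)$ has $q\in V\setminus S_e$ and $<$-minimum $e$. If $\Oref(e)$ instead pointed from $S_e$ back toward $V\setminus S_e$, one can exhibit a partial orientation of $\G$ that is $q$-connected but not cut-connected (use $e$ in the ``wrong'' direction to reach $S_e$ from $q$, while leaving the fundamental cut of $e$ potential in a way that violates the cut-connected minimum-edge condition), contradicting the $q$-connectedness hypothesis. So $\Oref$ on $T$ is away from $q$, and $(T,<_T)$ is a legitimate ordered, $q$-rooted spanning tree.

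Now let $\G'=(G,\Oref',<')$ be any $(q,T)$-connected graph. Because min-edge cut properties are cut properties, agreement of good fourientations in $\G$ and $\G'$ reduces to checking, for each simple potential cut $\OCu=(A,B)$ of any fourientation $\O$, that conditions (i),(ii) match in $\G$ and $\G'$. The $<$-minimum of $E(\OCu)$ is a $T$-edge (Kruskal), the $<'$-minimum is a $T$-edge (since $T$-edges are $<'$-smaller than non-$T$ ones), and $\Oref$ and $\Oref'$ agree on $T$. So the comparison reduces to showing these two minima are the same $T$-edge. The main obstacle is the claim that for every simple cut $(A,B)$, the $<$-minimum $e^\ast$ of $T\cap E(A,B)$ is a $T$-ancestor of every other $T$-edge in $T\cap E(A,B)$; granted this, any ancestry-refining $<_T$ picks $e^\ast$ first, and we are done. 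This ancestor claim must use $q$-connectedness essentially: if some $T$-edge $e'$ in the cut were a strict ancestor of $e^\ast$, the forced away-from-$q$ orientations of $e^\ast$ and $e'$ in $T$, together with the connectivity of $A$ and $B$ in $G$ (which produces further $T$-edges in $E(A,B)$ lying between $e^\ast$ and $e'$), would clash with the characterization of $q$-connectedness via $<$-minima of $q$-containing simple cuts. Carrying out this case analysis on the $T$-structure separating $e^\ast$ from $e'$ is the bulk of the proof.
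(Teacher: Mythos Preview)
Your central claim---that for every simple cut $(A,B)$ the $<$-minimum edge $e^\ast$ of $T\cap E(A,B)$ is a $T$-ancestor of every other $T$-edge in that cut---is false, and the proof cannot be completed along these lines with an \emph{arbitrary} ancestry-refining order $<_T$.

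Here is a counterexample. Take $V=\{q,a,b,c,d\}$ and edges $e_1=\{q,a\}$, $e_2=\{q,b\}$, $e_3=\{a,c\}$, $e_4=\{b,d\}$, $e_5=\{c,d\}$ with $e_1<e_2<e_3<e_4<e_5$ and $\Oref$ orienting each $e_i$ as listed. The Kruskal tree is $T=\{e_1,e_2,e_3,e_4\}$; rooted at $q$, the edges $e_1$ and $e_2$ are siblings (incomparable in ancestry). One checks directly that $\G=(G,\Oref,<)$ is $(q,T')$-connected for $T'=T$ with tree order $e_1<e_2<e_3<e_4$, hence $q$-connected. But the simple cut $(\{q\},\{a,b,c,d\})$ has $T$-edges $\{e_1,e_2\}$ with $<$-minimum $e_1$, and $e_1$ is \emph{not} an ancestor of $e_2$. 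If you choose the ancestry-refinement $e_2<_T e_1<_T e_4<_T e_3$, then the $<_T$-minimum of this cut is $e_2\neq e_1$, so the good fourientations of $\G$ and of a $(q,T)$-connected $\G'$ built from this $<_T$ will not agree.

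Your sketched contradiction only rules out a $T$-edge $e'$ in the cut being a strict \emph{ancestor} of $e^\ast$; it says nothing about the incomparable case, which is exactly what occurs above.

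The fix is to drop ``arbitrary'' and take $<_T$ to be the restriction $<|_T$ of the given order to $T$. With this choice the equality of minima is automatic once you know (Kruskal) that the $<$-minimum of $E(\Cu)$ lies in $T$. The real work then shifts to showing that $<|_T$ refines ancestry, i.e.\ that in $T$ each edge is $<$-larger than its parent; this is where $q$-connectedness is genuinely used. The paper does exactly this via a Prim-type greedy construction from $q$: it shows that the $<$-minimum of $E(G)\setminus E(G[S_i])$ always crosses from $S_i$ to its complement (using $q$-connectedness to derive a contradiction otherwise), so the tree is built one edge at a time in $<$-increasing order, forcing parents to precede children. Your Kruskal tree coincides with this tree, but the order you put on it matters.
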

\begin{proof}
Let $\G = (G,\Oref,<)$ be $q$-connected. It is enough to prove the following:
\begin{equation*} \label{claim:tree}
 \parbox{5in}{There is an ordered, $q$-rooted spanning tree $T$ of $G$ such that for any simple cut $\Cu$ of $G$ the minimum edge $e$ in $E(\Cu)$ with respect to $<$ is the minimum edge in $E(\Cu) \cap T$ with respect to the tree order and $\Oref(e)$ agrees with the orientation of $e$ in $T$.} \tag{\dag}
\end{equation*}
This is because, as explained in the proof of Theorem~\ref{thm:minaretutte}, it suffices to check min-edge cut properties on simple cuts and because the minimum edge in any cut $\Cu$ with respect to $<'$ for a $(q,T)$-connected graph $\G = (G,<',\Oref')$ will always be the minimum edge in $E(\Cu) \cap T$. For~$U \subseteq V(G)$ let $G[U]$ denote the restriction of $G$ to vertex set $U$ and define $\G[U]$ analogously. A \emph{cut vertex} of~$G$ is~$v \in V(G)$ such that $G[V(G)\setminus\{v\}]$ is disconnected. Suppose~$v \in V(G)$ is a cut vertex of $G$. Let $V_0 \subseteq V(G)$ be the connected component of $G[V(G)\setminus\{v\}]$ containing~$q$; set~$V_1 := V_0 \cup \{v\}$ and $V_2 := V(G) \setminus V_0$. If~$\Cu$ is a simple cut of $G$ then either~$E(\Cu) \subseteq E(G[V_1])$ or~$E(\Cu) \subseteq E(G[V_2])$. So if $T_1$ is an ordered, $q$-rooted spanning tree of~$G[V_1]$ satisfying~(\ref{claim:tree}) for~$\G[V_1]$ and $T_2$ is a $v$-rooted, ordered spanning tree of~$G[V_2]$ satisfying~(\ref{claim:tree}) for~$\G[V_2]$ then $T := T_1 \cup T_2$ (where we declare all edges of~$T_2$ to be greater than those of~$T_1$) is an ordered, $q$-rooted spanning tree of $G$ satisfying~(\ref{claim:tree}) for~$\G$. Thus by induction on the number of vertices we can reduce to the case where $G$ has no cut vertices.

Now assume that $G$ has no cut vertices. To construct the appropriate $T$ we build up a chain of vertices $\{q\} = S_1 \subsetneq S_2 \subsetneq \cdots \subsetneq S_{n} = V(G)$ and $T_1,\ldots,T_n$ such that~$T_i$ is an ordered, $q$-rooted spanning tree of $G[S_i]$ for each $1 \leq i \leq n$ and~$T_{i+1}$ is obtained from~$T_i$ by adding $e_i$, the minimum edge (with respect to~$<$) in $E(G) \setminus E(G[S_i])$. To show that this is possible we need to show that the minimum edge in $E(G) \setminus E(G[S_i])$ never belongs to~$E(G[V(G)\setminus S_i])$. Suppose to the contrary that for some $i$ the minimum edge in $E(G) \setminus E(G[S_i])$ is $e_i = \{u,v\}$ with~$u,v \in V(G) \setminus S_i$ and~$e_i^{+} = (u,v)$. Since~$G$ has no cut vertices, $G[V(G) \setminus \{u\}]$ is connected and thus there exists a $q$-connected partial orientation $\O_u$ of~$G[V(G) \setminus \{u\}]$. Set $\O := \O_u \cup \{e^{\pm}\colon e^{\pm} = (w,u), e \in E(G)\}$. Then $\O$ remains $q$-connected; but $\O$~is not a cut connected partial orientation of~$\G$ because~$\Cu = (V(G) \setminus \{u\},\{u\})$ is a bad potential cut. This contradicts the assumption that $\G$ is $q$-connected. So indeed the minimum edge $e_i = \{u,v\}$ in $E(G) \setminus E(G[S_i])$ is always between a vertex $u  \in S_i$ and a vertex $v \in V(G)\setminus S_i$. A very similar argument shows that $\Oref(e_i) = (u,v)$. Thus we can construct the desired $T_i$. Now set $T := T_n$. We claim that this~$T$ satisfies~(\ref{claim:tree}). So let~$\Cu = \{U,U^c\}$ be a simple cut of $G$ and suppose~$e_i$ is the minimum edge in~$E(\Cu) \cap T$. Because $T_i$ spans $S_i$ and all the edges in~$T_i$ are less than~$e_i$, we must have that $S_i \subseteq U$. But then $E(U,U^c) \subseteq E(G) \setminus E(G[S_i])$, so~$e_i$ must be the minimum edge in $E(U,U^c)$. As mentioned above, $\Oref(e_i)$ agrees with the orientation of $e_i$ in $T$. Therefore~$T$ satisfies~(\ref{claim:tree}).
\end{proof}

\begin{remark}
There is a natural notion of $q$-connected fourientation as well: we say a fourientation $\O$ is \emph{$q$-connected} if for every $v \in V^{q}(G)$ there is a potential path from $q$ to $v$. Note that~$\G$ being $q$-connected is equivalent to the set of cut connected fourientations of $\G$ being equal to the set of $q$-connected fourientations of $\G$. This is because a fourientation is cut connected (respectively,~$q$-connected) if and only if it contains a cut connected (resp.,~$q$-connected) partial orientation. In fact, the minimal cut connected fourientations of $\G$ under the partial order of containment are ``oriented spanning trees'' of $G$; each spanning tree appears appears exactly once in this set with some orientation. In the case where $\G$ is $q$-connected, these minimal cut connected fourientations are precisely the $q$-rooted spanning trees.
\end{remark}

\begin{remark}
In contrast to the complicated situation with partial orientations described above, the number of indegree sequences among a min-edge class of total orientations is certainly given by a Tutte polynomial evaluation as outlined in~\S\ref{subsec:history}. In the other direction, it also might be interesting to investigate the number of indegree sequences among fourientations in a min-edge class. Again, this value is not necessarily a generalized Tutte polynomial evaluation. However, we can nevertheless sometimes get a simple expression for this value: for instance, it is easily seen that the number of indegree sequences among all fourientations of a graph $G$ is $\prod_{v \in V(G)} \mathrm{deg}(v)$.
\end{remark}

\begin{remark}
For ~$\G$ a $(q,T)$-connected graph, the cut minimal-cycle minimal total orientations enumerated by $T_G(1,1)$ become the cycle minimal,~$q$-connected total orientations.  These objects are in bijection with their associated divisors which were introduced by Gioan~\cite{gioan2007enumerating} and further investigated by Bernardi~\cite{bernardi2008tutte}.  These divisors were rediscovered by An, Baker, Kuperberg, and Shokrieh~\cite{an2014canonical}, who proved that they are exactly the \emph{break divisors} of Mikhalkin and Zharkov \cite{mikhalkin2006tropical} offset by a chip at $q$.  These break divisors were discovered originally in the context of divisor theory for tropical curves: they provide canonical representatives for the set of divisors of degree $g$ modulo linear equivalence.  In particular, this implies that by adding a chip at $q$ to the divisors associated to $q$-connected partial orientations, we lose all dependence on $q$.  Interestingly, there exist tropical proofs of the existence and uniqueness of break divisors which are not combinatorial in nature.
\end{remark}

\subsection{Monomizations of power ideals and cut internal partial orientations} \label{subsec:cutin}

One can extend the enumeration of $(G,q)$-parking functions via the Tutte polynomial to an expression for the generating function of $(G,q)$-parking functions by degree. For~$c = \sum_{v \in V^{q}(G)} c_v (v) \in \mathbb{Z}V^{q}(G)$ define~$\mathrm{deg}(c) := \sum_{v \in V^{q}(G)} c_v$. A famous result of Merino~\cite{merino1997chip} is that~$T_G(1,y) = \sum_{c \in \mathrm{PF}(G,q)} y^{g - \mathrm{deg}(c)}$. (Merino~\cite{merino2001chip} used this interpretation of the Tutte polynomial to resolve a special case of a 1977 conjecture of Stanley~\cite{stanley1977cohen} about the $h$-vectors of matroid complexes.) Merino's theorem can also be expressed succinctly using commutative algebra. Fix some field $\mathbf{k}$ and let $R := \mathbf{k}[x_v\colon v \in V^{q}(G)]$ be the polynomial ring with generators indexed by non-sink vertices. For $U \subseteq V^{q}(G)$, define~$\mathbf{x}^{U} := \prod_{u \in U} x_u^{d^G_U(u)}$ where as before we have~$d^G_U(u) := |\{e= \{u,v\} \in E(U,U^c)\}|$. Then define the monomial ideal~$I_{(G,q)} := \langle \mathbf{x}^{U}\colon U\subseteq V^{q}(G) \textrm{ with $U \neq \emptyset$} \rangle$. We use the notation~$\mathbf{x}^c := \prod_{v \in V^{q}(G)} x_v^{c_v}$ for~$c = \sum_{v \in V^{q}(G)} c_v (v) \in \mathbb{N}V^{q}(G)$. It is not difficult to see that a linear basis of~$R/I_{(G,q)}$ is~$\{\mathbf{x}^c\colon c \textrm{ a $(G,q)$-parking function}\}$. A restatement of Merino's theorem is then that the Hilbert series of the~$R$-module $R/I_{(G,q)}$ is~$\mathrm{Hilb}(R/I_{(G,q)};y) = y^{g} \cdot T_G(1,1/y)$.

Motivated by questions in Schubert calculus~\cite{postnikov1999differential}, Postnikov and Shapiro~\cite{postnikov2004trees} studied the monomial ideal $I_{(G,q)}$ as well as a deformation of this ideal generated by powers of homogenous linear forms. Specifically, setting $d^G_U := \sum_{u \in U} d^G_U(u)$, they defined the power ideal $J_{(G,q)} := \langle \left( \sum_{u \in U} x_u\right)^{d^G_U}\colon U\subseteq V^{q}(G) \textrm{ with $U \neq \emptyset$} \rangle$. One of the main results of~\cite{postnikov2004trees} is that $R/I_{(G,q)}$ and $R/J_{(G,q)}$ have the same Hilbert series. The (Macaulay) inverse system of~$J_{(G,q)}$ is a so-called \emph{central zonotopal algebra}~\cite{holtz2011zonotopal}. There are two other closely related zonotopal algebras associated to a graph. Specifically, define the ideals~$J^{r}_{(G,q)} := \langle ( \sum_{u \in U} x_u)^{d^G_U+r}\colon U\subseteq V^{q}(G) \textrm{ with $U \neq \emptyset$} \rangle$ for~$r \geq -1$. Then the inverse systems of $J^{r}_{(G,q)}$ in the special cases $r = +1,0,-1$ yield the \emph{external}, \emph{central} and \emph{internal zonotopal algebras} associated to $G$. These spaces of polynomials are related to the complexity of box splines~\cite{dahmen1985local}~\cite{deconcini2008hyperplane}. Recall that (at least when $\mathbf{k}$ has characteristic zero) the dimension of an inverse system of an ideal is equal to the dimension of the quotient by that ideal and that when the ideal is homogenous, as is the case for these power ideals, the same is true of the graded parts. Thus the Hilbert series of~$R/J^{r}_{(G,q)}$ are important in the theory of zonotopal algebras. Ardila and Postnikov~\cite{ardila2010combinatorics} show that 
 \begin{align*}
 \mathrm{Hilb}(R/J^{+1}_{(G,q)};y) &= y^g\cdot T_G(1+y,1/y); \\
 \mathrm{Hilb}(R/J^{-1}_{(G,q)};y) &= y^g\cdot T_G(0,1/y).
 \end{align*}
One might wonder whether there are some analogous monomial ideals~$I^{r}_{(G,q)}$ for~$r = \pm1$ with $\mathrm{Hilb}(R/J^{r}_{(G,q)};y) = \mathrm{Hilb}(R/I^{r}_{(G,q)};y)$; in this case we say the monomial ideal is a \emph{monomization} of the corresponding power ideal.\footnote{More precisely, for $\mathcal{I}$ a monomial ideal of $R$ and $\mathcal{J}$ any ideal of $R$, we say that $\mathcal{I}$ is a \emph{monomization} of $\mathcal{J}$ if the standard monomials of $\mathcal{I}$ give a linear basis of $R/\mathcal{J}$. We will however ignore the distinction between having the same Hilbert series and having the same linear basis. Note that for any given $\mathcal{J}$ such an $\mathcal{I}$ can in principle be found using the theory of Gr\"{o}bner bases but when $\mathcal{J}$ is a power ideal it is computationally expensive to compute a Gr\"{o}bner basis of $\mathcal{J}$. The monomizations we discuss here are not initial ideals of their corresponding power ideals with respect to any term order.} We want this monomization to be ``natural'' in some sense: for instance, the generators of $I_{(G,q)}$ correspond to the generators of $J^{0}_{(G,q)}$ in an obvious way; we would want the~$I^{r}_{(G,q)}$ to also have this property with respect to the~$J^{r}_{(G,q)}$.

For the complete graph $G = K_{n}$, Postinkov-Shapiro-Shaprio~\cite{postnikov1998chern} found  such an external monomial ideal $I^{+1}_{(G,q)}$ (and indeed this external case was the one they were originally interested in). Desjardins~\cite[\S3]{desjardins2010monomization} extended their construction to obtain an external monomial ideal $I^{+1}_{(G,q)}$ for any $G$. Specifically, let $\prec$ be a total order on~$V^{q}(G)$ and define~$I^{+1}_{(G,q,\prec)} :=  \langle x_{\mathrm{min}_{\prec}(U)} \cdot \mathbf{x}^{U}\colon U\subseteq V^{q}(G) \textrm{ with $U \neq \emptyset$} \rangle$ where $\mathrm{min}_{\prec}(U)$ is the minimal element of~$U$ according to~$\prec$. Then Desjardins showed~$I^{+1}_{(G,q,\prec)}$ is a monomization of~$J^{+1}_{(G,q)}$. But he also showed that certain assumptions on $G$ were necessary to mimic this construction and obtain an appropriate internal monomial ideal~$I^{-1}_{(G,q)}$. We can try to define~$I^{-1}_{(G,q,\prec)} :=  \langle x_{\mathrm{min}_{\prec}(U)}^{-1} \cdot \mathbf{x}^{U}\colon U\subseteq V^{q}(G) \textrm{ with $U \neq \emptyset$} \rangle$. However, it is important to observe that $I^{-1}_{(G,q,\prec)}$ does not always make sense because $x_{\mathrm{min}_{\prec}(U)}^{-1} \cdot \mathbf{x}^{U}$ may be a Laurent monomial rather than an honest monomial. But $I^{-1}_{(G,q,\prec)}$ does make sense at least when there is an edge between $q$ and each vertex in $V^{q}(G)$. Desjardins~\cite[\S4]{desjardins2010monomization} showed that when $G$ is \emph{saturated},~i.e.,~when there is at least one edge between any pair of vertices in~$V(G)$, then $I^{-1}_{(G,q,\prec)}$ is a monomization of~$J^{-1}_{(G,q)}$ for any choice of~$\prec$.

Here we present an approach to the problem of finding a monomization of $J^{-1}_{(G,q)}$ for all $G$ using acyclic-cut internal partial orientations. As before, we must work with a~$(q,T)$-connected graph~$\G$.  For a subset~$U \subseteq V^{q}(G)$, define~$\mathbf{x}^{U,T} := \prod_{u \in U} x_u^{d^{G}_{U,T}(u)}$ with~$d^{G}_{U,T}(u) := |\{ e=\{u,v\} \in E(U,U^c) \textrm{ and $e$ is not the minimum edge in $E(U,U^c)$}\}|$. So in particular the degree of $\mathbf{x}^{U,T}$ is one less than the degree of $\mathbf{x}^{U}$. Then define the monomial ideal $I^{-1}_{(G,q,T)} := \langle \mathbf{x}^{U,T}\colon U\subseteq V^{q}(G) \textrm{ with $U \neq \emptyset$} \rangle$.  Note that an ordered, $q$-rooted spanning tree $T$ of $G$ gives rise to a total order $\prec_T$ on $V^{q}(G)$ whereby~$u \prec_T v$ if the minimum edge in $T$ containing $u$ is less than the minimum edge in $T$ containing~$v$. Furthermore, we have $I^{-1}_{(G,q,T)} = I^{-1}_{(G,q,\prec_T)}$; and moreover, the total orders $\prec$ arising from spanning trees in this way are precisely the ones for which $I^{-1}_{(G,q,\prec)}$ does make sense. In analogy to the definition of $(G,q)$-parking functions, let us say $c \in \mathbb{Z}V^{q}(G)$ is a \emph{$(G,q,T)$-subparking function} if~$\mathbf{x}^{c}$ is a standard monomial of $I^{-1}_{(G,q,T)}$ (i.e., a monomial in $R \setminus I^{-1}_{(G,q,T)}$). Equivalently:
\begin{definition}
An element $c = \sum_{v \in V^{q}(G)}c_v(v) \in \mathbb{Z}V^{q}(G)$ is a \emph{$(G,q,T)$-subparking function} if for every non-empty~$U \subseteq V^{q}(G)$, there is~$u \in U$ with~$0\leq c_u < d^{G}_{U,T}(u)$. We denote the set of~$(G,q,T)$-subparking functions by $\mathrm{PF}^{-}(G,q,T)$. (See also~\cite[Definition~1.5]{holtz2011zonotopal}, where Holtz and Ron define subparking functions, which they call ``internal parking functions,'' in the case of the complete graph $G=K_{n}$.)
\end{definition}
\noindent Also, let~$\mathrm{ACI}(G,q,T)$ denote the set of acyclic-cut internal partial orientations of~$\G$. (Notice that both~$\mathrm{PF}^{-}(G,q,T)$ and~$\mathrm{ACI}(G,q,T)$ depend only on $(G,q,T)$.)

\begin{conj} \label{conj:subpark}
For any graph $G$ and choice of sink $q \in V(G)$, there exists an ordered, $q$-rooted spanning tree $T$ of $G$ such that 
\begin{enumerate}[label=(\alph*)]
\item $\mathrm{Hilb}(R/I^{-1}_{(G,q,T)};y) = y^{g} \cdot T_G(0,1/y)$; \label{statement:hilbertseries}
\item $\mathrm{PF}^{-}(G,q,T) = \{(D_{\O})_{\mathbb{Z}V^{q}(G)}\colon \O \in \mathrm{ACI}(G,q,T) \}$. \label{statement:acfpos}
\end{enumerate}
\end{conj}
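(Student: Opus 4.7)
The plan is to first choose $T$ so that $\G = (G,\Oref,<)$ is $(q,T)$-connected, as constructed in the proof following Proposition~\ref{prop:qconnected}. This choice serves two purposes: it guarantees that $I^{-1}_{(G,q,T)}$ is a genuine monomial ideal (every generator $\mathbf{x}^{U,T} = \mathbf{x}^{U}/x_{u_0}$ has nonnegative exponents, where $u_0 \in U$ is the endpoint of the minimum edge in $E(U,U^c)$, which lies in $T$ and is oriented from $U^c$ into $U$), and it ensures that the cut internal min-edge property can be read off from the $T$-structure.

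For part (b), I would study the map $\Phi \colon \mathrm{ACI}(G,q,T) \to \mathbb{Z}V^q(G)$ defined by $\Phi(\O) := (D_\O)_{\mathbb{Z}V^q(G)}$, and show that its image is exactly $\mathrm{PF}^{-}(G,q,T)$. For the inclusion $\Phi(\mathrm{ACI}) \subseteq \mathrm{PF}^{-}$, fix $\emptyset \neq U \subseteq V^q(G)$ and consider the simple-cut decomposition of the cut $\{U, U^c\}$. The cut internal property forces the minimum edge $e_{\min} = \{u_0,v_0\} \in T$ of $E(U,U^c)$ to be neutral in $\O$ and the cut to contain an oriented edge pointing from $U$ into $U^c$; combining this with acyclicity (which provides a sink in the oriented subgraph induced on $U$), one extracts a vertex $u \in U$ with $0 \leq \mathrm{indeg}_\O(u) - 1 < d^G_{U,T}(u)$, verifying the subparking bound. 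For the reverse inclusion, given $c \in \mathrm{PF}^{-}(G,q,T)$, build an ACI partial orientation with restricted divisor $c$ by a Dhar-style inductive burning procedure: at each step, use the subparking inequality to identify a vertex $u$ whose unoriented incident edges can be oriented into $u$ without creating a directed cycle or violating the cut internal condition at the minimum edge.

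For part (a), once (b) is in hand and the bijection analysis refined to track the $\mathbb{N}$-grading, I would prove the graded identity
\[ \mathrm{Hilb}\bigl(R/I^{-1}_{(G,q,T)}; y\bigr) \;=\; \sum_{c \in \mathrm{PF}^{-}(G,q,T)} y^{\deg c} \;=\; y^{g}\, T_{G}(0, 1/y) \]
by deletion-contraction on the maximum edge of $\G$, mirroring the argument of Theorem~\ref{thm:main} but weighting each oriented partial orientation by $y^{|\O^{o}| - \mathrm{indeg}_\O(q) - (n-1)}$; the recurrences for ACI enumeration and for $y^g T_G(0,1/y)$ coincide on the three cases (isthmus, loop, ordinary edge), matching the Tutte-polynomial recurrence at the specialization in question. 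Alternatively, knowing that Ardila--Postnikov computed $\mathrm{Hilb}(R/J^{-1}_{(G,q)};y) = y^g T_G(0,1/y)$, it suffices to prove that $I^{-1}_{(G,q,T)}$ is a monomization of $J^{-1}_{(G,q)}$, which would follow by comparing Hilbert series via the refined count on ACI partial orientations.

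The principal obstacle is the well-definedness step in part (b): the minimum edge $e_{\min}$ in $E(U,U^c)$ lives on the boundary of $U$ rather than inside $U$, so reading off the subparking inequality requires propagating the cut internal/acyclic data across this boundary, and the natural induction on $|U|$ interacts non-trivially with the way $T$ orders the edges. A secondary difficulty is that $\Phi$ is far from injective (indeed $|\mathrm{ACI}(G,q,T)| = 2^{g}T_G(0,\tfrac{1}{2})$ by Theorem~\ref{thm:main}, while a positive answer to (a) predicts $|\mathrm{PF}^{-}(G,q,T)| = T_G(0,1)$), so the graded Hilbert series identity cannot be read off from a naive bijection and genuinely requires the refined deletion-contraction analysis sketched above. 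The fibers of $\Phi$ are controlled by Lemma~\ref{lem:edgepivot} and should correspond to classes in the generalized cycle reversal system, but taming their sizes is what makes the graded enumeration delicate.
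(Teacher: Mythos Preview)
The statement you are attempting to prove is stated in the paper as a \emph{conjecture}, not a theorem: the paper does not prove it in general, only in the two special cases where $G$ is saturated (Theorem following Conjecture~\ref{conj:subpark}) and where $G$ is outerplanar (final theorem of~\S\ref{subsec:cutin}). So you are not proposing an alternate proof of something the paper proves; you are proposing a proof of an open problem.

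Your plan has a concrete gap right at the first step. You propose to take any $T$ making $\G$ $(q,T)$-connected, but the paper explicitly records (immediately after the statement of Conjecture~\ref{conj:subpark}) that Desjardins gave an example in which statement~\ref{statement:hilbertseries} \emph{fails} for some choices of $(G,q,T)$. Thus ``$(q,T)$-connected'' is not a sufficient hypothesis on $T$, and the heart of the conjecture is precisely identifying which $T$ works. In the saturated case the paper takes $T$ to be a star centered at $q$; in the outerplanar case it takes $T$ to be a $q$-rooted boundary tree obtained from a planar embedding. Both choices are highly specific to the graph class, and the arguments exploit those specifics (e.g.\ the existence of an edge between any two vertices in the saturated case, and the boundary-walk structure in the outerplanar case).

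Your sketch of the deletion--contraction approach to~\ref{statement:hilbertseries} is in fact close in spirit to what the paper does for outerplanar graphs: Lemma~\ref{lem:subparkdc} gives exactly a recurrence of the form $\mathrm{Hilb}(R/I^{-1}_{(G,q,T)};y) = \mathrm{Hilb}(R/(I^{-1}_{(G/e,q,T/e)}+\langle x_v\rangle);y) + y\cdot\mathrm{Hilb}(R/I^{-1}_{(G\setminus e,q,T\setminus e)};y)$, but only when the edge $e$ being removed is a non-loop edge incident to $q$ and not in $T$. The difficulty is that after contracting or deleting, one must again find such an edge for the new triple, and there is no guarantee this can be iterated down to the base case for an arbitrary graph; the paper formalizes the triples for which this works as the class $\mathcal{G}$ and shows outerplanar graphs (with boundary trees) lie in $\mathcal{G}$. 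Your proposal to run deletion--contraction on the \emph{maximum} edge of $\G$, as in Theorem~\ref{thm:main}, does not obviously interact well with $I^{-1}_{(G,q,T)}$, whose generators are indexed by vertex subsets rather than by cuts containing a fixed edge.

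Finally, your argument for $\Phi(\mathrm{ACI})\subseteq\mathrm{PF}^{-}$ is too optimistic: acyclicity gives a sink in the induced oriented subgraph on $U$, but the subparking inequality $c_u < d^G_{U,T}(u)$ requires controlling edges in $E(U,U^c)$, not edges internal to $U$, and the cut internal condition only tells you about the minimum edge and one oriented edge, not enough to bound an indegree by $d^G_{U,T}(u)$. In the saturated case the paper's argument for this inclusion genuinely uses that every pair of vertices is adjacent to manufacture a directed cycle from a hypothetical failure of the subparking condition; no such argument is available in general.
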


\begin{example}
Let $(G,q,T)$ be as below, with the edges of $T$ oriented, in bold, and labeled according to order:
\begin{center}
\begin{tikzpicture}
	\SetFancyGraph
	\Vertex[LabelOut,Lpos=180, Ldist=.1cm,x=-0.75,y=-1.2]{q}
	\Vertex[LabelOut,Lpos=0, Ldist=.1cm,x=0.75,y=0]{v_1}
	\Vertex[LabelOut,Lpos=180, Ldist=.1cm,x=-0.75,y=0]{v_2}
	\Edges[style={thick}](q,v_1)
	\Edges[style={->,>=mytip,ultra thick,bend right}](q,v_1)
	\Edges[style={thick}](q,v_2)
	\Edges[style={thick}](v_1,v_2)
	\Edges[style={->,>=mytip,ultra thick, bend right}](v_1,v_2)
	\node at (0.2,0.5) {$2$};
	\node at (0.4,-1) {$1$};
	\node at (0,-1.6) {$G$};
\end{tikzpicture}
\end{center}
The $9$ acyclic-cut internal partial orientations in $\mathrm{ACI}(G,q,T)$ are the following:
\begin{center}
\begin{tikzpicture}
	\SetBasicGraph
	\Vertex[NoLabel,x=-0.6,y=-0.9]{q}
	\Vertex[NoLabel,x=0.6,y=0]{v_1}
	\Vertex[NoLabel,x=-0.6,y=0]{v_2}
	\Edges[style={thick}](q,v_1)
	\Edges[style={thick,bend right}](q,v_1)
	\Edges[style={->-,>=mytip,thick}](q,v_2)
	\Edges[style={thick}](v_1,v_2)
	\Edges[style={->-,>=mytip,thick, bend left}](v_2,v_1)
\end{tikzpicture} \;
\begin{tikzpicture}
	\SetBasicGraph
	\Vertex[NoLabel,x=-0.6,y=-0.9]{q}
	\Vertex[NoLabel,x=0.6,y=0]{v_1}
	\Vertex[NoLabel,x=-0.6,y=0]{v_2}
	\Edges[style={thick}](q,v_1)
	\Edges[style={thick,bend right}](q,v_1)
	\Edges[style={->-,>=mytip,thick}](q,v_2)
	\Edges[style={->-,>=mytip,thick}](v_2,v_1)
	\Edges[style={thick, bend left}](v_2,v_1)
\end{tikzpicture} \;
\begin{tikzpicture}
	\SetBasicGraph
	\Vertex[NoLabel,x=-0.6,y=-0.9]{q}
	\Vertex[NoLabel,x=0.6,y=0]{v_1}
	\Vertex[NoLabel,x=-0.6,y=0]{v_2}
	\Edges[style={->-,>=mytip,thick}](q,v_1)
	\Edges[style={thick,bend right}](q,v_1)
	\Edges[style={->-,>=mytip,thick}](q,v_2)
	\Edges[style={thick}](v_1,v_2)
	\Edges[style={thick, bend right}](v_1,v_2)
\end{tikzpicture} \;
\begin{tikzpicture}
	\SetBasicGraph
	\Vertex[NoLabel,x=-0.6,y=-0.9]{q}
	\Vertex[NoLabel,x=0.6,y=0]{v_1}
	\Vertex[NoLabel,x=-0.6,y=0]{v_2}
	\Edges[style={->-,>=mytip,thick}](q,v_1)
	\Edges[style={bend right}](q,v_1)
	\Edges[style={thick}](q,v_2)
	\Edges[style={->-,>=mytip,thick}](v_1,v_2)
	\Edges[style={bend left}](v_2,v_1)
\end{tikzpicture} \;
\begin{tikzpicture}
	\SetBasicGraph
	\Vertex[NoLabel,x=-0.6,y=-0.9]{q}
	\Vertex[NoLabel,x=0.6,y=0]{v_1}
	\Vertex[NoLabel,x=-0.6,y=0]{v_2}
	\Edges[style={->-,>=mytip,thick}](q,v_1)
	\Edges[style={thick,bend right}](q,v_1)
	\Edges[style={->-,>=mytip,thick}](q,v_2)
	\Edges[style={thick}](v_1,v_2)
	\Edges[style={->-,>=mytip,thick, bend left}](v_2,v_1)
\end{tikzpicture} \;
\begin{tikzpicture}
	\SetBasicGraph
	\Vertex[NoLabel,x=-0.6,y=-0.9]{q}
	\Vertex[NoLabel,x=0.6,y=0]{v_1}
	\Vertex[NoLabel,x=-0.6,y=0]{v_2}
	\Edges[style={->-,>=mytip,thick}](q,v_1)
	\Edges[style={thick,bend right}](q,v_1)
	\Edges[style={->-,>=mytip,thick}](q,v_2)
	\Edges[style={->-,>=mytip,thick}](v_2,v_1)
	\Edges[style={thick, bend left}](v_2,v_1)
\end{tikzpicture} \;
\begin{tikzpicture}
	\SetBasicGraph
	\Vertex[NoLabel,x=-0.6,y=-0.9]{q}
	\Vertex[NoLabel,x=0.6,y=0]{v_1}
	\Vertex[NoLabel,x=-0.6,y=0]{v_2}
	\Edges[style={thick}](q,v_1)
	\Edges[style={thick,bend right}](q,v_1)
	\Edges[style={->-,>=mytip,thick}](q,v_2)
	\Edges[style={->-,>=mytip,thick}](v_2,v_1)
	\Edges[style={->-,>=mytip,thick, bend left}](v_2,v_1)
\end{tikzpicture} \;
\begin{tikzpicture}
	\SetBasicGraph
	\Vertex[NoLabel,x=-0.6,y=-0.9]{q}
	\Vertex[NoLabel,x=0.6,y=0]{v_1}
	\Vertex[NoLabel,x=-0.6,y=0]{v_2}
	\Edges[style={->-,>=mytip,thick}](q,v_1)
	\Edges[style={thick,bend right}](q,v_1)
	\Edges[style={->-,>=mytip,thick}](q,v_2)
	\Edges[style={->-,>=mytip,thick}](v_1,v_2)
	\Edges[style={thick, bend right}](v_1,v_2)
\end{tikzpicture} \;
\begin{tikzpicture}
	\SetBasicGraph
	\Vertex[NoLabel,x=-0.6,y=-0.9]{q}
	\Vertex[NoLabel,x=0.6,y=0]{v_1}
	\Vertex[NoLabel,x=-0.6,y=0]{v_2}
	\Edges[style={->-,>=mytip,thick}](q,v_1)
	\Edges[style={thick,bend right}](q,v_1)
	\Edges[style={->-,>=mytip,thick}](q,v_2)
	\Edges[style={->-,>=mytip,thick}](v_2,v_1)
	\Edges[style={->-,>=mytip,thick, bend left}](v_2,v_1)
\end{tikzpicture}
\end{center}
The Tutte polynomial of our graph is $T_G(x,y) = y^3 + x^2 + 2xy + 2y^2 + x + y$. We can compute~$I^{-1}_{(G,q,T)} = \langle x_{v_1}x_{v_2},x_{v_2}^2,x_{v_1}^3\rangle$. So indeed, 
\[\mathrm{Hilb}(R/ I^{-1}_{(G,q,T)};y) = 1 + 2y + y^2 = y^3 \cdot T_G(0,1/y).\] 
And indeed, the set of divisors associated to partial orientations in $\mathrm{ACI}(G,q,T)$ is the set of subparking functions, namely $\{0,(v_1),(v_2),2(v_1)\}$.
\end{example}

Finding the appropriate tree $T$ for each choice of graph $G$ and sink $q \in V(G)$ is a major part of resolving Conjecture~\ref{conj:subpark}. Desjardins~\cite[Example 21]{desjardins2010monomization} gave an example that shows statement~\ref{statement:hilbertseries} from Conjecture~\ref{conj:subpark} does not always hold for all choices of~$(G,q,T)$. That some restrictions on $T$ are necessary should be seen as similar to the fact that we need $\G$ to be $(q,T)$-connected in order to ensure that the number of indegree sequences of its acyclic-cut connected partial orientations is~$T_{G}(1,1)$.

We now prove some special cases of Conjecture~\ref{conj:subpark}. First we upgrade Desjardins' result about monomizations of the internal power ideal for saturated $G$ to a proof of Conjecture~\ref{conj:subpark} for saturated $G$.

\begin{thm}
Conjecture~\ref{conj:subpark} is true when $G$ is saturated.
\end{thm}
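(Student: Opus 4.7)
My plan is to choose $T$ to be a $q$-rooted spanning \emph{star}, which is possible since $G$ is saturated: take $E(T) := \{e_v := \{q,v\} : v \in V^{q}(G)\}$ with each $e_v$ oriented $q \to v$ under $\Oref$, order the tree edges in any way that realizes a desired $\prec_T$ on $V^{q}(G)$, and place every non-tree edge after all tree edges. With this setup $\G$ is $(q,T)$-connected, and crucially the minimum edge of $E(U, V(G) \setminus U)$ for every non-empty $U \subseteq V^{q}(G)$ is the star edge $e_{u^*}$, where $u^* := \min_{\prec_T} U$, whose reference orientation is $q \to u^*$.

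Part (a) combines two known results. Ardila and Postnikov~\cite{ardila2010combinatorics} compute $\mathrm{Hilb}(R/J^{-1}_{(G,q)}; y) = y^{g} T_G(0, 1/y)$, and Desjardins~\cite{desjardins2010monomization} shows that for saturated $G$ the monomial ideal $I^{-1}_{(G,q,\prec)}$ is a monomization of $J^{-1}_{(G,q)}$ for any total order $\prec$ on $V^{q}(G)$. Taking $\prec = \prec_T$ gives $I^{-1}_{(G,q,T)} = I^{-1}_{(G,q,\prec_T)}$, and since a monomization shares the Hilbert series of the ideal it monomizes, the conclusion of~(a) is immediate.

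For part (b), I prove both inclusions in $\mathrm{PF}^{-}(G,q,T) = \{(D_{\O})_{\mathbb{Z}V^{q}(G)} : \O \in \mathrm{ACI}(G,q,T)\}$. For $\supseteq$, given $\O \in \mathrm{ACI}(G,q,T)$ and a non-empty $U \subseteq V^{q}(G)$, I locate a subparking witness by examining sources of the acyclic sub-DAG obtained by restricting $\O$ to edges of $G[U]$. Since cut internal implies cut connected, Proposition~\ref{prop:qconnected} gives $q$-connectedness, and hence $\mathrm{indeg}_{\O}(u) \geq 1$ for every $u \in U$; meanwhile any source $u_0$ of the sub-DAG satisfies $\mathrm{indeg}_{\O}(u_0) \leq d^G_U(u_0)$, which handles every case except when the unique source is $u^*$ and all external edges at $u^*$ are oriented into it. In that exceptional configuration I let $W \supseteq U$ be the set of vertices reachable from $u^*$ in $\O$ (with $q \notin W$ by acyclicity combined with $q$-connectedness), so that $(V(G) \setminus W, W)$ becomes a potential cut of $\O$, and apply the cut-internal property to this cut; the main obstacle is the subcase $w^* := \min_{\prec_T} W \neq u^*$, which requires a minimality or induction argument on $U$ together with the deduced existence of an outward cut edge $u' \to v' \in V(G) \setminus U$ (forced because otherwise $(V(G) \setminus U, U)$ itself would be a potential cut whose minimum edge $e_{u^*}$ is oriented).

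For the reverse inclusion $\subseteq$, given $c \in \mathrm{PF}^{-}(G,q,T)$ I construct $\O$ via a modified Dhar burning: iteratively burn the $\prec_T$-smallest subparking witness $v_k \in U_k := V(G) \setminus V_{k-1}$, and at burn step $k$ orient $c_{v_k} + 1$ of the $d^G_{U_k}(v_k)$ edges between $v_k$ and $V_{k-1}$ as incoming to $v_k$, subject to two priorities: (i) leave $e_{v_k}$ neutral whenever $v_k = \min_{\prec_T} U_k$, and (ii) at the burn step of every $\prec_T$-minimum $v_l$, orient the edge $v_k \to v_l$ (for every previously-burnt non-minimum $v_k$ with $v_l \prec_T v_k$) so that $v_l$ becomes a descendant of $v_k$ in $\O$. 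Acyclicity and $q$-connectedness are then built into the construction, and the sanity check $(D_{\O})_{\mathbb{Z}V^{q}(G)} = c$ is immediate because $\mathrm{indeg}_{\O}(v_k) = c_{v_k} + 1$ by construction. Cut internality reduces to showing that for every down-set $B$ of $\O$ the $\prec_T$-minimum $b^*$ of $B$ was burnt as $\min_{\prec_T} U_j$ at its burn step $j$ (so that priority (i) makes $e_{b^*}$ neutral); priority (ii) guarantees precisely this by forcing every non-minimum burnt vertex to acquire a strictly smaller descendant, preventing it from being the $\prec_T$-minimum of any down-set. The verification that priority (ii) can always be carried out (in particular that the star edges and non-star edges needed for this purpose are still available after priority (i) is applied) is the principal technical obstacle.
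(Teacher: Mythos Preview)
Your setup (choosing $T$ to be a $q$-rooted star) and your treatment of part~(a) via Desjardins' result are exactly what the paper does.

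For the inclusion $\{(D_{\O})_{\mathbb{Z}V^{q}(G)} : \O \in \mathrm{ACI}(G,q,T)\} \subseteq \mathrm{PF}^{-}(G,q,T)$, your argument is essentially the paper's, but your $W$-reachability detour and the ``main obstacle'' $w^* \neq u^*$ are unnecessary. The paper argues directly in your exceptional configuration: since $e_{u^*}$ is oriented, cut-neutrality forces $(V(G)\setminus U, U)$ not to be a potential cut, giving an outgoing edge $u' \to v'$ with $u' \in U$, $v' \notin U$ (this is precisely your parenthetical observation). Because $u^*$ is the unique source of $\O|_{G[U]}$, there is a directed path $u^* \to \cdots \to u'$ inside $U$; and since $G$ is saturated there is an edge $\{v', u^*\}$, which by hypothesis is oriented $v' \to u^*$. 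This closes a directed cycle, contradicting acyclicity. No case analysis on $w^*$ ever arises.

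For the reverse inclusion the paper takes a genuinely different and simpler route. Rather than a bespoke burning with priorities, it runs the Cori--Le~Borgne variant of Dhar's algorithm: initialize $B_0 = \{q\}$, $\O_0 = \emptyset$, and at each step add the \emph{maximum} element of $\mathbb{E}(B_{i-1},B_{i-1}^c)\setminus \O_{i-1}$, updating $B_i$ to the set of vertices whose target indegree has been reached. The published correctness of Cori--Le~Borgne gives acyclicity, $q$-connectedness, and $D_\O = c$ for free; what remains is only cut-neutrality, and this follows from the ``always choose the maximum edge'' rule, since the minimum tree-edge in any cut is necessarily added last. Your construction can be made to work too, but filling your ``principal technical obstacle'' requires proving the bound $|S| \leq c_{v_l}+1$ where $S = \{v_k : k < l,\ v_l \prec_T v_k\}$; this does hold (apply the non-witness inequality $c_{v_l} \geq d^{G}_{U_{k^*},T}(v_l)$ at $k^* := \max\{k : v_k \in S\}$ and use saturation to get $d^{G}_{U_{k^*},T}(v_l) \geq k^* - 1 \geq |S|-1$), but the paper's approach avoids this bookkeeping entirely.
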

\begin{proof}
As we already explained, Desjardins showed that statement~\ref{statement:hilbertseries} holds for any choice of $T$. Thus we need only show that~\ref{statement:acfpos} holds for an appropriate choice of $T$, which we do now. We will take $T$ to be a star; i.e., the edges of $T$ are $e=\{q,v\}$ for all vertices~$v \in V^{q}(G)$. The order of the edges of $T$ can be arbitrary. Let~$<$ be some edge order compatible with~$T$. In what follows for convenience we write~$D_\O$ in place of~$(D_{\O})_{\mathbb{Z}V^{q}(G)}$ when the choice of sink is clear from context.

First let us show $\{D_{\O}\colon \O \in \mathrm{ACI}(G,q,T) \} \subseteq \mathrm{PF}^{-}(G,q,T)$. So let~$\O \in \mathrm{ACI}(G,q,T)$ and set~$c = \sum_{v \in V^{q}(G)}c_v(v) := D_{\O}$. Suppose to the contrary that $c \notin \mathrm{PF}^{-}(G,q,T)$: specifically, suppose there is some~$U \subseteq V^{q}(G)$ such that~$d^{G}_{U,T}(u) \leq c_u$ for all $u \in U$. Let $u_0$ be the vertex in $U$ adjacent to the minimal edge in $E(U,U^c)$. First suppose that there is some $e \in E(U,U^c)$ with $e^{\pm} = (v,u_0)$ such that $e^{\pm} \notin \O$. This means we have $|\{e^{\pm}=(u',u)\in\O\colon u' \in U, e\in E(G)\}| \geq 1$ for each~$u \in U$. But then~$\O$ contains a directed cycle involving vertices in $U$, contradicting the fact that $\O$ is acyclic. So now assume that for all $e \in E(U,U^c)$ with $e^{\pm} = (v,u_0)$ we have $e^{\pm} \in \O$. Then if~$\OCu = (U^c,U)$ is a potential cut, it is a bad potential cut with respect to the cut internal property because its minimum edge is oriented. So it cannot be a potential cut. Thus there must be~$e^{\pm} = (u_1,v_0) \in \O$ with~$u_1 \in U$ and~$v_0 \in U^c$. But also note that~$|\{e^{\pm}=(u',u)\in\O\colon u' \in U, e\in E(G)\}| \geq 1$ for each~$u \in U$ with~$u \neq u_0$, which in particular means there is a directed path in~$\O$ from~$u_0$ to~$u_1$ involving vertices in~$U$. Because $G$ is saturated, there is~$e = \{v_1,u_0\} \in E(G)$ and by assumption we have~$e^{\pm} = (v_1,u_0) \in \O$. Therefore there is a directed cycle in $\O$ that goes from $u_1$ to~$v_1$ to $u_0$ back to $u_1$, contradicting the fact that $\O$ is acyclic. So indeed~$c \in \mathrm{PF}^{-}(G,q,T)$.

Next let us show $\mathrm{PF}^{-}(G,q,T) \subseteq  \{D_{\O}\colon \O \in \mathrm{ACI}(G,q,T) \}$. So let~$c\in \mathrm{PF}^{-}(G,q,T)$. We want to find a $\O \in  \mathrm{ACI}(G,q,T)$ with $D_{\O} = c$. To do this we will apply the Cori-Le~Borgne variant~\cite{cori2003sand} of Dhar's burning algorithm~\cite{dhar1990self}. The algorithm proceeds as follows: we initialize $\O_0 := \emptyset$ and $B_0 := \{q\}$; at the $i$th step for~\mbox{$i=1,\ldots,\mathrm{deg}(c)+n-1$} we set~$\O_{i} := \O_{i-1} \cup \{e^{\pm}\}$, where $e^{\pm}$ is the maximum edge of~$\mathbb{E}(B_{i-1},B_{i-1}^c) \setminus \O_i$ according to~$<$ , and~$B_i := \{q\} \cup \{v\in V^{q}(G)\colon \mathrm{indeg}_{\O_i}(v) - 1 = c_v\}$; our output is~\mbox{$\O := \O_{\mathrm{deg}(c)+n-1}$}. The facts that $\mathbb{E}(B_{i-1},B_{i-1}^c) \setminus \O_i \neq \emptyset$ at each step, that~$D_{\O} = c$, and that $\O$ is a $q$-connected, acyclic partial orientation follow from the correctness of the Cori-Le Borgne algorithm~\cite{cori2003sand} (see also the description of this algorithm given in~\mbox{\cite[\S5.2]{baker2013chip}}). All we need to show is that $\O$ is cut neutral. Suppose to the contrary we have a potential cut~$\OCu = (U,U^c)$ of $\O$ and $e_{\mathrm{min}}^{\delta} \in \O$ where~$e_{\mathrm{min}}^{\delta} = (q,v)$ is the minimum edge in~$\mathbb{E}(U,U^c)$ with respect to~$<$. (We know $e_{\mathrm{min}}$ is of this form because~$T$ is a star.) Let~$i$ be such that~$\O_i = \O_{i-1} \cup \{e^{\delta}\}$. First suppose that~$U \subseteq B_i$. The minimum edge in any cut is of the form $e=\{q,v'\}$ because $T$ is a star, and since~$\{e=\{q,v'\}\colon v' \in B_i^c\}\subseteq \{e=\{q,v'\}\colon v' \in U^c\}$, the minimum edge in~$E(B_i,B_i^c)$ must in fact be~$e_{\mathrm{min}}$. But because we always choose the maximum edge in the cut to add at every step of the algorithm, we must have~$\mathbb{E}(B_i,B_i^c) \subseteq \O$. This in turn means that~$d^{G}_{B_i^c,T}(v') \leq \mathrm{indeg}(\O)-1$ for all~$v' \in B_i^c$, contradicting that~$c \in \mathrm{PF}^{-}(G,q,T)$. So now assume~$U \setminus B_i \neq \emptyset$.  Let $j$ be minimal so that $\O_j = \O_{j-1}\cup \{e_{*}^{\delta_*}\}$ with~$e_{*}^{\delta_*} = (u,w)$ for some $w \in U \setminus B_i$. Such a $j$ exists because $B_{\mathrm{deg}(c)+n-1} = V(G)$. Note that because we added an edge $e_{\mathrm{min}} \in T$ at step $i$, it must be that any $e \in \mathbb{E}(B_i,B_i^{c}) \setminus \O_i$ also belongs to $T$ (and is thus incident to $q$). So if $u \in B_i$ then $u = q$  and $e_{*} \in T$. But in fact because $G$ is saturated there is an edge $e=\{v,w\} \in E(G)$, and because the algorithm would not choose to add an edge in~$T$ when it could add an edge not in~$T$ this means~$u \neq q$. Thus $u \in U^c$, which means $e^{\delta_*}_{*} = (u,w) \in \O$ with $u \in U^c$ and $w \in U$, contradicting the fact that $\OCu$ is a potential cut of~$\O$. So indeed $\O$ is cut neutral.
\end{proof}

We will now prove Conjecture~\ref{conj:subpark} in a different special case than the case addressed by Desjardins, namely, when $G$ is an \emph{outerplanar} graph. This means that~$G$ can be drawn in the plane without crossings in such a way that all of its vertices lie in the boundary of the unbounded face of this drawing. These cases really are quite different:~$G$ being saturated means that~$G$ is ``dense'' while~$G$ being outerplanar means that $G$ is ``sparse.'' Also, the techniques we employ are very different from those used by Desjardins. Desjardins employed the theory of Monotone Monomial Ideals developed by Postinkov-Shapiro~\cite[\S5]{postnikov2004trees}. Instead, we build on Merino's deletion-contraction proof~\cite{merino1997chip} of his famous theorem. We remark that the ideals $I_{(G,q,T)}$ we obtain for outerplanar $G$ are in general not Monotone Monomial Ideals.

The main observation for what follows is that if $T$ is an ordered, $q$-rooted spanning tree of any graph~$G$ and~$e = \{q,v\} \in E(G)$ is some non-loop edge that does not belong to~$T$ then there are natural ways to obtain ordered, $q$-rooted spanning trees~$T /e$ and~$T \setminus e$ of~$G/e$ and $G\setminus e$, respectively. These are defined as follows. Let $f$ be the minimal edge in~$T$ that contains $v$; then $T/e$ is the the ordered, $q$-rooted spanning tree of $G/e$ consisting of all edges in~$T$ except for $f$ with the same relative order as in~$T$. And~$T\setminus e$ is just defined to be equal to~$T$. Figure~\ref{fig:treecondel} gives an example of this construction, with the edges of the $q$-rooted trees oriented, in bold, and labeled according to order.
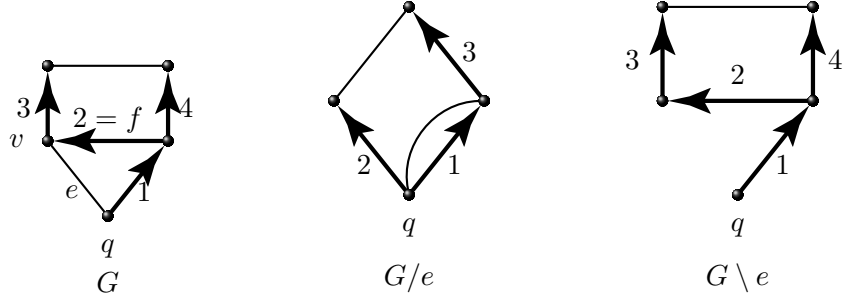
\begin{figure}
\begin{tikzpicture}[scale=0.8]
	\SetFancyGraph
	\Vertex[LabelOut,Lpos=270, Ldist=.1cm,x=0,y=0]{q}
	\Vertex[NoLabel,x=1,y=1.25]{v_1}
	\Vertex[NoLabel,x=1,y=2.5]{v_2}
	\Vertex[NoLabel,x=-1,y=2.5]{v_3}
	\Vertex[LabelOut,Lpos=180,Ldist=.1cm,,x=-1,y=1.25]{v}
	\Edges[style={->,>=mytip,ultra thick}](q,v_1)
	\Edges[style={->,>=mytip,ultra thick}](v_1,v_2)
	\Edges[style={->,>=mytip,ultra thick}](v_1,v)
	\Edges[style={->,>=mytip,ultra thick}](v,v_3)
	\Edges[style={thick}](q,v)
	\Edges[style={thick}](v_2,v_3)
	\node at (-0.6,0.4) {$e$};
	\node at (0.6,0.4) {$1$};
	\node at (0,1.6) {$2=f$};
	\node at (-1.4,1.8) {$3$};
	\node at (1.3,1.8) {$4$};
	\node at (0,-1.1) {$G$};
\end{tikzpicture} \qquad \qquad \begin{tikzpicture}
	\SetFancyGraph
	\Vertex[LabelOut,Lpos=270, Ldist=.1cm,x=0,y=0]{q}
	\Vertex[NoLabel,x=1,y=1.25]{v_1}
	\Vertex[NoLabel,x=0,y=2.5]{v_2}
	\Vertex[NoLabel,x=-1,y=1.25]{v_3}
	\Edges[style={->,>=mytip,ultra thick}](q,v_1)
	\Edges[style={->,>=mytip,ultra thick}](v_1,v_2)
	\Edges[style={thick,bend left=50}](q,v_1)
	\Edges[style={->,>=mytip,ultra thick}](q,v_3)
	\Edges[style={thick}](v_2,v_3)
	\node at (0.6,0.4) {$1$};
	\node at (-0.6,0.4) {$2$};
	\node at (0.8,1.9) {$3$};
	\node at (0,-1.1) {$G/e$};
\end{tikzpicture} \qquad \qquad \begin{tikzpicture}
	\SetFancyGraph
	\Vertex[LabelOut,Lpos=270, Ldist=.1cm,x=0,y=0]{q}
	\Vertex[NoLabel,x=1,y=1.25]{v_1}
	\Vertex[NoLabel,x=1,y=2.5]{v_2}
	\Vertex[NoLabel,x=-1,y=2.5]{v_3}
	\Vertex[NoLabel,x=-1,y=1.25]{v_4}
	\Edges[style={->,>=mytip,ultra thick}](q,v_1)
	\Edges[style={->,>=mytip,ultra thick}](v_1,v_2)
	\Edges[style={->,>=mytip,ultra thick}](v_1,v_4)
	\Edges[style={->,>=mytip,ultra thick}](v_4,v_3)
	\Edges[style={thick}](v_2,v_3)
	\node at (0.6,0.4) {$1$};
	\node at (0,1.6) {$2$};
	\node at (-1.4,1.8) {$3$};
	\node at (1.3,1.8) {$4$};
	\node at (0,-1.1) {$G\setminus e$};
\end{tikzpicture}
\caption{An example of contracting and deleting an ordered, $q$-rooted spanning tree $T$ along an edge $e \notin T$.} \label{fig:treecondel}
\end{figure}

\begin{prop} \label{prop:subparkisthmus}
Let $G$ be a graph, $q \in V(G)$ a sink, and $T$ an ordered, $q$-rooted spanning tree of $G$. Let $e \in E(G)$ be an isthmus. Then $\mathrm{Hilb}(R/I^{-1}_{(G,q,T)};y) = 0$.
\end{prop}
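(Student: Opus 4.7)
The plan is to show that the ideal $I^{-1}_{(G,q,T)}$ actually equals the whole polynomial ring $R$, so that the quotient is the zero module and its Hilbert series vanishes identically. The key observation is that the definition of $\mathbf{x}^{U,T}$ uses the modified degree $d^G_{U,T}(u)$, which ignores the contribution of the minimum edge in $E(U,U^c)$. So if we can find a non-empty $U \subseteq V^q(G)$ whose associated cut $E(U,U^c)$ consists of a \emph{single} edge, then that single edge is necessarily the minimum, every exponent $d^G_{U,T}(u)$ is zero, and the generator $\mathbf{x}^{U,T}$ equals $1$.

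To produce such a $U$, I would use the isthmus $e$. By definition of an isthmus, there is a simple cut $\Cu = \{W, W^c\}$ of $G$ with $E(\Cu) = \{e\}$. Exactly one of $W$ or $W^c$ contains the sink $q$; let $U$ be the other one. Then $U$ is non-empty, $U \subseteq V^q(G)$, and $E(U,U^c) = \{e\}$ because $U^c = V(G) \setminus U$ is precisely the side of the cut containing~$q$.

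With this $U$ in hand, the computation is immediate: for every $u \in U$, the only edge of $E(U,U^c)$ containing $u$ is $e$, and since $e$ is by default the minimum of the singleton $\{e\}$, we get $d^G_{U,T}(u)=0$. Therefore
\[\mathbf{x}^{U,T} \;=\; \prod_{u \in U} x_u^{0} \;=\; 1,\]
and $1$ is among the generators listed in the definition of $I^{-1}_{(G,q,T)}$. Hence $I^{-1}_{(G,q,T)} = R$, so $R/I^{-1}_{(G,q,T)} = 0$ and its Hilbert series is $0$.

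There is no real obstacle here beyond being careful with the bookkeeping of which side of the isthmus cut contains $q$ (including the degenerate case $e=\{q,v\}$, where the component of $G\setminus e$ not containing~$q$ could be a single vertex, but this is still a legitimate non-empty subset of $V^q(G)$). This will match the expected Tutte-polynomial behavior, since $T_G(0, 1/y)$ vanishes whenever $G$ has an isthmus, consistent with statement~(a) of Conjecture~\ref{conj:subpark}.
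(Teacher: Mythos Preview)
Your proof is correct and follows essentially the same approach as the paper: choose $U$ to be the side of the isthmus cut not containing $q$, observe that $E(U,U^c)=\{e\}$ forces $\mathbf{x}^{U,T}=1$, and conclude $I^{-1}_{(G,q,T)}=R$. The paper's version is just a one-line compression of your argument.
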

\begin{proof}
Let $\{U,U^c\}$ be the cut such that $E(U,U^c) = \{e\}$ and $q \in U^c$. Then $\mathbf{x}^{U,T} = 1$, so $R/I^{-1}_{(G,q,T)} = 0$.
\end{proof}

\begin{prop} \label{prop:subparkloop}
Let $G$ be a graph, $q \in V(G)$ a sink, and $T$ an ordered, $q$-rooted spanning tree of $G$. Let $e \in E(G)$ be a loop. Then $\mathrm{Hilb}(R/I^{-1}_{(G,q,T)};y) = \mathrm{Hilb}(R/I^{-1}_{(G/e,q,T)};y)$.
\end{prop}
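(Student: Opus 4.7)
The plan is to show that the monomial ideal $I^{-1}_{(G,q,T)}$ is literally unchanged when we remove a loop $e$, from which equality of Hilbert series follows at once.

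First I would verify that the ambient ring $R$ is the same for $(G,q,T)$ and $(G/e,q,T)$. Contracting a loop identifies no vertices, so $V(G/e) = V(G)$ and hence $V^q(G/e) = V^q(G)$; consequently $R = \mathbf{k}[x_v : v \in V^q(G)]$ agrees in both settings. Next I would check that $T/e = T$ (which is how $T$ is inherited by $G/e$ under the paragraph preceding the proposition, since a loop $e$ does not lie in the spanning tree $T$).

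The key observation is that a loop has both endpoints equal, so it belongs to no cut: for every nonempty $U \subseteq V^q(G)$, one has
\[ E_G(U,U^c) = E_{G/e}(U,U^c). \]
Therefore $d^G_{U,T}(u) = d^{G/e}_{U,T}(u)$ for every $u \in U$, and moreover the minimum edge in $E_G(U,U^c)$ under $<$ coincides with the minimum edge in $E_{G/e}(U,U^c)$ under the inherited order. Consequently the monomial $\mathbf{x}^{U,T}$ is the same element of $R$ whether computed with respect to $(G,q,T)$ or $(G/e,q,T)$.

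Since the generating sets $\{\mathbf{x}^{U,T} : \emptyset \neq U \subseteq V^q(G)\}$ for $I^{-1}_{(G,q,T)}$ and $I^{-1}_{(G/e,q,T)}$ are identical, we conclude $I^{-1}_{(G,q,T)} = I^{-1}_{(G/e,q,T)}$ as ideals of $R$, and so $R/I^{-1}_{(G,q,T)} = R/I^{-1}_{(G/e,q,T)}$; in particular their Hilbert series coincide. There is no real obstacle here: the entire content is the trivial combinatorial fact that loops do not participate in cuts, so they are invisible to the data defining $I^{-1}_{(G,q,T)}$.
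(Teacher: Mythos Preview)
Your proof is correct and follows essentially the same approach as the paper: both argue that since a loop belongs to no cut, the quantities $d^{G}_{U,T}(u)$ are unchanged by contracting $e$, hence $I^{-1}_{(G,q,T)} = I^{-1}_{(G/e,q,T)}$ as ideals of the same ring $R$. The paper states this in one line, while you have (harmlessly) spelled out the auxiliary checks about $V(G/e)$, $T$, and the minimum edge.
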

\begin{proof}
This is trivial: loops do not affect $d^{G}_{U,T}(u) $ for any $u \in U \subseteq V^{q}(G)$, so in fact we have $I^{-1}_{(G,q,T)} = I^{-1}_{(G/e,q,T)}$.
\end{proof}

\begin{lemma} \label{lem:subparkdc}
Let $G$ be a graph, $q \in V(G)$ a sink, and $T$ an ordered, $q$-rooted spanning tree of $G$. Let $e = \{q,v\} \in E(G)$ be some non-loop edge that does not belong to $T$. Then $\mathrm{Hilb}(R/I^{-1}_{(G,q,T)};y) = \mathrm{Hilb}(R/(I^{-1}_{(G/e,q,T/e)}+ \langle x_v \rangle);y)+y\cdot \mathrm{Hilb}(R/I^{-1}_{(G\setminus e,q,T \setminus e)};y)$.
\end{lemma}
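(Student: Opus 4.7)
My plan is to deduce the Hilbert series identity from two ideal equalities via the standard colon-ideal short exact sequence. Applying
\[
0 \to \bigl(R/(I^{-1}_{(G,q,T)}:x_v)\bigr)(-1) \xrightarrow{\;\cdot x_v\;} R/I^{-1}_{(G,q,T)} \to R/\bigl(I^{-1}_{(G,q,T)} + \langle x_v\rangle\bigr) \to 0
\]
gives $\mathrm{Hilb}(R/I^{-1}_{(G,q,T)};y)= \mathrm{Hilb}(R/(I^{-1}_{(G,q,T)}+\langle x_v\rangle);y) + y\cdot\mathrm{Hilb}(R/(I^{-1}_{(G,q,T)}:x_v);y)$, so it suffices to prove
\[
\text{(A)}\quad I^{-1}_{(G,q,T)}+\langle x_v\rangle = I^{-1}_{(G/e,q,T/e)}+\langle x_v\rangle \quad\text{and}\quad \text{(B)}\quad (I^{-1}_{(G,q,T)}:x_v)=I^{-1}_{(G\setminus e,q,T\setminus e)}.
\]

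The crucial combinatorial input will be $(q,T)$-compatibility of the edge order: since non-tree edges succeed all tree edges and every nonempty cut $E(U,U^c)$ meets $T$ (by the spanning property), the minimum of $E(U,U^c)$ is always a tree edge, hence never equal to the non-tree edge $e$. I would next compare $\mathbf{x}^{U,T}$ and $\mathbf{x}^{U,T\setminus e}$ for $U\subseteq V^q(G)$ nonempty. If $v\notin U$, then $e\notin E(U,U^c)$ and nothing changes, so $\mathbf{x}^{U,T}=\mathbf{x}^{U,T\setminus e}$. If $v\in U$, then $e\in E(U,U^c)$ is a non-tree (hence non-minimum) edge incident to $v$; the minimum of the cut is the same tree edge in $G$ and $G\setminus e$, and deleting $e$ decreases only $d(v)$ by exactly $1$, giving $\mathbf{x}^{U,T}=x_v\cdot\mathbf{x}^{U,T\setminus e}$. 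Writing $J_A:=\langle \mathbf{x}^{U,T\setminus e}\colon v\notin U\rangle$ and $J_B:=\langle \mathbf{x}^{U,T\setminus e}\colon v\in U\rangle$, this yields $I^{-1}_{(G,q,T)}=J_A+x_vJ_B$ and $I^{-1}_{(G\setminus e,q,T\setminus e)}=J_A+J_B$. Claim (B) then follows immediately from the ideal identity $(J_A+x_vJ_B:x_v)=(J_A:x_v)+J_B$ together with the observation that the generators of $J_A$ do not involve $x_v$, so $(J_A:x_v)=J_A$.

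For (A), the identity $I^{-1}_{(G,q,T)}+\langle x_v\rangle = J_A+\langle x_v\rangle$ is immediate from the decomposition above, so the task reduces to showing $I^{-1}_{(G/e,q,T/e)} = J_A$, i.e.\ that $\mathbf{x}^{U,T/e}=\mathbf{x}^{U,T\setminus e}$ for every nonempty $U\subseteq V^q(G/e)=V^q(G)\setminus\{v\}$. The cut edge sets agree since $e\notin E(U,U^c)$, and the hard part will be checking that the \emph{minimum} of the cut is preserved when passing from $(G,T)$ to $(G/e,T/e)$. Since $T/e=T\setminus\{f\}$ where $f$ is the minimum tree edge containing $v$, the only potential obstruction arises when $f\in E(U,U^c)$ and $f$ is the minimum of $E(U,U^c)\cap T$; in that case removing $f$ would shift the minimum to the second smallest tree edge of the cut. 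I would rule this scenario out by tracing the path $q=v_0,e^{(1)},v_1,\ldots,v_k=v$ in $T$: because $v\notin U$ but the $U$-endpoint of $f=e^{(k)}$ lies in $U$, this path must leave $U^c$ and enter $U$ at some earlier index $j<k$, producing a tree edge $e^{(j)}\in E(U,U^c)$ of strictly smaller depth and hence strictly smaller in the tree order than $f$, contradicting the minimality of $f$. This establishes $\mathbf{x}^{U,T/e}=\mathbf{x}^{U,T\setminus e}$ for $v\notin U$, completing (A) and therefore the lemma.
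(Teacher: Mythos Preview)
Your proof is correct and rests on the same combinatorial fact as the paper's---namely the claim that for $U\subseteq V^q(G)$ with $v\notin U$, the edge $f$ is never the minimum of $E(U,U^c)$ (your path-tracing argument is exactly the paper's claim~(\ref{claim:contraction}))---but the packaging is genuinely different. The paper works entirely on the combinatorial side: it partitions $\mathrm{PF}^{-}(G,q,T)$ according to whether $c_v=0$ or $c_v>0$ and builds explicit degree-preserving (resp.\ degree-decreasing by one) bijections $\varphi_1,\varphi_2$ to $\mathrm{PF}^{-}(G/e,q,T/e)$ and $\mathrm{PF}^{-}(G\setminus e,q,T\setminus e)$, following Merino's deletion--contraction argument for ordinary parking functions. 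You instead run the standard colon-ideal short exact sequence and reduce to two clean ideal identities $I^{-1}_{(G,q,T)}+\langle x_v\rangle=I^{-1}_{(G/e,q,T/e)}+\langle x_v\rangle$ and $(I^{-1}_{(G,q,T)}:x_v)=I^{-1}_{(G\setminus e,q,T\setminus e)}$, which you verify via the decomposition $I^{-1}_{(G,q,T)}=J_A+x_vJ_B$. Your route is slicker algebraically and makes the deletion--contraction structure transparent at the level of ideals; the paper's bijective route has the advantage that the maps $\varphi_1,\varphi_2$ are reused verbatim in the proof of Lemma~\ref{lem:acfpos}, where one needs to track individual subparking functions (not just Hilbert series) against the acyclic-cut internal orientations.
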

\begin{proof}
Let $F_1 \subseteq \mathrm{PF}^{-}(G,q,T)$ be the subset of subparking functions whose coefficient of $(v)$ is $0$. Let $F_2 := \mathrm{PF}^{-}(G,q,T) \setminus F_1$. Following Merino~\cite[Theorem 3.6]{merino1997chip}, we will construct bijections
\begin{align*}
\varphi_1\colon F_1 &\to \mathrm{PF}^{-}(G/e,q,T/e) \\
\varphi_2\colon F_2 &\to \mathrm{PF}^{-}(G\setminus e,q,T \setminus e)
\end{align*}
with $\mathrm{deg}(\varphi_1(c)) = c$ and $\mathrm{deg}(\varphi_2(c)) = \mathrm{deg}(c) - 1$, thus proving the desired identity.

We define $\varphi_1(c) := c$, i.e., $\varphi_1$ is the identity map. Clearly this is an invertible map, we just need to check that it and its inverse take subparking functions to subparking functions. Let $f$ be the minimal edge in $T$ adjacent to $v$. We need the following key claim:
\begin{equation*} \label{claim:contraction}
 \textrm{If $U \subseteq V^{q}(G)$ with $v \in U^c$ then $f$ is not the minimum edge in $E(U,U^c)$}. \tag{*}
\end{equation*}
To prove~(\ref{claim:contraction}), suppose $\emptyset \neq U \subseteq V^{q}(G)$ with $v \in U^c$ is such that $f \in E(U,U^c)$. Then consider the path from $v$ to $q$ in $T$: it crosses into $U$ at $f$ and therefore must cross back into~$U^c$ at some edge~$g$ closer to~$q$ than~$f$. But since the total order of edges in~$T$ is consistent with the partial order of ancestry, we must have $g < f$. So indeed~$f$ is not the minimum edge in~$E(U,U^c)$ because~$g \in E(U,U^c)$. A consequence of~(\ref{claim:contraction}) is that for any~$u \in U \subseteq V^{q}(G/e)$ we have~$d^{G/e}_{U,T/e}(u) = d^{G}_{U,T}(u)$. Thus clearly if~$c \in F_1$ we have~$\varphi_1(c) \in \mathrm{PF}^{-}(G/e,q,T/e)$. Conversely, let $c' \in \mathrm{PF}^{-}(G/e,q,T/e)$ and set $c := \varphi^{-1}(c')$. Because of~(\ref{claim:contraction}) there can be no $\emptyset\neq U \subseteq V^{q}(G)$ with $v \in U^c$ such that $c_u \geq d^{G}_{U,T}$ for all $u \in U$. But on the other hand, if $v \in U$ then $d^{G}_{U,T}(v) \geq 1$ since~$e \in E(U,U^c)$ and~$e$ is not minimal in any cut (as it does not belong to $T$) and so~$0 = c_v < d^{G}_{U,T}$. Thus, indeed $c \in F_1$.

Next we define $\varphi_2(c) := c - (v)$. Again, this is clearly an invertible map, we just need to check that it and its inverse take subparking functions to subparking functions. But observe that because $e$ is never the minimum edge in any cut, we have for all~$U \subseteq V^{q}(G)$ and all~$u \in U$ that
\[ d_{U,T\setminus e}^{G \setminus e}(u) = \begin{cases} d^{G}_{U,T}(u) - 1 &\textrm{if $u = v$} \\ d^{G}_{U,T}(u) &\textrm{otherwise}.\end{cases} \]
Thus, $\varphi_2$ and $\varphi_2^{-1}$ clearly take subparking functions to subparking functions.
\end{proof}

Let $\mathcal{G}$ be the smallest set of triples $(G,q,T)$ where $G$ is a (connected) graph, $q \in V(G)$ is a choice of sink, and $T$ is an ordered, $q$-rooted spanning tree of $G$ such that
\begin{itemize}
\item if $G$ is the graph with one vertex and no edges, then $(G,q,T) \in \mathcal{G}$;
\item if there exists $e \in E(G)$ with $e$ an isthmus then $(G,q,T) \in \mathcal{G}$;
\item if there exists $e \in E(G)$ with $e$ a loop and $(G/e,q,T) \in \mathcal{G}$ then $(G,q,T) \in \mathcal{G}$;
\item if there exists a non-loop $e = \{q,u\} \in E(G)$ with $e \notin T$ and if  $(G/e,q,T/e) \in \mathcal{G}$ and $(G\setminus e,q,T \setminus e) \in \mathcal{G}$ then~$(G,q,T) \in \mathcal{G}$.
\end{itemize}

\begin{cor} \label{cor:hilbertseries}
$\mathrm{Hilb}(I^{-1}_{(G,q,T)};y) = y^{g} \cdot T_G(0,1/y)$ for $(G,q,T) \in \mathcal{G}$.
\end{cor}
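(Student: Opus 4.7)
The plan is to proceed by induction on the recursive construction of $\mathcal{G}$ (equivalently, strong induction on $|E(G)|$), using Propositions~\ref{prop:subparkisthmus} and~\ref{prop:subparkloop} together with Lemma~\ref{lem:subparkdc} to mirror the classical Tutte polynomial deletion-contraction recursion. For the base case of a single-vertex graph with no edges, $V^{q}(G) = \emptyset$ forces $R$ to equal the base field $\mathbf{k}$ and $I^{-1}_{(G,q,T)} = 0$, so $\mathrm{Hilb}(R/I^{-1}_{(G,q,T)};y) = 1 = y^{0} \cdot T_{G}(0,1/y)$.

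For the inductive step I will cover the three recursive cases by which $(G,q,T)$ could have been added to $\mathcal{G}$. When some $e \in E(G)$ is an isthmus, Proposition~\ref{prop:subparkisthmus} gives $\mathrm{Hilb}(R/I^{-1}_{(G,q,T)};y) = 0$, matching $y^{g}T_{G}(0,1/y) = 0$ by the Tutte recursion $T_{G}(x,y) = x \cdot T_{G\setminus e}(x,y)$. When $e \in E(G)$ is a loop with $(G/e,q,T) \in \mathcal{G}$, Proposition~\ref{prop:subparkloop} and the inductive hypothesis give $\mathrm{Hilb}(R/I^{-1}_{(G,q,T)};y) = y^{g(G/e)} T_{G/e}(0,1/y)$, which equals $y^{g}T_{G}(0,1/y)$ via $g(G) = g(G/e) + 1$ and $T_{G}(x,y) = y \cdot T_{G/e}(x,y)$.

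The main case is when $e = \{q,v\} \notin T$ is a non-loop (automatically not an isthmus, since $T \subseteq G \setminus e$ remains a spanning tree). First I will note that adjoining $\langle x_{v}\rangle$ to $I^{-1}_{(G/e,q,T/e)}$ in the ambient ring $R$ amounts to passing to the polynomial ring on $V^{q}(G/e) = V^{q}(G)\setminus\{v\}$, since $v$ is identified with $q$ in $G/e$. Writing $f(H) := \mathrm{Hilb}(R_{H}/I^{-1}_{(H,q,T_{H})};y)$ for each reduced graph, Lemma~\ref{lem:subparkdc} therefore yields
\[ f(G) = f(G/e) + y \cdot f(G \setminus e). \]
By the inductive hypothesis the right-hand side equals $y^{g(G/e)}T_{G/e}(0,1/y) + y \cdot y^{g(G\setminus e)} T_{G\setminus e}(0,1/y)$, which simplifies to $y^{g}T_{G}(0,1/y)$ using $g(G/e) = g(G)$, $g(G\setminus e) = g(G)-1$, and the classical non-loop/non-isthmus Tutte recursion $T_{G}(x,y) = T_{G/e}(x,y) + T_{G\setminus e}(x,y)$.

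The genuine work was already carried out in Lemma~\ref{lem:subparkdc}, whose proof adapts Merino's bijective deletion-contraction argument to the subparking function setting; the present corollary is a routine matching of that monomial-ideal recursion against the Tutte polynomial recursion, and I do not foresee any further obstacle. One minor subtlety to record carefully is the identification of Hilbert series across the change of variable set when contracting $e$, but this is just the observation about $\langle x_{v}\rangle$ made above.
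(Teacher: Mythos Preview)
Your proposal is correct and follows essentially the same approach as the paper. The paper's proof is a one-line citation of Propositions~\ref{prop:subparkisthmus} and~\ref{prop:subparkloop}, Lemma~\ref{lem:subparkdc}, and the recipe theorem (Theorem~\ref{thm:gentutte}); you have simply unpacked that induction explicitly, including the identification $R/(I^{-1}_{(G/e,q,T/e)}+\langle x_v\rangle)\cong R_{G/e}/I^{-1}_{(G/e,q,T/e)}$ that the paper leaves implicit.
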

\begin{proof}
This follows from Propositions~\ref{prop:subparkisthmus} and~\ref{prop:subparkloop}, Lemma~\ref{lem:subparkdc}, and Theorem~\ref{thm:gentutte}.
\end{proof}

\begin{lemma} \label{lem:acfpos}
$\mathrm{PF}^{-}(G,q,T) = \{(D_{\O})_{\mathbb{Z}V^{q}(G)}\colon \O \in \mathrm{ACI}(G,q,T) \}$ for~$(G,q,T) \in \mathcal{G}$.
\end{lemma}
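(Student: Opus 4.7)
The plan is to prove this lemma by induction on the construction of $(G, q, T) \in \mathcal{G}$, paralleling the case analysis used in the proof of Corollary~\ref{cor:hilbertseries}. For the base case ($G$ has no edges), both sides equal $\{0\}$. In the isthmus case, the singleton cut $\{e\}$ formed by an isthmus $e$ forces every partial orientation to have a bad potential cut under the cut-internal property: if $e$ is oriented then the cut has an oriented minimum edge, and if $e$ is neutral then one of the two directions of this cut is a bad potential cut (namely the direction containing $e^{-}$). Hence $\mathrm{ACI}(G, q, T) = \emptyset$, matching $\mathrm{PF}^{-}(G, q, T) = \emptyset$ from Proposition~\ref{prop:subparkisthmus}. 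In the loop case, a loop must be neutral in any acyclic partial orientation, participates in no cut, and contributes nothing to divisors projected to $\mathbb{Z}V^{q}(G)$; combining this with Proposition~\ref{prop:subparkloop} and the inductive hypothesis for $(G/e, q, T)$ handles this case.

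The main work is the deletion-contraction case, where $e = \{q, u\} \notin T$ is a non-loop edge with $\Oref(e^{+}) = (q, u)$ after possibly reversing $\Oref(e)$. A useful first observation is that $e^{-} \notin \O$ for every $\O \in \mathrm{ACI}(G, q, T)$: by Proposition~\ref{prop:qconnected} $\O$ is $q$-connected, so there is a directed path from $q$ to $u$ in $\O$; concatenating this path with $e^{-} = (u, q)$ would produce a directed cycle, contradicting acyclicity. Thus $e$ is either neutral or oriented from $q$ to $u$ in any $\O \in \mathrm{ACI}(G, q, T)$. Splitting both $\mathrm{PF}^{-}(G, q, T)$ and the set of $\mathrm{ACI}$-divisors by whether $c_u = 0$ or $c_u \geq 1$, the bijection $\varphi_1$ from the proof of Lemma~\ref{lem:subparkdc} identifies the $c_u = 0$ part of $\mathrm{PF}^{-}(G, q, T)$ with $\mathrm{PF}^{-}(G/e, q, T/e)$, while $\varphi_2$ identifies the $c_u \geq 1$ part with $\mathrm{PF}^{-}(G\setminus e, q, T\setminus e)$ shifted by $(u)$. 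Applying the inductive hypothesis to $(G/e, q, T/e)$ and $(G\setminus e, q, T\setminus e)$, the goal reduces to matching, at the level of divisor sets, the $\mathrm{ACI}$-divisors of $G$ in each $c_u$ class with those of $G/e$ (for $c_u = 0$) and of $G\setminus e$ (for $c_u \geq 1$). The natural candidate maps are contraction of $e$ from an $\O \in \mathrm{ACI}(G, q, T)$ with $e^{+} \in \O$, and deletion of $e$ from an $\O$ with $e$ neutral, with inverses extending an $\mathrm{ACI}$ partial orientation of $G/e$ by orienting $e$ as $q \to u$ or extending one of $G\setminus e$ by leaving $e$ neutral.

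The main obstacle is verifying that these contraction, deletion, and extension operations really do preserve the acyclic and cut-internal properties and match the divisors correctly. The central structural fact is that $e \notin T$ ensures $e$ is never the minimum edge of any cut containing it, so adding or removing $e$ does not change the minimum-edge status of any other cut---the same observation that powers the proof of Lemma~\ref{lem:subparkdc}. With this in hand, cut-internal preservation reduces to analyzing potential cuts of $G$ containing $e$, while acyclicity preservation reduces to controlling new directed cycles through $e$; both can be handled by routine case analyses exploiting $q$-connectedness. A subtler point is that the partition of $\mathrm{ACI}(G, q, T)$ by $c_u$ does not coincide with the partition by $e$'s status in $\O$; reconciling the two will likely require an appeal to edge pivots (Lemma~\ref{lem:edgepivot}) to move between orientations sharing a divisor, so as to select canonical ``$e$-compatible'' representatives in each divisor class.
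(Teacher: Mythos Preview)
Your inductive framework and base cases match the paper's, and your observation that $e^{-}\notin\O$ for any $\O\in\mathrm{ACI}(G,q,T)$ is correct and useful. However, the candidate maps you propose for the main inductive step have two concrete flaws.

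First, the inverse of your deletion map is miscalibrated. If $\O''\in\mathrm{ACI}(G\setminus e,q,T\setminus e)$ realizes $c-(u)\in\mathrm{PF}^{-}(G\setminus e,q,T\setminus e)$ and you extend by leaving $e$ neutral, the resulting $\O$ has $D_{\O}=D_{\O''}=c-(u)$, not $c$. To hit $c$ you must instead add $e^{+}=(q,u)$, which bumps the indegree at $u$ by one. (This is exactly what the paper does in both subcases: the extension always orients $e$ toward $u$.)

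Second, and more seriously, your contraction map need not land in $\mathrm{ACI}(G/e,q,T/e)$. If $e^{+}=(q,u)\in\O$ but some other edge $f^{\varepsilon}=(w,u)\in\O$ is also directed into $u$, then after contracting $e$ the edge $f^{\varepsilon}$ becomes directed into the merged sink, while the $q$-connectedness of $\O$ gives a directed path from that sink to $w$; together these form a directed cycle in $\O/e$. So $\O/e$ is not acyclic in general. The proposed fix via edge pivots does not help here: edge pivots preserve the divisor but freely destroy both acyclicity and the cut-internal condition, so there is no reason a pivot-equivalent orientation with the desired $e$-status stays inside $\mathrm{ACI}$.

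The paper avoids both problems by abandoning the attempt to align the split with either $c_u$ or the status of $e$. It proves the two containments separately. For $\{D_\O\}\subseteq\mathrm{PF}^{-}$ it first reduces to the case $e^{+}\in\O$ (orienting $e$ if it was neutral and invoking downward closure of $\mathrm{PF}^{-}$), then branches on whether $e$ is the \emph{only} oriented edge into $u$: if so, $\O/e$ really is in $\mathrm{ACI}(G/e)$; if not, the extra incoming edge guarantees $\O\setminus e\in\mathrm{ACI}(G\setminus e)$. For $\mathrm{PF}^{-}\subseteq\{D_\O\}$ it extends from $G/e$ or $G\setminus e$ by orienting $e$ toward $u$ in both cases, and checks (using claim~(\ref{claim:contraction}) from Lemma~\ref{lem:subparkdc}) that cut-neutrality is preserved. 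The key additional fact that makes the divisors match in the contraction case is that the merged sink in any element of $\mathrm{ACI}(G/e)$ has indegree zero, so after lifting and adding $e^{+}$ the coefficient at $u$ is exactly $0$.
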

\begin{proof}
We will prove this by induction on the number of edges of $G$. If $e \in E(G)$ is an isthmus then both sets are empty. If $e \in E(G)$ is a loop, then, as mentioned earlier, the loop~$e$ has no effect on the set of subparking functions and also~$e$ must be neutral in any~$\O \in \mathrm{ACI}(G,q,T)$, so the claim reduces to the same claim for~$(G/e,q,T)$. If~$G$ is the graph on one vertex and no edges then both sets are equal to $\{0\}$. Thus, we may assume $G$ has no isthmuses and loops, that there is $e = \{q,v\} \in E(G)$ with $e \notin T$, and by induction that the claim holds for $(G/e,q,T/e)$ and $(G \setminus e, q, T \setminus e)$. In what follows for convenience we write~$D_\O$ in place of~$(D_{\O})_{\mathbb{Z}V^{q}(G)}$ when the choice of sink is clear from context.

First let us show $\mathrm{PF}^{-}(G,q,T) \subseteq \{D_{\O}\colon \O \in \mathrm{ACI}(G,q,T) \}$. So let~$c \in \mathrm{PF}^{-}(G,q,T)$. By the proof of Lemma~\ref{lem:subparkdc}, we know that either the coefficient of $(v)$ in $c$ is $0$ and we have~$c \in \mathrm{PF}^{-}(G/e,q,T/e)$, or else $c - (v) \in \mathrm{PF}^{-}(G\setminus e, q, T \setminus e)$. Assume we are in the first case where the coefficient of $(v)$ in $c$ is $0$. By induction we can find a partial orientation~$\O' \in \mathrm{ACI}(G/e,q,T/e)$ with~$D_{\O'} = c$. Define~$\O := \O' \cup \{e^{\delta} = (q,v)\}$. Then~$D_{\O} = c$. We claim that~$\O \in \mathrm{ACI}(G,q,T)$. Clearly~$\O$ remains acyclic and $q$-connected. Let us show it is cut neutral. So assume there is a potential cut $\OCu$ for~$\O$ that is bad for the cut neutral property. First suppose $\Cu = (\{q\} \cup U_1, \{v\} \cup U_2)$. Then note that the only edge in~$E(\OCu)$ adjacent to $v$ that is oriented in~$\O$ is~$e$, but $e$ it is not in~$T$ and so cannot be the minimum edge in~$E(\OCu)$ and the minimum edge of any bad potential cut must be oriented. Thus there is some edge in~$E(\OCu)$ that is less than any edge in~$E(\OCu)$ adjacent to~$v$. Therefore $(\{q,v\} \cup U_1, U_2)$ is also bad. So we may assume that~$\OCu = (\{q,v\} \cup U_1, U_2)$. But then by the key claim~(\ref{claim:contraction}) in the proof of Lemma~\ref{lem:subparkdc} the minimum edge of~$\OCu' :=(\{q\} \cup U_1, U_2)$ is also the minimum edge of $\OCu$, so $\OCu'$ is a bad potential cut of~$\O'$, a contradiction. Thus indeed $\O$ is cut neutral. Next assume we are in the second case where the coefficient of $(v)$ is greater than~$0$. Again by induction we can find~$\O'' \in \mathrm{ACI}(G\setminus e,q,T \setminus e)$ with $D_{\O''} = c - (v)$. Define $\O := \O'' \cup \{e^{\delta} = (q,v)\}$. Then $D_{\O} = c$. Furthermore, we have~$\O \in A(G,q,T)$: clearly $\O$ remains acyclic and $q$-connected, and it is cut neutral because~$e$ is never the minimum edge in a cut. 

Now let us show $\{D_{\O} \colon \O \in \mathrm{ACI}(G,q,T) \} \subseteq \mathrm{PF}^{-}(G,q,T)$. So let~$\O \in  \mathrm{ACI}(G,q,T)$. If $e^{\delta} = (q,v) \notin \O$, then~$\O \cup \{e^{\delta}\} \in  \mathrm{ACI}(G,q,T)$: certainly~$\O$ remains acyclic and $q$-connected, and it is still cut neutral because~$e$ is never the minimum edge in a cut. Moreover, if we show~$D_{\O \cup \{e^{\delta}\}} \in \mathrm{PF}^{-}(G,q,T)$ this will show~$D_{\O} \in \mathrm{PF}^{-}(G,q,T)$ because $0 \leq D_{\O} \leq D_{\O \cup \{e^{\delta}\}}$ and the set of subparking functions is downward closed: that is, if~$0 \leq c \leq c'$ with~$c'$ a subparking function, then~$c$ is a subparking function. So now assume that $e^{\delta} \in \O$. First suppose that $e^{\delta}$ is the only oriented edge in $\O$ pointing into $v$. Then we claim that~$\O/e \in \mathrm{ACI}(G/e,q,T/e)$: it is easy to see that this orientation is acyclic and $q$-connected, and any bad potential cut for the cut neutral property for~$\O/e$ lifts to a bad potential cut of~$\O$ because by the key claim~(\ref{claim:contraction}) in the proof of Lemma~\ref{lem:subparkdc} when~$q$ and~$v$ are on the same side of a cut the minimum edge of this cut is the same in $G$ and $G/e$. Now assume that there is at least one other oriented edge pointing to $v$. Then by walking backwards from $v$ along this edge we see that there is another directed path from~$q$ to~$v$ that does not use~$e$. We claim that in this case $\O \setminus e \in \mathrm{ACI}(G\setminus e, q, T\setminus e)$: again it is clearly acyclic and $q$-connected, and it is cut neutral because in any cut that $e$ belonged to, we must have another oriented edge in the same direction coming from the other path from~$q$ to~$v$. But then by induction we know that $D_{\O/e} \in \mathrm{PF}^{-}(G/e,q,T/e)$ or $D_{\O\setminus e} \in \mathrm{PF}^{-}(G\setminus e,q,T\setminus e)$, and so by the proof of Lemma~\ref{lem:subparkdc}, we know that $D_{\O} \in \mathrm{PF}^{-}(G,q,T)$.
\end{proof}

Now let us apply the above results to $G$ when $G$ is outerplanar. Let $q \in V(G)$ be a choice of sink. We say that $T$ is a \emph{$q$-rooted boundary tree} if it can be obtained as follows: draw $G$ in the plane without crossings and with all its vertices on the boundary of the unbounded face; walk counterclockwise along this boundary starting at~$q$ and add an oriented edge~$e^{\delta} = (u,v)$ to~$T$ whenever you walk along~$e^{\delta}$ and visit~$v$ for the first time; order the edges in $T$ in the order they were walked along. For example, Figure~\ref{fig:boundarytree} depicts an outerplanar graph $G$ together with a $q$-rooted boundary tree $T$, with the edges of~$T$ oriented, in bold, and labeled according to order.
\begin{figure}
\begin{tikzpicture}[scale=0.8]
	\SetFancyGraph
	\Vertex[LabelOut,Lpos=270, Ldist=.1cm,x=0,y=0]{q}
	\Vertex[NoLabel,x=1,y=1]{v_1}
	\Vertex[NoLabel,x=2.25,y=2]{v_2}
	\Vertex[NoLabel,x=1,y=2.5]{v_3}
	\Vertex[NoLabel,x=1,y=3.5]{v_4}
	\Vertex[NoLabel,x=-0.5,y=3]{v_5}
	\Vertex[NoLabel,x=-0.5,y=1.75]{v_6}
	\Vertex[NoLabel,x=-1.5,y=0.5]{v_7}
	\Edges[style={->,>=mytip,ultra thick}](q,v_1)
	\Edges[style={thick,bend left=50}](q,v_1)
	\Edges[style={->,>=mytip,ultra thick}](v_1,v_2)
	\Edges[style={->,>=mytip,ultra thick}](v_2,v_3)
	\Edges[style={->,>=mytip,ultra thick}](v_3,v_4)
	\Edges[style={->,>=mytip,ultra thick}](v_4,v_5)
	\Edges[style={thick}](v_5,v_3)
	\Edges[style={->,>=mytip,ultra thick}](v_3,v_6)
	\Edges[style={thick}](v_6,v_1)
	\Edges[style={thick}](v_6,q)
	\Edges[style={->,>=mytip,ultra thick,bend right=20}](q,v_7)
	\Edges[style={thick,bend left=20}](q,v_7)
	\node at (0.6,0) {$1$};
	\node at (1.6,1.1) {$2$};
	\node at (2.0,2.4) {$3$};
	\node at (1.4,2.9) {$4$};
	\node at (0.4,3.6) {$5$};
	\node at (-0.1,2.5) {$6$};
	\node at (-0.5,0.7) {$7$};
\end{tikzpicture}
\caption{An example of an outerplanar graph $G$ with a $q$-rooted boundary tree~$T$.} \label{fig:boundarytree}
\end{figure}
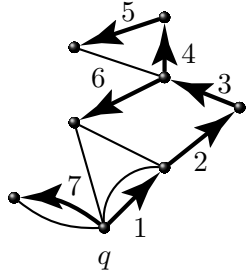

\begin{lemma} \label{lem:outerplanar}
If $G$ is an outerplanar graph, $q \in V(G)$ is a choice of sink, and $T$ is a $q$-rooted boundary tree, then $(G,q,T) \in \mathcal{G}$.
\end{lemma}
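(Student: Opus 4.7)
The plan is to induct on $|E(G)|$, matching up with the three recursive clauses in the definition of $\mathcal{G}$. The base case of no edges is trivial. For the inductive step, first dispose of the easy cases: if $G$ has an isthmus, the isthmus clause applies directly; if $G$ has a loop $e$, then $G/e$ (simply $G$ with the loop removed) remains outerplanar and $T$ is unchanged as a $q$-rooted boundary tree of $G/e$, since a loop encountered in the boundary walk returns to an already-visited vertex and so is never added to $T$. Then apply the loop clause.

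For the main case, assume $G$ has no isthmus and no loop. Fix an outerplanar drawing realizing $T$ and let $e_1,e_2,\ldots,e_m$ be the counterclockwise boundary walk from $q$. I would take $e^* := e_m = \{q,u^*\}$, the final edge of the walk. By construction $e^*$ is incident to $q$ and not a loop; moreover $e^* \notin T$, since the walk visits $q$ at step $0$ and so the terminal vertex of $e_m$ is already visited. The central observation, which drives both subsequent arguments, is that every vertex of $G$ is visited strictly before step $m$: any vertex first visited by $e_m$ would have to equal its terminal endpoint $q$, but $q$ is visited at the outset.

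For the contraction, I would use the drawing inherited by shrinking $e^*$ to a point, merging $q$ and $u^*$. The boundary walk of $G/e^*$ from the merged vertex is exactly $e_1,\ldots,e_{m-1}$. The first-visit structure changes only in the step that formerly visited $u^*$: that step now lands on the merged vertex $q$, already visited, and so no longer contributes to the new tree. Because the total order on $T$ is consistent with ancestry in $T$, this step corresponds to the minimum edge of $T$ containing $u^*$, matching the paper's definition of $T/e^*$. For the deletion, I would again use the inherited drawing; removing $e^*$ merges the outer face with the interior face $F$ that lay on the other side of $e^*$, so the new boundary walk from $q$ is $e_1,\ldots,e_{m-1}$ followed by a traversal of the remaining boundary of $F$ from $u^*$ back to $q$. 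By the central observation, all vertices have been visited by step $m-1$, so this continuation produces no new tree edges; thus the boundary tree built from the $G\setminus e^*$ walk coincides with the tree built from $e_1,\ldots,e_{m-1}$ in $G$, which is exactly $T$ (since $e^* \notin T$ already). Applying induction to both $(G/e^*, q, T/e^*)$ and $(G\setminus e^*, q, T)$ and then invoking the third clause in the definition of $\mathcal{G}$ finishes the step.

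The main obstacle will be the face-topology bookkeeping needed to justify the structure of the modified boundary walks, particularly in the deletion step: one needs that deleting the boundary edge $e^*$ merges the outer face with exactly one inner face $F$, and that the continuation around $F$ visits no vertex that was not already seen. Both are handled by the central observation together with the fact that in a connected outerplanar graph every vertex lies on the outer face and is therefore visited during the boundary walk.
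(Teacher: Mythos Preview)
Your proposal is correct and follows essentially the same approach as the paper: induct on $|E(G)|$, pick the final edge $e_m$ of the boundary walk (which coincides with the paper's ``leftmost'' non-tree edge at $q$), and verify that $T/e_m$ and $T\setminus e_m$ are again $q$-rooted boundary trees of the contracted and deleted graphs. Your case structure is slightly cleaner (you invoke the isthmus clause directly rather than deducing an isthmus from the absence of non-tree edges at $q$), and you spell out more carefully why the edge dropped in the contraction is the minimum edge of $T$ containing $u^*$, but the core argument is the same.
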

\begin{proof}
The proof is by induction on the number of edges of $G$. The case where $G$ has one vertex and no edges is trivial. If $G$ has a loop $e$ at $q$ then we can contract $e$ and $T$ is still a $q$-rooted boundary tree of the outerplanar graph $G/e$, so by induction~$(G/e,q,T) \in \mathcal{G}$ and thus~$(G,q,T) \in \mathcal{G}$ by the definition of $\mathcal{G}$. Suppose all edges containing $q$ in $E(G)$ belong to $T$: then we claim $G$ has an isthmus $e$. Indeed, this can happen only if as we are walking along the boundary we walk along an edge $e^{\delta} = (q,v)$ and then later walk along $e^{-\delta}$; in this case, $e$ must be an isthmus. So in this case $(G,q,T) \in \mathcal{G}$.

Thus, we can assume that there is some non-loop $e = \{q,v\}$ with $e \notin T$, and we can assume that $e$ is chosen to be the ``leftmost'' such edge, i.e., it is the last one we walk along when walking along the boundary of $G$. Then we claim that $T/e$ and $T \setminus e$ remain $q$-rooted boundary trees of the outerplanar graphs $G/e$ and $G\setminus e$, respectively. First consider~$T/e$: we can take the drawing that verifies $T$ is a $q$-rooted boundary tree and ``squish'' $e$ in this drawing to obtain a drawing of $G/e$; then (neglecting any loops produced, which are irrelevant) if we walk counterclockwise from $q$ around the boundary of this drawing of $G/e$ we walk along the same edges in the same order as in the walk for $T$; so this drawing verifies that $T/e$ is also a $q$-rooted boundary tree. Now consider~$T\setminus e$: again, we take the drawing for $T$ and delete $e$ from the drawing to obtain a drawing of $G\setminus e$; now as we walk counterclockwise from~$q$ along the boundary of this drawing of~$G \setminus e$ we may walk along some new edges after we visit~$v$, but we will have already visited all vertices at that point; so this drawing verifies that~$T\setminus e$ is also a $q$-rooted boundary tree. Therefore by induction $(G/e,q,T/e), (G\setminus e,q,T\setminus e) \in \mathcal{G}$, which means $(G,q,T) \in \mathcal{G}$ by the definition of $\mathcal{G}$.
\end{proof}

\begin{thm}
Conjecture~\ref{conj:subpark} is true when $G$ is outerplanar.
\end{thm}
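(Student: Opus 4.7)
The proof is essentially immediate from the machinery already developed in this subsection, so the plan is just to assemble the pieces. Given an outerplanar graph $G$ and any choice of sink $q \in V(G)$, the plan is to exhibit a specific ordered, $q$-rooted spanning tree $T$ for which both statements of Conjecture~\ref{conj:subpark} hold. The natural candidate, as suggested by the discussion just before Lemma~\ref{lem:outerplanar}, is a \emph{$q$-rooted boundary tree}: fix a planar drawing of $G$ with all vertices on the boundary of the unbounded face and let $T$ be obtained by recording the edges (and their order of first traversal) of the counterclockwise walk starting at $q$.

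With this choice of $T$, I would then apply Lemma~\ref{lem:outerplanar}, which tells us that $(G,q,T) \in \mathcal{G}$. Once this membership is established, both parts of Conjecture~\ref{conj:subpark} follow directly: part~(a), the Hilbert series identity $\mathrm{Hilb}(R/I^{-1}_{(G,q,T)};y) = y^{g}\cdot T_G(0,1/y)$, is precisely the content of Corollary~\ref{cor:hilbertseries}; and part~(b), the equality $\mathrm{PF}^{-}(G,q,T) = \{(D_{\O})_{\mathbb{Z}V^{q}(G)}\colon \O \in \mathrm{ACI}(G,q,T)\}$, is precisely the content of Lemma~\ref{lem:acfpos}. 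No additional computation is required.

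There is no genuine obstacle here, since all of the real work has already been done: the deletion-contraction structure is packaged into the inductive definition of $\mathcal{G}$, Lemma~\ref{lem:subparkdc} propagates both the Hilbert series formula and the divisor bijection across the recursion, and Lemma~\ref{lem:outerplanar} verifies that outerplanar graphs with a boundary-tree rooting are accessible via this recursion. The mild subtlety to double-check in the write-up is simply that, in applying Lemma~\ref{lem:outerplanar}, the hypotheses match: the statement requires $G$ connected with $q$ a chosen sink and $T$ a $q$-rooted boundary tree, all of which we have by construction. Consequently the proof can be written in essentially one sentence invoking Lemma~\ref{lem:outerplanar}, Corollary~\ref{cor:hilbertseries}, and Lemma~\ref{lem:acfpos}.
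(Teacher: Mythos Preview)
Your proposal is correct and follows exactly the same approach as the paper's proof: choose a $q$-rooted boundary tree $T$, invoke Lemma~\ref{lem:outerplanar} to conclude $(G,q,T)\in\mathcal{G}$, and then apply Corollary~\ref{cor:hilbertseries} and Lemma~\ref{lem:acfpos} for parts~(a) and~(b) respectively.
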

\begin{proof}
Let $q \in V(G)$. Certainly there exists a $q$-rooted boundary tree $T$. Then by Lemma~\ref{lem:outerplanar} we have $(G,q,T) \in \mathcal{G}$. So statement~\ref{statement:hilbertseries} of Conjecture~\ref{conj:subpark} holds by Corollary~\ref{cor:hilbertseries}, and statement~\ref{statement:acfpos} holds by Lemma~\ref{lem:acfpos}.
\end{proof}

\begin{remark}
Gessel and Sagan~\cite{gessel1996tutte} count the number of acyclic partial orientations of $G$ by decomposing the poset of acyclic partial orientations into a number of intervals equal to the number of forests of $G$. The interval corresponding to a forest $F$ is a Boolean lattice of order $|E\setminus (F \cup L(F))|$ where $L(F)$ denotes the set of externally active edges in $F$ as in~\S\ref{subsec:future} (albeit with a different notion of external activity). The partial orientations belonging to an interval corresponding to a spanning tree are precisely the acyclic, $q$-connected partial orientations. It should be possible to fit the acyclic-cut internal partial orientations into this story: in particular, they should be precisely the partial orientations that belong to an interval corresponding to a spanning tree with no internal activity. Indeed, extending the interval decomposition of~\cite{gessel1996tutte} in this manner seems like a promising approach to proving Conjecture~\ref{conj:subpark}. The main issue is that there are so many choices of data, including even which notion of activity to use.
\end{remark}

\begin{remark} \label{rem:minres}
Recently there has been a great deal of interest in understanding minimal free resolutions of~$I_{(G,q)}$ and minimal free resolutions of a certain binomial ideal for which~$I_{(G,q)}$ is a distinguished initial ideal~\cite[\S7]{perkinson2013primer}~\cite{manjunath2013monomials}~\cite{manjunath2014minimal}~\cite{dochtermann2012laplacian}~\cite{mohammadi2013divisors}~\cite{hopkins2014another}~\cite{mohammadi2013divisors2}~\cite{mohammadi2014divisors}. It would be interesting to find a combinatorial description of a minimal free resolution of $I^{-1}_{(G,q,T)}$ or to compute its Betti numbers combinatorially. Even more interesting would be to find some combinatorially-meaningful binomial ideal which has $I^{-1}_{(G,q,T)}$ as an initial ideal for an appropriate choice of term order.
\end{remark}

\subsection{(Co)graphic Lawrence ideals, cut (cycle) connected fourientations and cut (cycle) minimal partial orienations} \label{sec:lawrence}

As in Remark~\ref{rem:param}, let~$W \simeq \mathbb{R}^{E(G)}$ be the vector space with basis~$x_{e^+} = -x_{e^-} $ for~\mbox{$e \in E(G)$}. Consider the lattice~$\mathbb{Z}^{E(G)}$ inside of~$W$. Given~$u = \sum_{e \in E(G)} c_e (x_{e^+})  \in \mathbb{Z}^{E(G)}$, let us define~$u^+ := \sum_{c_e \geq 0} c_e (x_{e^+})$ and~$u^- := - \sum_{c_e \leq 0} c_e (x_{e^+})$ to be the positive and negative parts of~$u$. Fix a field~$\mathbf{k}$ and let~$S = \mathbf{k}[y^{+}_e,y^{-}_e\colon e \in E(G)]$ be a polynomial ring in~$2|E(G)|$ variables. To a lattice element~$u \in \mathbb{Z}^{E(G)}$ we associate a binomial~$b(u) := y_{+}^{u^+}y_{-}^{u^-} - y_{-}^{u^+}y_{+}^{u^-} \in S$, where we use the notation~$y_{\pm}^{c} := \prod_{e \in E(G)} (y^{\pm}_e)^{c_e}$ for~$c = \sum_{e \in E(G)} c_e (x_{e^+}) \in \mathbb{N}^{E(G)}$. Let~$L$ be a sublattice of~$\mathbb{Z}^{E(G)}$. To~$L$ we associate the binomial \emph{Lawrence ideal}~$I_L := \langle b(u) : u \in L \rangle$. 

Recall that for a directed cut or cycle~$\OC$ of~$G$ we defined~$x_{\OC} := \sum_{e^{\pm} \in \mathbb{E}(\OC)}x_{e^{\pm}} \in W$. We define the \emph{cut lattice of~$(G,\Oref)$} to be~$\langle x_{\OCu}: \OCu \textrm{ is a directed cut} \rangle_{\mathbb{Z}}$ and the \emph{cycle lattice} to be~$\langle x_{\OCy}: \OCy \textrm{ is a directed cycle} \rangle_{\mathbb{Z}}$.  See~\cite{bacher1997lattice} for a more organic homological description of the cut and cycle lattices.  The graphic and cographic Lawrence ideals, which we will denote~$I^{\OCu}_{(G,\Oref)}$ and~$I^{\OCy}_{(G,\Oref)}$, are the Lawrence ideals associated to the cut and cycle lattices respectively.  The observation which relates these ideals to cycle/cocycle reversal systems is the following: we can encode a fourientation~$\O$ of~$G$ as a squarefree monomial~$y^{\O} := \prod_{e^{\pm} \in \O} y^{\pm}_e \in S$; then multivariate division of~$y^{\O}$ by some~$b(x_{\OCy})$ ($b(x_{\OCu})$) corresponds to a (co)cycle reversal of~$\OCy$ ($\OCu$) in~$\O$.  These ideals have been previously studied in the context of algebraic combinatorics and algebraic statistics~\cite{bayer1999syzygies}~\cite{drton2008lectures}~\cite{kateri2014family}~\cite{mohammadi2013divisors}~\cite{mohammadi2013divisors2}~\cite{mohammadi2014divisors}.  A theorem of Sturmfels~\cite[Theorem 7.1]{sturmfels1996grobner} about binomial generating sets for Lawrence ideals implies that~$\{ b(x_{\OCu}): \OCu \textrm{ is a directed cut} \}$ and~$\{ b(x_{\OCu}): \OCy \textrm{ is a directed cycle} \}$  are universal Gr\"obner bases for the ideals that they generate.

Mohammadi and Shokrieh~\cite{mohammadi2013divisors2} investigate the graphic Lawrence ideal~$I^{\OCu}_{(G,\Oref)}$ and construct a minimal free resolution of~$I^{\OCu}_{(G,\Oref)}$ with the aim of relating it to~$I_{(G,q)}$ and the binomial ideal mentioned in Remark~\ref{rem:minres} of which~$I_{(G,q)}$ is a distinguished initial ideal. They relate these ideals via regular sequences, as we will now explain. Define~$\mathrm{in}_{<}(I^{\OCu}_{(G,\Oref)})$ to be the initial ideal of $I^{\OCu}_{(G,\Oref)}$ with respect to lexicographic term order $\prec$ where~$y^{-}_{e_m} \prec y^{+}_{e_m} \prec y^{-}_{e_{m-1}} \prec y^{+}_{e_{m-1}} \prec \cdots \prec y^{-}_{e_1} \prec y^{+}_{e_1}$ is  the order we choose on the generators of~$S$ if~$e_1 < e_2 < \cdots < e_m$ are the edges of $G$. Explicitly, by the aforementioned theorem of Sturmfels, we have
\[ \mathrm{in}_{<}(I^{\OCu}_{(G,\Oref)}) := \left\langle m(x_{\OCu})\colon \parbox{3in} {\begin{center}$\OCu$ is a directed cut with $e^{+}_{\mathrm{min}} \in \mathbb{E}(\OCu)$, \\ where $e_{\mathrm{min}}$ is the minimal element of $E(\OCu)$\end{center}} \right\rangle \]
where to $u \in \mathbb{Z}^{E(G)}$ we associate the monomial $m(u) := y_{+}^{u^+}y_{-}^{u^-} \in S$. Choose a sink~$q \in V(G)$ and suppose $\G$ is~$(q,T)$-connected. In this case, Mohammadi and Shokrieh term~$\mathrm{in}_{<}(I^{\OCu}_{(G,\Oref)})$ the \emph{graphic oriented matroid ideal} of $G$. 

For each~$v \in V(G)$ choose some $e^{\delta_v}_{v} \in \mathbb{E}(G)$ so that~$e^{\delta_v}_{v} = (u,v)$ for some $u \in V(G)$ and then define the set~$\mathcal{L}_v := \{e^{\pm} - e^{\delta_v}_v\colon e^{\pm} = (u,v) \textrm{ for some $u \in V(G)$ and $e^{\pm} \neq e^{\delta_v}_v$}\}$. Set~$\mathcal{L} := \cup_{v \in V} \mathcal{L}_v$ and~$\mathcal{L}^{q} := \mathcal{L} \cup \{e^{\delta_q}_q\}$. Mohammadi and Shokrieh~\cite[Proposition~9.6]{mohammadi2013divisors2} (see also~\cite{hopkins2014another}) prove~$\mathcal{L}^{q}$ is a permutable regular sequence for~$S/\mathrm{in}_{<}(I^{\OCu}_{(G,\Oref)})$. Tensoring with $S/\langle\mathcal{L}^{q}\rangle$ should be seen as the algebraic version of taking the indegree of an orientation: it moves from edge orientation variables to vertex variables. It is clear from the presentation of these ideals by generators that~$S/\mathrm{in}_{<}(I^{\OCu}_{(G,\Oref)}) \otimes_S S/\langle\mathcal{L}^{q} \rangle \simeq R/I_{(G,q)}$.  That $\mathcal{L}^{q}$ is a regular sequence in particular implies
\[\mathrm{Hilb}(R/I_{(G,q)};y) = (1-y)^{|\mathcal{L}^{q}| } \cdot \mathrm{Hilb}(S/\mathrm{in}_{<}(I^{\OCu}_{(G,\Oref)});y);\] 
but~$|\mathcal{L}^{q}| = 2|E| - n +1$, so from the theorem of Merino we mentioned at the beginning of~\S\ref{subsec:cutin} one concludes
\[\mathrm{Hilb}(S/I^{\OCu}_{(G,\Oref)};y) = \frac{y^g \cdot T_G(1,1/y)}{(1-y)^{2|E|-n+1}}.\]

We now offer a different way to compute the Hilbert series of $S/I^{\OCu}_{(G,\Oref)}$ ($S/I^{\OCy}_{(G,\Oref)}$) in terms of the Tutte polynomial using cut (cycle) connected fourientations that avoids the use of regular sequences and instead studies the polyhedral combinatorics of these ideals directly (and which does not depend on choosing $(q,T)$-connected data).

\begin{prop} \label{prop:lawrencehilbertseries}
We have
\begin{align*}
\mathrm{Hilb}(S/I^{\OCu}_{(G,\Oref)};y) &= \frac{y^g \cdot T_G(1,1/y)}{(1-y)^{2|E|-n+1}}; \\
\mathrm{Hilb}(S/I^{\OCy}_{(G,\Oref)};y) &= \frac{y^{n-1} \cdot T_G(1/y,1)}{(1-y)^{2|E|-g}}.
\end{align*}
\end{prop}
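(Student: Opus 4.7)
The plan is to compute $f(G) := \mathrm{Hilb}(S/I^{\OCu}_{(G,\Oref)};y)$ by a deletion-contraction on $G$ and then invoke the recipe theorem (Theorem~\ref{thm:gentutte}) to identify the result as a generalized Tutte polynomial evaluation; the formula for $I^{\OCy}$ follows by the symmetric role of cuts and cycles (matroid duality of the graphic/cographic matroids). Throughout we freely replace $I^{\OCu}$ by its monomial initial ideal $\mathrm{in}_<(I^{\OCu}) = \langle m(x_{\OCu}): e^+_{\min}\in\mathbb{E}(\OCu)\rangle$, since the displayed universal Gr\"obner basis ensures $\mathrm{Hilb}(S/I^{\OCu};y) = \mathrm{Hilb}(S/\mathrm{in}_<(I^{\OCu});y)$.

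The base and monovalent cases are straightforward. If $G$ has no edges then $S = \mathbf{k}$ and $f(G) = 1$. If $e$ is a loop of $G$, then no directed cut of $G$ contains $e$, so $I^{\OCu}_G$ equals $I^{\OCu}_{G\setminus e}$ extended to $S_G = S_{G\setminus e}[y_e^+, y_e^-]$ and $f(G) = f(G/e)/(1-y)^2$. If $e$ is an isthmus then the generator $b(x_{\{e\}}) = y_e^+ - y_e^-$ lies in $I^{\OCu}_G$, identifying $y_e^+ \equiv y_e^-$ in the quotient, so $S_G/I^{\OCu}_G \cong (S_{G/e}/I^{\OCu}_{G/e})[y]$ and $f(G) = f(G/e)/(1-y)$.

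The main step is the two-term recurrence
\[
f(G) \;=\; \frac{1}{1-y}\,f(G/e) \;+\; \frac{y}{(1-y)^2}\,f(G\setminus e)
\]
for $e$ neither a loop nor an isthmus. To establish this I would decompose the standard monomials of $\mathrm{in}_<(I^{\OCu}_G)$ according to their support fourientation $\O_M \subseteq \mathbb{E}(G)$ and according to the state of $e$ in $\O_M$. Since $m(x_{\OCu})$ is squarefree, a monomial $M$ is standard iff $\O_M$ contains no $\mathbb{E}(\OCu)$ with $e^+_{\min}\in\mathbb{E}(\OCu)$; summing the geometric series $\prod_{e^\pm\in\O_M}y/(1-y)$ over all supports gives $f(G)=\sum_{\O}(y/(1-y))^{|\O^o|+2|\O^b|}$. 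Cuts of $G$ split into those missing $e$, which are cuts of $G/e$, and those containing $e$, which correspond to cuts of $G\setminus e$ together with an extra orientation on $e$. Matching up these two families of cut generators gives, case by case on whether $e$ is unoriented, oriented, or bioriented in $\O_M$, a bijection between standard monomials of $\mathrm{in}_<(I^{\OCu}_G)$ and a disjoint union of two classes: standard monomials of $\mathrm{in}_<(I^{\OCu}_{G/e})$, weighted by the free variable $y_e$ (contributing the factor $\frac{1}{1-y}$), and standard monomials of $\mathrm{in}_<(I^{\OCu}_{G\setminus e})$, weighted by the subset of nonempty patterns on $(y_e^+,y_e^-)$ (contributing the factor $\frac{y}{(1-y)^2}$). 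The cut connected fourientations of $G$, $G/e$, and $G\setminus e$ organize this bookkeeping: since Theorem~\ref{thm:main} enumerates the cut connected class of each graph by a generalized Tutte evaluation, their compatibility across deletion and contraction of $e$ is exactly what is needed to verify the recurrence on each graded piece.

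With the recurrence and base cases established, the recipe theorem applied with $a = \frac{1}{1-y}$, $b = \frac{y}{(1-y)^2}$, $x_0 = \frac{1}{1-y}$, $y_0 = \frac{1}{(1-y)^2}$ yields
\[
f(G) \;=\; a^{n-1} b^g\, T_G\!\left(\tfrac{x_0}{a}, \tfrac{y_0}{b}\right) \;=\; \frac{y^g\, T_G(1,1/y)}{(1-y)^{n-1+2g}} \;=\; \frac{y^g\, T_G(1,1/y)}{(1-y)^{2|E|-n+1}},
\]
as desired. The hardest part is the non-loop non-isthmus recurrence; matching standard monomials of $\mathrm{in}_<(I^{\OCu}_G)$ against those of the ideals for $G/e$ and $G\setminus e$ requires keeping careful track of how the forbidden-cut conditions involving $e$ interact with the multiplicities on $y_e^+, y_e^-$, and this is the place where the cut connected min-edge class does the real combinatorial work.
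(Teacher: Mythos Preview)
Your route is different from the paper's. The paper does not set up a deletion--contraction recurrence for the Hilbert series at all. Instead it observes that $S/\mathrm{in}_<(I^{\OCu}_{(G,\Oref)})$ is the Stanley--Reisner ring of the complex $\Delta^{\OCu}_{\G}=\{\O^c:\O\text{ cut connected}\}$, and then applies Theorem~\ref{thm:main} \emph{once}, with specific numerical weights, to compute the face generating function $\sum_{\sigma\in\Delta^{\OCu}_{\G}}y^{|\sigma|}$ directly. From this one reads off the $f$-polynomial $(1+y)^{|E|}T_G(1,1+y)$, hence the $h$-polynomial $y^{|E|}T_G(1,y)$, and the standard Stanley--Reisner formula $\mathrm{Hilb}=y^{\dim\Delta+1}H(1/y)/(1-y)^{\dim\Delta+1}$ finishes. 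No recurrence, no recipe theorem.

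Your argument has a genuine gap at exactly the step you flag as hardest. You assert the two-term recurrence for $f(G)$ but do not prove it: the promised case-by-case bijection on standard monomials is only named, not executed, and for it to work cleanly you must take $e$ to be the \emph{maximum} edge (otherwise deleting or contracting $e$ can change which edge is minimal in a cut, and the generators of $\mathrm{in}_<(I^{\OCu})$ for $G$, $G/e$, $G\setminus e$ need not line up). You never impose this. You then appeal to Theorem~\ref{thm:main} to ``organize the bookkeeping,'' but this makes the whole deletion--contraction detour redundant: your own identity $f(G)=\sum_{\O\in\Delta^{\OCu}_{\G}}(y/(1-y))^{|\O|}$ is already a specialization of the weighted sum in Theorem~\ref{thm:main} (take $(k,l,m)=(z^{-1},1,z^{-2})$ with $z=y/(1-y)$, multiplied by $z^{2|E|}$), so substituting and simplifying yields the answer in one line. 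In other words, once you invoke Theorem~\ref{thm:main} you should stop and compute, rather than extract a recurrence from it only to feed that recurrence back into the recipe theorem.
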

\begin{proof}
One can easily see that the squarefree standard monomials of $ \mathrm{in}_{<}(I^{\OCu}_{(G,\Oref)})$ are $y^{\O}$ for~$\O \in \Delta^{\OCu}_{\G}$ where~$\Delta^{\OCu}_{\G} := \{\O^c\colon \textrm{$\O$ is a cut connected fourientation of $\G$}\}$.\footnote{It has previously been observed by Manjunath and Sturmfels~\cite{manjunath2013monomials} that Alexander duality for the $G$-parking function ideal $I_{(G,q)}$ is closely related to Reimann-Roch duality.  When we instead work with edge variables rather than vertex variables,  Alexander duality with respect to $y^{\mathbb{E}(G)}$ is precisely Riemann-Roch duality.} In other words, the Stanley-Reisner ring of the simplicial complex~$\Delta^{\OCu}_{\G}$ is~$S/\mathrm{in}_{<}(I^{\OCu}_{(G,\Oref)})$. Note that Theorem~\ref{thm:main} implies that
\[ \sum_{\sigma \in \Delta^{\OCu}_{\G}} y^{|\sigma|} = \sum_{\substack{\textrm{$\O$ a cut connected} \\ \textrm{fourientation of $\G$}}} y^{|\O^{o}|+2\cdot|\O^{u}|} = (y+1)^{n-1}(y^2+y)^{g}\cdot T_G\left(1,1+\frac{1}{y}\right)\]
and so~$\mathrm{dim}(\Delta^{\OCu}_{\G}) = |E(G)| + g - 1$. Therefore, by again applying Theorem~\ref{thm:main} we get that the $f$-polynomial of~$\Delta^{\OCu}_{\G}$ is
\begin{align*}
F_{\Delta^{\OCu}_{\G}}(y) &= \sum_{\sigma \in \Delta^{\OCu}_{\G}} y^{\mathrm{dim}(\Delta^{\OCu}_{\G}) -\mathrm{dim}(\sigma)} \\
&= y^{|E(G)| + g} \cdot \sum_{\substack{\textrm{$\O$ a cut connected} \\ \textrm{fourientation of $\G$}}} y^{-|\O^{o}|-2\cdot|\O^{u}|} \\
&= y^{|E(G)| + g} \left(\frac{1+y}{y}\right)^{n-1} \left(\frac{1+y}{y^2}\right)^{g} \cdot T_G\left(1, \frac{\left(\frac{1+y}{y}\right)^2}{\frac{1+y}{y^2}}\right) \\
&= (1+y)^{|E(G)|}\cdot T_G(1,1+y).
\end{align*}
And so the $h$-polynomial of~$\Delta^{\OCu}_{\G}$ is $H_{\Delta^{\OCu}_{\G}}(y) = F_{\Delta^{\OCu}_{\G}}(y-1) = y^{|E(G)|}\cdot T_G(1,y)$. Basic combinatorial commutative algebra~\cite[Corollary 1.5]{miller2005combinatorial} then implies that
\[\mathrm{Hilb}(S/\mathrm{in}_{<}(I^{\OCu}_{(G,\Oref)});y) = \frac{y^{\mathrm{dim}(\Delta^{\OCu}_{\G})+1} \cdot H_{\Delta^{\OCu}_{\G}}(1/y)}{(1-y)^{\mathrm{dim}(\Delta^{\OCu}_{\G})+1}} = \frac{y^g \cdot T_G(1,1/y)}{(1-y)^{2|E|-n+1}}.\]
Of course this means that Tutte polynomial expression is the Hilbert series of $S/I^{\OCu}_{(G,\Oref)}$ as well. An analogous argument for the cycle case establishes that $S/\mathrm{in}_{<}(I^{\OCy}_{(G,\Oref)})$ is the Stanley-Reisner ring of~$\Delta^{\OCy}_{\G} := \{-\O\colon \textrm{$\O$ is a cycle connected fourientation of $\G$}\}$ and that in particular the Hilbert series of the cographic Lawrence ideal is as claimed.
\end{proof}

We recall that Gessel and Sagan~\cite{gessel1996tutte} obtained the generating function for cut connected fourientations of $\G$ by size in the case where~$\G$ is~$(q,T)$-connected. Their result could be used in the proof of Propoisiton~\ref{prop:lawrencehilbertseries} instead of Theorem~\ref{thm:main}. In fact, the aforementioned theorem of Sturmfels implies that this generating function is the same for all choices of total order and reference orientation.

Let $\mathfrak{o} := \langle\{y_e^{+}y_e^{-}\colon e \in E(G)\}\rangle$, an ideal of $S$. Tensoring with $S/\mathfrak{o}$ is the algebraic version of passing from fourientations to partial orientations: it kills bioriented edges. We can compute the Hilbert series of $S/\mathrm{in}_{<}(I^{\OCu}_{(G,\Oref)}) \otimes_S S/\mathfrak{o}$ ($S/\mathrm{in}_<(I^{\OCy}_{(G,\Oref)}) \otimes S/\mathfrak{o}$) in terms of the Tutte polynomial using cut (cycle) minimal partial orientations. 

\begin{prop}
We have
\begin{align*}
\mathrm{Hilb}(S/\mathrm{in}_{<}(I^{\OCu}_{(G,\Oref)}) \otimes_S S/\mathfrak{o};y) &= \frac{y^g \cdot T_G(1,1+\frac{1}{y})}{(1-y)^{|E|}}; \\
\mathrm{Hilb}(S/\mathrm{in}_<(I^{\OCy}_{(G,\Oref)}) \otimes S/\mathfrak{o};y) &= \frac{y^{n-1} \cdot T_G(1+\frac{1}{y},1)}{(1-y)^{|E|}}.
\end{align*}
\end{prop}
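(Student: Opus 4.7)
The plan is to follow the template of the previous proposition: present $\mathrm{in}_{<}(I^{\OCu}_{(G,\Oref)}) + \mathfrak{o}$ as the Stanley--Reisner ideal of an explicit simplicial complex and compute its Hilbert series face by face using the standard formula $\sum_{\sigma}(y/(1-y))^{|\sigma|}$. Since $\mathfrak{o}$ is itself generated by squarefree monomials, the sum ideal remains a squarefree monomial ideal, and the associated simplicial complex consists of the subsets $\sigma \subseteq \mathbb{E}(G)$ satisfying both (a) $\sigma$ contains at most one of $\{e^+, e^-\}$ for each $e \in E(G)$, and (b) $\sigma \not\supseteq \mathbb{E}(\OCu)$ for every directed cut $\OCu$ of $G$ with $e_{\mathrm{min}}^{+} \in \mathbb{E}(\OCu)$.

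My next step would be to identify this simplicial complex via $\sigma = \O^c$ with the set of cut connected Type~B fourientations of $\G$. Condition (a) is exactly the statement that $\O$ has no unoriented edges (i.e., is Type~B), while condition (b) encodes cut connectedness of $\O$ by the same argument used in the previous proposition. Under this bijection $|\sigma| = |\O^o|$, so
\[ \mathrm{Hilb}(S/(\mathrm{in}_{<}(I^{\OCu}_{(G,\Oref)}) + \mathfrak{o}); y) = \sum_{\O}\left(\frac{y}{1-y}\right)^{|\O^o|}, \]
where the sum runs over cut connected Type~B fourientations $\O$ of $\G$.

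I would then invoke Theorem~\ref{thm:main} with $(k,l,m) = (t, 0, 1)$ for $t = y/(1-y)$, the cut connected cut property ($X = \{\emptyset, \{-\}\}$), and the cycle general cycle property ($Y = \emptyset$); the theorem yields $(t+1)^{n-1} t^g T_G(1, 2 + 1/t)$. Substituting $t = y/(1-y)$ (so $t+1 = 1/(1-y)$ and $2 + 1/t = 1 + 1/y$) produces the claimed formula. The cycle case is formally dual: the analogous simplicial complex is parametrized by cycle connected Type~A fourientations via $\sigma = -\O$ (condition (a) now forces $\O$ to have no bioriented edges), and Theorem~\ref{thm:main} applied with $(k,l,m) = (t, 1, 0)$, $X = \emptyset$, and $Y = \{\{-\}, \{+,-\}\}$ together with the same substitution delivers the second identity.

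The main obstacle is the verification in step two --- specifically, that condition (b) is equivalent to cut (respectively, cycle) connectedness of $\O$ in the restricted world of Type~B (respectively, Type~A) fourientations. The key point for cuts is that in a Type~B fourientation a potential cut cannot contain a bioriented edge (bioriented edges would violate $e^{\pm} \in \mathbb{E}(\OCu) \Rightarrow e^{\mp} \notin \O$), so bad potential cuts are precisely fully oriented directed cuts whose minimum edge is oriented against the reference; under complementation these correspond exactly to the directed cuts $\OCu$ with $e_{\mathrm{min}}^{+} \in \mathbb{E}(\OCu)$ and $\mathbb{E}(\OCu) \subseteq \O^c$ excluded by condition (b). The cycle analogue is strictly parallel, with potential cycles requiring all cycle edges to be either oriented in agreement or bioriented, so in Type~A the same dichotomy applies.
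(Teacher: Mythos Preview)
Your proposal is correct and follows essentially the same approach as the paper. The paper phrases the simplicial complex as $\{-\O : \O \text{ a cut minimal partial orientation}\}$ rather than your $\{\O^c : \O \text{ a cut connected Type~B fourientation}\}$, but these are literally the same set (if $\O'$ is the Type~B fourientation obtained from a partial orientation $\O$ by biorienting neutral edges, then $-\O = (\O')^c$, and for Type~B fourientations the cut connected and cut negative/minimal conditions coincide since potential cuts are just directed cuts); the paper then computes the Hilbert series via the $f$- and $h$-polynomials whereas you substitute $t=y/(1-y)$ directly into Theorem~\ref{thm:main}, which is an equivalent bookkeeping choice.
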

\begin{proof}
Set $\widehat{I} :=   \left\langle \left\{ m(x_{\OCu})\colon \textrm{$\OCu$ a directed cut, $e^{+}_{\mathrm{min}} \in \mathbb{E}(\OCu)$} \right\} \cup \{y_e^{+}y_e^{-}\colon e \in E(G)\}  \right\rangle$. Clearly $S/\mathrm{in}_{<}(I^{\OCu}_{(G,\Oref)}) \otimes_S S/\mathfrak{o} \simeq S/\widehat{I}$. The squarefree standard monomials of $\widehat{I}$ are~$y^{\O}$ for~$\O \in \widehat{\Delta}^{\;\OCu}_{\G}$ where~$\widehat{\Delta}^{\;\OCu}_{\G} := \{-\O\colon \textrm{$\O$ is a cut minimal partial orientation of $\G$}\}$. In other words, the Stanley-Reisner ring of the simplicial complex~$\widehat{\Delta}^{\OCu}_{\G}$ is~$S/\widehat{I}$. Note that Theorem~\ref{thm:main} implies that~$\mathrm{dim}(\widehat{\Delta}^{\OCu}_{\G}) = |E(G)|-1$. Therefore, by again applying Theorem~\ref{thm:main} we get that the $f$-polynomial of~$\widehat{\Delta}^{\OCu}_{\G}$ is
\begin{align*}
F_{\widehat{\Delta}^{\OCu}_{\G}}(y) &= \sum_{\sigma \in \widehat{\Delta}^{\OCu}_{\G}} y^{\mathrm{dim}(\widehat{\Delta}^{\OCu}_{\G}) -\mathrm{dim}(\sigma)} \\
&= y^{|E(G)|} \cdot \sum_{\substack{\textrm{$\O$ a cut minimal} \\ \textrm{partial orientation of $\G$}}} y^{-|\O^{o}|} \\
&= y^{|E(G)|} \left(\frac{1+y}{y}\right)^{n-1} \left(\frac{1}{y}\right)^{g} \cdot T_G\left(1, \frac{\frac{2+y}{y}}{\frac{1}{y}}\right) \\
&= (1+y)^{n-1}\cdot T_G(1,2+y).
\end{align*}
And so the $h$-polynomial of~$\widehat{\Delta}^{\OCu}_{\G}$ is $H_{\widehat{\Delta}^{\OCu}_{\G}}(y) = F_{\widehat{\Delta}^{\OCu}_{\G}}(y-1) = y^{n-1}\cdot T_G(1,1+y)$. Again we conclude
\[\mathrm{Hilb}(S/\widehat{I};y) = \frac{y^{\mathrm{dim}(\widehat{\Delta}^{\OCu}_{\G})+1} \cdot H_{\widehat{\Delta}^{\OCu}_{\G}}(1/y)}{(1-y)^{\mathrm{dim}(\widehat{\Delta}^{\OCu}_{\G})+1}} = \frac{y^g \cdot T_G(1,1+\frac{1}{y})}{(1-y)^{|E|}}.\]
An analogous argument holds for the cycle case.
\end{proof}

\subsection{Cut connected fourientations and the reliability polynomial}

Suppose that we remove each edge of $G$ independently with probability $p$; then the \emph{reliability polynomial} $R_G(p)$ of $G$ is the probability that the resulting subgraph is connected. Note that this subgraph is connected if and only if it is \emph{spanning} in the sense of~\S\ref{subsec:subgraphs}.  It is well-known (see~\cite[(3.3)]{welsh1999tutte} and~\cite[{\S}V.(15)]{welsh2000potts}), and easy to prove using the rank generating function description of the Tutte polynomial, that 
\[R_G(p) = (1-p)^{n-1}p^g\cdot T_G\left(1,\frac{1}{p}\right).\]
In this section we discuss a strong relationship between the cut connected fourientations and the reliability polynomial. Let $k$, $l$, and $m$ be nonnegative real numbers such that~$2k+l+m=1$. By abuse of notation, by a ``$(k,l,m)$-fourientation'' we will mean a randomly chosen fourientation where the probability of choosing $\O$ is $k^{|\O^o|}l^{|\O^u|}m^{|\O^b|}$.

\begin{thm}\label{thm:reliable}
Let $k$, $l$, and $m$ be nonnegative real numbers with~$2k+l+m=1$. The probability that a $(k,l,m)$-fourientation of $G$ is cut connected is~$R_G(p)$ where $p := k+l$.
\end{thm}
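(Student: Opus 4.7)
The plan is to apply Theorem~\ref{thm:main} directly to the cut-cycle property whose cut part is \emph{cut connected} and whose cycle part is \emph{cycle general}. Since the probability of selecting a particular fourientation $\O$ in the model described is precisely $k^{|\O^o|}l^{|\O^u|}m^{|\O^b|}$ (and the total probability equals $(2k+l+m)^{|E|}=1$), the probability that a random $(k,l,m)$-fourientation is cut connected equals the weighted sum
\[\sum_{\O \text{ cut connected}} k^{|\O^o|}l^{|\O^u|}m^{|\O^b|},\]
which by Theorem~\ref{thm:main} is a specific generalized Tutte polynomial evaluation.

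Next I would compute the bad isthmus and bad loop sets. The cut connected property was defined (in the list of min-edge cut classes in \S\ref{sec:four}) by $X=\{\emptyset,\{-\}\}$, and the cycle general property has $Y=\emptyset$. Plugging into the formulas for $x_0$ and $y_0$ in Theorem~\ref{thm:main}:
\begin{align*}
x_0 &= \delta(\{+\}\notin X)k + \delta(\{-\}\notin X)k + \delta(\emptyset\notin X)l + m = k+m,\\
y_0 &= \delta(\{+\}\notin Y)k + \delta(\{-\}\notin Y)k + l + \delta(\{+,-\}\notin Y)m = 2k+l+m.
\end{align*}
Thus Theorem~\ref{thm:main} yields
\[\sum_{\O \text{ cut connected}} k^{|\O^o|}l^{|\O^u|}m^{|\O^b|} = (k+m)^{n-1}(k+l)^{g}\,T_G\!\left(1,\frac{2k+l+m}{k+l}\right).\]

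Finally I would substitute $2k+l+m=1$ and $p=k+l$. Then $k+m = 1-(k+l) = 1-p$ and $\frac{2k+l+m}{k+l} = \frac{1}{p}$, so the right-hand side becomes
\[(1-p)^{n-1}p^{g}\,T_G\!\left(1,\frac{1}{p}\right) = R_G(p),\]
as claimed. There is no real obstacle here: the entire argument is a bookkeeping exercise once one identifies cut connected with the Tutte cut property $X=\{\emptyset,\{-\}\}$ and pairs it with the trivial (empty) Tutte cycle property, so that the Tutte-polynomial formula of Theorem~\ref{thm:main} collapses to the classical rank-generating-function form of the reliability polynomial.
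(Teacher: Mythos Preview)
Your proof is correct and follows essentially the same route as the paper: apply Theorem~\ref{thm:main} to the cut connected property (paired with cycle general), read off the resulting Tutte polynomial expression, and substitute $2k+l+m=1$, $p=k+l$ to recover the standard formula for $R_G(p)$. Your explicit computation of $x_0$ and $y_0$ from the bad isthmus/loop sets $X=\{\emptyset,\{-\}\}$, $Y=\emptyset$ is a slight elaboration of what the paper does (it simply quotes the resulting evaluation), but the argument is the same.
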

\begin{proof}
By Theorem~\ref{thm:main}, the probability that a $(k,l,m)$-fourientation of $G$ is cut connected is 
\begin{align*}
\sum_{\substack{\textrm{$\O$ cut connected}\\\textrm{fourientation of $G$}} } k^{|\O^o|}l^{|\O^u|}m^{|\O^b|} &=  (k+m)^{n-1}(k+l)^g\cdot T_G\left(\frac {k + m}{k+m},\frac{2 k + l +  m}{k+l}\right)\\
&= (k+m)^{n-1}(k+l)^g\cdot T_G\left(1,\frac{2k + l + m}{k+l}\right) \\
&=  (1-p)^{n-1}p^g\cdot T_G\left(1,\frac{1}{p}\right) \\
&= R_G(p).
\end{align*}
\end{proof}

For a fixed probability $p$, Theorem~\ref{thm:reliable} gives a one-parameter family of combinatorial interpretations of $R_G(p)$: if we choose some probability~$0 < p < 1$ and some parameter~$t \in \left[\mathrm{max}\left(\frac{1}{p}, \, \frac{1}{1-p}\right) - 2,\infty\right)$ then there are unique nonnegative real numbers~$k$, $l$, and~$m$ with~$2k+l+m=1$ such that $p =  k+l$ and~\mbox{$t = \frac{l+m}{k}$}. We now present some specializations of Theorem~\ref{thm:reliable}, the first of which shows that we can recover the classical description of $R_G(p)$.

\begin{cor} \label{cor:classicreliable}
By setting $k=0$ in Theorem~\ref{thm:reliable} we recover the classical description of the reliability polynomial.
\end{cor}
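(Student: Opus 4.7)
The plan is to interpret a $(k,l,m)$-fourientation with $k=0$ as a random subgraph and match the probabilities. When $k=0$ we have $l+m=1$, and every fourientation $\O$ consists only of unoriented edges (each occurring with probability $l$) and bioriented edges (each occurring with probability $m$). As in~\S\ref{subsec:subgraphs}, we can identify such a fourientation with the subgraph $H \subseteq E(G)$ of its bioriented edges, where the probability of any particular $H$ is $l^{|E(G) \setminus H|}m^{|H|}$. Since $p = k+l = l$, this means each edge of $G$ is independently absent from $H$ with probability $p$ and present in $H$ with probability $1-p$, which is exactly the classical reliability model in which each edge is removed with probability~$p$.

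The key step is to verify that in this setting the cut connected condition reduces to $H$ being a spanning connected subgraph. Since there are no oriented edges when $k=0$, the cut connected property --- that every potential cut contains an oriented edge agreeing with the reference orientation of its minimum edge --- can hold only vacuously, i.e., when the fourientation has no potential cut at all. On the other hand, with $k=0$ an edge contributes to a potential cut only if it is unoriented (bioriented edges can never lie in a potential cut). Hence a potential cut of $\O$ is precisely a cut $\Cu$ of $G$ all of whose edges lie in $E(G) \setminus H$, which is exactly a cut of the subgraph $H$ in the sense of~\S\ref{subsec:subgraphs}. Thus $\O$ is cut connected if and only if $H$ has no cuts, which is to say $H$ is spanning and connected.

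Combining these two observations, the left-hand side of Theorem~\ref{thm:reliable} with $k=0$ becomes
\[\sum_{\substack{H \subseteq E(G) \\ H \text{ spanning}}} (1-p)^{|H|} p^{|E(G)| - |H|},\]
which is precisely the classical probability $R_G(p)$ that $H$ is connected when each edge is independently deleted with probability $p$. There is no real obstacle here --- the argument is just a careful unwinding of the $k=0$ specialization. The only subtlety is the convention: one must notice that with our parameterization it is the unoriented edges (probability $l = p$) that correspond to removed edges and the bioriented edges (probability $m = 1-p$) that correspond to surviving edges, matching the standard convention under which $R_G(p) = (1-p)^{n-1}p^g T_G(1,1/p)$.
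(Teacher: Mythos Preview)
Your proof is correct and takes essentially the same approach as the paper: identify bioriented edges as ``present'' and unoriented edges as ``absent,'' and observe that with no oriented edges the cut connected condition reduces to the subgraph being spanning. Your version is actually more detailed than the paper's, which simply asserts the equivalence without spelling out the potential-cut argument.
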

\begin{proof}
If $k=0$ then the random fourientation will contain only bioriented edges and unoriented edges.  We can consider these objects as random subgraphs by saying that a bioriented edge is ``present" and an unoriented edge is ``absent" as in~\S\ref{subsec:subgraphs}.  In this situation the fourientation is cut connected precisely when the subgraph is spanning in the sense of~\S\ref{subsec:subgraphs}.
\end{proof}

The following specialization recovers a result of the first author.

\begin{cor}\label{cor:cutmin}\cite[Theorem 5.1]{backman2014partial}
Let $0 \leq p \leq 1/2$. Then there are unique nonnegative real numbers~$k$ and~$m$ with $2k+m = 1$ such that $p = k$. In this case,~$R_G(p)$ is the probability that a~$(k,0,m)$-fourientation of $G$ is cut minimal when viewed as a partial orientation in the sense of~\S\ref{subsec:partial}.
\end{cor}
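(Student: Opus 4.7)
The plan is to deduce this as a specialization of Theorem~\ref{thm:reliable}. Setting $l = 0$ in that theorem, the constraint $2k + l + m = 1$ becomes $2k + m = 1$; the requirement $k, m \geq 0$ then forces $k \in [0, 1/2]$, and $p := k + l = k$ ranges over $[0, 1/2]$. So Theorem~\ref{thm:reliable} already gives $R_G(p)$ as the probability that a $(k, 0, m)$-fourientation is cut connected, provided $p \in [0, 1/2]$. All that remains is to identify ``cut connected $(k,0,m)$-fourientation'' with ``cut minimal partial orientation under the Type~B identification.''

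To establish that identification, I would first observe that for a Type~B fourientation $\O$ (no unoriented edges), the potential cuts of $\O$ in the fourientation sense are precisely the directed cuts of its partial-orientation image $\pi(\O)$ (bioriented $\mapsto$ neutral). Indeed, if $\OCu = (U, U^c)$ is a potential cut of $\O$ and $e \in E(\OCu)$, then $e$ cannot be bioriented in $\O$ (this would violate the defining implication $e^{\pm} \in \mathbb{E}(\OCu) \Rightarrow e^{\mp} \notin \O$) and cannot be unoriented ($\O$ is Type~B), so $e$ must be oriented in $\O$ in agreement with $\OCu$. Conversely any directed cut of $\pi(\O)$ lifts to a potential cut of $\O$ by the same reasoning.

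Next I would unpack the cut connected condition on Type~B fourientations. Let $\OCu$ be a potential cut of such an $\O$, and let $e_{\min}$ be its minimum edge; by the preceding paragraph, $e_{\min}$ is oriented in $\O$ in the direction agreeing with $\OCu$. The cut connected axiom demands that $\mathbb{E}(\OCu)$ contain an oriented edge whose direction agrees with $\Oref(e_{\min})$; equivalently (translating to the min-edge characterization with $X = \{\emptyset, \{-\}\}$, $\delta = -$), that $\OCu$ not have $e_{\min}$ oriented in $\O$ opposite to $\Oref$. Since every edge of the cut is already oriented in the cut direction, this requirement reduces cleanly to the statement that $e_{\min}$ be oriented in agreement with its reference orientation, which is precisely the cut minimal condition on the partial orientation $\pi(\O)$.

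Putting these pieces together, the weight-preserving bijection $\O \mapsto \pi(\O)$ (which satisfies $|\O^o| = |\pi(\O)^o|$ and $|\O^b| = |$neutral edges of $\pi(\O)|$) identifies the cut connected $(k,0,m)$-fourientations with the cut minimal partial orientations under the Type~B probability $k^{|\O^o|} m^{|\O^b|}$, and Theorem~\ref{thm:reliable} finishes the proof. The only genuinely substantive step is the careful unpacking of the cut connected axiom in the Type~B setting; everything else is purely a matter of bookkeeping on top of Theorem~\ref{thm:reliable}.
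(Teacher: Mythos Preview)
Your proof is correct and follows exactly the approach the paper intends: the corollary is stated without proof in the paper precisely because it is the $l=0$ specialization of Theorem~\ref{thm:reliable}, together with the observation (implicit in \S\ref{subsec:partial} and the Type~B column of Figure~\ref{fig:fourtables}) that the cut connected fourientation class collapses to the cut minimal partial orientation class under the Type~B identification. Your explicit verification that potential cuts of a Type~B fourientation coincide with directed cuts of its partial-orientation image, and that the $X=\{\emptyset,\{-\}\}$ condition then reduces to ``$e_{\min}$ agrees with $\Oref$,'' is exactly the content the paper leaves to the reader.
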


At the time of writing~\cite{backman2014partial} it seemed odd that the probability was restricted to lie between $0$ and $1/2$, and that something else must lie on ``the other side of $1/2$".  The following dual specialization clarifies this strange range restriction.

\begin{cor}\label{cor:dualreliable}
Let $1/2 \leq p \leq 1$. Then there are unique nonnegative real numbers~$k$ and~$l$ with $2k+l = 1$ such that $p = k+l$. In this case,~$R_G(p)$ is the probability that a~$(k,l,0)$-fourientation of $G$ is cut connected when viewed as a partial orientation in the sense of~\S\ref{subsec:partial}.
\end{cor}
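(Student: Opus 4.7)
The plan is to derive this result as a direct specialization of Theorem~\ref{thm:reliable} in the case $m=0$, matching the dual specialization that gave Corollary~\ref{cor:cutmin} from setting $l=0$.

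First I would verify the existence and uniqueness of the parameters $(k,l)$. The conditions $2k+l=1$ and $k+l=p$ form a nonsingular linear system in $(k,l)$, whose unique solution is
\[ k = 1-p, \qquad l = 2p-1. \]
Both values are nonnegative precisely when $1/2 \leq p \leq 1$, so the stated range is exactly the one for which a valid parameter choice exists. This also shows (together with $m=0$) that all three values $k, l, m$ are nonnegative and sum as required to yield a well-defined $(k,l,m)$-fourientation distribution.

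Next I would invoke Theorem~\ref{thm:reliable} with these parameters: the probability that a $(k,l,0)$-fourientation is cut connected equals $R_G(p)$ with $p=k+l$, which by the choice above is the prescribed $p$. Finally, I would observe that a $(k,l,0)$-fourientation has no bioriented edges almost surely (since $m=0$), hence is a Type~A fourientation, which under the identification discussed at the start of~\S\ref{subsec:partial} is precisely a partial orientation of $G$. Under this identification the cut connected fourientations are exactly the cut connected partial orientations defined in~\S\ref{subsec:partial}, so the probability computed by Theorem~\ref{thm:reliable} is exactly the probability that a random $(k,l,0)$-fourientation, viewed as a partial orientation, is cut connected.

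There is no real obstacle here: the content of the corollary is just the observation that the $m=0$ slice of the one-parameter family described after Theorem~\ref{thm:reliable} covers exactly $p \in [1/2,1]$, complementary to the $l=0$ slice covered in Corollary~\ref{cor:cutmin}. The only thing worth being slightly careful about is confirming that the event ``cut connected'' in the fourientation sense genuinely coincides with ``cut connected'' in the partial orientation sense of~\S\ref{subsec:partial} under the Type~A identification, which is immediate from the definition since the property only ever constrains the behavior of oriented and neutral (unoriented) edges.
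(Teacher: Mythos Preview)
Your proposal is correct and is exactly the intended argument: the paper states this corollary without proof, treating it as an immediate specialization of Theorem~\ref{thm:reliable} at $m=0$, and your write-up fills in precisely those details (solving for $k=1-p$, $l=2p-1$, checking the range, and identifying $(k,l,0)$-fourientations with partial orientations).
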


The first author's result Corollary~\ref{cor:cutmin} was inspired by the work of the second author and Perkinson who showed the following.

\begin{prop}\label{prop:reliableplanar}\cite[Corollary 3.3]{hopkins2012bigraphical}
Let $G$ be a planar graph and $G^*$ be its planar dual.  Let $A \in \mathbb{R}_{>0}^{\mathbb{E}(G)}$ be a generic parameter list in the sense of~\S\ref{subsec:bigraph}. The probability that a partial orientation of $G$ chosen uniformly at random is~$A$-admissible is~$R_{G^{*}}(2/3)$.
\end{prop}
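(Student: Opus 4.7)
The plan is to reduce the probability in question to a simple counting ratio and then to match that ratio with the reliability polynomial of $G^*$ via planar duality of the Tutte polynomial. First, I would invoke the main theorem of Hopkins--Perkinson~\cite{hopkins2012bigraphical} (equivalently, the characteristic polynomial computation $\bigchi_{\Sigma_{(G,\Oref)}(A)}(t) = (-2)^{n-1}T_G(1-t/2,1)$ cited in~\S\ref{subsec:bigraph}, evaluated at $t=-1$ via Zaslavsky's theorem) to conclude that for generic $A$ the number of $A$-admissible partial orientations of $G$ equals the number of regions of the bigraphical arrangement $\Sigma_{(G,\Oref)}(A)$, which is $2^{n-1}T_G(3/2,1)$. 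Since the total number of partial orientations of $G$ is $3^{|E(G)|}$, the probability in the proposition is
\[
\frac{2^{n-1}T_G(3/2,1)}{3^{|E(G)|}}.
\]

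Next, I would expand $R_{G^*}(2/3)$ using the formula $R_H(p) = (1-p)^{|V(H)|-1}p^{g(H)}T_H(1,1/p)$ recalled at the start of the section. The standard planar duality facts
\[
T_{G^*}(x,y) = T_G(y,x), \qquad |V(G^*)|-1 = g(G), \qquad g(G^*) = |V(G)|-1, \qquad |E(G^*)| = |E(G)|
\]
give
\[
R_{G^*}(2/3) = \left(\tfrac{1}{3}\right)^{g}\left(\tfrac{2}{3}\right)^{n-1}T_G(3/2,1) = \frac{2^{n-1}T_G(3/2,1)}{3^{g+n-1}} = \frac{2^{n-1}T_G(3/2,1)}{3^{|E(G)|}},
\]
which matches the expression above and completes the argument.

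There is essentially no hard step here: the proof is a direct computation once the enumeration of $A$-admissible partial orientations is in hand. The only subtlety worth double-checking is the genericity hypothesis on $A$, which is used precisely to guarantee the characteristic polynomial formula for $\Sigma_{(G,\Oref)}(A)$; without it the count of $A$-admissible partial orientations can change and the identification with $R_{G^*}(2/3)$ would fail. In particular, this approach does not require $A$ to be the exponential parameter list $A^<$, so the result holds in the generality stated.
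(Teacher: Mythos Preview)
Your argument is correct. The computation is clean: the region count $2^{n-1}T_G(3/2,1)$ for generic $A$ comes straight from the characteristic polynomial formula and Zaslavsky's theorem, and the planar duality identities $T_{G^*}(x,y)=T_G(y,x)$, $|V(G^*)|-1=g(G)$, $g(G^*)=n-1$ turn $R_{G^*}(2/3)$ into exactly the same ratio. Your observation that genericity (not the specific exponential choice $A^<$) is all that is needed is also right.

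The paper takes a different route. Rather than computing $R_{G^*}(2/3)$ directly from the Tutte formula, it deduces the proposition from its own Corollary~\ref{cor:dualreliable}: it first specializes to $A=A^<$ so that $A$-admissible coincides with cycle neutral, then uses planar duality to pass from cycle neutral on $G$ to cut neutral on $G^*$, then invokes Theorem~\ref{thm:main} to say that cut neutral and cut connected partial orientations of $G^*$ are equinumerous, and finally applies Corollary~\ref{cor:dualreliable} with $p=2/3$ (i.e.\ $k=l=1/3$) to identify the cut connected probability with $R_{G^*}(2/3)$. Your approach is shorter and self-contained, and it handles all generic $A$ at once without passing through the min-edge class machinery; the paper's approach, by contrast, is there precisely to illustrate how the result slots into the fourientation framework developed in the paper.
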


Let us clarify the relationship between Proposition~\ref{prop:reliableplanar} and Theorem~\ref{thm:reliable}. Recall from~\S\ref{subsec:bigraph} that the exponential parameter list $A^{<}$ is generic and that being~$A^{<}$-admissible is the same as being cycle neutral. This property is planar dual to cut neutral (which was studied in~\S\ref{subsec:bigraph} in relation to the cobigraphical arrangement). Although the two sets of are not equal, the number of cut neutral partial orientations is equal to the number of cut connected partial orientations by Theorem~\ref{thm:main} and so Proposition~\ref{prop:reliableplanar} follows from Corollary~\ref{cor:dualreliable}.

\begin{remark}
Let $0 \leq p \leq 1$. Let $\O_D$ be a random fourientation obtained in the following manner: independently for each $e \in E(G)$ and each $\delta \in \{+,-\}$, include~$e^{\delta}$ in~$\O_D$ with probability $(1-p)$ and exclude $e^{\delta}$ from $\O_D$ with probability $p$. Let $\O_U$ be a different random fourientation obtained as follows: independently for each $e \in E(G)$, include both $e^{+}$ and $e^{-}$  in $\O_U$ with probability $(1-p)$ and exclude both $e^{+}$ and $e^{-}$ from $\O_U$ with probability $p$. Theorem~\ref{thm:reliable} implies that the probability that $\O_D$ is cut connected is the same as the probability that $\O_U$ is cut connected and in fact they are both equal to $R_G(p)$: for $\O_D$ we take $(k,l,m) := (p(1-p),p^2,(1-p)^2)$ and for~$\O_U$ we take $(k,l,m) := (0,p,(1-p))$. Suppose $\G$ is $(q,T)$-connected for some choice of sink~$q\in V(G)$. Our claims about $\O_D$ and $\O_U$ can be reinterpreted as follows. Let~$G_D$ be the \emph{directed} graph obtained from $G$ by including the directed edges~$(u,v)$, $(v,u)$ in~$G_D$ for each $e = \{u,v\} \in E(G)$. Remove each directed edge from $G_D$ independently with probability $p$; from the above claim about $\O_D$ we conclude that the probability the resulting ``subdigraph'' is $q$-connected is $R_G(p)$. On the other hand, as explained in Corollary~\ref{cor:classicreliable}, the claim about $\O_U$ just amounts to the classical definition of the reliability polynomial. Thus we see that the ``directed'' and ``undirected'' system reliability models have the same probability of failure. The connection between partial orientations and system reliability, and especially this intriguing fact that the ``directed'' and ``undirected'' system reliability models corresponding to a graph $G$ have the same probability of failure, were recently explored by Mohammadi~\cite{mohammadi2014divisors}.
\end{remark}

\bibliographystyle{plain}
\bibliography{Fourientations-and-the-Tutte-Polynomial}

\end{document}